\numberwithin{equation}{section}
\newtheorem{theoremcounter}{theoremcounter}[section]
\theoremstyle{plain}
\newtheorem{corollary}[theoremcounter]{Corollary}
\newtheorem{lemma}[theoremcounter]{Lemma}
\newtheorem{proposition}[theoremcounter]{Proposition}
\newtheorem{theorem}[theoremcounter]{Theorem}
\newtheorem{introtheorem}{Theorem}
\newtheorem{introcorollary}[introtheorem]{Corollary}
\newtheorem{introexample}[introtheorem]{Example}
\theoremstyle{definition}
\newtheorem{definition}[theoremcounter]{Definition}
\theoremstyle{remark}
\newtheorem{example}[theoremcounter]{Example}
\newtheorem{notation}[theoremcounter]{Notation}
\newtheorem{remark}[theoremcounter]{Remark}
\newtheorem*{remarkNN}{Remark}
\newcommandx{\unsure}[2][1=]{\todo[linecolor=red,backgroundcolor=red!25,bordercolor=red,#1]{#2}}
\newcommandx{\change}[2][1=]{\todo[linecolor=blue,backgroundcolor=blue!25,bordercolor=blue,#1]{#2}}
\newcommandx{\info}[2][1=]{\todo[linecolor=OliveGreen,backgroundcolor=OliveGreen!25,bordercolor=OliveGreen,#1]{#2}}
\newcommandx{\improvement}[2][1=]{\todo[linecolor=Plum,backgroundcolor=Plum!25,bordercolor=Plum,#1]{#2}}
\newcommand{\cC}{\ensuremath{\mathcal{C}}}
\newcommand{\cG}{\ensuremath{\mathcal{G}}}
\newcommand{\cH}{\ensuremath{\mathcal{H}}}
\newcommand{\cM}{\ensuremath{\mathcal{M}}}
\newcommand{\cN}{\ensuremath{\mathcal{N}}}
\newcommand{\cO}{\ensuremath{\mathcal{O}}}
\newcommand{\cP}{\ensuremath{\mathcal{P}}}
\newcommand{\cS}{\ensuremath{\mathcal{S}}}
\newcommand{\cZ}{\ensuremath{\mathcal{Z}}}
\newcommand{\bP}{\ensuremath{\mathbb{P}}}
\newcommand{\rE}{\ensuremath{\mathrm{E}}}
\newcommand{\rF}{\ensuremath{\mathrm{F}}}
\newcommand{\rL}{\ensuremath{\mathrm{L}}}
\newcommand{\rM}{\ensuremath{\mathrm{M}}}
\newcommand{\rS}{\ensuremath{\mathrm{S}}}
\newcommand{\rT}{\ensuremath{\mathrm{T}}}
\newcommand{\rV}{\ensuremath{\mathrm{V}}}
\newcommand{\rmc}{\ensuremath{\mathrm{c}}}
\newcommand{\rmd}{\ensuremath{\mathrm{d}}}
\newcommand{\rmm}{\ensuremath{\mathrm{m}}}
\newcommand{\rmr}{\ensuremath{\mathrm{r}}}
\newcommand{\rms}{\ensuremath{\mathrm{s}}}
\newcommand{\veps}{\ensuremath{\varepsilon}}
\newcommand{\vphi}{\ensuremath{\varphi}}
\newcommand{\ol}{\overline}
\newcommand{\eqstop}{\ensuremath{\, \text{.}}}
\newcommand{\eqcomma}{\ensuremath{\, \text{,}}}
\newcommand{\NN}{\ensuremath{\mathbb{N}}}
\newcommand{\ZZ}{\ensuremath{\mathbb{Z}}}
\newcommand{\RR}{\ensuremath{\mathbb{R}}}
\newcommand{\CC}{\ensuremath{\mathbb{C}}}
\renewcommand{\Re}{\ensuremath{\mathop{\mathrm{Re}}}}
\newcommand{\id}{\ensuremath{\mathrm{id}}}
\newcommand{\ra}{\ensuremath{\rightarrow}}
\newcommand{\hra}{\ensuremath{\hookrightarrow}}
\newcommand{\thra}{\ensuremath{\twoheadrightarrow}}
\newcommand{\ev}{\ensuremath{\mathrm{ev}}}
\newcommand{\Cstar}{\ensuremath{\mathrm{C}^*}}
\newcommand{\Wstar}{\ensuremath{\mathrm{W}^*}}
\newcommand{\bo}{\ensuremath{\mathcal{B}}}
\newcommand{\supp}{\ensuremath{\mathop{\mathrm{supp}}}}
\newcommand{\im}{\ensuremath{\mathop{\mathrm{im}}}}
\newcommand{\Cstarred}{\ensuremath{\Cstar_\mathrm{red}}}
\newcommand{\Cstarmax}{\ensuremath{\Cstar_\mathrm{max}}}
\newcommand{\cont}{\ensuremath{\mathrm{C}}}
\newcommand{\contb}{\ensuremath{\mathrm{C}_\mathrm{b}}}
\newcommand{\conto}{\ensuremath{\mathrm{C}_0}}
\newcommand{\contc}{\ensuremath{\mathrm{C}_\mathrm{c}}}
\newcommand{\conv}{\ensuremath{\mathrm{conv}}}
\newcommand{\ltwo}{\ensuremath{\ell^2}}
\newcommand{\linfty}{\ensuremath{{\offinterlineskip \ell \hskip 0ex ^\infty}}}
\newcommand{\lspan}{\ensuremath{\mathop{\mathrm{span}}}}
\newcommand{\cspan}{\ensuremath{\mathop{\overline{\mathrm{span}}}}}
\newcommand{\Ad}{\ensuremath{\mathop{\mathrm{Ad}}}}
\newcommand{\grpaction}[1]{\ensuremath{\stackrel{#1}{\curvearrowright}}}
\newcommand{\Fix}{\ensuremath{\mathrm{Fix}}}
\renewcommand{\Cstar}{\ensuremath{\displaystyle \mathrm{C}^*}}
\newcommand{\Gnaught}{{\cG^{(0)}}}
\newcommand{\Hnaught}{\cH^{(0)}}
\newcommand{\Iso}{\mathrm{Iso}}
\newcommand{\fb}{\partial_{\mathrm F}}
\newcommand{\hb}{\partial_{\mathrm H}}
\newcommand{\bb}{\partial_{\mathrm B}}
\newcommand{\Dix}{\ensuremath{\mathrm{Dix}}}
\newcommand{\Ered}{\mathrm{E}_{\mathrm{red}}}
\newcommand{\Eess}{\mathrm{E}_{\mathrm{ess}}}
\newcommand{\Cstaress}{\Cstar_{\mathrm{ess}}}
\newcommand{\Haus}{\mathrm{Haus}}
\newcommand{\Mloc}{\rM_{\mathrm{loc}}}
\newcommand{\cconv}{\mathop{\ol{\conv}}}
\newcommand{\Sub}{\mathrm{Sub}}
\newcommand{\redtimes}{\rtimes_{\mathrm{red}}}
\newcommand{\Binfty}{\mathcal{B}^\infty}
\DeclareMathOperator{\weakstar}{w*}
\newcommand{\CP}{\mathcal{CP}}
\renewcommand{\Cstar}{\ensuremath{\text{\rm C}^*}}
\newcommand{\authors}{Matthew Kennedy $\bullet$ Se-Jin Kim $\bullet$ Xin Li $\bullet$ Sven Raum $\bullet$ Dan Ursu}
\renewcommand{\title}{The ideal intersection property for \\ essential groupoid C$^*$-algebras}
\begin{document}


\thispagestyle{empty}

\noindent
\begin{minipage}{\linewidth}
  \begin{center}
    \textbf{\Large \title} \\[0.3em]
    \authors    
  \end{center}
\end{minipage}

\renewcommand{\thefootnote}{}
\footnotetext{
  \textit{MSC classification:
    46L55, 
    37A55, 
    46L05, 
    20M18, 
    22A22, 
  }
}
\footnotetext{
  \textit{Keywords: essential groupoid \Cstar-algebra, simplicity, \Cstar-irreducible inclusion, ideal intersection property, Furstenberg boundary, confined subgroup, Powers averaging, Thompson's group $\rT$}
}

\vspace{2em}
\noindent
\begin{minipage}{\linewidth}
  \textbf{Abstract}.
  We characterise, in several complementary ways, {\'e}tale groupoids with locally compact Hausdorff space of units whose essential groupoid \Cstar-algebra has the ideal intersection property, assuming that the groupoid is topologically transitive and either Hausdorff or $\sigma$-compact.  This leads directly to a characterisation of the simplicity of this \Cstar-algebra which, for Hausdorff groupoids, agrees with the reduced groupoid \Cstar-algebra.  Specifically, we prove for topologically transitive groupoids that the ideal intersection property is equivalent to the absence of essentially confined amenable sections of isotropy groups.  For topologically transitive groupoids with compact space of units we moreover show that this is equivalent to the uniqueness of equivariant pseudo-expectations.  A key technical idea underlying our results is a new notion of groupoid action on \Cstar-algebras including the essential groupoid \Cstar-algebra itself.  For minimal groupoids, we further obtain a relative version of Powers averaging property.  Examples arise from suitable group representations into simple groupoid \Cstar-algebras.  This is illustrated by the example of the quasi-regular representation of Thompson's group $\rT$ with respect to Thompson's group $\rF$, which satisfies the relative Powers averaging property in the Cuntz algebra $\cO_2$.
\end{minipage}

\tableofcontents


\section{Introduction}
\label{sec:introduction}

Groupoids provide a framework encompassing both groups and topological spaces, and provide an abstraction of the notion of a quotient space in these settings, much like stacks in algebraic geometry. They naturally arise as transformation groupoids encoding the topological dynamical structure of discrete groups, and as transversal groupoids of foliations in differential geometry.  Groupoids arising from these and many other examples are {\'e}tale, which is a notion suitably abstracting the topological properties of groupoids arising from actions of discrete groups.  Renault \cite{renault80} and Connes \cite{connes1982-foliations}, by introducing various \Cstar-algebraic completions of an appropriate convolution algebra, discovered that {\'e}tale groupoids give rise to an extraordinarily rich class of operator algebras. On the other hand, since the structure of these algebras encodes much of the structure of the underlying groupoid, they are capable of serving as a proxy for the study of structures relevant to other areas of mathematics, such as semigroup \Cstar-algebras \cite{cuntzechterhoffliyou2017} and Bost-Connes systems for arbitrary number fields \cite{lacalarsenneshveyev2009}. Furthermore, {\'e}tale groupoids also naturally arise from within the theory of operator algebras, where they encode important structural features through the notion of Cartan subalgebras \cite{renault2008-cartan,l2020-cartan-classifiable}.

Groupoids appearing in applications are frequently {\'e}tale or, after choosing a transversal, Morita equivalent to an {\'e}tale groupoid.  However, they may be non-Hausdorff, as the example of groupoids of germs associated with group actions shows, and there is no way to remedy this fact. Nevertheless, even in the non-Hausdorff case, the unit space of an {\'e}tale groupoid is typically locally compact and Hausdorff. Therefore, understanding the structural properties of the \Cstar-algebras associated to {\'e}tale groupoids with locally compact Hausdorff unit spaces is an important challenge at the intersection of operator algebras and many other areas of mathematics.  Before stating our main results and putting them into the context of previous work, let us summarise how we meet this challenge in four ways. First, we provide a complete characterisation of {\'e}tale Hausdorff groupoids whose reduced groupoid \Cstar-algebra is simple.  Second, for non-Hausdorff groupoids satisfying the mild countability assumption of $\sigma$-compactness, we characterise the simplicity of their essential groupoid \Cstar-algebra.  It has recently become clear that this \Cstar-algebra, which agrees with the reduced groupoid \Cstar-algebra in the Hausdorff setting, is the correct replacement for the reduced groupoid \Cstar-algebra in the non-Hausdorff setting, when searching for an algebraic-dynamical description of the ideal space of a groupoid \Cstar-algebra.  It comes as a surprise that a careful development of the necessary techniques and notions leads to a clean characterisation of \Cstar-simplicity also in the non-Hausdorff case.  Third, we obtain appropriate analogues in the setting of groupoids of the averaging results considered by Powers and many others for reduced group \Cstar-algebras.  This allows us to strengthen previously known results on \Cstar-irreducibility of inclusions arising from groupoids of germs.  Fourth, behind these results lies the development of a novel category of groupoid \Cstar-dynamical systems, including a theory of boundaries for groupoids that culminates in a dynamical construction of the Furstenberg boundary of a groupoid.  Our novel notion of groupoid action on \Cstar-algebras has to the best of our knowledge no predecessor in the literature.

The first main result of this article completely solves the problem of characterising the simplicity of reduced groupoid \Cstar-algebras of {\'e}tale Hausdorff groupoids, and the simplicity of essential groupoid \Cstar-algebras of $\sigma$-compact {\'e}tale groupoids.
\begin{introtheorem}[See Theorem~\ref{thm:characterisation-confined-subgroups-minimal-case}]
  \label{thmintro:characterisation-confined-subgroups-minimal-case}
  Let $\cG$ be an {\'e}tale groupoid with locally compact Hausdorff space of units.  Assume that $\cG$ is Hausdorff, $\cG$ is $\sigma$-compact or $\cG$ has a compact space of units.  Then the essential groupoid \Cstar-algebra $\Cstaress(\cG)$ is simple if and only if $\cG$ is minimal and has no essentially confined amenable sections of isotropy groups.
\end{introtheorem}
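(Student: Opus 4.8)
The plan is to split simplicity of $\Cstaress(\cG)$ into two complementary conditions: minimality, which forces every ideal meeting the diagonal $\conto(\Gnaught)$ to be trivial, and the ideal intersection property, which forces every nonzero ideal to meet the diagonal. First I would record the elementary reduction. The diagonal $\conto(\Gnaught)$ embeds in $\Cstaress(\cG)$ and contains an approximate unit for it. If $I$ is a nonzero closed two-sided ideal with $I \cap \conto(\Gnaught) \neq 0$, then $I \cap \conto(\Gnaught)$ is a nonzero ideal of $\conto(\Gnaught)$, hence equals $\conto(U)$ for a nonempty open $U \subseteq \Gnaught$; a direct computation with the groupoid multiplication shows that $U$ is invariant under the canonical action of $\cG$ on its unit space. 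Minimality then forces $U = \Gnaught$, so $\conto(\Gnaught) \subseteq I$ and therefore $I = \Cstaress(\cG)$ by the approximate unit. Conversely, a proper nonempty invariant open subset yields a proper nonzero ideal, so minimality is necessary.

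Granting minimality, simplicity is then equivalent to the assertion that every nonzero ideal meets $\conto(\Gnaught)$ nontrivially, namely the ideal intersection property; its necessity is immediate, since any ideal with trivial intersection is proper. At this point the theorem reduces to the equivalence between the ideal intersection property for $\Cstaress(\cG)$ and the absence of essentially confined amenable sections of isotropy groups. This equivalence is precisely the content of the cited Theorems~\ref{thm:seciso} and~\ref{thm:characterisation-locally-compact-case}, which between them cover the Hausdorff, $\sigma$-compact, and compact-unit-space regimes, so the proof is completed by assembling the minimality reduction with these characterisations.

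The substance, and the main obstacle, therefore lies entirely in those two theorems. The guiding analogy is the group-theoretic picture, in which $\Cstar$-simplicity is equivalent to freeness of the action on the Furstenberg boundary and, intrinsically, to the absence of nontrivial amenable confined subgroups. Transporting this to groupoids requires a boundary theory adapted to the bundle of isotropy groups $x \mapsto \cG_x^x$, where a single confined subgroup is replaced by an essentially confined amenable section $x \mapsto H_x \le \cG_x^x$. The hard direction is to show that the absence of such sections implies the ideal intersection property; I expect this to run through a dynamical argument on the Furstenberg boundary of $\cG$ together with an averaging estimate in the spirit of Powers, with the non-Hausdorff pathologies tamed by working with the essential algebra and its faithful conditional expectation $\Eess$, and organised through the novel category of groupoid actions on $\Cstar$-algebras. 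The hypotheses of $\sigma$-compactness and of compact unit space should enter exactly at the points requiring measurable selection of boundary data and convergence of the averaging, which is why the statement is restricted to these three regimes.
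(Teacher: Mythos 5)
Your proposal takes essentially the same route as the paper: the paper likewise treats simplicity as the elementary conjunction of minimality with the ideal intersection property and obtains Theorem~\ref{thmintro:characterisation-confined-subgroups-minimal-case} as a special case of Theorems~\ref{thm:seciso} and~\ref{thm:characterisation-locally-compact-case}, exactly as you assemble it (your invariance-of-$U$ reduction and approximate-unit argument being the standard folklore step the paper leaves implicit). Your closing guesses about the internals are slightly off --- the hard direction of those theorems runs through uniqueness of $\cG$-pseudo-expectations into $\cont(\fb \cG)$ rather than a Powers-type averaging estimate (averaging is a separate development in Section~\ref{sec:powers-averaging}), and $\sigma$-compactness enters via Baire-category control of singular elements and dense sets rather than measurable selection --- but since you only cite those theorems, this does not affect the correctness of your proof.
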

Let us explain the notation we use and put our results into the context of previous work.  Theorem~\ref{thmintro:characterisation-confined-subgroups-minimal-case} generalises the breakthrough results characterising \Cstar-simple discrete groups \cite{kalantarkennedy2017,breuillardkalantarkennedyozawa14,kennedy15-cstarsimplicity,haagerup15}, and completes a sequence of partial results obtained for dynamical systems \cite{kawabe17, kalantarscarparo2021} and special classes of groupoids \cite{fackskandalis1982,renault1991,khoshkamskandalis2002,brownclarkfarthingsims2014,borys2020-thesis,borys2019-boundary,kwasniewskimeyer2019-essential}.  In particular, Borys had shown that for {\'e}tale Hausdorff groupoids with a compact space of units, the absence of confined amenable sections of isotropy groups is a sufficient condition for simplicity of the reduced groupoid \Cstar-algebra (there is however a gap in Borys' argument, as we explain in Remark~\ref{rem:NotAllStabAmen}).  Further, Kwa{\'s}niewski-Meyer had shown that for {\'e}tale groupoids with locally compact Hausdorff space of units, which are topologically free -- and thus have no essentially confined amenable sections of isotropy groups -- their essential groupoid \Cstar-algebra is simple.  The notion of confined sections of isotropy groups emerged from work on group rings \cite{hartleyyalesskij1997} and \Cstar-simple groups \cite{kennedy15-cstarsimplicity} and was subsequently generalised to dynamical systems \cite{kawabe17} and Hausdorff groupoids \cite{borys2019-boundary}.  In this setting, a section of isotropy groups is confined if, roughly speaking, it cannot be approximately conjugated into the groupoid's unit space.  The terminology stems from the special case of groups, where a confined subgroup is separated in a strong sense from the trivial subgroup.  For non-Hausdorff groupoids, it is necessary to consider a more general notion of essential confinedness, taking into account the closure of the unit space. Definition~\ref{def:section-isotropy} captures this notion rigorously.

While the essential groupoid \Cstar-algebras of a Hausdorff groupoid is identified with its reduced groupoid \Cstar-algebra, for non-Hausdorff groupoids, it is necessary to adapt the construction of the reduced groupoid \mbox{\Cstar-algebra}, if one aims to relate the ideal structure of a groupoid \Cstar-algebra to the algebraic and dynamical structure of the groupoid.  The need to consider a modification of the reduced groupoid \Cstar-algebra became apparent beginning with work of Khoshkam-Skandalis \cite{khoshkamskandalis2002}, followed by instructive examples of Exel \cite{exel2011}, and work of Exel-Pitts \cite{exelpitts2019-characterizing} and Clark-Exel-Pardo-Sims-Starling \cite{clarkexelpardosimsstarling2019} and Kwa{\'s}niewski-Meyer \cite{kwasniewskimeyer2019-essential}.  The essential groupoid \mbox{\Cstar-algebra} is the quotient of the reduced groupoid \mbox{\Cstar-algebra} by an ideal of singular elements \cite{clarkexelpardosimsstarling2019,kwasniewskimeyer2019-essential}.  This explains the need for some kind of countability assumption, such as $\sigma$-compactness, in order to control the singular elements in a suitable way.  By adopting this perspective, \cite{clarkexelpardosimsstarling2019,kwasniewskimeyer2019-essential} obtained a characterisation of certain amenable groupoids whose essential groupoid \Cstar-algebra is simple.  In particular \cite{kwasniewskimeyer2019-essential} showed that topologically free and minimal groupoids have simple essential groupoid \Cstar-algebras.

Let us now explain the technical core of the present work.  In the context of groupoid \mbox{\Cstar-algebras}, the problem of proving simplicity is subdivided by considering the ideal intersection property \cite{KT90, tomiyama1992-lecture-notes} and the study of orbits.  The inclusion $\conto(\Gnaught) \subseteq \Cstaress(\cG)$ has the ideal intersection property if every nonzero ideal of $\Cstaress(\cG)$ has nonzero intersection with $\conto(\Gnaught)$.  We abuse terminology slightly and  refer to $\Cstaress(\cG)$ having the ideal intersection property.  This \Cstar-algebra is simple if and only if it has the ideal intersection property and $\cG$ is minimal.  While it is straightforward to determine the minimality of a groupoid, it has been a major open problem to characterise the ideal intersection property for essential groupoid \Cstar-algebras.

Existing work on simplicity and the ideal intersection property for groupoid \Cstar-algebras has generally proceeded in two different directions. First, for amenable groupoids, the reduced and the full groupoid \Cstar-algebras agree, making characterisations of simplicity more accessible, as is also visible from our main results, where the amenability condition on sections of isotropy groups is automatically satisfied.  From an operator algebraic perspective this is reflected in the abundance of \mbox{*-homomorphisms} defined on the full groupoid \Cstar-algebra.  Previous work on amenable groupoids includes the work of Kawamura-Tomiyama and Archbold-Spielberg on transformation groups \cite{KT90, archboldspielberg94},  the work of Kumjian-Pask \cite{kumjianpask2000} and Robertson-Sims \cite{robertsonsims2007} characterising simplicity of groupoid \Cstar-algebras arising from higher-rank graphs, and finally the work of Brown-Clark-Farthing-Sims \cite{brownclarkfarthingsims2014} resulting in a complete characterisation of {\'e}tale second countable Hausdorff groupoids with simple full groupoid \Cstar-algebra.  As explained above, characterisations of the simplicity of essential groupoid \Cstar-algebras of amenable non-Hausdorff groupoids can be deduced from recent work by Clark-Exel-Pardo-Sims-Starling \cite{clarkexelpardosimsstarling2019} and by Kwa{\'s}niewski-Meyer \cite{kwasniewskimeyer2019-essential}.

Second, a completely different approach to simplicity and the ideal intersection property for groupoid \Cstar-algebras emerged from the breakthrough results about \Cstar-simple discrete groups obtained by Kalantar-Kennedy \cite{kalantarkennedy2017} and Breuillard-Kalantar-Kennedy-Ozawa \cite{breuillardkalantarkennedyozawa14}.  The crucial insight of \cite{kalantarkennedy2017} was that the simplicity of reduced group \Cstar-algebras has to be coupled to and understood through the group's action on its Furstenberg boundary.  Combined with Le Boudec's work \cite{leboudec15-cstarsimplicity}, along with further characterisations obtained by Kennedy \cite{kennedy15-cstarsimplicity} and Haagerup \cite{haagerup15}, this resolved the long-standing problem of characterising discrete groups whose reduced group \mbox{\Cstar-algebra} is simple, as reported in the S{\'e}minaire Bourbaki \cite{raum19-bourbaki}.

The success of these methods in understanding the \Cstar-simplicity of groups triggered subsequent work on simplicity and the ideal intersection property for reduced crossed product \mbox{\Cstar-algebras} associated with dynamical systems of discrete groups \cite{bryderkennedy2018-twisted,bryder2017,kawabe17,kennedyschafhauser2019}.  In all of this work, the \Cstar-algebra being acted on is unital, which in the setting of a dynamical system translates to the assumption that the underlying topological space is compact.  The next advance in this direction was obtained by Borys \cite{borys2020-thesis,borys2019-boundary}, who introduced an analogue of the Furstenberg boundary for {\'e}tale Hausdorff groupoids with compact unit space that enabled him to prove the ideal intersection property for groupoids with no confined amenable sections of isotropy groups (but, as above, see Remark~\ref{rem:NotAllStabAmen}). This necessary condition was a partial analogue of results obtained by Kennedy \cite{kennedy15-cstarsimplicity} for groups and by Kawabe \cite{kawabe17} for topological dynamical systems.  The most recent advance was achieved by Kalantar-Scarparo \cite{kalantarscarparo2021}, who utilised the Alexandrov one-point compactification to extend results of Kawabe to group actions on locally compact spaces.

Our proof of Theorem~\ref{thmintro:characterisation-confined-subgroups-minimal-case} begins with the study of the ideal intersection property.  We first treat in Theorem~\ref{thm:seciso} groupoids with compact space of units, and subsequently obtain a result for groupoids with locally compact space of units in Section~\ref{sec:alexandrov-groupoid}.  For groupoids that do not necessarily arise from a group action, a one-point compactification of the unit space leads to the notion of Alexandrov groupoid $\cG^+$, which allows us to obtain the following theorem from the corresponding result for groupoids with compact space of units.
\begin{introtheorem}[See Theorems~\ref{thm:seciso} and~\ref{thm:characterisation-locally-compact-case}]
  \label{thmintro:characterisation-confined-subgroups}
    Let $\cG$ be an {\'e}tale groupoid with locally compact Hausdorff space of units.  Assume that $\cG$ is topologically transitive, and that either $\cG$ is Hausdorff or $\cG^+$ is $\sigma$-compact.  Then $\Cstaress(\cG)$ has the ideal intersection property if and only if $\cG$ has no essentially confined amenable sections of isotropy groups.  Further, $\cG^+$ is $\sigma$-compact if $\cG$ is $\sigma$-compact.
\end{introtheorem}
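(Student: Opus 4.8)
The plan is to prove the equivalence in two stages, mirroring the two results the statement cites. First I would settle the case of a compact unit space, which is Theorem~\ref{thm:seciso}, and then bootstrap to the locally compact case, Theorem~\ref{thm:characterisation-locally-compact-case}, by passing to the Alexandrov groupoid $\cG^+$. Throughout, the ideal intersection property is taken relative to the inclusion $\conto(\Gnaught) \subseteq \Cstaress(\cG)$. The three alternative hypotheses---$\cG$ Hausdorff, $\cG^+$ $\sigma$-compact, or $\cG$ minimal with compact unit space---should each be read as a device for controlling the singular ideal that separates $\Cstaress(\cG)$ from $\Cstarred(\cG)$: Hausdorffness removes singular elements entirely, while $\sigma$-compactness or minimality tames them enough for the boundary machinery to apply.

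For the compact case the strategy is to route the equivalence through the Furstenberg boundary $\fb\cG$ and its associated equivariant pseudo-expectation. Concretely I would (i) exhibit a canonical equivariant pseudo-expectation $E \colon \Cstaress(\cG) \to I$ into the injective envelope $I$ of $\conto(\Gnaught)$, realised via $\fb\cG$ inside the new category of $\cG$-actions on \Cstar-algebras; (ii) prove that the inclusion has the ideal intersection property exactly when $E$ is the unique such pseudo-expectation, the point being that a competing pseudo-expectation detects an element of a nonzero ideal annihilated by $E$ and hence invisible to $\conto(\Gnaught)$; and (iii) identify the failure of uniqueness dynamically. For (iii), an essentially confined amenable section of isotropy yields a second pseudo-expectation: amenability supplies an invariant mean along the section which averages to a genuinely new expectation, while essential confinedness guarantees the section is not absorbed into the closure of the unit space, so the two expectations really differ. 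Conversely, the difference of two distinct pseudo-expectations, analysed through the isotropy of the boundary action---whose isotropy groups are automatically amenable---is shown to concentrate on the isotropy and thereby to manufacture an essentially confined amenable section. This closes the chain: the ideal intersection property is equivalent to uniqueness of the equivariant pseudo-expectation, which is equivalent to the absence of essentially confined amenable sections.

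The reduction from locally compact to compact units is carried out by passing to $\cG^+$, whose unit space is compact. Here I would realise $\cG$ as the restriction of $\cG^+$ to the open invariant subset $\Gnaught \subseteq \cG^{+(0)}$, so that $\Cstaress(\cG)$ embeds as an ideal of $\Cstaress(\cG^+)$. One then checks that the two inclusion-level conditions transfer faithfully: nonzero ideals of $\Cstaress(\cG)$ correspond to nonzero ideals of $\Cstaress(\cG^+)$ meeting $\conto(\Gnaught)$ trivially, so $\cG$ has the ideal intersection property if and only if $\cG^+$ does; and the essentially confined amenable sections of $\cG$ are precisely those of $\cG^+$ supported away from the adjoined point at infinity, so the two confinement conditions coincide. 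Applying the compact-case theorem to $\cG^+$ then yields the locally compact statement. To invoke that theorem one needs $\cG^+$ to be $\sigma$-compact, which is the content of the final sentence; I would verify directly that the Alexandrov construction preserves $\sigma$-compactness, so that a $\sigma$-compact $\cG$ lands in the admissible case.

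The hard part will be step (iii) of the compact case---the dynamical identification of the uniqueness obstruction with essentially confined amenable sections---and making it survive the passage from the reduced to the essential groupoid \Cstar-algebra. In the non-Hausdorff setting every occurrence of ``invariant mean'', ``pseudo-expectation'' and ``isotropy'' must be controlled only modulo the singular ideal, and it is exactly here that the $\sigma$-compactness (or Hausdorffness, or minimality) hypothesis is indispensable, since it is what keeps the singular part from swamping the boundary-theoretic estimates once they are transported through the new notion of groupoid action that accommodates $\Cstaress(\cG)$ itself.
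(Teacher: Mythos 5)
Your proposal follows essentially the same route as the paper: for compact unit space it runs the equivalence through the Furstenberg boundary and uniqueness of the $\cG$-pseudo-expectation (Theorem~\ref{thm:intersection-property-essential-algebras}), with an essentially confined amenable section yielding a second expectation by averaging with the invariant mean/trivial character (Proposition~\ref{prop:unique-pseudo-expection-implies-no-essentially-confined-amenable-sections}) and the converse exploiting the automatic amenability of the boundary isotropy (Propositions~\ref{prop:furstenberg-groupoid-has-amenable-isotropy} and~\ref{prop:norec->intprop}), and the locally compact case is reduced exactly as you describe via the Alexandrov groupoid, the identification $\Cstaress(\cG^+) \cong \Cstaress(\cG)^+$ (Proposition~\ref{prop:essential-groupoid-cstar-algebra-alexandrov-groupoid-unitisation}), the transfer of confined sections (Lemma~\ref{lem:transfer-essentially-confined-sections}), and preservation of $\sigma$-compactness. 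The only point you gloss is that the implication from the ideal intersection property to uniqueness of the pseudo-expectation is obtained in the paper not by extracting an ideal from a competing expectation but by passing through essential principality of $\cG \ltimes \fb \cG$, which also requires the nontrivial inclusion $\Cstaress(\cG) \subseteq \Cstaress(\cG \ltimes \fb \cG)$ under your standing hypotheses.
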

Note that we actually prove a more general statement (see Theorems~\ref{thm:seciso} and~\ref{thm:characterisation-locally-compact-case} for details). The reason why an additional hypothesis like topological transitivity is required is the surprising observation that stabilizer groups of the Hamana-Furstenberg groupoid (see Theorem~\ref{thmintro:injective} below) are not always amenable, in contrast to the case of transformation groupoids arising from group actions on compact spaces. We present a concrete and elementary example in Remark~\ref{rem:NotAllStabAmen} (see also the remark at the end of the introduction).

Following the paradigm established by \cite{kalantarkennedy2017, breuillardkalantarkennedyozawa14}, an important intermediate step in establishing our elementary characterisation of the ideal intersection property is the control of equivariant ucp maps defined on the essential groupoid \Cstar-algebras of $\cG$ by means of a suitable boundary, which implicitly requires a $\cG$-action on the groupoid \Cstar-algebra.  However, the lack of such an action has been recognised as a major obstruction to extending the previous-mentioned work on groups and dynamical systems to the present setting. In particular, the \Cstar-algebra $\Cstaress(\cG)$ is not a $\cG$-\Cstar-algebra in the sense of \cite{renault87-produits-croises}, which is also the definition employed in \cite{borys2020-thesis, borys2019-boundary}, and requires a $\cG$-\Cstar-algebra to be fibered over the unit space of the groupoid.  Therefore, an important step in developing the results of the present article is the introduction of a new notion of groupoid action on \Cstar-algebras.  For this, we replace elements of the groupoid with elements from the pseudogroups of open bisections \cite{lawsonlenz13}. Groupoid actions on \Cstar-algebras are then defined in terms of families of hereditary \mbox{\Cstar-subalgebras} with *-isomorphisms associated to open bisections of the groupoid. See Section~\ref{sec:groupoid-cstar-algebras} for details.

With respect to the above definition, we prove that $\Cstaress(\cG)$ is naturally a $\cG$-\Cstar-algebra. The following result is an important first step towards Theorem~\ref{thmintro:characterisation-confined-subgroups}.
\begin{introtheorem}[See Proposition~\ref{prop:ell_infty_injective} and Theorem~\ref{thm:injective_envelope}]
  \label{thmintro:injective}
    Let $\cG$ be an {\'e}tale groupoid with compact Hausdorff unit space.  Then $\linfty(\cG)$ is an injective object in the category of $\cG$-\Cstar-algebras.  Further, there is a unique injective envelope of $\cont(\Gnaught)$ in the category of $\cG$-\Cstar-algebras, which is commutative.
\end{introtheorem}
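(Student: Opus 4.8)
The plan is to mirror the strategy of Kalantar and Kennedy \cite{kalantarkennedy2017} for the inclusion $\CC \subseteq \ell^\infty(G)$ of the trivial $G$-\Cstar-algebra, transporting it to the bisection-based notion of $\cG$-action developed in Section~\ref{sec:groupoid-cstar-algebras}. The essential observation is that $\linfty(\cG)$, being the algebra of bounded functions on the \emph{set} $\cG$, is a commutative \Cstar-algebra carrying a natural $\cG$-action in which the pseudogroup of open bisections acts by translating base points, and into which $\cont(\Gnaught)$ embeds equivariantly by pulling back along the range map $r \colon \cG \to \Gnaught$. Both assertions of the theorem will be extracted from the single structural fact that this embedding exhibits $\linfty(\cG)$ as a $\cG$-injective object.

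First I would prove that $\linfty(\cG)$ is injective in the category of $\cG$-\Cstar-algebras by a coinduction-and-reaverage argument. Given an equivariant ucp map $\phi \colon A \to \linfty(\cG)$ and a $\cG$-equivariant inclusion $A \subseteq B$, I would restrict $\phi$ to the fibres over the unit space, obtaining a ucp map $\phi_0 \colon A \to \linfty(\Gnaught)$ via $\phi_0(a)(x) = \phi(a)(x)$ for $x \in \Gnaught$. Since $\linfty(\Gnaught)$ is the algebra of bounded functions on a set, it is an injective operator system, so $\phi_0$ extends to a ucp map $\psi_0 \colon B \to \linfty(\Gnaught)$ by the Arveson extension theorem. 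The extension is then reassembled equivariantly by setting $\tilde\phi(b)(\gamma) = \psi_0(\gamma^{-1} \cdot b)(s(\gamma))$, where $\gamma^{-1} \cdot b$ denotes the action of a bisection through $\gamma$; a direct check should show that $\tilde\phi$ is ucp, equivariant, and restricts to $\phi$ on $A$.

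For the second assertion I would invoke Hamana's theory of injective envelopes, which I expect Section~\ref{sec:groupoid-cstar-algebras} to set up in this category: since $\linfty(\cG)$ is a $\cG$-injective object containing $\cont(\Gnaught)$, the injective envelope $I_\cG(\cont(\Gnaught))$ exists, is unique up to isomorphism, and may be realised as the range of a minimal $\cG$-equivariant ucp idempotent $E \colon \linfty(\cG) \to \linfty(\cG)$. Commutativity then follows from the Kalantar--Kennedy observation: equipping $E(\linfty(\cG))$ with the Choi--Effros product $a \circ b = E(ab)$ and using that $\linfty(\cG)$ is commutative, one has $a \circ b = E(ab) = E(ba) = b \circ a$, so the envelope is commutative.

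The main obstacle is the reaverage step in the first part: in the bisection framework the $\cG$-action is given only by $*$-isomorphisms $\alpha_U$ between hereditary subalgebras $A_{s(U)}$ and $A_{r(U)}$ attached to open bisections $U$, rather than by a global action of groupoid elements, so the symbol $\gamma^{-1} \cdot b$ must be given rigorous meaning through germs of bisections and localisation by $\cont(\Gnaught)$. One must verify that $\tilde\phi(b)(\gamma)$ is independent of the bisection chosen through $\gamma$, that the resulting function on $\cG$ is bounded, and that the compatibility relations among the $\alpha_U$ translate into equivariance of $\tilde\phi$ for every bisection simultaneously. Once this bookkeeping is in place, the remaining ingredients --- injectivity of $\linfty(\Gnaught)$, the Hamana existence and uniqueness results, and the Choi--Effros commutativity computation --- are routine.
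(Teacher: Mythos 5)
Your proposal is correct and follows essentially the same route as the paper. The injectivity argument is the paper's almost verbatim: Proposition~\ref{prop:poisson_maps} sets up exactly your correspondence between $\cG$-ucp maps $A \to \linfty(\cG)$ and ucp maps $A \to \linfty(\Gnaught)$ restricting to the identity on $\cont(\Gnaught)$, with your formula $\tilde\phi(b)(g) = \psi_0(g^{-1}\cdot b)(\rms(g))$ appearing as $\phi(a)(g) = (g\mu_{\rms(g)})(a)$, and the bookkeeping you flag (independence of the chosen bisection and bump function) is precisely Lemma~\ref{lem:translated-state-existence}; injectivity of $\linfty(\Gnaught)$ plus this correspondence then gives Proposition~\ref{prop:ell_infty_injective}. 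The only point where you diverge is that you invoke Hamana's envelope theory as a citable black box, which is not quite available: the category of $\cG$-\Cstar-algebras is introduced in this paper, so existence of the minimal idempotent must be re-proved rather than quoted. The paper does this (Theorem~\ref{thm:injective_envelope}) following Sinclair's argument: the set $S$ of $\cG$-ucp maps on $\linfty(\cG)$ fixing $\cont(\Gnaught)$ is a compact convex right topological semigroup, a minimal left ideal is shown to be a left zero semigroup (convexity supplies the fixed point), yielding an idempotent $\psi$ whose range with the Choi--Effros product is the envelope; rigidity, essentiality, uniqueness, and your commutativity computation $a \circ b = \psi(ab) = \psi(ba) = b \circ a$ then all come out exactly as you anticipate. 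So your outline is sound, but to be complete you would need to supply a Hamana- or Sinclair-style existence argument for the idempotent in this new category rather than cite it.
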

The spectrum of the $\cG$-injective envelope of $\cont(\Gnaught)$ is, by definition, the Hamana boundary $\hb \cG$ of $\cG$.  Expressed using this terminology, Borys actually constructed the Hamana boundary of an {\'e}tale Hausdorff groupoid with compact space of units, considered within the category of classical $\cG$-\Cstar-algebras from \cite{renault87-produits-croises}.  We show in Theorem~\ref{thm:injective_envelope} that, in the setting considered by Borys, his construction of the Hamana boundary agrees with our construction of the Hamana boundary in our larger category of $\cG$-\Cstar-algebras.  We also develop a dynamical approach to boundary theory for an {\'e}tale groupoid $\cG$ with compact Hausdorff unit space and construct the Furstenberg boundary $\fb \cG$ of $\cG$ within this framework.  Specifically, we consider the category of $\cG$-flows, which are compact Hausdorff spaces equipped with a $\cG$-action.  We single out the $\cG$-boundaries, which are the $\cG$-flows that are both minimal and strongly proximal in an appropriate sense. The Furstenberg boundary $\fb \cG$ is the universal $\cG$-boundary, meaning that every $\cG$-boundary is the image of $\fb \cG$ under a morphism of $\cG$-flows.  Using the universal properties satisfied by the Hamana boundary and the Furstenberg boundary, we establish in Theorem~\ref{thm:hamana-equals-furstenberg} that they coincide.  This identification is an important ingredient in the study of Powers averaging property, leading to Theorem~\ref{thmintro:powers-averaging} below.

Having developed a boundary theory for groupoids and, specifically, having constructed the Hamana boundary, we are able to introduce an equivariant analogue of Pitts' pseudo-expectations \cite{pitts2017}, in particular obtaining a natural $\cG$-pseudo-expectation $\Cstaress(\cG) \to \cont(\fb \cG)$ in Section~\ref{sec:local-conditional-expectation}. This provides a new perspective on work of Kwa{\'s}niewski-Meyer on local conditional expectations \cite{kwasniewskimeyer2019-essential} in terms of \Cstar-simplicity theory.  We are able to prove the following characterisation of the ideal intersection property, which is the foundation for all of our subsequent results.
\begin{introtheorem}[See Theorem~\ref{thm:intersection-property-essential-algebras}]
  \label{thmintro:characterisation-ucp-classification}
Let $\cG$ be an {\'e}tale groupoid with compact Hausdorff space of units.  Assume that $\cG$ is Hausdorff, that $\cG$ is minimal or that $\cG$ is $\sigma$-compact.  Then the following statements are equivalent.
  \begin{itemize}
  \item $\cont(\fb \cG) \subseteq \Cstaress(\cG \ltimes \fb \cG)$ has the ideal intersection property.
  \item Every $\cG$-pseudo expectation $\Cstaress(\cG) \to \cont(\fb \cG)$ is faithful.
  \item $\cont(\Gnaught) \subseteq \Cstaress(\cG)$ has the ideal intersection property.
  \end{itemize}

  Assume, in addition to the assumptions above, that $\cG$ is topologically transitive, in the sense that there exists a point in $\Gnaught$ with dense $\cG$-orbit. Then the three statements above are also equivalent to
  \begin{itemize}
  \item $\Iso(\cG \ltimes \fb \cG) = \ol{\fb \cG}^{\cG \ltimes \fb \cG}$.
  \item There is a unique $\cG$-pseudo expectation $\Cstaress(\cG) \to \cont(\fb \cG)$.
  \end{itemize}
\end{introtheorem}
Note that we actually prove a more general statement (see Theorem~\ref{thm:intersection-property-essential-algebras} for details).

Let us mention Remark~\ref{rem:reduced-groupoid-cstar-algebra-intersection-property}, which shows that the intersection property for $\Cstarred(\cG \ltimes \fb \cG)$ implies that $\cG$ is Hausdorff.  Hence, for non-Hausdorff groupoids, there is no possible variant of Theorem~\ref{thmintro:characterisation-ucp-classification} that solely relies on the reduced groupoid \Cstar-algebra.

Research on \Cstar-simplicity began with an averaging argument devised by Powers \cite{powers75}, based on the Dixmier averaging theorem for von Neumann algebras.  Following the work on \mbox{\Cstar-simplicity} in \cite{kalantarkennedy2017,breuillardkalantarkennedyozawa14}, it was subsequently shown independently by Haagerup \cite{haagerup15} and Kennedy \cite{kennedy15-cstarsimplicity} that the Powers averaging property characterises \Cstar-simplicity.  That is, $G$ is a \Cstar-simple group if and only if the norm-closed convex hull of the set of $G$-conjugates of an element of the reduced group \Cstar-algebra contains the trace of the element.

This is not only of aesthetic value, but is the basis for transferring simplicity results to related operator algebras, as happened for example with Hecke \Cstar-algebras in \cite{caspersklisselarsen19,klisse2021-simplicity} and in Phillips' work on $\rL^p$-simplicity \cite{phillips19}. 

Very recently, the correct analogue of Powers averaging for crossed product \Cstar-algebras associated with group actions on compact spaces was developed in \cite{amrutamursu2021}, using the concept of a generalised probability measure.  We introduce a suitable notion of generalised probability measure along with a suitable variant of the Powers averaging property for {\'e}tale groupoids in Section~\ref{sec:powers-averaging}.  We then obtain a corresponding characterisation of simple essential \Cstar-algebras generalising the results in \cite{amrutamursu2021}, as well as the results about relative versions of Powers averaging property in \cite{amrutamkalantar2020,amrutam2018,ursu2021-relative-simplicity}.

An inclusion $A \subseteq B$ of unital \Cstar-algebras was termed \Cstar-irreducible in \cite{rordam2021} if every intermediate \Cstar-algebra is simple.  Such inclusions are an important ingredient in an emerging \mbox{\Cstar-algebraic} analogue of Jones' subfactor theory \cite{jones83}. In particular, \Cstar-irreducible inclusions arising from group \Cstar-algebras, crossed products and groupoids of germs have recently received a great deal of interest \cite{amrutamkalantar2020,amrutamursu2021,rordam2021,kalantarscarparo2021}.  The relative Powers averaging property implies \mbox{\Cstar-irreducibility} of the associated inclusion of \Cstar-algebras.
\begin{introtheorem}[See Theorem~\ref{thm:powers-averaging}]
  \label{thmintro:powers-averaging}
  Let $\cG$ be a minimal {\'e}tale groupoid with compact Hausdorff space of units.  Then the following statements are equivalent.
  \begin{itemize}
  \item $\Cstaress(\cG)$ is simple.
  \item $\Cstaress(\cG)$ satisfies the relative Powers averaging property with respect to any covering and contracting semigroup of generalised probability measures.
  \item $A \subseteq \Cstaress(\cG)$ is \Cstar-irreducible for every \Cstar-subalgebra $A \subseteq \Cstaress(\cG)$ supporting a covering and contracting semigroup of generalised probability measures.
  \end{itemize}
\end{introtheorem}

Applying Theorem~\ref{thmintro:powers-averaging} to suitable subgroups of the topological full group, we obtain many examples of groups of unitaries satisfying the relative Powers averaging property.  We remark that in view of the degeneration phenomena described in \cite{brixscarparo2019}, it is important to allow for the consideration of proper subgroups of the topological full group.
\begin{introcorollary}[See Corollary~\ref{cor:contracting-groups-implies-irreducibility}]
  \label{corintro:irreducible-topological-full-group}
    Let $\cG$ be an {\'e}tale groupoid with compact Hausdorff space of units.  Assume that there is a subgroup of the topological full group $G \leq \mathbf{F}(\cG)$ that covers $\cG$ and such that $G \grpaction{} \Gnaught$ is a $G$-boundary.  Denote by $\pi: G \to \Cstaress(\cG)$ the unitary representation of $G$ in the essential groupoid \Cstar-algebra of $\cG$.  If $\Cstaress(\cG)$ is simple, then $\Cstaress(\cG)$ satisfies Powers averaging property relative to $\pi(G)$.
\end{introcorollary}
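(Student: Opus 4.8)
The plan is to deduce this from Theorem~\ref{thm:powers-averaging} together with the elementary hereditary behaviour of C$^*$-irreducible inclusions. First I would record the chain of inclusions
\[
  \Cstar_\pi(G) \subseteq \Cstaress(\cG) \subseteq \Cstaress(\cG \ltimes \fb \cG),
\]
where the first inclusion holds because $\pi(G) \subseteq \Cstaress(\cG)$ by definition of $\pi$, and the second is the embedding already built into the framework of Section~\ref{sec:powers-averaging} (it is presupposed by the third bullet of Theorem~\ref{thm:powers-averaging}). Since $\Cstaress(\cG)$ is simple by hypothesis, and simplicity of the essential algebra is equivalent to the ideal intersection property together with minimality of $\cG$, the groupoid $\cG$ is minimal; hence Theorem~\ref{thm:powers-averaging} applies.

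The heart of the argument is to verify that $A := \Cstar_\pi(G)$ supports a covering and contracting semigroup $S$ of generalised probability measures, so that the third bullet of Theorem~\ref{thm:powers-averaging} becomes available. I would take $S$ to consist of the generalised probability measures arising from finitely supported probability measures on $G$, acting by conjugation $a \mapsto \sum_i \lambda_i\, \pi(g_i)\, a\, \pi(g_i)^*$; these have coefficients in $\Cstar_\pi(G)$ and are closed under convolution, so they are indeed supported on $A$. The covering condition should follow directly from the hypothesis that $G \leq \mathbf{F}(\cG)$ covers $\cG$: the bisections implementing $\pi(G)$ cover the groupoid, which is precisely what is required for the associated family of measures to cover $\cG$.

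The genuinely delicate point — and the main obstacle — is the contracting condition. Here I would use the hypothesis that $G \grpaction{} \Gnaught$ is a $G$-boundary, that is, minimal and strongly proximal. This is the exact groupoid analogue of the classical input to Powers' averaging argument: strong proximality of the action on the unit space lets one drive the measures $\sum_i \lambda_i \delta_{g_i}$ towards point masses, which translates into contraction of the corresponding conjugation operators onto the range of the pseudo-expectation. Making this rigorous amounts to transporting the strong-proximality estimate on $\Gnaught$ through to the action on $\fb \cG$, using the identification of the Furstenberg boundary with the Hamana boundary from Theorem~\ref{thm:hamana-equals-furstenberg}, and then checking it against the technical definition of ``contracting'' in Section~\ref{sec:powers-averaging}. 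This transport step is where the boundary hypothesis does all the work, and the care lies in the fact that $\fb \cG$ sits over $\Gnaught$ rather than equalling it.

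Granting the construction of $S$, Theorem~\ref{thm:powers-averaging} yields that $\Cstar_\pi(G) \subseteq \Cstaress(\cG \ltimes \fb \cG)$ is C$^*$-irreducible. To conclude I would invoke the hereditary property of C$^*$-irreducible inclusions in the sense of R\o{}rdam: if $A \subseteq C$ is C$^*$-irreducible and $A \subseteq B \subseteq C$, then every intermediate algebra between $A$ and $B$ is a fortiori intermediate between $A$ and $C$, hence simple, so $A \subseteq B$ is C$^*$-irreducible as well. Applying this with $A = \Cstar_\pi(G)$, $B = \Cstaress(\cG)$ and $C = \Cstaress(\cG \ltimes \fb \cG)$ gives that $\Cstar_\pi(G) \subseteq \Cstaress(\cG)$ is C$^*$-irreducible, as required.
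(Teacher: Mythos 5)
Your proposal is correct and is essentially the paper's argument: the proof of Corollary~\ref{cor:contracting-groups-implies-irreducibility} consists precisely of taking $S$ to be the convex subsemigroup of $\CP_\cG(\Gnaught)$ generated by $G$, noting that $S$ is covering and contractive and that $\Cstar_\pi(G)$ supports $S$, and applying Theorem~\ref{thm:powers-averaging}. Your detour through the inclusion into $\Cstaress(\cG \ltimes \fb \cG)$ followed by the hereditary property of \Cstar-irreducible inclusions is not genuinely different either: the full statement of Theorem~\ref{thm:powers-averaging} contains the assertion that $A \subseteq \Cstaress(\cG)$ is \Cstar-irreducible directly, and its proof obtains this from the $\Cstaress(\cG \ltimes \fb \cG)$-version by exactly the hereditary observation you make. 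One clarification on the step you single out as the main obstacle: the contracting condition of Definition~\ref{def:contractive-semigroup} is formulated purely in terms of probability measures on $\Gnaught$, not on $\fb \cG$, so it follows immediately from the classical fact that for a boundary action $G \grpaction{} \Gnaught$ the weak-* closure of $\{g\nu \mid g \in G\}$ contains every point mass; no transport to $\fb \cG$ (and no appeal to Theorem~\ref{thm:hamana-equals-furstenberg}) is needed to verify the hypothesis, since the lifting of contractivity to the Furstenberg boundary is carried out once and for all inside the proof of Theorem~\ref{thm:powers-averaging} via Lemma~\ref{lem:contractivity-furstenberg-boundary}.
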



Considering groupoids of germs, simplicity of the associated essential groupoid \Cstar-algebra can be guaranteed thanks to topological freeness and minimality.  Applied to this situation, our Theorem~\ref{corintro:irreducible-topological-full-group} yields the following result, extending work of Kalantar-Scarparo \cite{kalantarscarparo2020,kalantarscarparo2021}.
\begin{introtheorem}[See Theorem~\ref{thm:relative-powers-averaging-boundary-action}]
  \label{thmintro:relative-powers-averaging-boundary-action}
    Let $G$ be a countable discrete group and $G \grpaction{} X$ a boundary action.  Denote by $\cG$ its groupoid of germs and by $\pi: G \to \Cstaress(\cG)$ the associated unitary representation.  Then $\pi(G) \subseteq \Cstaress(\cG)$ satisfies the relative Powers averaging property.
\end{introtheorem}
Let us explain the terminology in the statement of the previous theorem, referring to Section~\ref{sec:examples} for more details.  An action of a discrete group $G \grpaction{} X$ is a boundary action if $X$ is compact and the action is minimal and strongly proximal.  These actions were introduced by Furstenberg \cite{furstenberg63} and  further developed by Glasner \cite{glasner1976}, and are of fundamental importance in topological dynamics.  Given an action of a discrete group $G \grpaction{} X$, its groupoid of germs is the quotient groupoid $G \ltimes X / \Iso(G \ltimes X)^\circ$, dividing out the interior of its isotropy from the associated transformation groupoid.

Recall that Thompson's group $\rF$ consists of the piecewise linear homeomorphisms of $[0,1)$ with slopes in $2^\ZZ$ and breakpoints in $\ZZ[\frac{1}{2}]$.  The amenability of $\rF$ is a major open problem in group theory.  It is a subgroup of Thompson's group $\rT$, which is the subgroup of homeomorphisms of $\rS^1$ generated by $\rF$ under the identification of $[0,1)$ and $\rS^1$ together rotations by an angle in $2\pi \ZZ[\frac{1}{2}]$.  The quasi-regular representation of $\rT$ with respect to $\rF$ has received a great deal of attention since Haagerup-Olesen and Le Boudec-Matte Bon considered it within the context of the amenability of $\rF$ \cite{haagerupolesen17,leboudecmattebon18}. The amenability of $\rF$ is equivalent to the statement that this quasi-regular representation is weakly contained in the regular representation of $\rT$.  This fact motivated Kalantar-Scarparo to establish the simplicity of the \Cstar-algebra generated by this representation in \cite{kalantarscarparo2020}.  Thanks to Theorem~\ref{thmintro:relative-powers-averaging-boundary-action}, we obtain a significant strengthening of their result.
\begin{introexample}[See Example~\ref{ex:irreducible-inclusion-cuntz-algebra}]
  \label{exintro:cuntz-irreducible-inclusion}
  Denote by $K$ the totally disconnected cover of $\rS^1$, doubling dyadic integer points.  Let $\cG$ be the groupoid of germs of the action $\rT \grpaction{} K$ and $\pi: \rT \to \Cstarred(\cG)$ the associated unitary representation.  Then $\pi(\rT) \subseteq \Cstarred(\cG)$ satisfies the relative Powers averaging property.
\end{introexample}
Since it is known that $\Cstarred(\cG) \cong \cO_2$, in the setting of Examples~\ref{exintro:cuntz-irreducible-inclusion}, the above result shows that if $\rF$ is amenable, then there is a unitary representation of $\rT$ into the Cuntz algebra $\cO_2$ that enjoys the relative Powers averaging property.  Many more examples of unitary representations satisfying the relative Powers averaging property can be obtained from groups of homeomorphisms of the circle, as we explain in Remark~\ref{rem:circle-dynamics-powers-averaging}.

\begin{remarkNN}
We found examples (see Remark~\ref{rem:NotAllStabAmen}) which show that, contrary to what we claimed in the previous version of our paper and what is also claimed in \cite[Proposition~4.2.16]{borys2020-thesis} and \cite[Proposition~5.1]{borys2019-boundary}, stabilizer groups of the Hamana-Furstenberg groupoid need not be amenable. This surprising observation, which reveals a phenomenon not present in the case of groups and transformation groupoids arising from group actions on compact spaces, led to this revised version of our paper. Compared to the previous version, the main changes occur in Sections~\ref{sec:characterisations} and \ref{sec:confined-subgroupoids}. In particular, we had to modify the notion of essentially confined section of isotropy subgroups. However, note that for minimal groupoids, stabilizer groups of the Hamana-Furstenberg groupoid are always amenable (see Proposition~\ref{prop:SomeStabAmen} and Corollary~\ref{cor:MinAllStabAmen}).
\end{remarkNN}

\subsection*{Organisation of the article}

This article has 9 sections.  After this introduction, Section~\ref{sec:preliminaries} describes preliminary results and fixes notation concerning groupoids and inverse semigroups.  In Section~\ref{sec:groupoid-cstar-algebras}, we introduce our new notion of groupoid actions on \Cstar-algebras.  In Section~\ref{sec:boundary-theory}, we prove Theorem~\ref{thmintro:injective} and develop the dynamical approach to boundary theory for {\'e}tale groupoids and prove the identification of the Furstenberg and the Hamana boundary.  In Section~\ref{sec:non-hausdorff-groupoids}, we study essential groupoid \Cstar-algebra of groupoids with compact space of units from the point of view of the Furstenberg boundary and show that there is a natural inclusion of \Cstar-algebras $\Cstaress(\cG) \subseteq \Cstaress(\cG \ltimes \fb \cG)$ if $\cG$ is either minimal or $\sigma$-compact.  In Section~\ref{sec:characterisations} we prove some fundamental characterisations of the ideal intersection property for groupoids with compact space of units, eventually leading to Theorem~\ref{thmintro:characterisation-ucp-classification}.  In Section~\ref{sec:confined-subgroupoids}, we introduce the notion of essentially confined sections of isotropy groups, as well as the Alexandrov groupoid, leading to a proof of Theorem~\ref{thmintro:characterisation-confined-subgroups}.  Theorem~\ref{thmintro:characterisation-confined-subgroups-minimal-case} is a special case of this.  In Section~\ref{sec:powers-averaging}, we single out the appropriate notion of relative Powers averaging property for essential groupoid \Cstar-algebras and prove Theorem~\ref{thmintro:powers-averaging} as well as Corollary~\ref{corintro:irreducible-topological-full-group}.  In Section \ref{sec:examples}, we apply the previous results and prove Theorem~\ref{thmintro:relative-powers-averaging-boundary-action}.  We also describe Example~\ref{exintro:cuntz-irreducible-inclusion}.

\subsection*{Acknowledgements}

M. K.\ supported by the Canadian Natural Sciences and Engineering Research Council (NSERC) grant number 50503-10787.

\noindent 
S.-J. K.\ and X. L.\ received funding from the European Research Council (ERC) under the European Union's Horizon 2020 research and innovation programme (grant agreement No. 817597).

\noindent S. R.\ is supported by the Swedish Research Council through grant number 2018-04243 and by the European Research Council (ERC) under the European Union's Horizon 2020 research and innovation programme (grant agreement no. 677120-INDEX).  He is grateful to Adam Skalski and Piotr Nowak for their hospitality at IM PAN, where part of this work was completed.

\noindent D. U.\ supported by the  Canadian Natural Sciences and Engineering Research Council (NSERC) grant number PGSD3-535032-2019.

\vspace{1em}

\noindent The authors are grateful to Jean Renault for pointing out the references \cite{fackskandalis1982} and \cite{renault1991} and to Eduardo Scarparo for useful remarks on an earlier version of this article.

\section{Preliminaries}
\label{sec:preliminaries}

\subsection{Groupoids and their \Cstar-algebras}
\label{sec:groupoids}

For basics on {\'e}tale groupoids and their \Cstar-algebras, we refer the reader to Renault's book \cite{renault80} and Sims' lecture notes \cite{sims2020}.

A groupoid is a small category whose morphisms are invertible.  We denote by $\rmr$ and $\rms$ the range and source map of a groupoid and we adopt the convention that $g \cdot h$ is defined if $\rmr(h) = \rms(g)$.  We denote by $\Gnaught = \{g g^{-1} \mid g \in \cG\} \subseteq \cG$ the set of units of a groupoid $\cG$ and by $\Iso(\cG) = {\{g \in \cG \mid \rms(g) = \rmr(g)\}}$ its isotropy bundle.  For $x,y \in \Gnaught$, we denote by $\cG_x = \rms^{-1}(x)$ and $\cG^x = \rmr^{-1}(x)$ the fibres of range and source map.  We also write $\cG_x^x = \cG_x \cap \cG^x$ for the isotropy group at $x$.

A topological groupoid is a groupoid equipped with a topology such that multiplication and inversion become continuous and the range and source maps are open.  An {\'e}tale groupoid is a topological groupoid whose range and source maps are local homeomorphisms.  Every {\'e}tale groupoid $\cG$ has a basis of its topology consisting of open bisections, that is open subsets $U \subseteq \cG$ such that $s|_U$ and $r|_U$ are homeomorphisms onto their image.  A topological groupoid $\cG$ is effective if it satisfies $\Iso(\cG)^\circ = \Gnaught$, that is the interior of its isotropy bundle equals its space of units.  We will need the following proposition, which generalises \cite[Proposition 4.2.19]{borys2020-thesis} to arbitrary {\'e}tale groupoids with extremally disconnect space of units.
\begin{proposition}
  \label{prop:isotropy-clopen}
  Let $\cG$ be an {\'e}tale groupoid with extremally disconnected, locally compact space of units.  Then the isotropy bundle of $\cG$ is clopen.
\end{proposition}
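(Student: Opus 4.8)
The plan is to prove the two halves of ``clopen'' separately: that $\Iso(\cG)$ is closed and that it is open. Closedness is soft. The map $(\rms,\rmr)\colon\cG\to\Gnaught\times\Gnaught$ is continuous, and $\Iso(\cG)$ is exactly the preimage of the diagonal $\Delta\subseteq\Gnaught\times\Gnaught$. Since $\Gnaught$ is Hausdorff, $\Delta$ is closed, whence $\Iso(\cG)$ is closed. I stress that this uses only that the \emph{unit space} is Hausdorff, never that $\cG$ itself is; this is what allows the statement to include non-Hausdorff $\cG$, and it is the main structural difference from the compact Hausdorff case treated in \cite{borys2020-thesis}.

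All the content is in showing $\Iso(\cG)$ is open, for which it suffices to prove that every $g\in\Iso(\cG)$ is interior. I would reduce this to a statement about $\Gnaught$ alone. Pick an open bisection $U\ni g$ and form the associated partial homeomorphism $\phi=\rmr\circ(\rms|_U)^{-1}\colon\rms(U)\to\rmr(U)$ of $\Gnaught$; writing $x=\rms(g)=\rmr(g)$ we have $\phi(x)=x$. A direct check gives $\Iso(\cG)\cap U=(\rms|_U)^{-1}(\Fix(\phi))$, where $\Fix(\phi)=\{y\in\rms(U)\cap\rmr(U):\phi(y)=y\}$, because $h\in U$ lies in $\Iso(\cG)$ exactly when $\phi(\rms(h))=\rms(h)$. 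As $\rms|_U$ is a homeomorphism onto the open set $\rms(U)$, it is enough to prove the Key Lemma: the fixed-point set of a homeomorphism between open subsets of a locally compact Hausdorff extremally disconnected space $X$ is open.

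For the Key Lemma I would first note that such an $X$ has a basis of compact clopen sets: given $x$ in an open $W$, local compactness gives open $O'$ with $x\in O'\subseteq\ol{O'}\subseteq W$ and $\ol{O'}$ compact, and extremal disconnectedness makes $\ol{O'}$ clopen. Now fix $x$ with $\phi(x)=x$ and suppose, for contradiction, that $x$ is not interior to $\Fix(\phi)$. Shrinking to a compact clopen $K\ni x$ with $K$ and $\phi(K)$ inside the common domain, so that $\phi|_K$ is a homeomorphism onto the compact clopen set $\phi(K)$, the moved set $O=\{y\in K:\phi(y)\neq y\}$ is open and satisfies $x\in\ol O\setminus O$. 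The endgame is the disjoint-closure property of extremally disconnected spaces: if I can produce an open $W\subseteq O$ with $x\in\ol W$ and $W\cap\phi(W)=\emptyset$, then $\ol W$ and $\phi(\ol W)=\ol{\phi(W)}$ are both clopen and both contain $x=\phi(x)$, yet $W$ and $\phi(W)$ are disjoint open sets and hence have disjoint closures---a contradiction.

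The crux, and the step I expect to be the main obstacle, is the construction of this wandering set $W$ accumulating at $x$. Using Hausdorffness, continuity and the clopen basis, every point of $O$ lies in a clopen $C\subseteq O$ with $C\cap\phi(C)=\emptyset$; the trouble is that unions of such pieces need not be wandering, since one piece's image may meet another. I would resolve this with a maximality argument: the set $\mathcal P$ of open $W\subseteq O$ with $W\cap\phi(W)=\emptyset$ is closed under unions of chains (one checks $W_\alpha\subseteq W_\beta$ forces $W_\alpha\cap\phi(W_\beta)=\emptyset$, and symmetrically), so Zorn yields a maximal $W$. It remains to see $x\in\ol W$. If not, choose a clopen neighbourhood $D\ni x$ with $D\cap\ol W=\emptyset$ and trim it to $\tilde D=D\setminus(\ol{\phi(W)}\cup\ol{\phi^{-1}(W)})$, using that these closures are clopen; the point is that $\tilde D$ still contains $x$, because $x\in\ol{\phi(W)}$ or $x\in\ol{\phi^{-1}(W)}$ would, upon applying $\phi^{-1}$ or $\phi$ and using $\phi(x)=x$, force $x\in\ol W$. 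Since $x\in\ol O$ and $\tilde D$ is a neighbourhood of $x$, there is $y\in\tilde D\cap O$, around which I pick a clopen wandering $C\subseteq\tilde D\cap O$; the trimming guarantees $C\cap\phi(W)=\emptyset$ and $\phi(C)\cap W=\emptyset$, so $W\cup C\in\mathcal P$ strictly contains $W$, contradicting maximality. This yields $x\in\ol W$ and completes the Key Lemma, and with it the proposition. The reduction to a compact clopen neighbourhood $K$ is precisely what upgrades the compact-unit-space result of \cite{borys2020-thesis} to the locally compact setting.
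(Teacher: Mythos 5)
Your proof is correct, and its skeleton coincides exactly with the paper's: closedness is the soft preimage-of-the-diagonal argument (the paper compresses this into one sentence about continuity of $\rms$ and $\rmr$), and openness is obtained by fixing an open bisection $U \ni g$, noting $\Iso(\cG) \cap U = (\rms|_U)^{-1}(\Fix(\vphi))$ for the induced partial homeomorphism $\vphi$ of $\Gnaught$, and reducing everything to the openness of fixed-point sets of homeomorphisms of an extremally disconnected space. Where you genuinely diverge is that the paper treats this last fact as a black box: it chooses $U$ to be a \emph{compact} open bisection (available because the compact clopen subsets form a basis of $\Gnaught$, exactly as in the first step of your Key Lemma), so that $\vphi$ becomes a homeomorphism between clopen sets, and then simply cites \cite[Proposition 2.11]{pitts2017} (see also \cite{arhangelskii2000}), whereas you reprove this Frol\'{\i}k-type fixed-point theorem from scratch via a maximal wandering open set: Zorn's lemma applied to the open $W \subseteq O$ with $W \cap \vphi(W) = \emptyset$, trimming a clopen neighbourhood of $x$ against the clopen sets $\ol{\vphi(W)}$ and $\ol{\vphi^{-1}(W)}$ to contradict maximality unless $x \in \ol{W}$, and finally the disjoint-closures property of extremally disconnected spaces against $x = \vphi(x) \in \ol{W} \cap \ol{\vphi(W)}$. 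I checked the delicate points: the four intersections witnessing that $W \cup C$ is again wandering all vanish (your trimming handles $C \cap \vphi(W)$ and $\vphi(C) \cap W$, the choice of $C$ handles $C \cap \vphi(C)$), and $\vphi(\ol{W}) = \ol{\vphi(W)}$ holds because $\ol{W} \subseteq K$ and $\vphi|_K$ is a homeomorphism between compact clopen sets. What your route buys is a self-contained argument, in slightly greater generality (homeomorphisms between open rather than clopen subsets, though your passage to $K$ shows the two settings are interchangeable); what the citation buys the paper is a three-line proof.
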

\begin{proof}
  It follows from continuity of range and source map and Hausdorffness of $\Gnaught$ that $\Iso(\cG)$ is closed.  Let $g \in \Iso(\cG)$ and let $U$ be a compact open bisection containing $g$.  Since range and source map are continuous and open, $U$ defines a partial homeomorphism $\vphi: \rms(U) \to \rmr(U)$ between clopen subsets of $\Gnaught$.  So \cite[Proposition 2.11]{pitts2017} (see also \cite[Proof of Theorem 1]{arhangelskii2000}) implies that $\Fix(\vphi) \subseteq \Gnaught$ is clopen.  Now $(s|_U)^{-1}(\Fix(\vphi)) \subseteq U \subseteq \cG$ is an open neighbourhood of $g$ inside $\Iso(\cG)$.  So $\Iso(\cG)$ is open.
\end{proof}

Given a topological groupoid $\cG$, a $\cG$-space is a topological space with a surjection $p:X \to \Gnaught$ and a continuous action map $\cG \tensor[_\rms]{\times}{_p} X \to X$ satisfying the natural associativity condition.  Here
\begin{gather*}
  \cG \tensor[_\rms]{\times}{_p} X = \{(g,x) \in \cG \times X \mid \rms(g) = p(x)\}
\end{gather*}
denotes the fibre product with respect to $\rms$ and $p$. 


Given a $\cG$-space $X$, one defines the transformation groupoid $\cG \ltimes X$ as the topological space $\cG \tensor[_\rms]{\times}{_p} X$ equipped with the range and source maps $\rms(g,x) = x$ and $\rmr(g,x) = gx$, respectively, and the multiplication $(g,x)(h,y) = (gh,y)$.

A subgroupoid $\cH \subseteq \Iso(\cG)$ is normal if $ghg^{-1} \in \cH$ holds for all $h \in \cH$ and all $g \in \cG_{\rmr(h)}$.  Assuming that $\cH \subseteq \Iso(\cG)$ is an open normal subgroupoid of an {\'e}tale groupoid, the quotient space $\cG/\cH$ carries a canonical structure of an {\'e}tale groupoid.   It does not need to be Hausdorff.

The next definition of groupoid \Cstar-algebras for not-necessarily Hausdorff groupoids goes back to Connes' work on foliations \cite{connes1982-foliations}.  We also refer to the article of Khoshkam-Skandalis for the construction of the regular representation for non-Hausdorff groupoids \cite{khoshkamskandalis2002}.
\begin{definition}
  \label{def:groupoid-operator-algebras}
  Let $\cG$ be an {\'e}tale groupoid with locally compact Hausdorff space of units.  Denote by $\cC(\cG)$ the linear span inside $\linfty(\cG)$ of all subspaces $\contc(U)$, where $U \subseteq \cG$ runs through open bisections.  For $x \in \Gnaught$, denote by $\lambda_x: \cC(\cG) \to \bo(\ltwo(\cG_x))$ the convolution representation satisfying $\lambda_x(f)\delta_g = \sum_{h \in \cG_{\rmr(g)}} f(h) \delta_{hg}$.  We adapt the following notation:
  \begin{itemize}
  \item The maximal groupoid \Cstar-algebra $\Cstarmax(\cG)$ is the universal enveloping \Cstar-algebra of $\cC(\cG)$.
  \item The reduced groupoid \Cstar-algebra $\Cstarred(\cG)$ is the \Cstar-completion of $\cC(\cG)$ with respect to the family of *-representations $(\lambda_x)_{x \in \Gnaught}$.
  \item The universal enveloping von Neumann algebra of the maximal groupoid \Cstar-algebra is denoted by $\Wstar(\cG) = \Cstarmax(\cG)^{**}$.
  \end{itemize}
\end{definition}

To each element $a \in \Cstarred(\cG)$ we associate the function
\begin{gather*}
  g \mapsto \hat a(g) = \langle \lambda_{\rms(g)}(a) \delta_{\rms(g)}, \delta_g \rangle
  \eqstop
\end{gather*}
If $f \in \cC(\cG)$, then $\hat f = f$ holds.  We remark that unless $\cG$ is Hausdorff, there are always non-continuous functions in $\cC(\cG)$ and a fortiori elements $a \in \Cstarred(\cG)$ such that $\hat a$ is non-continuous.

While the correct definition of the maximal and reduced groupoid \Cstar-algebra for non-Hausdorff groupoids was already clarified in the 1980's by Connes, it is only much more recently that a clear picture of the essential groupoid \Cstar-algebra has emerged.   After work of Khoshkam-Skandalis \cite{khoshkamskandalis2002}, Exel \cite{exel2011}, Exel-Pitts \cite{exelpitts2019-characterizing} and Clark-Exel-Pardo-Sims-Starling \cite{clarkexelpardosimsstarling2019}, the abstract framework of local conditional expectations described in \cite{kwasniewskimeyer2019-essential} allowed to define the essential crossed product of semigroup actions on Fell bundles of bimodules.  Even in the setup of groupoid \mbox{\Cstar-algebras}, removing any action from the picture, the work of Kwa{\'s}niewski-Meyer provided the first complete and systematic account on this problem (see \cite[End of Section 4]{kwasniewskimeyer2019-essential}).  It slightly differs from the treatment given in \cite{clarkexelpardosimsstarling2019} in as far as the reference to functions with meager strict support allows for a more systematic framework than considering functions whose strict support has empty interior.

For a locally compact Hausdorff space $X$ denote by $\Binfty(X)$ the algebra of all bounded Borel functions on $X$ and by $\cM^\infty(X)$ its ideal of functions with meager support.  We denote by $\Dix(X) = \Binfty(X) / \cM^\infty(X)$ the Dixmier algebra of $X$.  Then by a result of Gonshor, $\Dix(X)$ is isomorphic with the local multiplier algebra $\rM_{\mathrm{loc}}(\conto(X))$ of $\conto(X)$ and the injective envelope of $\conto(X)$ in the category of \Cstar-algebras with *-homomorphisms as morphisms \cite{gonsher1970}.  Recall that $\rM_{\mathrm{loc}}(\conto(X)) = \varinjlim \contb(U)$, where $U$ runs through dense open subsets of $X$.

Let $\cG$ be an {\'e}tale groupoid with locally compact Hausdorff space of units.  The local conditional expectation $\Ered: \Cstarred(\cG) \to \rM_{\mathrm{loc}}(\conto(\Gnaught))$ defined by Kwa{\'s}niewski-Meyer is characterised by the formula $\Ered(f) = f|_U$ for every $f \in \cC(\cG)$, where $U \subseteq \Gnaught$ is a dense open subset on which $f$ is continuous.  Thanks to  \cite[Proposition 4.3]{kwasniewskimeyer2019-essential}, it can be identified with the continuous extension of the natural map $\cC(\cG) \to \Dix(\Gnaught)$ induced by restriction.

There is a representation of $\Cstarred(\cG)$ into the adjointable operators on a Hilbert-$\Dix(\Gnaught)$-module associated with $\Ered$ by the KSGNS-construction \cite[Theorem 5.6]{lance1995-toolkit}.
\begin{definition}
  \label{def:essential-groupoid-cstar-algebra}
  Let $\cG$ be an {\'e}tale groupoid with locally compact Hausdorff space of units.  The \emph{essential groupoid \Cstar-algebra} $\Cstaress(\cG)$ of $\cG$ is the range of the representation of $\Cstarred(\cG)$ on the Hilbert-$\Dix(\Gnaught)$-module associated with $\Ered$.
\end{definition}

By construction, the essential groupoid \Cstar-algebra comes with a generalised conditional expectation $\Eess: \Cstaress(\cG) \to \Dix(\Gnaught)$, which is faithful by \cite[Theorem 4.11]{kwasniewskimeyer2019-essential}.  We follow Kwa{\'s}niewski-Meyer's development and call the kernel of the map $\Cstarred(\cG) \to \Cstaress(\cG)$ its ideal of \emph{singular elements} and denote it by $J_{\mathrm{sing}}$.  By faithfulness of $\Eess$, we have
\begin{gather*}
  J_{\mathrm{sing}} = \{a \in \Cstarred(\cG) \mid \Ered(a^*a) = 0\}
  \eqstop
\end{gather*}
For groupoids covered by countably many open bisections, \cite[Proposition 7.18]{kwasniewskimeyer2019-essential} shows that $J_{\mathrm{sing}}$ consists exactly of those elements such that $\rms(\supp \hat a) \subseteq \Gnaught$ is meager, thereby connecting to the treatment in \cite{clarkexelpardosimsstarling2019}.  For general groupoids the following description of elements vanishing under the local conditional expectation, extracted from \cite[Section 4]{kwasniewskimeyer2019-essential}, is useful.  We give a proof for the reader's convenience.
\begin{proposition}
  \label{prop:singular-elements-vanishing-generically}
  Let $\cG$ be an {\'e}tale groupoid with locally compact Hausdorff space of units.  If $a \in \ker(\Ered)$, then $\hat{a}$ vanishes on a dense subset of $\Gnaught$.
\end{proposition}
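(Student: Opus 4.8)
The plan is to identify the value $\Ered(a)$, which a priori is only an abstract element of $\rM_{\mathrm{loc}}(\conto(\Gnaught)) \cong \Dix(\Gnaught)$, with the class of the concrete bounded Borel function $\hat a|_{\Gnaught}$, and then to read off the dense vanishing set directly from the meagerness built into the Dixmier algebra. The whole argument rests on one elementary estimate plus the Baire category theorem.

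First I would record how $\hat{\,\cdot\,}$ behaves on all of $\Cstarred(\cG)$, not just on $\cC(\cG)$. For $x \in \Gnaught$ we have $\hat a(x) = \langle \lambda_x(a)\delta_x, \delta_x\rangle$, and since the reduced norm is by definition $\|b\| = \sup_{x} \|\lambda_x(b)\|$, each $\lambda_x$ extends to a representation of $\Cstarred(\cG)$ of norm at most $1$. Hence $|\hat a(x)| \le \|\lambda_x(a)\| \le \|a\|$ for every $x \in \Gnaught$, so $a \mapsto \hat a|_{\Gnaught}$ is a contractive linear map $\Cstarred(\cG) \to \Binfty(\Gnaught)$. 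In particular, choosing $f_n \in \cC(\cG)$ with $f_n \to a$, we get $f_n|_{\Gnaught} = \hat f_n|_{\Gnaught} \to \hat a|_{\Gnaught}$ uniformly on $\Gnaught$, which shows en passant that $\hat a|_{\Gnaught}$ is a bounded Borel function, so its class in $\Dix(\Gnaught) = \Binfty(\Gnaught)/\cM^\infty(\Gnaught)$ makes sense.

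Next I would identify $\Ered$ with this map. Both $\Ered$ and the composite $a \mapsto [\hat a|_{\Gnaught}]$ are continuous maps $\Cstarred(\cG) \to \Dix(\Gnaught)$: the former because it is a contractive conditional expectation, the latter because the quotient map $\Binfty(\Gnaught) \to \Dix(\Gnaught)$ is contractive and the preceding estimate makes the $\Binfty$-valued map contractive. On the dense subalgebra $\cC(\cG)$ the two maps agree, since by its defining characterization $\Ered(f)$ is exactly the class of the restriction $f|_{\Gnaught} = \hat f|_{\Gnaught}$. By density they therefore agree on all of $\Cstarred(\cG)$, giving $\Ered(a) = [\hat a|_{\Gnaught}]$.

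Finally, the hypothesis $a \in \ker(\Ered)$ forces $\hat a|_{\Gnaught} \in \cM^\infty(\Gnaught)$, i.e.\ $\hat a$ vanishes off a meager subset of $\Gnaught$, so that $\{x \in \Gnaught : \hat a(x) \neq 0\}$ is contained in a meager set. Because $\Gnaught$ is locally compact Hausdorff and hence a Baire space, the complement $U = \{x \in \Gnaught : \hat a(x) = 0\}$ is comeager and therefore dense, and $\hat a|_U = 0$ by construction, which is the claim. The only point needing genuine care—and the nearest thing to an obstacle—is the identification $\Ered(a) = [\hat a|_{\Gnaught}]$ of the abstract expectation with the pointwise-defined function; once the contractivity estimate is secured this reduces to a density argument, and the conclusion is then pure Baire category.
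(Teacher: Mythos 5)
Your proof is correct, and it runs on the same basic fuel as the paper's argument: the contractivity estimate $|\hat a(x)| \leq \|\lambda_x(a)\| \leq \|a\|$ for $x \in \Gnaught$, density of $\cC(\cG)$ in $\Cstarred(\cG)$, the description of $\Ered$ on $\cC(\cG)$ by restriction, and the Baire category theorem. The organisation, however, is genuinely different. The paper argues by hand: it chooses $a_n \in \cC(\cG)$ with $a_n \to a$, takes for each $n$ a dense open $U_n \subseteq \Gnaught$ on which $a_n$ is continuous with $\Ered(a_n) = a_n|_{U_n} \in \contb(U_n)$, observes that $\Ered(a_n) \to 0$ forces $\|a_n|_{U_n}\|_\infty \to 0$, and concludes on the comeager (hence dense) set $U = \bigcap_n U_n$ via $\hat a|_U = \lim_n \hat a_n|_U = 0$. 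You instead promote the defining property of $\Ered$ on $\cC(\cG)$ to the global identity $\Ered(a) = [\hat a|_{\Gnaught}]$ in $\Dix(\Gnaught)$ --- essentially the identification from \cite[Proposition 4.3]{kwasniewskimeyer2019-essential} that the paper records in its preliminaries but does not invoke in this proof --- and then read off meagerness of $\{x \in \Gnaught \mid \hat a(x) \neq 0\}$ directly from the definition of $\cM^\infty(\Gnaught)$. Your packaging buys a reusable, cleaner statement ($\ker(\Ered)$ restricted to the unit space is exactly the meager ideal, and the vanishing set is exhibited as comeager, which the paper's $\bigcap_n U_n$ also is, though only density is claimed); its cost is that you must know the isometric identification $\rM_{\mathrm{loc}}(\conto(\Gnaught)) \cong \Dix(\Gnaught)$ is compatible with the $\contb(U)$-picture, i.e.\ that $f|_U$ corresponds to the class of a bounded Borel extension --- here this is harmless because $\Gnaught \setminus U$ is closed with empty interior, hence meager, so the class is independent of the extension, and the paper has already asserted that $\Ered$ is the continuous extension of the restriction map $\cC(\cG) \to \Dix(\Gnaught)$. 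In substance your soft ``two contractive maps agreeing on a dense subalgebra'' step and the paper's explicit $\epsilon$-estimate over the $U_n$ are the same limit argument in different clothing.
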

\begin{proof}
  Let $a \in \ker(\Ered)$ and let $(a_n)_{n \in \NN}$ be a sequence in $\cC(\cG)$ converging to $a$ in $\Cstarred(\cG)$.  Then also $a_n = \hat a_n \to \hat a$ in $\| \cdot \|_\infty$.  For every $n \in \NN$ there is a dense open subset $U_n \subseteq \Gnaught$ such that $a_n|_{U_n}$ is continuous and $\Ered(a_n) = a_n|_{U_n} \in \contb(U_n) \subseteq \rM_{\mathrm{loc}}(\conto(\Gnaught))$.  Since $\Ered(a_n) \to \Ered(a) = 0$, it follows that $\|a_n|_{U_n}\|_\infty \to 0$.  Let $U = \bigcap_n U_n$, which is a comeager subset of $\Gnaught$.  Since $\Gnaught$ is locally compact, we infer that $U \subseteq \Gnaught$ is dense.  We also have $\hat a|_U = \lim \hat a_n|_U = 0$. 
\end{proof}

We say that an inclusion of \Cstar-algebras $A \subseteq B$ has the \emph{ideal intersection property} if zero is the only ideal $I \unlhd B$ satisfying $I \cap A = 0$.  We will in particular consider the intersection property for $\conto(\Gnaught) \subseteq \Cstarred(\cG)$ and $\conto(\Gnaught) \subseteq \Cstaress(\cG)$.  In these cases we will write that $\Cstarred(\cG)$ and $\Cstaress(\cG)$, respectively, have the ideal intersection property.

  We will need several times the following criterion when, given a normal open subgroupoid $\cN \subseteq \Iso(\cG)$, the canonical quotient map $\cG \to \cG/\cN$ induces a quotient map $\Cstarred(\cG) \to \Cstarred(\cG/\cN)$.  While our criterion seems to be a folklore result, we are not aware of any written account.  We therefore provide a full proof for the convenience of the reader.
\begin{proposition}
  \label{prop:quotient-reduced-groupoid-cstar-algebras}
  Let $\cG$ be an {\'e}tale groupoid with locally compact Hausdorff space.  Let $\cN \subseteq \Iso(\cG)$ be a normal open subgroupoid such that there exists a dense subset $\mathfrak{Y}$ of $\Gnaught$ such that $\cN_x$ is amenable for all $x \in \mathfrak{Y}$. Further suppose that $\cG/\cN$ is Hausdorff. Let $p: \cG \to \cG/\cN$ be the canonical quotient map.  Then there is a unique *-homomorphism $\pi: \Cstarred(\cG) \to \Cstarred(\cG/\cN)$ which restricts to the natural isometry $p_*: \contc(U) \to \contc(p(U))$ on every open bisection $U \subseteq \cG$.
\end{proposition}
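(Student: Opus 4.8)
The plan is to first build the map at the level of the convolution algebras and then establish the crucial norm estimate, where amenability of the isotropy groups of $\cN$ enters.

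First I would define $p_*$ on $\cC(\cG)$. For $f \in \cC(\cG)$ and a class $[k] \in \cG/\cN$, set $(p_* f)([k]) = \sum_{k' \in p^{-1}([k])} f(k')$. Although the fibre $p^{-1}([k]) = k\,\cN_{\rms(k)}^{\rms(k)}$ may be infinite, the sum is finite: $f$ is supported on finitely many open bisections, and each meets the fibre in at most one point, because elements of a single $p$-fibre all share the same source while a bisection has injective source map. This makes $p_*$ well defined, independent of any decomposition of $f$ into bisection-supported pieces. Since $\cN$ is open, $p$ is a local homeomorphism carrying bisections to bisections, and on $\contc(U)$ the map $p_*$ coincides with pushforward along the homeomorphism $p|_U \colon U \to p(U)$, hence with the claimed $*$-isomorphism onto $\contc(p(U))$. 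That $p_*$ is multiplicative and $*$-preserving on $\cC(\cG)$ is then a routine fibrewise computation, the multiplicativity using normality of $\cN$ to match products of fibres.

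The heart of the argument is the estimate $\|p_* f\|_{\Cstarred(\cG/\cN)} \le \|f\|_{\Cstarred(\cG)}$ for $f \in \cC(\cG)$. Since the reduced norm on $\cG/\cN$ is $\sup_{x} \|\lambda_{\bar x}(p_* f)\|$, where $\lambda_{\bar x}$ denotes the regular representation of $\cG/\cN$ at the unit $x$ (the unit space being unchanged), I would fix $x \in \Gnaught$ and compare the representation on $\ltwo((\cG/\cN)_x)$ with the regular representation on $\ltwo(\cG_x)$. Writing $H = \cN_x^x$, the source fibre $(\cG/\cN)_x$ is the set of right cosets $\cG_x / H$, and $H$ is amenable by hypothesis, so it admits a F\o{}lner net $(F_i)_i$. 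Choosing a set-theoretic section $\sigma \colon (\cG/\cN)_x \to \cG_x$, I would define $\Phi_i \colon \ltwo((\cG/\cN)_x) \to \ltwo(\cG_x)$ by $\Phi_i \delta_{[g]} = |F_i|^{-1/2}\sum_{h \in F_i}\delta_{\sigma([g])h}$; these are isometries, as the sets $\sigma([g])F_i$ are pairwise disjoint and consist of distinct points.

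The key computation is that $\Phi_i^* \lambda_x(f)\Phi_i \to \lambda_{\bar x}(p_* f)$ in the weak operator topology. Using $\langle \lambda_x(f)\delta_g, \delta_{g'}\rangle = f(g' g^{-1})$ for $g, g' \in \cG_x$ and reindexing by $u = h' h^{-1}$, one gets
\[
  \langle \lambda_x(f)\Phi_i \delta_{[g]}, \Phi_i \delta_{[g']}\rangle
  = \sum_{u \in H} \frac{|F_i \cap u F_i|}{|F_i|}\, f\bigl(\sigma([g'])\, u\, \sigma([g])^{-1}\bigr),
\]
a finite sum since, with $a = \sigma([g])$ and $b = \sigma([g'])$, all elements $b u a^{-1}$ ($u \in H$) share the source $\rmr(a)$, so at most one lies in each bisection supporting $f$. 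By the F\o{}lner property each ratio tends to $1$, and the expression converges to $\sum_{u \in H} f(b u a^{-1})$. The substitution $m = a u a^{-1}$, a bijection $\cN_x^x \to \cN_{\rmr(a)}^{\rmr(a)}$ by normality of $\cN$, then rewrites this as $\sum_{m} f(b a^{-1} m) = (p_* f)([b a^{-1}]) = \langle \lambda_{\bar x}(p_* f)\delta_{[g]}, \delta_{[g']}\rangle$. Since each $\Phi_i$ is an isometry we have $\|\Phi_i^* \lambda_x(f)\Phi_i\| \le \|\lambda_x(f)\| \le \|f\|_{\Cstarred(\cG)}$, and lower semicontinuity of the operator norm under weak operator convergence gives $\|\lambda_{\bar x}(p_* f)\| \le \|f\|_{\Cstarred(\cG)}$; taking the supremum over $x$ yields the estimate. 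Consequently $p_*$ extends to a $*$-homomorphism $\pi$, unique by density of $\cC(\cG)$. The main obstacle is exactly this norm bound: making amenability do its work through the F\o{}lner isometries and verifying via normality that the two fibrewise sums agree. The well-definedness and the algebraic properties of $p_*$ are comparatively routine.
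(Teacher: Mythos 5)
Your proof is correct, and it exploits amenability at exactly the point where the paper does — a F\o{}lner net in $\cN_x^x$ for each unit $x$ — but the mechanism is genuinely different and worth comparing. The paper forms the averaged diagonal states $\vphi_i = |F_i|^{-1}\sum_{g \in F_i}\langle \lambda^{\cG}_x(\cdot)\delta_g, \delta_g\rangle$, passes to a weak-* cluster point, identifies the limit on $\cC(\cG)$ with the single vector state $\langle \lambda^{\cG/\cN}_x(p_*(\cdot))\delta_x, \delta_x\rangle$, and then obtains the norm bound from Cauchy--Schwarz together with cyclicity of $\delta_x$ for $\cC(\cG/\cN)$ acting on $\ltwo((\cG/\cN)_x)$. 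You instead build explicit F\o{}lner isometries $\Phi_i \colon \ltwo((\cG/\cN)_x) \to \ltwo(\cG_x)$ and show $\Phi_i^*\lambda_x(f)\Phi_i \to \lambda_{\bar x}(p_*f)$ in the weak operator topology, concluding by lower semicontinuity of the operator norm. In effect you recover \emph{all} matrix coefficients of the quotient regular representation at once, making the weak containment $\lambda_{\bar x}\circ p_* \prec \lambda_x$ explicit and avoiding the cyclic-vector bootstrap, at the cost of the coset-and-section bookkeeping — which you handle correctly: the pairwise disjointness of the sets $\sigma([g])F_i$ gives the isometry property, the set of $u \in H$ with $f(bua^{-1}) \neq 0$ is finite and independent of $i$ (since all $bua^{-1}$ share source $\rmr(a)$ and each bisection supporting $f$ meets that set at most once), which justifies the limit exchange, and the normality-based bijection $u \mapsto aua^{-1}$ correctly identifies $\sum_{u \in H} f(bua^{-1})$ with $(p_*f)([ba^{-1}])$. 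Two harmless remarks: the count of pairs $(h,h')$ with $h'h^{-1} = u$ is $|F_i \cap u^{-1}F_i|$ rather than $|F_i \cap uF_i|$, immaterial for the F\o{}lner limit; and passing from convergence of matrix entries on the canonical basis to weak operator convergence uses the uniform bound $\|\Phi_i^*\lambda_x(f)\Phi_i\| \leq \|\lambda_x(f)\|$, which you have. Your front-end construction of $p_*$ on $\cC(\cG)$ — well-definedness of the fibrewise sums because each $p$-fibre shares a common source and hence meets any bisection in at most one point — matches what the paper leaves tacit.
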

\begin{proof}
  Write $\cH = \cG/\cN$ for the quotient and observe that the quotient map $p:\cG \to \cH$ restricts to a homeomorphism on every open bisection of $\cG$.  So there is a well-defined *-homomorphism $\pi_{\mathrm{alg}}: \cC(\cG) \to \cC(\cH)$.  Let us show that $\pi_{\mathrm{alg}}$ extends to a *-homomorphism $\pi: \Cstarred(\cG) \to \Cstarred(\cH)$.  Fix $x \in \mathfrak{Y} \subseteq \Hnaught \cong \Gnaught$ and we will show that $\lambda^\cH_x \circ \pi_{\mathrm{alg}}$ extends to $\Cstarred(\cG)$, where $\lambda^\cH_x$ denotes the left-regular representation $\lambda^\cH_x: \Cstarred(\cH) \to \bo(\ltwo(\cH_x))$.  Since the group $\cN_x^x$ is amenable, we can find a F{\o}lner net $(F_i)_i$ in there and consider the states
  \begin{gather*}
    \vphi_i = \frac{1}{|F_i|} \sum_{g \in F_i} \langle \lambda^\cG_x( \cdot ) \delta_{g}, \delta_{g} \rangle_{\ltwo(\cG_x)}
    \eqstop
  \end{gather*}
  Passing to a subnet, we may assume that $\vphi_i \to \vphi \in (\Cstarred(\cG))^*$ in the weak-*-topology.  Then an elementary computation shows that for all open bisections $U \subseteq \cG$ and all $f \in \contc(U)$ we have
  \begin{gather*}
    \vphi(f) =
    \begin{cases}
      f(g) & \text{if } U \cap \cN_x^x = \{g\} \eqcomma \\
      0 & \text{if } U \cap \cN_x^x = \emptyset \eqstop
    \end{cases}
  \end{gather*}
  So $\vphi|_{\cC(\cG)} = \langle \lambda^\cH_x \circ \pi_{\mathrm{alg}}( \cdot ) \delta_x , \delta_x \rangle$.  For $a,b \in \cC(\cG)$ we find that
  \begin{align*}
    \langle \lambda^\cH_x \circ \pi_{\mathrm{alg}} (a^*a) ( \lambda^\cH_x \circ \pi_{\mathrm{alg}}(b) \delta_x) , \lambda^\cH_x \circ \pi_{\mathrm{alg}}(b) \delta_x \rangle 
    & = 
      \vphi(b^*a^*ab) \\
    & \leq
      \|a\|_{\Cstarred(\cG)}^2 \vphi(b^*b) \\
    & =
      \|a\|_{\Cstarred(\cG)}^2  \|\lambda^\cH_x \circ \pi_{\mathrm{alg}}(b) \delta_x \|^2
      \eqstop
  \end{align*}
Since $\pi_{\mathrm{alg}}(\cC(\cG)) = \cC(\cH)$ and $\cC(\cH)$ acts cyclically on $\ltwo(\cH_x)$, this shows that $\|\lambda^\cH_x \circ \pi_{\mathrm{alg}}(a)\| \leq \|a\|_{\Cstarred(\cG)}$. Thus $\Vert \pi_{\mathrm{alg}}(a) \Vert_{\Cstarred(\cH)} = \sup_{x \in \mathfrak{Y}} \Vert \lambda^\cH_x \circ \pi_{\mathrm{alg}}(a) \Vert \leq \Vert a \Vert_{\Cstarred(\cG)}$. For the first equality, we used the assumption that $\cH$ is Hausdorff. This proves that $\pi_{\mathrm{alg}}$ extends to a *-homomorphism $\pi_{\mathrm{red}}: \Cstarred(\cG) \to \Cstarred(\cH)$.
\end{proof}

\subsection{Inverse semigroups and pseudogroups of open bisections}
\label{sec:pseudogroups}

We refer the reader to Lawson's book \cite{lawson1998} for a comprehensive introduction to inverse semigroups. Recall that an inverse semigroup is a semigroup $S$ such that for every $s \in S$ there is a unique $s^* \in S$ such that $ss^*s = s$ and $s^*ss^* = s^*$.  The following notion of pseudogroups goes back to Resende \cite{resende2007} and Lawson-Lenz \cite{lawsonlenz13}.
\begin{definition}
  \label{def:pseudogroup}
  Let $S$ be an inverse semigroup.  Elements $s,t \in S$ are \emph{compatible} if $s^*t$ and $t^*s$ are idempotent.  Given a family of compatible elements $(s_i)_i$ in $S$ its join is the minimal element $s \in S$ such that $s_i \leq s$ for all $i$.  We say that $S$ has \emph{infinite compatible joins} if every if any compatible family in $S$ admits a join.  A \emph{pseudogroup} is an inverse semigroup with infinite compatible joins in which multiplication distributes over arbitrary joins.
\end{definition}

\begin{example}[Proposition 2.1 of \cite{lawsonlenz13}]
  \label{ex:pseudogroup-bisections}
  Let $\cG$ be an {\'e}tale groupoid with compact Hausdorff space of units.  Then the semigroup of open bisections $\Gamma(\cG)$ is a pseudogroup when equipped with the multiplication $U \cdot V = \{gh \mid g \in U, h \in V, \rms(g) = \rmr(h)\}$.  Then $U^* = \{g^{-1} \mid g \in U\}$ follows.
\end{example}

\begin{notation}
  \label{notation:notation-open-bisections}
  In this article, we will refer to open bisections of an {\'e}tale groupoid $\cG$ either as subsets, usually denoted by $U, V \subseteq \cG$ or alternatively as elements of the pseudogroup of $\cG$, usually denoted by $\gamma \in \Gamma(\cG)$. Both kinds of notation make sense in different contexts.  We denote by $\supp \gamma = \gamma^* \gamma = \rms(\gamma)$ the support and by $\im \gamma = \gamma \gamma^* = \rmr(\gamma)$ the image of $\gamma \in \Gamma(\cG)$.
\end{notation}

\section{A new notion of groupoid actions on C$^*$-algebras}
\label{sec:groupoid-cstar-algebras}

In this section we introduce a new notion of groupoid actions on \Cstar-algebras that will allow us to apply methods from the toolbox of \Cstar-simplicity developed over the past years.  The key point is that the reduced groupoid \Cstar-algebra $\Cstarred(\cG)$, as well as the essential groupoid \Cstar-algebra $\Cstaress(\cG)$, become $\cG$-\Cstar-algebras, which is not the case for existing notions of groupoid actions on \Cstar-algebras used in \cite{borys2019-boundary,borys2020-thesis}.

The next example frames the action of a groupoid on its base space in a way that is compatible with the perspective of operator algebras and pseudogroups.  It will serve as a building block for the subsequent definition of groupoid actions on \Cstar-algebras.
\begin{example}
  \label{ex:pseudogroup-action-unit-space}
  Let $\cG$ be an {\'e}tale groupoid with locally compact Hausdorff space of units $\Gnaught$.  Every open bisection $\gamma \in \Gamma(\cG)$ defines a partial homeomorphism $\psi_\gamma = \rmr|_\gamma \circ (\rms|_\gamma)^{-1}: \supp \gamma \to \im \gamma$.  Dually, we obtain a *-homomorphism $\alpha_\gamma: \conto(\supp \gamma) \to \conto(\im \gamma)$ by the assignment $\alpha_\gamma(f) = f \circ \psi_{\gamma^*} = f \circ \psi_\gamma^{-1}$.  Associativity of the multiplication in $\Gamma(\cG)$ implies that $\psi_{\gamma_1 \gamma_2} = \psi_{\gamma_1} \circ \psi_{\gamma_2}$ on $\psi_{\gamma_2^*}(\supp \gamma_1 \cap \im \gamma_2)$ and $\alpha_{\gamma_1} \circ \alpha_{\gamma_2} = \alpha_{\gamma_1 \gamma_2}$ on $\conto(\psi_{\gamma_2^*}(\supp \gamma_1 \cap \im \gamma_2))$.
\end{example}

Let us also introduce the following notation for hereditary \Cstar-subalgebras.
\begin{notation}
  \label{not:hereditary-subalgebra}
  Let $X$ be compact Hausdorff space, $A$ a unital \Cstar-algebra and $\cont(X) \subseteq A$ a unital inclusion of \Cstar-algebras.  For an open subset $U \subseteq X$ we denote by
  \begin{gather*}
    A_U = \ol{\conto(U) A \conto(U)}
  \end{gather*}
  the hereditary \Cstar-subalgebra of $A$ associated with $U$.
\end{notation}

We are now ready to formulate our definition of groupoid actions on \Cstar-algebras.  In Proposition~\ref{prop:bundle-G-cstar-algebra-integration-desintegration} we will compare this new definition with the classical definition of $\cG$-\Cstar-bundles introduced by Renault \cite{renault87-produits-croises}, showing that our definition is a suitable generalisation.  We will only require this definition for groupoids with compact space of units, fitting the needs of Sections~\ref{sec:boundary-theory} and~\ref{sec:characterisations}.
\begin{definition}
  \label{def:groupoid-cstar-algebra}
  Let $\cG$ be an {\'e}tale groupoid with compact Hausdorff space of units.  A unital $\cG$-\Cstar-algebra is a unital \Cstar-algebra $A$ with an injective unital \mbox{*-homomorphism} \mbox{$\iota: \cont(\Gnaught) \to A$} and a family of *-isomorphisms $\alpha_\gamma: A_{\supp \gamma} \to A_{\im \gamma}$ indexed by $\gamma \in \Gamma(\cG)$ such that
    \begin{itemize}
    \item for all $\gamma \in \Gamma(\cG)$ and all $f \in \conto(\supp \gamma)$ we have $\iota(f \circ \psi_{\gamma^*}) = \alpha_\gamma \circ \iota(f)$, and
    \item for all $\gamma_1,\gamma_2 \in \Gamma(\cG)$ the following diagram commutes.
      \begin{gather*}
        \xymatrix{
          A_{\psi_{\gamma_2^*}(\supp \gamma _1)} \ar[d]^{\alpha_{\gamma_2}} \ar[r]^{\alpha_{\gamma_1\gamma_2}} & A_{\psi_{\gamma_1}(\im \gamma_2)} \\
          A_{\supp \gamma_1 \cap \im \gamma_2} \ar[ur]_{\alpha_{\gamma_1}}
        }
      \end{gather*}
    \end{itemize}
\end{definition}
When working with a unital $\cG$-\Cstar-algebra $A$, we will frequently identify $\cont(\Gnaught)$ with its image in $A$ under $\iota$ and suppress the explicit map $\iota$.

\begin{remark}
  \label{rem:smaller-semigroups-acting}
  In the setting of Fell bundles over semigroups as used in \cite{kwasniewskimeyer2019-essential}, it is possible to replace actions of the pseudogroup of bisections by actions of wide subsemigroups \cite[Definition 2.1 and Proposition 2.2]{kwasniewskimeyer2019-essential}.  The analogue of this fact for $\cG$-\Cstar-algebras as introduced here does not hold, since this notion is genuinely noncommutative. An example can be found by considering the discrete groupoid $\cG = \{0,1\} \times \ZZ$ with unit space $\{0,1\}$.  Its group of global bisections can be identified with $\ZZ \oplus \ZZ$.  We identify bisections supported on $0$ and $1$, respectively, with $\{\emptyset\} \times \ZZ$ and $\ZZ \times \{\emptyset\}$.   Then the pseudogroup of bisections can be described as
  \begin{gather*}
    \Gamma(\cG)
    =
    (\ZZ \oplus \ZZ) \sqcup (\{\emptyset\} \times \ZZ) \sqcup (\ZZ \times \{\emptyset\}) \sqcup \emptyset
    \eqstop
  \end{gather*}
  The subsemigroup $S = {(\{\emptyset\} \times \ZZ)} \sqcup (\ZZ \times \{\emptyset\}) \sqcup \emptyset \subseteq \Gamma(\cG)$ is wide in the sense of \cite[Definition~2.1]{kwasniewskimeyer2019-essential}.  Consider the non-central, diagonal, unitary matrix $\mathrm{diag}(1, -1)$ and the embedding as diagonal matrices $\cont(\Gnaught) \cong \CC^2 \subseteq \rM_2(\CC)$.  The action of $\Gamma(\cG)$ on $\rM_2(\CC)$ for which the global bisection $(1,1)$ acts by conjugation with $\mathrm{diag}(1,-1)$ is non-trivial.  However, its restriction to $S$ is trivial.

\end{remark}

A key role in the theory of \Cstar-simplicity is played by unital completely positive maps.  Let us fix the corresponding notion of $\cG$-ucp maps and define the category of unital $\cG$-\Cstar-algebras considered in subsequent sections.
\begin{definition}
  \label{def:G-ucp-map}
  Let $\cG$ be an {\'e}tale groupoid with compact Hausdorff space of units.
  \begin{itemize}
  \item A $\cG$-ucp map between unital $\cG$-\Cstar-algebras $(A,\iota_A, \alpha)$ and $(B, \iota_B, \beta)$ is a unital completely positive map $\vphi: A \to B$ such that $\vphi \circ \iota_A = \iota_B$ holds and the diagram
    \begin{gather*}
      \xymatrix{
        A_{\supp \gamma} \ar[r]^{\alpha_\gamma} \ar[d]^{\vphi|_{\supp \gamma}} & A_{\im \gamma} \ar[d]^{\vphi|_{\im \gamma}} \\
        B_{\supp \gamma} \ar[r]^{\beta_\gamma} & B_{\im \gamma}
      }
    \end{gather*}
    commutes for every $\gamma \in \Gamma(\cG)$, where $\vphi|_U: A_U \to B_U$ is the restriction of $\vphi$.
  \item The category of unital $\cG$-\Cstar-algebras has as its objects unital $\cG$-\Cstar-algebras and as morphisms $\cG$-ucp maps.
  \item For unital $\cG$-\Cstar-algebras $A$ and $B$, a $\cG$-ucp map $\phi : A \to B$ is an embedding if it is a complete order embedding.
  \end{itemize}
\end{definition}

The main motivation to pass to the greater generality of Definition~\ref{def:groupoid-cstar-algebra} is the fact that the maximal groupoid \Cstar-algebra $\Cstarmax(\cG)$ becomes a $\cG$-\Cstar-algebra in a natural way.  Indeed, our approach to $\cG$-\Cstar-algebras via a pseudogroup action allows for a straightforward definition of inner actions, which we will now present.  We need the following preparatory lemma.

Let $\cG$ be an {\'e}tale groupoid with a compact Hausdorff space of units and consider its universal enveloping von Neumann algebra $\Wstar(\cG)$. Fix $\gamma \in \Gamma(\cG)$ and consider the subspace $\contc(\gamma) \subseteq \Cstarmax(\cG)$.  For every $g \in \contc(\gamma)$, the convolution product satisfies $g * g^* \in \cont(\Gnaught)$.  So the \Cstar-identity implies that there is an isometric isomorphism of Banach spaces $\conto(\gamma) \cong \ol{\contc(\gamma)} \subseteq \Cstarmax(\cG)$.  Thus, we obtain an isometric embedding
  \begin{gather*}
    \Binfty(\gamma) \subseteq \conto(\gamma)^{**} \subseteq \Cstarmax(\cG)^{**} = \Wstar(\cG)
    \eqcomma
  \end{gather*}
  where $\Binfty(\gamma)$ denotes the space of bounded Borel functions on $\gamma$.  The net $(f)_{0 \leq f \leq \mathbb{1}_\gamma}$ is monotone and converges pointwise to the indicator function $\mathbb{1}_{\gamma}$ in $\Binfty(\gamma)$.  By the monotone convergence theorem, it also converges in the weak-*-topology.  Since $\conto(\gamma)^{**}$ is weak-*-closed in $\Wstar(\cG)$, the net $(f)_{0 \leq f \leq \mathbb{1}_\gamma}$ converges also in $\Wstar(\cG)$.  We denote its limit by $u_\gamma$.
\begin{lemma}
  \label{lem:pseudogroup-partial-isometries}
  Let $\cG$ be an {\'e}tale groupoid with a compact Hausdorff space of units. The map $\Gamma(\cG) \to \Wstar(\cG): \gamma \mapsto u_\gamma$ is a semigroup homomorphism from $\Gamma(\cG)$ to the set of partial isometries in $\Wstar(\cG)$ such that, for all $\gamma \in \Gamma(\cG)$, the following statements hold.
  \begin{enumerate}
  \item \label{it:pseudogroup-partial-isometries:partial-isometry}
    We have $u_\gamma u_\gamma^* = \mathbb{1}_{\im \gamma}$ and $u_\gamma^* u_\gamma = \mathbb{1}_{\supp \gamma}$.
  \item \label{it:pseudogroup-partial-isometries:compatibility}
    If $f' \in \contc(\supp \gamma)$, then we have $u_\gamma f' = f' \circ \rmm_{\gamma^*}$, where $m_\gamma$ denotes left multiplication with $\gamma$.
  \end{enumerate}
\end{lemma}
\begin{proof}
  Take $f' \in \contc(V)$ for some open bisection $V$ satisfying $\rms(V) \subseteq \supp \gamma$.  Then for every $0 \leq f \leq \mathbb{1}_\gamma$ in $\contc(\gamma)$, we have
  \begin{gather*}
    (f * f') (x) =
    \begin{cases}
      f(y) f'(y^{-1}x) & \text{ for } y \in \gamma \text{ and } y^{-1}x \in V \eqcomma \\
      0 & \text{otherwise.}
    \end{cases}
  \end{gather*}
  Considering the map $\Binfty(\gamma) \to \Binfty(\gamma V): \: f \mapsto f * f'$, the definition of the convolution product shows continuity with respect to the topology of pointwise convergence in $\Binfty(\gamma)$ and $\Binfty(\gamma V)$.  This implies that $u_\gamma f' = f'( \gamma^* \cdot ) = f' \circ \rmm_{\gamma^*}$ holds.
  
  Let us now show that $\gamma \mapsto u_\gamma$ is a semigroup homomorphism.  We have
  \begin{gather*}
    u_{\gamma_1} u_{\gamma_2}
    =
    \lim_{0 \leq f \leq \mathbb{1}_{\gamma_2}} u_{\gamma_1} f
    =
    \lim_{0 \leq f \leq \mathbb{1}_{\gamma_2}} f \circ \rmm_{\gamma_1^*}
    =
    u_{\gamma_1 \gamma_2}
  \end{gather*}
  where the convergence claimed by the last equality holds because of the first part of this proof.  It similarly follows that
  \begin{gather*}
    u_\gamma^* = \lim_{0 \leq f \leq \mathbb{1}_{\gamma}} f^* = u_{\gamma^*}
    \eqcomma
  \end{gather*}
  and
  \begin{gather*}
    u_\gamma u_\gamma^*
    =
    \lim_{0 \leq f \leq \mathbb{1}_{\gamma^*}} u_\gamma f
    =
    \lim_{0 \leq f \leq \mathbb{1}_{\gamma^*}} f \circ \rmm_{\gamma^*}
    =
    \mathbb{1}_{\im \gamma}
    \eqstop
  \end{gather*}
  Combining the previous statements, we also find that
  \begin{gather*}
    u_\gamma^* u_\gamma
    =
    u_{\gamma^*} u_{\gamma^*}^*
    =
    \mathbb{1}_{\im \gamma^*}
    =
    \mathbb{1}_{\supp \gamma}
    \eqstop
  \end{gather*}
\end{proof}

We can now conclude that there is a natural structure of a $\cG$-\Cstar-algebra on the maximal groupoid \Cstar-algebra of an {\'e}tale groupoid with compact unit space.  Applied to the quotient maps $\Cstarmax(\cG) \to \Cstarred(\cG)$ and $\Cstarmax(\cG) \to \Cstaress(\cG)$ it also exhibits natural $\cG$-\Cstar-algebra structures on the reduced and the essential groupoid \Cstar-algebra, respectively.
\begin{proposition}
  \label{prop:groupoid-cstar-algebras-carries-action}
  Let $\cG$ be an {\'e}tale groupoid with compact Hausdorff space of units.  Consider the standard embedding $\cont(\Gnaught) \subseteq \Cstarmax(\cG)$.  Then there is a unique structure $(\alpha_\gamma)_{\gamma \in \Gamma(\cG)}$ of a unital $\cG$-\Cstar-algebra on $\Cstarmax(\cG)$ satisfying
  \begin{gather*}
    \alpha_\gamma (f) = f \circ \rmc_{\gamma^*}
  \end{gather*}
  for all $\gamma \in \Gamma(\cG)$, all open bisections $U \subseteq \cG$ satisfying $\rms(U), \rmr(U) \subseteq \supp \gamma$ and all $f \in \contc(U)$.  Here $\rmc_{\gamma}: h \mapsto \gamma h \gamma^*$ denotes conjugation with $\gamma$.

  Further, if $\pi: \Cstarmax(\cG) \to A$ is a unital *-homomorphism that is injective on $\cont(\Gnaught)$, there is the structure of a unital $\cG$-\Cstar-algebra on $A$ so that $\pi$ is $\cG$-equivariant.  If $\pi$ is surjective, this structure is unique.
\end{proposition}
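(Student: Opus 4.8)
The plan is to define the pseudogroup action by conjugation with the partial isometries $u_\gamma \in \Wstar(\cG)$ furnished by Lemma~\ref{lem:pseudogroup-partial-isometries}: first I would construct the structure on $A = \Cstarmax(\cG)$ itself, and then transport it along any admissible quotient $\pi$ by passing to biduals. For the first part, set $\alpha_\gamma = \Ad(u_\gamma)$ on the hereditary subalgebra $A_{\supp \gamma}$ (Notation~\ref{not:hereditary-subalgebra}). Since $u_\gamma^* u_\gamma = \mathbb{1}_{\supp \gamma}$ acts as the unit of $A_{\supp \gamma}$ inside $\Wstar(\cG)$, the map $a \mapsto u_\gamma a u_\gamma^*$ is a *-homomorphism $A_{\supp \gamma} \to \Wstar(\cG)$; it is injective (apply $\Ad(u_\gamma^*)$), hence isometric, with two-sided inverse $\Ad(u_\gamma^*) = \Ad(u_{\gamma^*})$ on its image. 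The only genuine content is that $\alpha_\gamma$ takes values in $A$, more precisely in $A_{\im \gamma}$.

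I would establish this on a dense subset. The span of all $\contc(V)$ with $V$ an open bisection satisfying $\rms(V), \rmr(V) \subseteq \supp \gamma$ is dense in $A_{\supp \gamma}$: writing a generic element of $\conto(\supp \gamma)\,\cC(\cG)\,\conto(\supp \gamma)$ as $g_1 f g_2$ with $f \in \contc(W)$ and using $(g_1 f g_2)(x) = g_1(\rmr(x)) f(x) g_2(\rms(x))$ shows it is supported on an open sub-bisection with source and range inside $\supp \gamma$. For such $f \in \contc(V)$ the key computation is
\begin{gather*}
  u_\gamma f u_\gamma^* = f \circ \rmc_{\gamma^*} \in \contc(\gamma V \gamma^*) \eqcomma
\end{gather*}
obtained by chaining the translation formula of Lemma~\ref{lem:pseudogroup-partial-isometries} (and its adjoint) for left multiplication by $u_\gamma$ and right multiplication by $u_\gamma^*$, noting that $\gamma V \gamma^* = \rmc_\gamma(V)$ is an open bisection with source and range contained in $\im \gamma$. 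By isometry of $\alpha_\gamma$ and closedness of $A_{\im \gamma}$, it follows that $\alpha_\gamma(A_{\supp \gamma}) \subseteq A_{\im \gamma}$, and $\alpha_{\gamma^*}$ provides the inverse, so $\alpha_\gamma$ is a *-isomorphism $A_{\supp \gamma} \to A_{\im \gamma}$. The same formula restricted to $f \in \conto(\supp \gamma) \subseteq \cont(\Gnaught)$ gives $\alpha_\gamma(f) = f \circ \psi_{\gamma^*}$, since $\rmc_{\gamma^*}$ restricts on $\Gnaught$ to the partial homeomorphism $\psi_{\gamma^*}$ of Example~\ref{ex:pseudogroup-action-unit-space}; this is the compatibility axiom of Definition~\ref{def:groupoid-cstar-algebra}. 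The cocycle axiom is immediate from $u_{\gamma_1 \gamma_2} = u_{\gamma_1} u_{\gamma_2}$: one has $\alpha_{\gamma_1} \circ \alpha_{\gamma_2} = \Ad(u_{\gamma_1} u_{\gamma_2}) = \alpha_{\gamma_1\gamma_2}$ on the common domain, after the routine check that the source–range projections match the hereditary subalgebras labelling the diagram. Uniqueness is clear because the prescribed values $f \circ \rmc_{\gamma^*}$ determine each (continuous) $\alpha_\gamma$ on a dense subset.

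I expect the computation $u_\gamma f u_\gamma^* = f \circ \rmc_{\gamma^*}$ to be the main obstacle: one must track supports carefully, the subtle point being that the translation formula applies at each step through the condition $\rmr(\supp g) \subseteq \supp \gamma$ rather than through the source, so that the intermediate product $f u_\gamma^*$, supported on $V\gamma^*$ with $\rmr(V\gamma^*) \subseteq \rmr(V) \subseteq \supp \gamma$, still lies in the regime where Lemma~\ref{lem:pseudogroup-partial-isometries} computes the full translate. Everything else is bookkeeping.

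For the second part I would avoid any direct argument about invariance of $\ker \pi$ and instead extend $\pi$ to the normal *-homomorphism $\pi^{**} \colon \Wstar(\cG) \to A^{**}$. Put $v_\gamma = \pi^{**}(u_\gamma)$, a partial isometry, and define $\beta_\gamma = \Ad(v_\gamma)$ on $A_{\supp \gamma}$. Because $\pi$ is injective and unital on $\cont(\Gnaught)$, the image under $\pi$ of an approximate unit of $\conto(\supp \gamma)$ is an approximate unit of $A_{\supp \gamma}$, so by normality $v_\gamma^* v_\gamma = \pi^{**}(\mathbb{1}_{\supp \gamma})$ and $v_\gamma v_\gamma^* = \pi^{**}(\mathbb{1}_{\im \gamma})$ are exactly the support projections of $A_{\supp \gamma}$ and $A_{\im \gamma}$; hence $\beta_\gamma$ is a *-isomorphism $A_{\supp \gamma} \to A_{\im \gamma}$ with inverse $\beta_{\gamma^*}$, and $\gamma \mapsto v_\gamma$ being a homomorphism gives the cocycle identity. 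Equivariance is then automatic, namely $\pi(\alpha_\gamma(a)) = \pi^{**}(u_\gamma a u_\gamma^*) = v_\gamma\, \pi(a)\, v_\gamma^* = \beta_\gamma(\pi(a))$ for $a \in \Cstarmax(\cG)_{\supp \gamma}$; in particular $\beta_\gamma(A_{\supp \gamma}) = \pi(\alpha_\gamma(\Cstarmax(\cG)_{\supp \gamma})) \subseteq A_{\im \gamma}$ using $\pi(\Cstarmax(\cG)_U) = A_U$, which follows from surjectivity of $\pi$. Uniqueness of the equivariant structure is forced by $\beta_\gamma \circ \pi = \pi \circ \alpha_\gamma$ together with $\pi(\Cstarmax(\cG)_{\supp \gamma}) = A_{\supp \gamma}$. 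Applying this to the quotients $\Cstarmax(\cG) \thra \Cstarred(\cG)$ and $\Cstarmax(\cG) \thra \Cstaress(\cG)$, which are injective on $\cont(\Gnaught)$, yields the advertised $\cG$-\Cstar-algebra structures.
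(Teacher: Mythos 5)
Your proposal is correct and follows essentially the same route as the paper's proof: both define $\alpha_\gamma = \Ad u_\gamma$ via the partial isometries of Lemma~\ref{lem:pseudogroup-partial-isometries} inside $\Wstar(\cG)$, and both handle the quotient case by passing to the normal extension of $\pi$ to the biduals and conjugating by $\pi(u_\gamma)$, with the same density/surjectivity arguments for uniqueness. The only variation is local rather than structural: where the paper verifies $u_\gamma \Cstarmax(\cG)_{\supp \gamma} u_\gamma^* \subseteq \Cstarmax(\cG)$ by sandwiching inside $\ol{\cC(\cG)\, \Cstarmax(\cG)\, \cC(\cG)}$, you instead compute $u_\gamma f u_\gamma^* = f \circ \rmc_{\gamma^*}$ directly on the dense span of sub-bisection-supported functions, and you make explicit the support-projection bookkeeping in $A^{**}$ that the paper compresses into ``the argument above applies.''
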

\begin{proof}
  Consider the partial isometries $u_\gamma$, $\gamma \in \Gamma(\cG)$ provided by Lemma~\ref{lem:pseudogroup-partial-isometries}.  It follows from this lemma that the inclusion $\cont(\Gnaught) \subseteq \Cstarmax(\cG)$ and the maps $\Ad u_\gamma: \Wstar(\cG)_{\supp \gamma} \to \Wstar(\cG)_{\im \gamma}$ turn the enveloping von Neumann algebra into a $\cG$-\Cstar-algebra in such a way that the formula $\Ad u_\gamma (f) = f \circ \rmc_{\gamma^*}$ holds for all $\gamma \in \Gamma(\cG)$, all open bisections $U \subseteq \cG$ satisfying $\rms(U), \rmr(U) \subseteq \supp \gamma$ and all $f \in \contc(U)$.  In order to show that $\Cstarmax(\cG)$ is a unital $\cG$-algebra satisfying the conditions of the proposition, it suffices to fix $\gamma \in \Gamma(\cG)$ and show that $\Ad u_\gamma$ maps $\Cstarmax(\cG)_{\supp \gamma}$ to $\Cstarmax(\cG)$.  To this end, recall that $\Cstarmax(\cG)_{\supp \gamma} = \ol{\conto(\supp \gamma) \Cstarmax(\cG) \conto(\supp \gamma)}$.  Invoking item~\ref{it:pseudogroup-partial-isometries:compatibility} of Lemma~\ref{lem:pseudogroup-partial-isometries} we thus find that
  \begin{align*}
    u_\gamma \Cstarmax(\cG)_{\supp \gamma} u_\gamma^*
    & =
    u_\gamma \ol{\contc(\supp \gamma) \Cstarmax(\cG) \contc(\supp \gamma)} u_\gamma^* \\
    & \subset
    \ol{\cC(\cG) \Cstarmax(\cG) \cC(\cG)} \\
    & =
    \Cstarmax(\cG)
    \eqstop
  \end{align*}
  Note that this is the unique $\cG$-\Cstar-algebra structure on $\Cstarmax(\cG)$ satisfying the conditions of the proposition, since
  \begin{gather*}
    \lspan_{\rms(U) \subseteq \supp \gamma} \contc(U)
    =
    \conto(\supp \gamma) \cC(\cG) \conto(\supp \gamma)
    \subseteq
    \Cstarmax(\cG)_{\supp \gamma}
  \end{gather*}
  is dense.

  Assume now that $\pi: \Cstarmax(\cG) \to A$ is a unital *-homomorphism that is injective on $\cont(\Gnaught)$.  We identify $\cont(\Gnaught)$ with a \Cstar-subalgebra of $A$, that is $\pi|_{\cont(\Gnaught)} = \id$.  For $\gamma \in \Gamma(\cG)$, we start by defining a contractive *-homomorphism
  \begin{gather*}
    \beta_{\gamma, \rmc}:
    \lspan \contc(\supp \gamma) A \contc(\supp \gamma)
    \to
    \lspan \contc(\im \gamma) A \contc(\im \gamma)
    \eqstop
  \end{gather*}
  Let $a = \sum_{i = 1}^n f_{1, i} a_i f_{2, i} \in \lspan \contc(\supp \gamma) A \contc(\supp \gamma)$.  Put $K = \bigcup_{i = 1}^n \supp f_{1,i} \cup \supp f_{2, i}$.  Then $K \subseteq \supp \gamma$ is compact, so that there is some function $g \in \contc(\supp \gamma)$ satisfying $0 \leq g \leq 1$ and $g|_K \equiv 1$.  Observe that $u_\gamma g \in \cC(\cG)$.  We claim that the expression $\pi(u_\gamma g) a \pi(u_\gamma g)^*$ does not depend on the choice of $g$.  Indeed, if $h \in \contc(\supp \gamma)$ is another function satisfying $0 \leq h \leq 1$ and $h|_K \equiv 1$, then
  \begin{align*}
    \pi(u_\gamma g h ) a \pi(u_\gamma g h)^*
    & =
      \sum_{i = 1}^n \pi(u_\gamma g h ) f_{1, i} a_i f_{2, i} \pi( h g u_{\gamma^*} ) \\
    & =
      \sum_{i = 1}^n \pi(u_\gamma f_{1, i}) a_i  \pi(f_{2, i} u_{\gamma^*} ) \\
    & =
      \pi(u_\gamma g) a \pi(u_\gamma g)^*
      \eqstop
  \end{align*}
  Also observe that
  \begin{gather*}
    \pi(u_\gamma g) a \pi(u_\gamma g)^*
    =
    \sum_{i = 1}^n \alpha_{\gamma}(f_{1, i}) \pi(u_\gamma) a_i  \pi( u_{\gamma^*}) \alpha_\gamma(f_{2, i}) \\
  \end{gather*}
  So we can put $\beta_{\gamma, \rmc}(a) = \pi(u_\gamma g) a \pi(u_\gamma g)^*$.  Since $\beta_{\gamma, \rmc}$ is contractive it extends to a *-homomorphism $\beta_\gamma: A_{\supp \gamma} \to A_{\im \gamma}$.

  If $\gamma_1, \gamma_2 \in \Gamma(\cG)$ satisfy $\im \gamma_2 \cap \supp \gamma_1 \neq \emptyset$, we want to show that $\beta_{\gamma_1 \gamma_2} = \beta_{\gamma_1} \circ \beta_{\gamma_2}|_{A_{\supp \gamma_1 \gamma_2}}$.  To this end let $K \subseteq \supp \gamma_1 \gamma_2$ be a compact subset, let $g_2 \in \contc(\supp \gamma_1 \gamma_2)$ satisfy $0 \leq g_2 \leq 1$ and $g_2|_K \equiv 1$ and let $g_1 \in \contc(\im \gamma_2 \cap \supp \gamma_1)$ satisfy $g_1|_{\vphi_{\gamma_2}(K)} \equiv 1$.  Then Lemma~\ref{lem:pseudogroup-partial-isometries} implies that
  \begin{gather*}
    u_{\gamma_1}f_1 u_{\gamma_2} f_2
    =
    u_{\gamma_1} u_{\gamma_2} \alpha_{\gamma_2^*}(f_1) f_2
    =
    u_{\gamma_1 \gamma_2} \alpha_{\gamma_2^*}(f_1) f_2
    \eqstop
  \end{gather*}
  Since $\alpha_{\gamma_2^*}(f_1) f_2 |_K \equiv 1$, this implies
  \begin{gather*}
    \beta_{\gamma_1 \gamma_2, \rmc}
    =
    \beta_{\gamma_1, \rmc} \circ \beta_{\gamma_2, \rmc}|_{\lspan \contc(\supp \gamma_1 \gamma_2) A \contc(\supp \gamma_1 \gamma_2)}
    \eqstop
  \end{gather*}
  By continuity the desired equality follows.

    Assume now that $\pi: \Cstarmax(\cG) \thra A$ is surjective *-homomorphism and let $\beta_\gamma: A_{\supp \gamma} \to A_{\im \gamma}$ define some $\cG$-action on $A$ making $\pi$ equivariant.  Fix $\gamma \in \Gamma(\cG)$ and $a \in A_{\supp \gamma}$.  By approximation, we may assume that $a = \pi(f) a_0 \pi(f)$ for some $f \in \conto(\supp \gamma)$ and $a_0 \in A$.  Let $b_0 \in \Cstarmax(\cG)$ be a preimage of $a_0$ and put $b = f b_0 f$.  Then $\pi(b) = a$ and thus
  \begin{gather*}
    \beta_\gamma(a)
    =
    \beta_\gamma(\pi(b))
    =
    \pi(\Ad u_\gamma (b))
    =
    \pi(u_\gamma) \pi(b) \pi(u_\gamma^*)
    = \Ad \pi(u_\gamma) (a)
    \eqstop
  \end{gather*}
  This shows uniqueness of the $\cG$-\Cstar-algebra structure on $A$.
\end{proof}

Next we will show that the classical notion of $\cG$-\Cstar-algebras introduced in \cite{renault87-produits-croises} is in a precise sense subsumed by our notion.  Recall that given a compact Hausdorff space $X$, a unital $\cont(X)$-algebra is a unital \Cstar-algebra $A$ with a unital inclusion $\cont(X) \to \cZ(A)$ into the centre of $A$.  Given such algebra, the evaluation maps $\ev_x: \cont(X) \to \CC$ extend to quotient maps $A \to A_x$ onto the fibres of a \Cstar-bundle.
\begin{definition}
  \label{def:groupoid-cstar-bundle}
  Let $\cG$ be an {\'e}tale groupoid with compact Hausdorff space of units.  A unital $\cG$-\Cstar-bundle is a unital $\cont(\Gnaught)$-algebra $A$ with an associative and continuous map $\alpha: \cG \prescript{}{\rms}{\times} \bigsqcup_{x \in \Gnaught} A_x \to \bigsqcup_{x \in \Gnaught} A_x$ such that $\alpha_x = \id$ for all $x \in \Gnaught$ and $\alpha_g \circ \alpha_h = \alpha_{gh}$ whenever $\rms(g) = \rmr(h)$.

  A $\cG$-ucp map between unital $\cG$-\Cstar-bundles $(A, \alpha)$ and $(B, \beta)$ is a unital $\cont(\Gnaught)$-modular unital completely positive map $\vphi: A \to B$ satisfying $\vphi_{\rmr(g)} \circ \alpha_g = \beta_g \circ \vphi_{\rms(g)}$ for all $g \in \cG$.
\end{definition}

Let us make the following ad-hoc definition.
\begin{notation}
  \label{not:compatible-structures}
  Let $A$ be a \Cstar-algebra.  Writing groupoid actions implicitly, we call a structure of a $\cG$-\Cstar-bundle on $A$ and the structure of a $\cG$-\Cstar-algebra on $A$ compatible, if they define the same inclusion $\cont(\Gnaught) \subseteq A$ and the equality
    \begin{gather*}
      (\alpha_{\gamma} a)_{\rmr(g)} =  g a_{\rms(g)}
    \end{gather*}
    is satisfied for every $\gamma \in \Gamma(\cG)$, every $g \in \gamma$ and every $a \in A_{\supp \gamma}$.
\end{notation}


\begin{proposition}
  \label{prop:bundle-G-cstar-algebra-integration-desintegration}
  Let $\cG$ be an {\'e}tale groupoid with compact Hausdorff space of units.
  \begin{itemize}
  \item If $A$ is a unital $\cG$-\Cstar-bundle, then there is a unique compatible structure of a unital $\cG$-\Cstar-algebra on $A$.
  \item If $A$ is a unital $\cG$-\Cstar-algebra such that $\cont(\Gnaught) \subseteq A$ is central, then there is a unique compatible structure of a unital $\cG$-\Cstar-bundle on $A$.
  \item If $A$ and $B$ are unital $\cG$-\Cstar-algebras such that $\cont(\Gnaught)$ is central in $A$ and $B$ and $\vphi: A \to B$ is a unital completely positive map, then $\vphi$ is $\cG$-equivariant in the sense of $\cG$-\Cstar-algebras if and only if it is $\cG$-equivariant in the sense of $\cG$-\Cstar-bundles.
  \end{itemize}
\end{proposition}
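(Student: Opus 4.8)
The plan is to treat all three bullets through a single principle. Since $\cont(\Gnaught)$ is central, $A$ is a $\cont(\Gnaught)$-algebra, and by the standard correspondence between $\cont(X)$-algebras and upper semicontinuous \Cstar-bundles it is the algebra of continuous sections (vanishing at infinity) of the bundle with fibres $A_x = A / \ol{I_x A}$, where $I_x \subseteq \cont(\Gnaught)$ is the ideal of functions vanishing at $x$. Under this identification $A_U$ is exactly the ideal of sections supported in $U$, and every map in sight --- the pseudogroup isomorphisms $\alpha_\gamma$, the fibrewise maps $\alpha_g$, and the ucp maps $\vphi$ --- is determined by, and built from, its fibres. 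The relation $(\gamma a)_{\rmr(g)} = g\, a_{\rms(g)}$ of Notation~\ref{not:compatible-structures} is precisely the assertion that $\alpha_\gamma$ and the family $(\alpha_g)_{g \in \gamma}$ determine one another fibrewise. Thus the whole statement reduces to producing honest sections from fibrewise recipes (a continuity question) and to the triviality that two sections coincide iff their fibres do.

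For the first bullet I would define, for $\gamma \in \Gamma(\cG)$ and $a \in A_{\supp \gamma}$, the section $\alpha_\gamma(a)$ on $\im \gamma$ by $(\alpha_\gamma a)(y) = \alpha_{g}(a(\rms(g)))$, where $g$ is the unique element of $\gamma$ with $\rmr(g) = y$. The only nontrivial point is that $\alpha_\gamma(a)$ is a continuous section, hence lies in $A_{\im \gamma}$: the map $y \mapsto g_y = (\rmr|_\gamma)^{-1}(y)$ is a homeomorphism $\im \gamma \to \gamma$, so $y \mapsto (g_y, a(\rms(g_y)))$ is continuous into $\cG \prescript{}{\rms}{\times} \bigsqcup_{x \in \Gnaught} A_x$, and composing with the continuous bundle action $\alpha$ gives a continuous section. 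That $\alpha_\gamma$ is a $*$-isomorphism $A_{\supp \gamma} \to A_{\im \gamma}$ with inverse $\alpha_{\gamma^*}$, that it restricts to $f \mapsto f \circ \psi_{\gamma^*}$ on $\conto(\supp \gamma)$ (since $a(\rms(g)) = f(\psi_{\gamma^*}(y))\,1$ in that case and $\alpha_{g_y}$ is unital), and that the composition diagram of Definition~\ref{def:groupoid-cstar-algebra} commutes, are then immediate fibrewise checks using $\alpha_x = \id$ and $\alpha_g \circ \alpha_h = \alpha_{gh}$. Uniqueness is forced because compatibility prescribes every fibre of $\alpha_\gamma(a)$.

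For the second bullet, given $g \in \cG$ I would pick any open bisection $\gamma \ni g$, lift $a(\rms(g))$ to some $a \in A_{\supp \gamma}$, and set $\alpha_g(a(\rms(g))) := (\alpha_\gamma a)(\rmr(g))$. Independence of the lift holds because $\alpha_\gamma$ is a $*$-isomorphism satisfying $\alpha_\gamma(f b) = (f \circ \psi_{\gamma^*})\,\alpha_\gamma(b)$, so it carries sections vanishing at the fibre $\rms(g)$ to sections vanishing at $\rmr(g)$. Independence of $\gamma$ follows from the locality relation $\alpha_{\gamma'} = \alpha_\gamma|_{A_{\supp \gamma'}}$ whenever $\gamma' \leq \gamma$ (obtained from the composition diagram with $\gamma' = \gamma e$, $e$ an idempotent bisection, $\alpha_e = \id$), applied to $\gamma \cap \gamma'$. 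The bundle axioms $\alpha_x = \id$ (take $\gamma \subseteq \Gnaught$) and $\alpha_g \circ \alpha_h = \alpha_{gh}$ descend from the corresponding algebra facts, continuity of the assembled action map is exactly the continuity of the section $\alpha_\gamma(a) \in A$ over $\gamma$, compatibility holds by construction, and uniqueness follows again from the fibres being prescribed.

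For the third bullet, a ucp map with $\vphi \circ \iota_A = \iota_B$ has $\cont(\Gnaught)$ in its multiplicative domain, hence is $\cont(\Gnaught)$-bimodular, restricts to maps $\vphi|_U : A_U \to B_U$, and induces fibre maps $\vphi_x : A_x \to B_x$; conversely bimodularity encodes $\vphi \circ \iota_A = \iota_B$. Evaluating the square for $(\alpha_\gamma, \beta_\gamma)$ at the fibre over $\rmr(g)$ and using the fibrewise descriptions above yields $\vphi_{\rmr(g)} \circ \alpha_g = \beta_g \circ \vphi_{\rms(g)}$ (as $a(\rms(g))$ ranges over all of $A_{\rms(g)}$), i.e.\ bundle equivariance; conversely, if these fibrewise intertwiners hold for every $g \in \gamma$, then $\vphi|_{\im \gamma} \circ \alpha_\gamma$ and $\beta_\gamma \circ \vphi|_{\supp \gamma}$ agree on every fibre and therefore coincide. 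The main obstacle is the single continuity verification in the first bullet, namely that the fibrewise recipe for $\alpha_\gamma(a)$ produces an element of $A$ rather than a merely Borel section; everything else is fibrewise bookkeeping. This rests on the continuity of $\alpha$ together with $\rms|_\gamma, \rmr|_\gamma$ being homeomorphisms, and, reassuringly, it is exactly the element $\alpha_\gamma(a) \in A$ that certifies continuity of the bundle action in the second bullet, so the two constructions are genuinely mutually inverse.
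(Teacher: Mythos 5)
Your proposal is correct and follows essentially the same route as the paper: both directions are handled fibrewise via the sectional representation of $\cont(\Gnaught)$-algebras, with the same continuity argument for $\alpha_\gamma(a)$ in the integration step, the same localise-with-a-bisection-and-cut-off construction (together with the basic-neighbourhood continuity check that your ``certifying element'' remark compresses) in the disintegration step, and the same fibrewise evaluation for the equivalence of the two equivariance notions. The only cosmetic difference is that the paper defines $\alpha_\gamma$ on the dense subspace $\conto(\supp \gamma) A \conto(\supp \gamma)$ and extends by boundedness, whereas you define it on all of $A_{\supp \gamma}$ at once using the norm-decay description of $A_U$; both are fine.
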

\begin{proof}
  Let $A$ be a unital $\cG$-\Cstar-bundle.  Let $\gamma \in \Gamma(\cG)$ be an open bisection, $f_1, f_2 \in \conto(\supp \gamma)$ and $a \in A$.  Then $f_1 a f_1  \in  A_{\supp \gamma}$ and
  \begin{gather*}
    \supp \gamma \ra \cG \tensor[_s]{\times}{} \bigsqcup_{x \in \Gnaught} A_x, \, x \mapsto ((s|_\gamma)^{-1}(x), (f_1 a f_2)_{\rms(g)})
  \end{gather*}
  extends by zero to a continuous section of $\cG \tensor[_s]{\times}{} \bigsqcup_{x \in \Gnaught} A_x$.  Its image under the action of $\cG$ defines a continuous section in $\bigsqcup_{x \in \Gnaught} A_x$ whose support lies in $\im \gamma$.  We denote this element by $\alpha_\gamma (f_1 a f_2) \in A_{\im \gamma} \subseteq A$.  The map $f_1 a f_2 \mapsto \alpha_\gamma (f_1 a f_2)$ is bounded on $\conto(\supp \gamma) A \conto(\supp \gamma)$ and thus extends to a bounded map $A_{\supp \gamma} \to A_{\im \gamma}$.  It is straightforward to check that this defines a compatible structure of a unital $\cG$-\Cstar-algebra on $A$.

  Vice versa, assume that $(A, \alpha)$ is a unital $\cG$-\Cstar-algebra such that $\cont(\Gnaught) \subseteq A$ is central.  Let $g \in \cG$ and $a_{\rms(g)} \in A_{\rms(g)}$.  Let $a \in A$ be a lift of $a_{\rms(g)}$ and $\gamma \in \Gamma(\cG)$ an open bisection containing $g$.  Let $f_1,f_2 \in \contc(\supp \gamma)$ be such that $0 \leq f_1,f_2 \leq 1$ and $f_1(\rms(g)) = 1 = f_2(\rms(g))$.  Then
  \begin{gather*}
    \alpha_\gamma(f_1 f_2 a f_2 f_1)|_{\rmr(g)}
    =
    f_1(\gamma^*\rmr(g))^2 \alpha_\gamma(f_2 a f_2)_{\rmr(g)}
    =
    \alpha_\gamma(f_2 a f_2)_{\rmr(g)}
    \eqstop
  \end{gather*}
  This shows well-definedness of a map $\cG \tensor[_\rms]{\times}{} \bigsqcup_{x \in \Gnaught} A_x \to \bigsqcup_{x \in \Gnaught} A_x$ satisfying the compatibility condition of Notation~\ref{not:compatible-structures}.  A straightforward calculation shows that this defines a groupoid action, and it remains to check continuity.  We denote by $p$ the map from the bundle $\bigsqcup_{x \in \Gnaught} A_x$ to its base space.  Let $(g,a_{\rms(g)}) \in \cG \tensor[_\rms]{\times}{} \bigsqcup_{x \in \Gnaught} A_x$ and take a basic neighbourhood of $g a_{\rms(g)}$, given by
  \begin{gather*}
    N(\tilde b, U , \veps)
    =
    \{b \in \bigsqcup_{x \in \Gnaught} A_x \mid p(b) \in U \text{ and } \| b - \tilde b_{p(b)} \| < \veps\}
  \end{gather*}
  for $\tilde b \in A$, $U \subseteq \Gnaught$ open and $\veps > 0$.  Without loss of generality, we may reduce the size of $U$ in order to assume that there is an open bisection $\gamma \in \Gamma(\cG)$ that contains $g$ and satisfies $\ol{U} \subseteq \im \gamma$.  Further, replacing $\tilde b$ by $f \tilde b f$ for some $f \in \conto(\im \gamma)$ satisfying $f|_{\ol{U}} \equiv 1$, we may assume that $\tilde b \in A_{\im \gamma}$.  Put $\tilde a = \gamma^* \tilde b$. Then by construction $N(\tilde a, U, \veps)$ maps into $N(\tilde b, U, \veps)$, proving continuity of the action at $(g, a_{\rms(g)})$.

  A straightforward calculation shows the notions of $\cG$-equivariance for ucp maps agree.
\end{proof}

\section{Boundary theory for \'{e}tale groupoids}
\label{sec:boundary-theory}

Boundary theory for discrete groups has played an important role in the analysis of the algebraic structure of the reduced \Cstar-algebra of the group. A key result established in \cite{kalantarkennedy2017} is that the Furstenberg boundary and the Hamana boundary of a discrete group coincide, that is the \Cstar-algebra of continuous functions on the Furstenberg boundary is the unique essential and injective object in the category of \Cstar-dynamical systems.  In this section, we will construct the Hamana boundary and the Furstenberg boundary of any {\'e}tale groupoid $\cG$ with compact Hausdorff unit space and identify the two.

A Hamana boundary of an {\'e}tale Hausdorff groupoid $\cG$ with compact unit space was constructed by Borys in \cite{borys2019-boundary,borys2020-thesis} and directly termed Furstenberg boundary.  In this section, we denote the object he constructed by $\bb \cG$.   He constructed $\cont(\bb \cG)$ as the injective envelope of $\cont(\Gnaught)$ in the category of concrete $\cG$-operator systems, which agrees with the injective envelope in the category of $\cG$-\Cstar-bundles.  

We will construct the Hamana boundary $\hb \cG$ of $\cG$, for groupoids that are not necessarily Hausdorff.  The Hamana boundary is the spectrum of the injective envelope of $\cont(\Gnaught)$ in the category of unital $\cG$-\Cstar-algebras as introduced in Section~\ref{sec:groupoid-cstar-algebras}.  This terminology highlights that the identification with the Furstenberg boundary is an {\`a} posteriori result and honours Hamana's development of the theory of injective envelopes \cite{hamana79-operator-systems,hamana85-c-star-dynamical-systems}.  Our proof of the existence of injective envelopes in the category of unital $\cG$-\Cstar-algebras builds on Sinclair's proof from \cite{sinclair2015} of the existence of injective envelopes in the category of operator systems. The key technical device utilised in his proof is the existence of idempotents in compact right topological semigroups. Utilising the convexity of the specific semigroup under consideration allows us to further deduce the rigidity and essentiality of the Hamana boundary.

We also develop a dynamical approach to boundary theory for an {\'e}tale groupoid $\cG$ with compact Hausdorff unit space and construct the Furstenberg boundary $\fb \cG$ of $\cG$ within this framework.  Specifically, consider the category of $\cG$-flows, which are compact Hausdorff spaces equipped with a $\cG$-action.  We single out the $\cG$-boundaries, which are the $\cG$-flows that are both minimal and strongly proximal in a sense made precise in Definition~\ref{def:boundary-actions}.  The Furstenberg boundary $\fb \cG$ is the universal $\cG$-boundary, meaning that every $\cG$-boundary is the image of $\fb \cG$ under a morphism of $\cG$-flows.

Once we have established the existence and uniqueness of the Furstenberg boundary $\fb \cG$, we will prove that it coincides with the Hamana boundary $\hb \cG$.  Moreover, we will prove that if $\cG$ is Hausdorff, then $\fb \cG$ also coincides with the boundary $\bb \cG$ constructed by Borys.

Both perspectives on the Furstenberg boundary developed here are of critical importance to our results. We will utilise the operator algebraic approach to the Furstenberg boundary throughout this paper, and in particular when we introduce pseudo-expectations in Section \ref{sec:characterisations}. We will utilise the dynamical approach to the Furstenberg boundary when we consider Powers averaging property in Section \ref{sec:powers-averaging}.

\subsection{Groupoid actions on states and probability measures}
\label{sec:groupoid-spaces}

Let $\cG$ be an {\'e}tale groupoid with compact Hausdorff unit space and let $A$ be a unital $\cG$-\Cstar-algebra.  Let $\iota_* : \cS(A) \to \cP(\Gnaught)$ denote the restriction map, where $\cS(A)$ denotes the state space of $A$ equipped with the weak* topology and $\cP(\Gnaught)$ denotes the space of probability measures on $\Gnaught$ equipped with the weak* topology. Although the $\cG$-\Cstar-algebra structure on $A$ does not necessarily induce a $\cG$-space structure on $\cS(A)$, it does induce a $\cG$-space structure on a canonical closed subspace of $\cS(A)$.

\begin{definition}
  \label{def:restricted-state-space}
  Let $\cG$ be an {\'e}tale groupoid with compact Hausdorff unit space.  For a unital $\cG$-\Cstar-algebra $A$, we denote by $\cS_{\Gnaught}(A) \subseteq \cS(A)$ the closed subspace defined by
  \begin{gather*}
    \cS_{\Gnaught}(A)
    =
    \{\vphi \in \cS(A) \mid \iota_*\vphi = \delta_x \text{ for some } x \in \Gnaught \}
    \eqstop
  \end{gather*}
\end{definition}

The next lemma shows how the $\cG$-\Cstar-algebras structure on $A$ induces a $\cG$-space structure on $\cS_{\Gnaught}(A)$.
\begin{lemma}
  \label{lem:translated-state-existence}
  Let $\cG$ be an {\'e}tale groupoid with compact Hausdorff space of units.  Let $(A,\alpha)$ be a unital $\cG$-\Cstar-algebra, let $\vphi \in \cS(A)$ be such that $\vphi|_{\cont(\Gnaught)} = \ev_x$ for some $x \in \Gnaught$ and let $g \in \cG_x$.  Let $\gamma$ be an open bisection of $\cG$ containing $g$ and let $f \in \contc(\im \gamma)$ be a positive function satisfying $f(\rmr(g)) = 1$.  The formula
  \begin{gather*}
    g \vphi(a) = \vphi(\alpha_{\gamma^*}(f a f))
  \end{gather*}
  defines a state on $A$ satisfying $(g \vphi)|_{\cont(\Gnaught)} = \ev_{\rmr(g)}$.  It does not depend on the choice of $\gamma$ and $f$.
\end{lemma}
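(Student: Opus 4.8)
The plan is to verify, in turn, positivity and unitality of $g\vphi$, its restriction to $\cont(\Gnaught)$, and finally independence of the auxiliary data $\gamma$ and $f$, the last being the only genuinely delicate point. Two preliminary observations will be used throughout. First, since $\vphi \circ \iota = \ev_x$ and $\iota$ is a $*$-homomorphism, the commutative subalgebra $\iota(\cont(\Gnaught))$ lies in the multiplicative domain of the state $\vphi$; indeed $\vphi(\iota(k)^*\iota(k)) = |k(x)|^2 = |\vphi(\iota(k))|^2$. Consequently $\vphi(\iota(k)\, b\, \iota(k')) = k(x)\, k'(x)\, \vphi(b)$ for all $b \in A$ and $k, k' \in \cont(\Gnaught)$. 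Second, I would record the localisation property that $\gamma' \subseteq \gamma$ in $\Gamma(\cG)$ implies $\alpha_{\gamma'} = \alpha_\gamma|_{A_{\supp \gamma'}}$. This follows from the cocycle identity of Definition~\ref{def:groupoid-cstar-algebra}: writing $\gamma' = \gamma e$ with $e = \supp \gamma'$ the idempotent bisection $\rms(\gamma') \subseteq \Gnaught$, and observing that $\alpha_e$ is an idempotent $*$-automorphism of $A_e$, hence the identity, the commuting diagram yields $\alpha_{\gamma'} = \alpha_\gamma \circ \alpha_e = \alpha_\gamma|_{A_{\supp \gamma'}}$.

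For the state properties, positivity of $a \mapsto \vphi(\alpha_{\gamma^*}(faf))$ is immediate since $f = f^*$, the map $\alpha_{\gamma^*}$ is a $*$-isomorphism, and $\vphi$ is positive. For unitality I would compute $\alpha_{\gamma^*}(f^2)$ using the first axiom of a $\cG$-\Cstar-algebra: as $f^2 \in \conto(\im\gamma) = \conto(\supp\gamma^*)$, one has $\alpha_{\gamma^*}(f^2) = \iota(f^2 \circ \psi_\gamma)$, and evaluating the state gives $(f^2\circ\psi_\gamma)(x) = f(\rmr(g))^2 = 1$ because $\psi_\gamma(x) = \rmr(g)$. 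The same computation with $f\,\iota(h)\,f$ in place of $f^2$ shows $(g\vphi)(\iota(h)) = h(\rmr(g))$, that is $(g\vphi)|_{\cont(\Gnaught)} = \ev_{\rmr(g)}$.

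The heart of the matter is independence of the choice of $f$ and $\gamma$, where the main obstacle is that a generic element $a \in A$ does not lie in the hereditary subalgebra $A_{\im\gamma}$, so one cannot naively factor the functions $f$ out of $\alpha_{\gamma^*}(faf)$. I would circumvent this with a bump-function trick. Fixing $\gamma$ and given two admissible functions $f_1, f_2$, choose $f \in \contc(\im\gamma)$ equal to $1$ on a neighbourhood of the compact set $\supp f_1 \cup \supp f_2$; then $f_i a f_i = f_i (faf) f_i$ with $faf \in A_{\im\gamma}$, so applying the $*$-homomorphism $\alpha_{\gamma^*}$ gives $\alpha_{\gamma^*}(f_i a f_i) = F_i\,\alpha_{\gamma^*}(faf)\,F_i$, where $F_i = \alpha_{\gamma^*}(\iota(f_i)) = \iota(f_i\circ\psi_\gamma) \in \iota(\cont(\Gnaught))$ and $F_i(x) = f_i(\rmr(g)) = 1$. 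The multiplicative-domain identity then collapses the outer factors: $\vphi(\alpha_{\gamma^*}(f_i a f_i)) = F_i(x)^2\,\vphi(\alpha_{\gamma^*}(faf)) = \vphi(\alpha_{\gamma^*}(faf))$, which is manifestly independent of $i$.

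Finally, for independence of $\gamma$, given two open bisections $\gamma_1, \gamma_2$ containing $g$ I would pass to their intersection $\gamma = \gamma_1 \cap \gamma_2$, again an open bisection containing $g$, and pick $f \in \contc(\im\gamma)$ with $f(\rmr(g)) = 1$. Since $\im\gamma \subseteq \im\gamma_i$, the function $f$ is admissible for each $\gamma_i$, and since $\gamma^* \subseteq \gamma_i^*$ the localisation property gives $\alpha_{\gamma_i^*}(faf) = \alpha_{\gamma^*}(faf)$ on $A_{\im\gamma}$. Combined with the already established independence of $f$, the value computed via $\gamma_i$ equals $\vphi(\alpha_{\gamma^*}(faf))$ for both $i = 1, 2$, completing the proof. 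I expect the localisation property and the bump-function reduction to be the two steps requiring the most care; everything else is a direct application of the $\cG$-\Cstar-algebra axioms and the multiplicative-domain property of $\vphi$.
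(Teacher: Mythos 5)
Your proof is correct and follows essentially the same route as the paper's: both rest on the observation that $\iota(\cont(\Gnaught))$ lies in the multiplicative domain of $\vphi$ (so outer cut-off functions can be stripped at the cost of the scalar $h(\rmr(g))^2 = 1$), and both reduce independence of the bisection to the intersection $\gamma_1 \cap \gamma_2$. The only cosmetic difference is in the $f$-independence step, where the paper evaluates the single symmetric element $\vphi(\alpha_{\gamma^*}(hfafh))$, which by commutativity of $f$ and $h$ equals both $\vphi(\alpha_{\gamma^*}(faf))$ and $\vphi(\alpha_{\gamma^*}(hah))$ after stripping one factor, thereby avoiding your auxiliary dominating bump function; your explicit derivation of the localisation property $\alpha_{\gamma'} = \alpha_\gamma|_{A_{\supp \gamma'}}$ from the cocycle axiom correctly fills in a step the paper leaves implicit.
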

\begin{proof}
  Since the intersection of any pair of open bisections containing $g$ is another open bisection containing $g$, it suffices to observe that for any pair of functions $f,h \in \contc(\im \gamma)$ satisfying $0 \leq f,h \leq 1$ and $f(\rmr(g)) = h(\rmr(g)) = 1$, the equality
  \begin{gather*}
    \vphi(\alpha_{\gamma^*}(hf a fh))
    =
    \ev_x(\alpha_{\gamma^*}(h))^2 \vphi(\alpha_{\gamma^*}(f a f))
    =
    h(\rmr(g))^2 \vphi(\alpha_{\gamma^*}(f a f))
    =
    \vphi(\alpha_{\gamma^*}(f a f))
  \end{gather*}
  holds.  It is clear that $(g \vphi)|_{\cont(\Gnaught)} = \ev_{\rmr(g)}$, showing in particular that $g \vphi$ is a state.
\end{proof}

Let us consider the special case of commutative $\cG$-\Cstar-algebras separately.  We observe that there is a correspondence between commutative unital $\cG$-\Cstar-algebras $\cont(X)$ and $\cG$-spaces $p: X \to \Gnaught$.  This leads to the following reformulation of Definition~\ref{def:restricted-state-space}.
\begin{definition}
  \label{def:restricted-probability-measures}
  Let $\cG$ be an {\'e}tale groupoid with compact Hausdorff unit space. For a compact $\cG$-space $p : Z \to \Gnaught$, we define
  \begin{gather*}
    \cP_{\Gnaught}(Z)
    =
    \{\nu \in \cP(Z) \mid p_*\nu = \delta_x \text{ for some } x \in \Gnaught\}
    \eqstop
  \end{gather*}
\end{definition}

\begin{remark}
  \label{rem:reforumation-action-restricted-probability-measures}
  It is clear that $\cP_{\Gnaught}(Z)$ is closed in the relative weak* topology, because $\Gnaught \subseteq \cP(\Gnaught)$ is weak-*-closed.  The $\cG$-space structure on $Z$ induces a canonical $\cG$-space structure on $\cP_{\Gnaught}(Z)$. Namely, for $\nu \in \cP_{\Gnaught}(Z)$ and $g \in \cG$ with $\delta_{\rms(g)} = p_* \nu$, we observe that $\supp(\nu) \subseteq p^{-1}(p_*\nu) \subseteq Z$ and we can define the probability measure $g \nu \in \cP(Z)$ by
  \begin{gather*}
    \int_Z f\, \rmd(g\nu)
    =
    \int_{p^{-1}(\rms(g))} f(g z)\, \rmd\nu(z) \qquad \text{for } f \in \cont(Z)
    \eqstop
  \end{gather*}
  This is the dual of the action of $\cG$ on $\cS_{\Gnaught}(\cont(Z))$ described by Lemma~\ref{lem:translated-state-existence}.
\end{remark}

\subsection{The Hamana boundary}
\label{sec:hamana-boundary}

Let $\cG$ be an {\'e}tale groupoid with compact Hausdorff unit space. We first observe that $\linfty(\cG)$ is a $\cG$-\Cstar-algebra.  Since $\Gnaught$ is compact, the canonical embedding of $\cont(\Gnaught)$ into $\linfty(\cG)$ given by $f \mapsto f \circ \rmr$ is a unital *-homomorphism.  Recall from Notation~\ref{not:hereditary-subalgebra} that for an open subset $U \subseteq \Gnaught$,  we write $\linfty(\cG)_U = \overline{\conto(U) \linfty(\cG)}^{\|\cdot\|}$ for the hereditary \Cstar-subalgebra associated with $U$.  It is straightforward to see that
\begin{gather*}
  \linfty(\cG)_U \subseteq \{f \in \linfty(\cG) \mid \supp(f) \subseteq \rmr^{-1}(U) \} = \ol{\linfty(\cG)_U}^{\weakstar}
  \eqstop
\end{gather*}
For an open bisection $\gamma \in \Gamma(\cG)$, the partial homeomorphism of $\gamma$ on $\cG$ defines a *-isomorphism $\beta_\gamma : \linfty(\cG)_{\supp(\gamma)} \to \linfty(\cG)_{\im(\gamma)}$ satisfying
\begin{gather*}
  \beta_\gamma(f)(g)
  =
  \begin{cases*}
    f(\gamma^*g) & if $r(g) \in \im(\gamma)$ \\
    0 & otherwise.
  \end{cases*}
\end{gather*}

The next result characterises $\cG$-\Cstar-algebra morphisms into $\linfty(\cG)$. 
\begin{proposition} 
  \label{prop:poisson_maps}
  Let $\cG$ be an {\'e}tale groupoid with compact Hausdorff unit space. Let $(A, \alpha)$ be a unital $\cG$-\Cstar-algebra. There is a bijective correspondence between the following objects:
  \begin{enumerate}
  \item \label{it:poisson_maps:morphism}
    $\cG$-ucp maps $\phi : A \to \linfty(\cG)$.
  \item \label{it:poisson_maps:base-space}
    Unital completely positive maps $\psi : A \to \linfty(\Gnaught)$ satisfying $\psi|_{\cont(\Gnaught)} = \id_{\cont(\Gnaught)}$.
  \item\label{it:poisson_maps:states}
    Families of states $\{\mu_x\}_{x \in \Gnaught}$ on $A$ satisfying $\mu_x|_{\cont(\Gnaught)} = \delta_x$.
  \end{enumerate}
  For a map $\phi$ as in \ref{it:poisson_maps:morphism}, the map $\psi$ is defined by $\psi(f) = \phi(f)|_{\Gnaught}$.  For a map $\psi$ as in \ref{it:poisson_maps:base-space}, the family $\{\mu_x\}_{x \in \Gnaught}$ is defined by setting $\mu_x = \delta_x \circ \psi$ for each $x \in \Gnaught$.  For a family $\{\mu_x\}_{x \in \Gnaught}$ as in \ref{it:poisson_maps:states}, the map $\phi$ is defined by $\phi(a)(g) = (g\mu_{s(g)})(a)$ for $a \in A$.
\end{proposition}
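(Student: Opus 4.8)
The plan is to verify that the three assignments in the statement are well defined and mutually inverse, treating the passages $\ref{it:poisson_maps:morphism}\to\ref{it:poisson_maps:base-space}\to\ref{it:poisson_maps:states}$ as the formal part and the reconstruction $\ref{it:poisson_maps:states}\to\ref{it:poisson_maps:morphism}$ as the substantial one. For the first passage, restriction to $\Gnaught$ is a unital $*$-homomorphism $\linfty(\cG)\to\linfty(\Gnaught)$, so $\psi=(\,\cdot\,|_{\Gnaught})\circ\phi$ is automatically ucp; and since $\phi\circ\iota_A=\iota_{\linfty(\cG)}$ sends $f\in\cont(\Gnaught)$ to $f\circ\rmr$, whose restriction to the units is $f$ again, we get $\psi|_{\cont(\Gnaught)}=\id$. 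For the second passage, $\mu_x=\delta_x\circ\psi=\ev_x\circ\psi$ is a state because $\ev_x$ is a state and $\psi$ is ucp, and $\mu_x|_{\cont(\Gnaught)}=\delta_x$ is immediate.

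The core of the argument is that, given a family $\{\mu_x\}$ as in~\ref{it:poisson_maps:states}, the formula $\phi(a)(g)=(g\mu_{\rms(g)})(a)$ defines a $\cG$-ucp map. Here $g\mu_{\rms(g)}$ is the translated state furnished by Lemma~\ref{lem:translated-state-existence}, which makes sense precisely because $\mu_{\rms(g)}|_{\cont(\Gnaught)}=\delta_{\rms(g)}$, and which satisfies $(g\mu_{\rms(g)})|_{\cont(\Gnaught)}=\ev_{\rmr(g)}$. Since each $g\mu_{\rms(g)}$ is a state, $\phi(a)$ is a bounded function on $\cG$ with $\|\phi(a)\|\le\|a\|$, so $\phi(a)\in\linfty(\cG)$; $\phi$ is unital; and $\phi$ is completely positive because positivity in matrix algebras over $\linfty(\cG)$ is checked pointwise, where at each $g$ the map $a\mapsto(g\mu_{\rms(g)})(a)$ is a state. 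Compatibility $\phi\circ\iota_A=\iota_{\linfty(\cG)}$ follows from $(g\mu_{\rms(g)})|_{\cont(\Gnaught)}=\ev_{\rmr(g)}$.

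I expect the main obstacle to be $\cG$-equivariance, i.e.\ commutativity of the square in Definition~\ref{def:G-ucp-map} for each $\gamma\in\Gamma(\cG)$ and each $a\in A_{\supp\gamma}$, which I would check by evaluating both sides at $g\in\cG$ and splitting into cases. If $\rmr(g)\notin\im\gamma$, both sides vanish: the right-hand side by the defining formula for $\alpha_\gamma$ on $\linfty(\cG)$, and the left-hand side because a state restricting to $\ev_{\rmr(g)}$ on $\cont(\Gnaught)$ annihilates $A_{\im\gamma}$ when $\rmr(g)\notin\im\gamma$ (a Cauchy--Schwarz estimate against an approximate unit of $\conto(\im\gamma)$). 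If $\rmr(g)\in\im\gamma$, let $h_0\in\gamma$ be the unique element with $\rmr(h_0)=\rmr(g)$ and write $g=h_0 g'$ with $g'=\gamma^* g$. Using that the translated action is a genuine $\cG$-action (Lemma~\ref{lem:translated-state-existence} and the paragraph following it), so $g\mu_{\rms(g)}=h_0(g'\mu_{\rms(g)})$, I would expand the $h_0$-translate via the bisection $\gamma$ itself: for a cutoff $f\in\contc(\im\gamma)$ with $f(\rmr(h_0))=1$,
\[
(g\mu_{\rms(g)})(\alpha_\gamma(a)) = (g'\mu_{\rms(g)})\bigl(\alpha_{\gamma^*}(f\,\alpha_\gamma(a)\,f)\bigr) = (g'\mu_{\rms(g)})(\tilde f\,a\,\tilde f) = (g'\mu_{\rms(g)})(a),
\]
where $\tilde f=\alpha_{\gamma^*}(f)=f\circ\psi_\gamma\in\conto(\supp\gamma)$, using that $\alpha_{\gamma^*}$ is a $*$-homomorphism on $A_{\im\gamma}$ together with $\alpha_{\gamma^*}\circ\alpha_\gamma=\id$ on $A_{\supp\gamma}$. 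The last equality is the localization identity $\nu(e a e)=\nu(a)$ valid for a state $\nu$ and a positive contraction $e$ with $\nu(e)=1$, applied to $\nu=g'\mu_{\rms(g)}$ and $e=\tilde f$, since $\tilde f(\rms(h_0))=f(\rmr(h_0))=1$ and $\nu|_{\cont(\Gnaught)}=\ev_{\rms(h_0)}$. As $(g'\mu_{\rms(g)})(a)=\phi(a)(g')=\alpha_\gamma(\phi(a))(g)$, the square commutes.

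Finally I would check the three round trips. For $\ref{it:poisson_maps:morphism}\to\ref{it:poisson_maps:base-space}\to\ref{it:poisson_maps:states}\to\ref{it:poisson_maps:morphism}$, the reconstructed states are $\mu_x(a)=\phi(a)(x)$, and unwinding $(g\mu_{\rms(g)})(a)=\phi\bigl(\alpha_{\delta^*}(faf)\bigr)(\rms(g))$ for a bisection $\delta\ni g$, then applying the equivariance of $\phi$ for $\delta^*$ (note $faf\in A_{\im\delta}$) and the identity $\delta\cdot\rms(g)=g$ yields $\phi(faf)(g)$, which equals $\phi(a)(g)$ because $\cont(\Gnaught)$ lies in the multiplicative domain of $\phi$ and $f(\rmr(g))=1$. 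The remaining two round trips reduce to the fact that units act trivially on $\cS_{\Gnaught}(A)$, i.e.\ $x\mu_x=\mu_x$, which gives $\mu'_x=\mu_x$ and $\psi'=\psi$ respectively.
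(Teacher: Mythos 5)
Your proposal is correct and follows essentially the same route as the paper's proof: the passages from \ref{it:poisson_maps:morphism} to \ref{it:poisson_maps:base-space} to \ref{it:poisson_maps:states} are dispatched as routine, the substance is the construction of $\phi$ from the family of states via $\phi(a)(g)=(g\mu_{\rms(g)})(a)$, equivariance is verified by a localized cutoff computation, and your round trips reproduce the paper's injectivity and surjectivity arguments (your surjectivity unwinding via a bisection $\delta\ni g$ followed by the multiplicative-domain step $\phi(faf)(g)=\phi(a)(g)$ is the paper's computation almost verbatim). The one genuine deviation is in the equivariance step: the paper fixes an arbitrary bisection $\eta\ni g$ and pushes the cutoff through the cocycle identity, $\alpha_{\eta^*}(f\,\alpha_\gamma(a)\,f)=\alpha_{(\gamma^*\eta)^*}(\alpha_{\gamma^*}(f)\,a\,\alpha_{\gamma^*}(f))$, recognizing the result directly as $((\gamma^*g)\mu_{\rms(\gamma^*g)})(a)$, whereas you factor $g=h_0g'$ with $h_0\in\gamma$ and invoke associativity, $g\mu_{\rms(g)}=h_0(g'\mu_{\rms(g)})$, of the translated-state operation. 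Be aware that Lemma~\ref{lem:translated-state-existence} only establishes well-definedness of a single translate; the assertion that this yields a genuine $\cG$-space structure appears in the surrounding prose rather than as a proven statement, so on your route you should discharge the associativity explicitly --- which is exactly the same cocycle computation from Definition~\ref{def:groupoid-cstar-algebra} that the paper performs inside its chain of equalities, so nothing is lost. With that supplied, your argument is complete, and your explicit two-case split (including the vanishing of both sides when $\rmr(g)\notin\im\gamma$, via Cauchy--Schwarz against $\conto(\im\gamma)$) makes fully visible what the paper handles implicitly through the observation that $\phi(\alpha_\gamma(a))\in\linfty(\cG)_{\im\gamma}$.
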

\begin{proof}
  It is easy to verify the bijective correspondence between maps as in \ref{it:poisson_maps:base-space} and families of states as in \ref{it:poisson_maps:states}.  We will prove the bijective correspondence between maps as in \ref{it:poisson_maps:morphism} and families of states as in \ref{it:poisson_maps:states}. 

  Denote by $(\beta_\gamma)_{\gamma \in \Gamma(\cG)}$ the $\cG$-action on $\linfty(\cG)$.  Let $\{\mu_x\}_{x \in \Gnaught}$ be a family of states on $A$ as in \ref{it:poisson_maps:states}.  Then defining $\phi : A \to \linfty(\cG)$ by $\phi(a)(g) = (g\mu_{\rms(g)})(a)$ for $a \in A$ yields a unital completely positive map $\phi : A \to \linfty(\cG)$.  Furthermore, $\phi$ is the identity on $\cont(\Gnaught)$ since for $f \in \cont(\Gnaught)$ and $g \in \cG$,
  \begin{gather*}
    \phi(f)(g) = (g \mu_{\rms(g)})(f) = \delta_{\rmr(g)}(f) = (f \circ \rmr)(g)
    \eqstop    
  \end{gather*}
  In particular $\phi(\alpha_\gamma(a)) \in \linfty(\cG)_{\im \gamma}$ for all $a \in A_{\supp \gamma}$.  To see that  $\phi$ is $\cG$-equivariant, choose $\gamma \in \Gamma(\cG)$ and $a \in A_{\supp(\gamma)}$.  We must show that $\phi(\alpha_\gamma(a))(g) = \beta_{\gamma}(\phi(a))(g)$ for all $g \in \im \gamma$.
  
  Suppose $g \in \cG$ satisfies $\rmr(g) \in \im \gamma$.  Choose $\eta \in \Gamma(\cG)$ with $g \in \eta$.  Then $\rmr(g) \in \im \gamma  \cap \im \eta$, so in particular $\im \gamma \cap \im\eta \neq \emptyset$.  Choose $f \in \contc(\im \gamma \cap \im \eta)$ such that $f(\rmr(g)) = 1$. Then
  \begin{align*}
    \phi(\alpha_\gamma(a))(g)
    & = (g \mu_{\rms(g)})(\alpha_\gamma(a)) \\
    &= \mu_{\rms(g)} (\alpha_{\eta^*}( f \alpha_\gamma(a) f)) \\
    &= \mu_{\rms(g)} (\alpha_{(\gamma^* \eta)^*} ( \alpha_{\gamma^*}(f) a \alpha_{\gamma^*}(f))) \\
    &= \mu_{\rms(\gamma^* g)} (\alpha_{(\gamma^* \eta)^*} ( \alpha_{\gamma^*}(f) a \alpha_{\gamma^*}(f))) \\
    &= ((\gamma^* g) \mu_{\rms(\gamma^* g)})(a) \\
    &= \phi(a)(\gamma^*g) \\
    &= \beta_\gamma(\phi(a))(g)
      \eqstop
  \end{align*}
  Hence $\phi$ is equivariant.  Moreover, it is clear that $\phi$ satisfies $\delta_x \circ \phi = \mu_x$ for all $x \in \Gnaught$.  Therefore, the map from families of states as in \ref{it:poisson_maps:states} to $\cG$-ucp maps as in \ref{it:poisson_maps:morphism} is injective.
    
  It remains to show that the map is surjective. For this, let $\phi : A \to \linfty(\cG)$ be a $\cG$-ucp map as in \ref{it:poisson_maps:morphism}.  Define a family of states $\{\mu_x\}_{x \in \Gnaught}$ on $A$ by $\mu_x = \delta_x \circ \phi$ and let $\phi' : A \to \linfty(\cG)$ be the corresponding $\cG$-ucp map constructed as above.  For $g \in \cG$, let $\gamma \in \Gamma(\cG)$ be an open bisection with $g \in \gamma$ and let $f \in \contc(\im \gamma)$ be a function satisfying $f(\rmr(g)) = 1$. Then by the equivariance of $\phi$, we have
  \begin{align*}
    \phi'(a)(g) &= (g \mu_{\rms(g)})(a) \\
		&= \mu_{\rms(g)}(\alpha_{\gamma^*}(f a f)) \\
		&= (\delta_{\rms(g)} \circ \phi)(\alpha_{\gamma^*}(faf)) \\
		&= \delta_{\rms(g)}(\beta_{\gamma^*} \circ \phi(faf)) \\
		&= \phi(faf)(\gamma \rms(g)) \\
		&= \phi(faf)(g).
  \end{align*}
  Now using the fact that $\phi$ is a $\cont(\Gnaught)$-bimodule map gives
  \begin{gather*}
    \phi(faf)(g) = f(\rmr(g)) \phi(a)(g) f(\rmr(g)) = \phi(a)(g)
    \eqstop
  \end{gather*}
  Hence $\phi'(a)(g) = \phi(a)(g)$ for all $g \in \cG$, and we conclude that $\phi' = \phi$.  It follows that the map from families of states as in \ref{it:poisson_maps:states} to $\cG$-ucp maps as in \ref{it:poisson_maps:morphism} is surjective.
\end{proof}

For unital $\cG$-\Cstar-algebras $A$ and $B$, recall that a $\cG$-ucp map $\phi : A \to B$ is an embedding if it is a complete order embedding and note that if either of $A$ or $B$ is commutative, then $\phi$ is an embedding if and only if it is isometric.
\begin{definition}
  \label{def:injectivity}
  Let $\cG$ be an {\'e}tale groupoid with compact Hausdorff unit space. We say that a unital $\cG$-\Cstar-algebra $C$ is injective in the category of unital $\cG$-\Cstar-algebras if whenever $A$ and $B$ are unital $\cG$-\Cstar-algebras with an embedding $\iota : A \to B$ and a $\cG$-ucp map $\phi : A \to C$, there is a $\cG$-ucp map $\psi : B \to C$ extending $\phi$, that is the following diagram commutes.
  \begin{center}
    \begin{tikzcd}
      B \arrow[rd, "\psi", dashed]                 &   \\
      A \arrow[u, "\iota", hook] \arrow[r, "\phi"] & C
    \end{tikzcd}
  \end{center}
\end{definition}

With the correspondence from Proposition \ref{prop:poisson_maps}, we are now able to prove the injectivity of $\linfty(\cG)$ in the category of unital $\cG$-\Cstar-algebras.
\begin{proposition}
  \label{prop:ell_infty_injective}
  Let $\cG$ be an {\'e}tale groupoid with compact Hausdorff unit space. The \Cstar-algebra $\linfty(\cG)$ is injective in the category of unital $\cG$-\Cstar-algebras.
\end{proposition}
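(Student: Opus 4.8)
The plan is to use the correspondence of Proposition~\ref{prop:poisson_maps} to transport the extension problem from the category of unital $\cG$-\Cstar-algebras into the category of operator systems, where it is solved by the classical injectivity of the abelian von Neumann algebra $\linfty(\Gnaught) = \ell^\infty(\Gnaught)$. The crucial observation is that, for maps whose target is $\linfty(\cG)$, the $\cG$-equivariance is automatic: by Proposition~\ref{prop:poisson_maps} a $\cG$-ucp map $A \to \linfty(\cG)$ is exactly the same datum as a ucp map $A \to \linfty(\Gnaught)$ restricting to the identity on $\cont(\Gnaught)$. Thus, after passing to the base space, the equivariance constraint disappears and only a constraint on $\cont(\Gnaught)$ survives.

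Concretely, suppose we are given an embedding $\iota : A \to B$ of unital $\cG$-\Cstar-algebras and a $\cG$-ucp map $\phi : A \to \linfty(\cG)$. First I would apply Proposition~\ref{prop:poisson_maps} to $\phi$, obtaining the ucp map $\psi : A \to \linfty(\Gnaught)$, $\psi(a) = \phi(a)|_{\Gnaught}$, which satisfies $\psi|_{\cont(\Gnaught)} = \id_{\cont(\Gnaught)}$. Since $\iota$ is a complete order embedding by definition, and since $\linfty(\Gnaught) = \ell^\infty(\Gnaught)$ is an injective operator system---being an abelian von Neumann algebra, equivalently a commutative \Cstar-algebra whose Gelfand spectrum is extremally disconnected---there is a ucp extension $\wt{\psi} : B \to \linfty(\Gnaught)$ with $\wt{\psi} \circ \iota = \psi$. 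Because $\iota$ is a $\cG$-ucp map it intertwines the canonical inclusions of $\cont(\Gnaught)$ into $A$ and $B$, so the extension automatically satisfies $\wt{\psi}|_{\cont(\Gnaught)} = \psi|_{\cont(\Gnaught)} = \id_{\cont(\Gnaught)}$.

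It then remains to transport $\wt{\psi}$ back. Applying Proposition~\ref{prop:poisson_maps} to $B$ produces a $\cG$-ucp map $\wt{\phi} : B \to \linfty(\cG)$ whose restriction to the units recovers $\wt{\psi}$, that is $\wt{\phi}(b)|_{\Gnaught} = \wt{\psi}(b)$. To check that $\wt{\phi}$ extends $\phi$, I would observe that both $\phi$ and $\wt{\phi} \circ \iota$ are $\cG$-ucp maps $A \to \linfty(\cG)$ inducing the same map into $\linfty(\Gnaught)$: indeed $(\wt{\phi} \circ \iota)(a)|_{\Gnaught} = \wt{\psi}(\iota(a)) = \psi(a) = \phi(a)|_{\Gnaught}$. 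Since the correspondence of Proposition~\ref{prop:poisson_maps} between $\cG$-ucp maps into $\linfty(\cG)$ and their restrictions to $\Gnaught$ is a bijection, this forces $\wt{\phi} \circ \iota = \phi$, which is exactly the required extension and completes the proof of injectivity.

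The only genuinely nontrivial input is the classical injectivity of $\ell^\infty(\Gnaught)$ as an operator system; everything else is formal once Proposition~\ref{prop:poisson_maps} is in hand. The main point to watch is that the correspondence is invoked three times---to decompose $\phi$, to build $\wt{\phi}$ from $\wt{\psi}$, and (via its injectivity for $A$) to identify $\wt{\phi}\circ\iota$ with $\phi$---and that the bookkeeping of the $\cont(\Gnaught)$-constraint is precisely what guarantees that the operator-system extension lands back inside the $\cG$-equivariant category rather than merely being a ucp extension.
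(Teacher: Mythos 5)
Your proof is correct and follows essentially the same route as the paper's: pass to the base space via Proposition~\ref{prop:poisson_maps}, extend using the operator-system injectivity of the commutative von Neumann algebra $\linfty(\Gnaught)$, and transport back, identifying $\wt{\phi}\circ\iota$ with $\phi$ through the injectivity of the correspondence. Your explicit check that $\wt{\psi}|_{\cont(\Gnaught)} = \id_{\cont(\Gnaught)}$ (needed to re-apply the correspondence to $\wt{\psi}$) is a small bookkeeping point the paper leaves implicit.
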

\begin{proof}
  Let $\iota : A \to B$ be a unital $\cG$-\Cstar-algebra embedding and let $\phi : A \to \linfty(\cG)$ be a $\cG$-ucp map.  Let $\psi : A \to \linfty(\Gnaught)$ be the corresponding unital completely positive map satisfying $\psi|_{\cont(\Gnaught)} = \id_{\cont(\Gnaught)}$ as in Proposition~\ref{prop:poisson_maps}.  Since $\linfty(\Gnaught)$ is a commutative von Neumann algebra, it is injective in the category of operator systems.  Hence there is a unital positive map $\tilde{\psi} : B \to \linfty(\Gnaught)$ extending $\psi$, that is the following diagram commutes.
  \begin{center}
    \begin{tikzcd}
      B \arrow[rd, "\widetilde{\psi}", dashed]                 &                            \\
      A \arrow[u, "\iota", hook] \arrow[r, "\psi"] & \linfty(\Gnaught)
    \end{tikzcd}
  \end{center}
  Applying Proposition \ref{prop:poisson_maps} again, we obtain a $\cG$-ucp map $\tilde{\phi} : B \to \linfty(\cG)$.  For $a \in A$, we have $\tilde{\phi}(a)|_{\Gnaught} = \tilde{\psi}(a) = \psi(a) = \phi(a)|_{\Gnaught}$, so it follows from the correspondence in Proposition~\ref{prop:poisson_maps} that $\tilde{\phi}|_A = \phi$.  We conclude that $\linfty(\cG)$ is injective.
\end{proof}

\begin{definition}
  \label{def:essential-rigid}
  Let $\cG$ be an {\'e}tale groupoid with compact Hausdorff unit space and let $A \subseteq B$ be a $\cG$-ucp embedding of unital $\cG$-\Cstar-algebras.
  \begin{enumerate}
  \item We say that $B$ is a rigid extension of $A$ if any $\cG$-ucp map $\phi : B \to B$ with $\phi|_A = \id_A$ satisfies $\phi = \id_B$.

  \item We say that $B$ is an essential extension of $A$ if whenever $C$ is a unital $\cG$-\Cstar-algebra and $\phi : B \to C$ is a $\cG$-ucp map such that the restriction $\phi|_A$ is an embedding, then $\phi$ is an embedding.

  \item $B$ is a $\cG$-injective envelope of $A$ if $B$ is an essential extension of $A$, and if, in addition, $B$ is injective in the category of unital $\cG$-\Cstar-algebras.
  \end{enumerate}
\end{definition}

The next result establishes the existence and uniqueness of the injective envelope of $\cont(\Gnaught)$ in the category of unital $\cG$-\Cstar-algebras, along with its rigidity property.  The first part of the proof closely follows Sinclair's proof from \cite{sinclair2015} of the existence of the injective envelope of an operator system. By utilising the convexity of the semigroup under consideration, we then deduce rigidity and essentiality. The proof will make use of basic facts about compact right topological semigroups as presented for example in \cite{hindmanstrauss12}.
\begin{theorem}
  \label{thm:injective_envelope}
  Let $\cG$ be an {\'e}tale groupoid with compact Hausdorff unit space.  The $\cG$-\Cstar-algebra $\cont(\Gnaught)$ admits an injective envelope in the category of unital $\cG$-\Cstar-algebras.  It is a rigid extension of $\cont(\Gnaught)$.  It is also a commutative \Cstar-algebra and it is unique up to *-isomorphism.
\end{theorem}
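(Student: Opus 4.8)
The plan is to realise the injective envelope as the image of a minimal idempotent in a suitable semigroup of $\cG$-ucp maps, following Sinclair's strategy \cite{sinclair2015}, and then to exploit the convexity of this semigroup to upgrade minimality to rigidity. Since $\cont(\Gnaught) \subseteq \linfty(\cG)$ is a $\cG$-ucp embedding and $\linfty(\cG)$ is $\cG$-injective by Proposition~\ref{prop:ell_infty_injective}, I would introduce
\[
  \Sigma = \{\vphi : \linfty(\cG) \to \linfty(\cG) \mid \vphi \text{ is } \cG\text{-ucp and } \vphi|_{\cont(\Gnaught)} = \id\}
\]
with composition as multiplication. Because $\linfty(\cG)$ is commutative, the image of any ucp idempotent $\rho \in \Sigma$, equipped with the Choi--Effros product $(a,b) \mapsto \rho(ab)$, is automatically a \emph{commutative} \Cstar-algebra containing $\cont(\Gnaught)$. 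This settles the commutativity claim at once and identifies the envelope as $\cont(\hb\cG)$ for a compact Hausdorff space $\hb\cG$.

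First I would check that $\Sigma$ is a compact right topological semigroup in the topology of pointwise weak-$*$ convergence. Compactness follows from Banach--Alaoglu together with the fact that unitality, complete positivity, $\cG$-equivariance, and fixing $\cont(\Gnaught)$ are all preserved under pointwise weak-$*$ limits; right topologicality is the observation that $\vphi \mapsto \vphi \circ \psi$ is continuous for each fixed $\psi$. The structure theory of compact right topological semigroups \cite{hindmanstrauss12} then provides a minimal idempotent $\rho \in \Sigma$. Writing $I = \rho(\linfty(\cG))$ with its Choi--Effros product, the retraction $\rho : \linfty(\cG) \to I$ and the inclusion $I \hra \linfty(\cG)$ exhibit $I$ as a $\cG$-equivariant retract of the $\cG$-injective algebra $\linfty(\cG)$, so $I$ is itself $\cG$-injective: any $\cG$-ucp map into $I$ is first extended into $\linfty(\cG)$ by injectivity and then compressed back by $\rho$.

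The heart of the matter, and where I expect the main difficulty, is rigidity. Following Sinclair, I would use that for a minimal idempotent $\rho$ the set $\rho\Sigma\rho$ is a group with identity $\rho$; crucially, the convexity of $\Sigma$ passes to $\rho\Sigma\rho$, making it a compact convex subset of the locally convex space of maps $\linfty(\cG) \to \linfty(\cG)$. By Ellis' theorem $\rho\Sigma\rho$ is a topological group, so the barycenter $b = \int \sigma \, \mathrm{d}\mu(\sigma)$ of its Haar measure $\mu$ lies in $\rho\Sigma\rho$ by convexity and satisfies $hb = b$ for all $h$ by left invariance; multiplying on the right by $b^{-1}$ gives $h = \rho$, whence $\rho\Sigma\rho = \{\rho\}$. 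To deduce rigidity, given a $\cG$-ucp map $\phi : I \to I$ fixing $\cont(\Gnaught)$, I would set $\sigma = \phi \circ \rho \in \Sigma$ and compute $\rho\sigma\rho = \sigma$ using $\rho|_I = \id_I$; hence $\sigma \in \rho\Sigma\rho = \{\rho\}$, that is $\phi \circ \rho = \rho$, which restricts to $\phi = \id_I$. The delicate points are confirming that $\rho\Sigma\rho$ is genuinely a compact group to which Ellis' theorem and the Haar barycenter apply, and verifying that the $\cG$-\Cstar-algebra structure induced on $I$ via $\rho$ turns all these compressions into morphisms of the category.

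Finally, essentiality and uniqueness follow from the standard Hamana arguments once rigidity is established. For essentiality, if $\psi : I \to C$ is $\cG$-ucp with $\psi|_{\cont(\Gnaught)}$ an embedding, then injectivity of $I$ extends $(\psi|_{\cont(\Gnaught)})^{-1}$ to a $\cG$-ucp map $C \to I$ whose composite with $\psi$ fixes $\cont(\Gnaught)$, hence equals $\id_I$ by rigidity, forcing $\psi$ to be an embedding. For uniqueness, two $\cG$-injective envelopes admit mutually inverse $\cG$-ucp maps over $\cont(\Gnaught)$ whose composites fix $\cont(\Gnaught)$ and are therefore the identity by rigidity; being mutually inverse unital complete order isomorphisms of commutative \Cstar-algebras, they are $\cG$-equivariant \Cstar-isomorphisms.
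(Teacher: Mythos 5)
Your construction of the envelope itself is essentially the paper's: the same semigroup $\Sigma$ of $\cG$-ucp maps on $\linfty(\cG)$ fixing $\cont(\Gnaught)$, an idempotent obtained from the structure theory of compact right topological semigroups, the Choi--Effros product (with commutativity for free since $\linfty(\cG)$ is commutative), injectivity of the range as a retract, and the standard deductions of essentiality and uniqueness from rigidity. The gap is in your rigidity step. You want to apply Ellis' theorem and a Haar barycenter to $\rho\Sigma\rho$, but neither tool is available here. First, $\rho\Sigma\rho$ is not obviously compact: in the point-weak$^*$ topology only \emph{right} composition $\sigma \mapsto \sigma \circ \psi$ is continuous, whereas \emph{left} composition $\sigma \mapsto \rho \circ \sigma$ is continuous only if $\rho$ is weak$^*$-weak$^*$ continuous, which a ucp map on $\linfty(\cG)$ need not be (it need not be normal); so the image of $\Sigma$ under $\sigma \mapsto \rho\circ\sigma\circ\rho$ need not be closed. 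Second, even granting compactness, $\rho\Sigma\rho$ is only a compact \emph{right topological} group, and Ellis' theorem requires a semitopological group (separately continuous multiplication on both sides). Compact right topological groups are in general not topological groups --- Ellis groups of distal flows are standard counterexamples --- and Haar measure on such groups is only known to exist under additional admissibility hypotheses (a dense set of elements with continuous left translations, as in Milnes--Pym); its existence in general cannot be invoked. So the chain ``$\rho\Sigma\rho$ compact group $\Rightarrow$ topological group $\Rightarrow$ Haar barycenter $\Rightarrow$ $\rho\Sigma\rho = \{\rho\}$'' breaks at its first two links.

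The paper's argument shows how to exploit convexity without ever leaving the territory where right topological structure suffices. Take a minimal \emph{left ideal} $L \subseteq \Sigma$ and fix $\psi \in L$; minimality gives $L = \Sigma\psi$, which is convex because $\Sigma$ is convex and $\sigma \mapsto \sigma\circ\psi$ is affine, and closed because right multiplication is continuous. The map $L \to L$, $\phi \mapsto \phi \circ \psi$, is then a \emph{continuous} affine self-map of a compact convex set, hence has a fixed point; so $L_0 = \{\phi \in L \mid \phi\circ\psi = \phi\}$ is a nonempty left ideal inside $L$, and minimality forces $L_0 = L$. Thus $L$ is a left zero semigroup ($\phi\circ\psi = \phi$ for all $\phi, \psi \in L$), every element of $L$ is idempotent, and rigidity of $A = \im\psi$ follows from $\psi = \psi \circ \iota \circ \phi \circ \psi$ exactly as in your final computation. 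If you replace your Ellis/Haar paragraph with this fixed-point argument on the minimal left ideal, the rest of your proposal goes through unchanged.
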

\begin{proof}
  By Proposition \ref{prop:ell_infty_injective}, the commutative von Neumann algebra $\linfty(\cG)$ is injective in the category of unital $\cG$-\Cstar-algebras.  Let $S$ denote the set of $\cG$-\Cstar-ucp maps $\phi : \linfty(\cG) \to \linfty(\cG)$ satisfying $\phi|_{\cont(\Gnaught)} = \id_{\cont(\Gnaught)}$, equipped with the relative point-weak* topology.  Then $S$ is a compact convex right topological semigroup under composition, meaning that for fixed $\psi \in S$ and a net $(\phi_i)$ in $S$ converging to $\phi \in S$, we have $\lim \phi_i \circ \psi = \phi \circ \psi$.  
  
  Since $S$ is a compact right topological semigroup, it contains a minimal left ideal $L \subseteq S$. Note that $L$ is necessarily closed. We claim that $L$ is also a left zero semigroup, in the sense that $\phi \circ \psi = \phi$ for all $\phi,\psi \in L$. To see this, fix $\psi \in L$ and observe that $S \psi$ is a left ideal of $S$ contained in $L$, so the minimality of $L$ implies $L = S \psi$. In particular, since $S$ is convex, this implies that $L$ is convex. The map $L \to L : \phi \mapsto \phi \circ \psi$ is thus a continuous affine map, so it admits a fixed point. Hence $L_0 = \{\phi \in L : \phi \circ \psi = \phi\}$ is a left ideal of $S$ contained in $L$. Applying the minimality of $L$ again implies that $L_0 = L$. Hence $L$ is a left zero semigroup.  This implies that every element of $L$, and in particular $\psi$, is idempotent.
  
  Let $A = \im \psi$.  Since $\psi$ is idempotent, $A$ is a \Cstar-algebra under the Choi-Effros product defined by $a \circ b = \psi(ab)$.  Since $\linfty(\cG)$ is commutative, $A$ is commutative.  Also, since $\psi|_{\cont(\Gnaught)} = \id_{\cont(\Gnaught)}$, it follows that $\cont(\Gnaught)$ is a \Cstar-subalgebra of $A$ and belongs to the multiplicative domain of $\psi$.  Thanks to $\cG$-equivariance of $\psi$, the $\cG$-\Cstar-algebra structure on $\linfty(\cG)$ restricts to a $\cG$-\Cstar-algebra structure on $A$.  Furthermore, since $A$ is the range of an idempotent $\cG$-ucp map from the injective $\cG$-\Cstar-algebra $\linfty(\cG)$, it follows that $A$ is injective in the category of $\cG$-\Cstar-algebras.
  
  Before proving that $A$ is actually the injective envelope of $\cont(\Gnaught)$, it will be convenient to first prove that it is a rigid extension of $\cont(\Gnaught)$. To see this, let $\phi : A \to A$ be a $\cG$-ucp map. Let $\iota : A \to \linfty(\cG)$ denote the $\cG$-embedding as a subset. Then $\iota \circ \phi \circ \psi \in S$ and even $\iota \circ \phi = \iota \circ \phi \circ \psi^2 \in L$. Since $L$ is a left zero semigroup, $\psi = \psi \circ \iota \circ \phi \circ \psi$, which implies that $\phi = \id_A$. Hence $A$ is a rigid extension of $\cont(\Gnaught)$.
  
  To see that $A$ is an essential extension of $\cont(\Gnaught)$, let $C$ be a unital $\cG$-\Cstar-algebra and let $\phi : A \to C$ be a $\cG$-ucp map.  Note that $\phi|_{\cont(\Gnaught)} = \id_{\cont(\Gnaught)}$, so in particular $\phi|_{\cont(\Gnaught)}$ is an embedding.  By the injectivity of $A$, there is a $\cG$-ucp map $\eta : C \to A$ satisfying $(\eta \circ \phi)|_{\cont(\Gnaught)} = \id_{\cont(\Gnaught)}$.  Hence by the rigidity of $A$, we have $\eta \circ \phi = \id_A$, implying that $\phi$ is an embedding.

  Finally, to see that $A$ is unique, let $D$ be an injective envelope of $\cont(\Gnaught)$ in the category of unital $\cG$-\Cstar-algebras.  By the injectivity of $D$, there is a $\cG$-ucp map $\phi : A \to D$ such that $\phi|_{\cont(\Gnaught)}$ is an embedding.  So the essentiality of $A$ implies that $\phi$ is an embedding.  Symmetrically, we obtain an embedding $\psi: D \to A$.  The composition $\psi \circ \phi$, must be the identity map, since $A$ is rigid, which implies that $\phi$ is surjective.  Hence $\phi$ is an isometric complete order isomorphism between \Cstar-algebras, and therefore is a *-isomorphism.
\end{proof}

\begin{definition}
  \label{def:hamana-boundary}
  Let $\cG$ be an {\'e}tale groupoid with compact Hausdorff unit space.  The Hamana boundary $\hb \cG$ of $\cG$ is the spectrum of the injective envelope of $\cont(\Gnaught)$ in the category of unital $\cG$-\Cstar-algebras.
\end{definition}

We observe that $\cont(\hb \cG)$ is also injective in the category of unital \Cstar-algebras. In particular, this implies that $\hb \cG$ is extremally disconnected. We will make use of this fact below.

\subsection{The Furstenberg boundary}
\label{sec:furstenberg-boundary}

Let $\cG$ be an {\'e}tale groupoid with compact Hausdorff unit space. In this subsection we will construct the Furstenberg boundary of $\cG$ by developing an analogue for groupoids of Furstenberg and Glasner's theory of topological dynamical boundaries for groups (see e.g. \cite{glasner1976}).

In the next definition, we make use of the notation $\cG \cdot A = \{g a \mid \rms(g) = p(a) \}$ for a subset $A \subseteq X$ of a $\cG$-space $p: X \to \Gnaught$.
\begin{definition}
  \label{def:boundary-actions}
  Let $\cG$ be an {\'e}tale groupoid with compact Hausdorff space of units and let $p : Y \to \Gnaught$ be a compact $\cG$-space.
  \begin{itemize}
  \item We will say that $Y$ is irreducible if whenever $Z \subseteq Y$ is a closed $\cG$-invariant subspace satisfying $p(Z) = \Gnaught$, then $Z = Y$.
    
  \item We will say that $Y$ is strongly proximal if whenever $(\mu_x)_{x \in \Gnaught}$  is a family of probability measures on $Y$ satisfying $p_* \mu_x = \delta_x$ for all $x \in \Gnaught$, then
    \begin{gather*}
      \Gnaught
      \subseteq
      p(Y \cap \ol{\cG \cdot \{\mu_x \mid x \in \Gnaught\}})
      \eqstop
    \end{gather*}
    
  \item We will say that $Y$ is a $\cG$-boundary if it is both irreducible and strongly proximal.
  \end{itemize}
\end{definition}

\begin{remark}
  \label{rem:G-boundary-irreducible}
  We will use the fact that a compact $\cG$-space $p: Y \to \Gnaught$ is a $\cG$-boundary if and only if for every family $(\mu_x)_{x \in \Gnaught}$ of probability measures $\mu_x \in \cP(Y)$ satisfying $p_* \mu_x = \delta_x$, we have $Y \subseteq \ol{\cG \cdot \{\mu_x \mid x \in \Gnaught\}}$.
\end{remark}

The next proposition provides a characterisation of strong proximality in the minimal setting that will be useful for the arguments in Section~\ref{sec:powers-averaging}.
\begin{proposition}
  \label{prop:strongly-proximal-minimal}
  Let $\cG$ be a minimal {\'e}tale groupoid with compact Hausdorff unit space.  Let $p : Y \to \Gnaught$ be an irreducible $\cG$-space.  Then $Y$ is strongly proximal if and only if the following condition holds: for every $x \in \Gnaught$ and every probability measure $\mu \in \cP(Y)$ satisfying $p_* \mu = \delta_x$, there is $y \in p^{-1}(x)$ and a net $(g_i)$ in $\cG_x$ such that $g_i \mu \to \delta_y$. 
\end{proposition}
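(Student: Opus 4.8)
The plan is to prove the two implications separately, throughout identifying $Y$ with the point masses inside $\cP(Y)$ and recalling from Remark~\ref{rem:reforumation-action-restricted-probability-measures} that $\cP_{\Gnaught}(Y)$ is a $\cG$-space with jointly continuous action, where $g\nu$ is defined whenever $p_*\nu = \delta_{\rms(g)}$ and then satisfies $p_*(g\nu) = \delta_{\rmr(g)}$.

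The implication from the pointwise condition to strong proximality I would prove directly, and it uses neither minimality nor irreducibility. Given a family $(\mu_x)_{x \in \Gnaught}$ with $p_*\mu_x = \delta_x$, fix an arbitrary unit $w \in \Gnaught$ and apply the hypothesis to $\mu_w$: this produces $y \in p^{-1}(w)$ and a net $(g_i)$ in $\cG_w$ with $g_i\mu_w \to \delta_y$. Since each $g_i\mu_w$ lies in $\cG \cdot \{\mu_x \mid x \in \Gnaught\}$, its limit $\delta_y$ lies in $Y \cap \ol{\cG \cdot \{\mu_x \mid x \in \Gnaught\}}$, and $p(y) = w$. As $w$ was arbitrary, this gives $\Gnaught \subseteq p(Y \cap \ol{\cG \cdot \{\mu_x \mid x \in \Gnaught\}})$, which is exactly strong proximality.

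For the converse I would use minimality to manufacture a full family of measures from the single measure $\mu$. Fix $x \in \Gnaught$ and $\mu \in \cP(Y)$ with $p_*\mu = \delta_x$, and set $K = \ol{\cG_x \mu} \subseteq \cP_{\Gnaught}(Y)$, the orbit closure. Since $p_*(g\mu) = \delta_{\rmr(g)}$ and $\rmr(\cG_x)$ is dense in $\Gnaught$ by minimality, continuity of $p_*$ together with compactness of $K$ yields $p_*(K) = \{\delta_z \mid z \in \Gnaught\}$; hence for every $z \in \Gnaught$ I may choose $\mu_z \in K$ with $p_*\mu_z = \delta_z$. Applying strong proximality to the family $(\mu_z)_{z \in \Gnaught}$ gives $\Gnaught \subseteq p(Y \cap \ol{\cG \cdot \{\mu_z \mid z \in \Gnaught\}})$, and in particular a point mass $\delta_y$ in $\ol{\cG \cdot \{\mu_z \mid z \in \Gnaught\}}$ with $y \in p^{-1}(x)$.

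The final and most delicate step is to transfer this point mass back into $K$ itself, i.e.\ to show $\ol{\cG \cdot \{\mu_z \mid z \in \Gnaught\}} \subseteq K$; this then furnishes the desired net in $\cG_x\mu$ converging to $\delta_y$. I expect this to be the main obstacle, and it is purely groupoid-theoretic: for $g'$ with $\rms(g') = z$ and an approximating net $g_k \in \cG_x$ with $g_k\mu \to \mu_z$ (so that $\rmr(g_k) \to z$), the products $g' g_k$ are generally undefined, since $\rmr(g_k) \neq z = \rms(g')$, so one cannot naively pass $g'$ through the limit. I would resolve this using an open bisection: choose $\gamma \in \Gamma(\cG)$ with $g' \in \gamma$, and for $k$ large enough that $\rmr(g_k) \in \supp\gamma$, let $g'_k$ be the unique element of $\gamma$ with $\rms(g'_k) = \rmr(g_k)$. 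Then $g'_k \to g'$ by continuity of $(\rms|_\gamma)^{-1}$, the products $g'_k g_k \in \cG_x$ are defined, and joint continuity of the action on the fibre product $\cG \tensor[_\rms]{\times}{_p} \cP_{\Gnaught}(Y)$ gives $(g'_k g_k)\mu = g'_k(g_k\mu) \to g'\mu_z$, whence $g'\mu_z \in K$. As $g'$ and $z$ were arbitrary, $\cG \cdot \{\mu_z \mid z \in \Gnaught\} \subseteq K$, and since $K$ is closed, $\ol{\cG \cdot \{\mu_z \mid z \in \Gnaught\}} \subseteq K$. Combining this with the previous step places $\delta_y$ in $K = \ol{\cG_x\mu}$, which is precisely the assertion of the pointwise condition and completes the proof.
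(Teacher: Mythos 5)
Your proof is correct and follows essentially the same route as the paper: build a family $(\mu_z)_{z \in \Gnaught}$ inside the orbit closure $\ol{\cG \cdot \mu}$ via minimality and weak* compactness, apply strong proximality to it, and pull the resulting point mass back into $\ol{\cG_x \mu}$. The ``delicate'' bisection argument you spell out in your final step is precisely the justification for the containment $\ol{\cG \cdot \{\mu_z \mid z \in \Gnaught\}} \subseteq \ol{\cG \cdot \mu}$ that the paper uses implicitly (its inclusion $p(Y \cap \ol{\cG \cdot \mu}) \supseteq p(Y \cap \ol{\cG \cdot \{\mu_x \mid x \in \Gnaught\}})$), so you have in fact filled a gap the paper glosses over rather than taken a different path.
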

\begin{proof}
  If $Y$ satisfies the condition of the proposition, then it is clear that $Y$ is strongly proximal. For the converse, suppose that $Y$ is strongly proximal and that $\mu \in \cP(Y)$ satisfies $p(\mu) = \delta_{x_0}$ for some $x_0 \in \Gnaught$.  By the minimality of $\cG$,
  \begin{gather*}
    p_*( \ol{\cG \cdot \mu} ) = \ol{\cG \cdot p_* \mu} = \Gnaught
    \eqstop
  \end{gather*}
  So we can find a family $(\mu_x)_{x \in \Gnaught}$ in $\ol{\cG \cdot \mu}$ satisfying $p(\mu_x) = \delta_x$ for all $x \in \Gnaught$.  From the strong proximality of $Y$, we infer that
  \begin{gather*}
    p(Y \cap \ol{\cG \cdot \mu})
    \supseteq
    p(Y \cap \ol{\cG \cdot \{\mu_x \mid x \in \Gnaught\}})
    =
    \Gnaught
    \eqcomma
  \end{gather*}
  which finishes the proof of the proposition.
\end{proof}

In Furstenberg and Glasner's theory of topological dynamical boundaries for groups, the affine flow of probability measures on a compact flow plays an important role. In the present setting, a complication arises from the fact that a $\cG$-flow $Z$ does not necessarily induce a $\cG$-flow structure on the entire space $\cP(Z)$ of probability measures on $Z$. Instead, it is necessary to work with the subspace $\cP_{\Gnaught}(Z) \subseteq \cP(Z)$ introduced in Section \ref{sec:groupoid-spaces}.

\begin{proposition}
  \label{prop:rigidity}
  Let $\cG$ be an {\'e}tale groupoid with compact Hausdorff unit space.  Let $p_Z : Z \to \Gnaught$ be a $\cG$-boundary and let $p_Y : Y \to \Gnaught$ be any compact $\cG$-space.
  \begin{enumerate}
  \item The image of every $\cG$-map $Y \to \cP_{\Gnaught}(Z)$ contains $Z$.
  \item If $Y$ is irreducible, then every $\cG$-map $Y \to \cP_{\Gnaught}(Z)$ maps onto $Z$, and there is at most one $\cG$-map $Y \to Z$.
  \end{enumerate}
\end{proposition}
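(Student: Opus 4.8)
The plan is to adapt the classical Furstenberg--Glasner rigidity arguments for group boundaries, working throughout with the affine $\cG$-space $\cP_{\Gnaught}(Z)$ in place of the full simplex of probability measures, and to extract all the boundary input from the reformulation recorded in Remark~\ref{rem:G-boundary-irreducible}. The one preliminary I would fix is the identification of $Z$ with the set of point masses $\{\delta_z \mid z \in Z\} \subseteq \cP_{\Gnaught}(Z)$. Since $Z$ is compact and $\cP_{\Gnaught}(Z)$ is Hausdorff, this copy of $Z$ is closed; the action formula of Remark~\ref{rem:reforumation-action-restricted-probability-measures} gives $g \delta_z = \delta_{gz}$, so it is also $\cG$-invariant; and a convex combination $\tfrac12(\delta_a + \delta_b)$ is a point mass precisely when $a = b$. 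These, together with affineness of the $\cG$-action on $\cP_{\Gnaught}(Z)$, are the only structural facts I need.

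For part (1), let $\Phi : Y \to \cP_{\Gnaught}(Z)$ be a $\cG$-map. Using surjectivity of $p_Y$ I would choose, for each $x \in \Gnaught$, a point $y_x \in p_Y^{-1}(x)$ and set $\mu_x = \Phi(y_x)$; anchor-preservation of $\Phi$ forces $(p_Z)_* \mu_x = \delta_x$, so $(\mu_x)_{x \in \Gnaught}$ is admissible for Remark~\ref{rem:G-boundary-irreducible}. Equivariance gives $g \mu_x = \Phi(g y_x)$, hence $\cG \cdot \{\mu_x \mid x \in \Gnaught\} \subseteq \Phi(Y)$; because $Y$ is compact and $\Phi$ continuous, $\Phi(Y)$ is closed in $\cP(Z)$, so the closure cannot escape the image, i.e. $\ol{\cG \cdot \{\mu_x\}} \subseteq \Phi(Y)$. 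As $Z$ is a $\cG$-boundary, Remark~\ref{rem:G-boundary-irreducible} yields $Z \subseteq \ol{\cG \cdot \{\mu_x\}} \subseteq \Phi(Y)$, which is the assertion.

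For part (2), assume $Y$ is irreducible and let $\Phi : Y \to \cP_{\Gnaught}(Z)$ be a $\cG$-map. I would consider $W = \Phi^{-1}(Z)$, the set of points mapped to point masses. By the preliminary remarks $Z$ is closed and $\cG$-invariant in $\cP_{\Gnaught}(Z)$, and $\Phi$ is a continuous $\cG$-map, so $W$ is closed and $\cG$-invariant. Part (1) shows every $\delta_z$ lies in $\Phi(Y)$, and any preimage $y$ satisfies $p_Y(y) = p_Z(z)$; since $p_Z$ is surjective this gives $p_Y(W) = \Gnaught$, so irreducibility forces $W = Y$, that is $\Phi(Y) \subseteq Z$. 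Combined with part (1) this gives $\Phi(Y) = Z$. For uniqueness, given $\cG$-maps $\Phi_1, \Phi_2 : Y \to Z$, I would form $\Psi : Y \to \cP_{\Gnaught}(Z)$, $\Psi(y) = \tfrac12(\delta_{\Phi_1(y)} + \delta_{\Phi_2(y)})$, which is $\cG$-equivariant because the action on $\cP_{\Gnaught}(Z)$ is affine; the first half of (2) then forces $\Psi(Y) \subseteq Z$, and a midpoint of two point masses is a point mass only if they agree, whence $\Phi_1 = \Phi_2$.

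Each step is short, and no new idea beyond Remark~\ref{rem:G-boundary-irreducible} and affineness of the measure action is required. I expect the only genuinely delicate points to be bookkeeping: verifying that $\Phi(Y)$ is closed so the closure in Remark~\ref{rem:G-boundary-irreducible} stays inside the image, and confirming that $W$ is simultaneously closed, $\cG$-invariant, and full over $\Gnaught$ so that irreducibility is applicable. I anticipate the main obstacle will be stating these invariance-and-closedness checks cleanly rather than overcoming any conceptual difficulty.
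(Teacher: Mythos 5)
Your proof is correct and follows essentially the same route as the paper's: part (1) via Remark~\ref{rem:G-boundary-irreducible} together with compactness of $Y$ (so that $\ol{\cG \cdot \Phi(Y)}$ stays inside the closed image), and part (2) by applying irreducibility to $\Phi^{-1}(Z)$ and then using the affine midpoint map $\tfrac12(\delta_{\Phi_1} + \delta_{\Phi_2})$ for uniqueness. The only difference is that you spell out the bookkeeping the paper leaves implicit — the explicit admissible family $(\mu_x)$ obtained from surjectivity of $p_Y$, and the verification that the point-mass copy of $Z$ is closed and $\cG$-invariant so that $W = \Phi^{-1}(Z)$ meets the hypotheses of irreducibility — which is a welcome tightening rather than a deviation.
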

\begin{proof}
  Let $\phi : Y \to \cP_{\Gnaught}(Z)$ be a $\cG$-map.  Then $(p_X)_* \phi(y) = \delta_{p_Y(y)}$ holds for all $y \in Y$. Since $Z$ is a $\cG$-boundary, we have $Z \subseteq \ol{\cG \cdot \phi(Y)}$ by Remark~\ref{rem:G-boundary-irreducible}.  Because $\phi$ is $\cG$-equivariant and $Y$ is compact, we infer that $Z \subseteq \phi(Y)$.
  
  Now assume in addition that that $Y$ is irreducible. The subset $\phi^{-1}(Z) \subseteq Y$ satisfies $p_Y(\phi^{-1}(Z)) = \Gnaught$, so by the irreducibility of $Y$, we have $\phi^{-1}(Z) = Y$. Combined with the previous paragraph, this implies that $\phi(Y) = Z$.  If $\psi: Y \to Z$ is another $\cG$-map, then $\frac{1}{2}(\phi + \psi): Y \to \cP_{\Gnaught}(Z)$ is also a $\cG$-map that, from above, must take values in $Z$. It follows that $\phi = \psi$.
\end{proof}

The next theorem establishes the existence of a Furstenberg boundary in analogy with the classical argument for groups.
\begin{theorem}
  \label{thm:furstenberg-boundary-exists}
  Let $\cG$ be an {\'e}tale groupoid with compact Hausdorff space of units.  There is a $\cG$-boundary $\fb \cG$ that is universal in the sense that for every $\cG$-boundary $Y$ there is a (necessarily surjective) $\cG$-map $\fb \cG \to Y$. Furthermore, $\fb \cG$ is the unique $\cG$-boundary with this property up to isomorphism of $\cG$-spaces.
\end{theorem}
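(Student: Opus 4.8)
The plan is to follow the Furstenberg--Glasner strategy adapted to groupoids: construct a single $\cG$-boundary dominating all others as a minimal subflow of a fibre product of all boundaries, letting Proposition~\ref{prop:rigidity} carry the analytic weight. I would first record that the isomorphism classes of $\cG$-boundaries form a set. This follows from the injectivity of $\linfty(\cG)$ (Proposition~\ref{prop:ell_infty_injective}): given a $\cG$-boundary $Y$, injectivity together with Proposition~\ref{prop:poisson_maps} yields a $\cG$-ucp map $\cont(Y)\to\linfty(\cG)$ restricting to the identity on $\cont(\Gnaught)$; dualising produces a $\cG$-map from the fixed compact $\cG$-space $\Omega=\mathrm{spec}\,\linfty(\cG)$ into $\cP_{\Gnaught}(Y)$, whose image contains the point masses $Y$ by Proposition~\ref{prop:rigidity}. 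Hence $|Y|\le|\Omega|$, and compact $\cG$-spaces of bounded cardinality have only a set of isomorphism classes. Fix a set $(Y_i)_{i\in I}$ of representatives of all $\cG$-boundaries.

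Next I would form the fibre product $P=\{(y_i)_i\in\prod_i Y_i : p_i(y_i)=p_j(y_j)\text{ for all }i,j\}$, which is a compact Hausdorff $\cG$-space over $\Gnaught$ (compact by Tychonoff and closedness of the equalising conditions, and surjecting onto $\Gnaught$ since each fibre is a nonempty product of the fibres of the $Y_i$). The crucial point is that $P$ is strongly proximal. To prove this, fix a family $(\mu_x)_{x\in\Gnaught}$ on $P$, put $K=\ol{\cG\cdot\{\mu_x\mid x\in\Gnaught\}}\subseteq\cP_{\Gnaught}(P)$, and choose by Zorn's lemma a minimal closed $\cG$-invariant subspace $N\subseteq K$ with $p(N)=\Gnaught$ (lower bounds of chains exist because the fibres are compact). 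Then $N$ is irreducible, so for each $i$ the pushforward $(\pi_i)_*\colon N\to\cP_{\Gnaught}(Y_i)$ is a $\cG$-map from an irreducible space into $\cP_{\Gnaught}$ of a boundary, and Proposition~\ref{prop:rigidity}(ii) forces its image to be exactly the point masses of $Y_i$. Thus every $\rho\in N$ has point-mass image under each coordinate projection; but a measure on $P$ all of whose coordinate pushforwards are point masses $\delta_{y_i}$ is supported on the single point $(y_i)_i$ and is therefore itself a point mass. Hence $N$ consists of point masses lying over every $x\in\Gnaught$, which is precisely strong proximality of $P$.

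I would then define $\fb\cG$ to be a minimal closed $\cG$-invariant subspace $M\subseteq P$ with $p(M)=\Gnaught$, again via Zorn. By construction $M$ is irreducible, and strong proximality passes from $P$ to $M$: for a family $(\mu_x)$ on $M$ the orbit closure $\ol{\cG\cdot\{\mu_x\}}$ already lies in the closed $\cG$-invariant subset $\cP_{\Gnaught}(M)\subseteq\cP_{\Gnaught}(P)$, so the point masses it contains by strong proximality of $P$ lie over points of $M$ and still cover $\Gnaught$. Therefore $M$ is a $\cG$-boundary. For universality, every $\cG$-boundary is isomorphic to some $Y_i$, and $\pi_i|_M\colon M\to Y_i$ is a $\cG$-map from the irreducible $M$ to the boundary $Y_i$, hence surjective by Proposition~\ref{prop:rigidity}(ii).

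Uniqueness is then immediate: if $M$ and $M'$ are both universal, mutual universality gives $\cG$-maps $M\to M'$ and $M'\to M$, whose composites are $\cG$-self-maps of the respective boundaries and hence, by the uniqueness of $\cG$-maps into a boundary in Proposition~\ref{prop:rigidity}(ii), equal to the identity; so $M\cong M'$ as $\cG$-spaces. The main obstacle is the strong proximality of the fibre product $P$: in the group case this is the statement that products of strongly proximal flows are strongly proximal, and over a groupoid the family-based definition of strong proximality makes a direct coordinate-by-coordinate argument awkward, since acting by isotropy to fix one coordinate disturbs the base point. The device above---passing to a minimal subflow inside $\cP_{\Gnaught}(P)$ and invoking rigidity to see that it collapses onto genuine point masses---circumvents this cleanly, while the set-theoretic bound is routine bookkeeping.
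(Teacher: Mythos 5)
Your proposal is correct, and it reaches the key step by a genuinely different route than the paper. Both proofs fix a set of representatives $(Y_i)_{i\in I}$, form the fibre product $P$, extract a minimal closed invariant subset with full projection, and get uniqueness from Proposition~\ref{prop:rigidity}; the difference is how strong proximality of $P$ is established. The paper runs an induction over finite subsets $F\subseteq I$, contracting one coordinate at a time via the strong proximality of each $Y_i$, then passes to a limit by compactness and concludes with a cylinder-set argument. You instead work inside the affine flow $\cP_{\Gnaught}(P)$: take the orbit closure $K$ of the given family, extract by Zorn a minimal closed invariant $N\subseteq K$ with full projection, note $N$ is irreducible, and apply Proposition~\ref{prop:rigidity}(ii) to each coordinate pushforward $(\pi_i)_*\colon N\to\cP_{\Gnaught}(Y_i)$ to force every $\rho\in N$ to have point-mass marginals; the support argument (each $\pi_i^{-1}(y_i)$ is closed of full measure, so $\supp\rho\subseteq\bigcap_i\pi_i^{-1}(y_i)$, a singleton of $P$) then collapses $\rho$ to a point mass. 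This is the Glasner-style ``minimal subflow plus rigidity'' device; it handles all coordinates simultaneously and, via the support argument, is cleaner than the paper's cylinder-set remark when $I$ is uncountable, at the cost of an extra Zorn application and of needing that orbit closures in $\cP_{\Gnaught}(P)$ are invariant (routine, but it uses \'etaleness: to act by $g$ on a limit one perturbs $g$ within an open bisection to match the drifting base points — worth a sentence in a final write-up). Your set-theoretic bookkeeping also differs: the paper bounds the density character of a boundary by $|\cG|$ directly, whereas you route through injectivity of $\linfty(\cG)$ (Proposition~\ref{prop:ell_infty_injective}), Proposition~\ref{prop:poisson_maps} and rigidity to embed every boundary in the image of the fixed space $\mathrm{spec}\,\linfty(\cG)$; both are adequate, the paper's being more elementary. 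Your verification that the minimal subset $M\subseteq P$ inherits strong proximality (via the closed invariant subset $\cP_{\Gnaught}(M)\subseteq\cP_{\Gnaught}(P)$) is also slightly more explicit than the paper's one-line assertion.
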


\begin{proof}
  We will prove the existence of $\fb \cG$, whereupon uniqueness will follow from Proposition~\ref{prop:rigidity}. First observe that by irreducibility, the density character of every $\cG$-boundary does not exceed $|\cG|$.   So there is a family of representatives for isomorphism classes of $\cG$-boundaries $q_i: Y_i \to \Gnaught$, $i \in I$.  Consider the fibre product
  \begin{gather*}
    Y
    =
    \prod_I (Y_i, q_i)
    =
    \varprojlim_{i_1, \dotsc, i_n \in I} Y_{i_1} \times_{\Gnaught} Y_{i_2} \times_{\Gnaught} \dotsm \times_{\Gnaught} Y_{i_n}
  \end{gather*}
  and note that $Y$ is compact, being the projective limit of compact Hausdorff spaces.  Let $p: Y \to \Gnaught$ and $p_i: Y \to Y_i$ denote the natural projections.
  
  We now show that $Y$ is strongly proximal.  Let $(\mu_x)_{x \in \Gnaught}$ be a family of probability measures on $Y$ satisfying $p_* \mu_x = \delta_x$ for all $x \in \Gnaught$.  For a finite subset $F \subseteq I$, write $Y_F = \prod_F(Y_i, q_i)$ and denote by $p_F: Y \to Y_F$ and $q_F: Y_F \to \Gnaught$ the natural projections.  We will show by induction on the size of $F$ that
  \begin{gather*}
    \Gnaught
    \subseteq
    q_F(Y_F \cap \ol{\cG \cdot (p_F)_*\{\mu_x \mid x \in \Gnaught\}})
    \eqstop
  \end{gather*}
  The case $|F| = 1$ follows from the assumed strong proximality for all $Y_i$.  Fix a finite set $F \subseteq I$ and assume that the statement is proven for all strictly smaller sets than $F$.  Let $i \in F$.  Then by induction hypothesis, we have
  \begin{gather*}
    \Gnaught
    \subseteq
    q_{F \setminus \{i\}}(Y_{F \setminus \{i\}} \cap \ol{\cG \cdot (p_{F \setminus \{i\}})_*\{\mu_x \mid x \in \Gnaught\}})
    \eqstop
  \end{gather*}

  By the compactness of $\cP(Y)$ this means that for every $x \in \Gnaught$ there is $\nu_x \in \ol{ \cG \cdot \{\mu_x \mid x \in \Gnaught\}}$ such that $(p_{F \setminus \{i\}})_* \nu_x \in Y_{F \setminus \{i\}}$ and $p_* \nu_x = \delta_x$.  By the strong proximality of $Y_i$, we find that
  \begin{gather*}
    \Gnaught
    \subseteq
    q_i(Y_i \cap \ol{\cG \cdot (p_i)_* \{\nu_x \mid x \in \Gnaught\}})
    \eqstop
  \end{gather*}
  Choose probability measures $\sigma_x$, $x \in \Gnaught$ in $\ol{\cG \cdot \{\nu_x \mid x \in \Gnaught\}}$ such that $(p_i)_* \sigma_x \in Y_i$ and $p_* \sigma_x = \delta_x$.  We observe that
  \begin{gather*}
    (p_{F \setminus \{i\}})_* \sigma_x \in (p_{F \setminus \{i\}})_*(\ol{\cG \cdot \{\nu_x \mid x \in \Gnaught\}})
    \subseteq
    Y_{F \setminus \{i\}}
  \end{gather*}
  so that $(p_F)_* \sigma_x \in Y_F$ follows for all $x \in \Gnaught$.  This finishes the induction.
  
  In summary, we have found for every $x \in \Gnaught$ a net of probability measures $(\mu_{x, F})_{F \subseteq I \text{ finite}}$ in $\ol{\cG \cdot \{\mu_x \mid x \in \Gnaught\}}$ such that $p_*(\mu_{x,F}) = \delta_x$ and $(p_F)_*(\mu_{x,F}) \in Y_F$ for all $x \in \Gnaught$ and all finite subsets $F \subseteq I$.  By compactness, there are probability measures $\nu_x$, $x \in \Gnaught$ on $Y$ satisfying $p_*(\nu_x) = \delta_x$ and $(p_F)_*(\nu_x) \in Y_F$ for all $x \in \Gnaught$ and all finite subsets $F \subseteq I$.  Since cylinder sets generate the $\Sigma$-algebra of $Y$, this implies that $\nu_x \in Y$ for all $x \in \Gnaught$, which finishes the proof of strong proximality.
  
  Consider now the family of closed $\cG$-invariant subsets $A \subseteq Y$ that satisfy $p(A) = \Gnaught$.  This family is ordered by inclusion and by compactness satisfies the descending chain condition.  Hence it contains a minimal element, which necessarily will be an irreducible $\cG$-space.  Since it inherits strong proximality from $Y$, this proves the existence of $\fb \cG$.
\end{proof}

\begin{definition}
  \label{def:furstenberg-boundary}
  Let $\cG$ be an {\'e}tale groupoid with compact Hausdorff unit space.  The Furstenberg boundary $\fb \cG$ of $\cG$ is the $\cG$-boundary constructed in Theorem~\ref{thm:furstenberg-boundary-exists}.
\end{definition}

\subsection{Equivalence of boundaries}
\label{sec:equivalence-boundaries}

In this subsection we will prove that the Hamana boundary and the Furstenberg boundary coincide. The proof of the following lemma is based on Milman's partial converse to the Krein-Milman theorem, which states that if $Y$ is a closed subset of a compact convex set $K$ with the property that the closed convex hull of $Y$ is equal to $K$, then $Y$ contains all of the extreme points of $K$.
\begin{lemma}
  \label{lem:krein_milman_lemma}
  Let $\cG$ be an {\'e}tale groupoid with compact Hausdorff unit space.  Let $Y$ be a compact Hausdorff space identified with the closed subset of Dirac measures in $\cP(Y)$. Let $(\mu_i)_{i \in I}$ be a family of probability measures on $Y$. Then $Y \subseteq \overline{\{\mu_i\}_{i \in I}}^{\weakstar}$ if and only if the map $\bigoplus_{i \in I} \mu_i : \cont(Y) \to \linfty(I)$ is isometric.
\end{lemma}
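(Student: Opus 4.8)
The plan is to work with the unital positive map $\Phi = \bigoplus_i \mu_i : \cont(Y) \to \linfty(I)$, which is automatically contractive because each $\mu_i$ is a state. Concretely $\|\Phi(f)\|_\infty = \sup_i |\mu_i(f)|$, so the asserted isometry is exactly the statement that $\sup_i |\mu_i(f)| = \|f\|_\infty$ for every $f \in \cont(Y)$. The two implications have quite different character, and I would treat them separately.

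For the implication $Y \subseteq \overline{\{\mu_i\}}^{\weakstar} \Rightarrow \Phi$ isometric, I would argue directly and this direction is routine. Fix $f \in \cont(Y)$ and $y \in Y$; since $\delta_y$ lies in the weak* closure of $\{\mu_i\}$, there is a net $\mu_{i_\alpha} \to \delta_y$, whence $|f(y)| = \lim_\alpha |\mu_{i_\alpha}(f)| \le \sup_i |\mu_i(f)|$. Taking the supremum over $y \in Y$ gives $\|f\|_\infty \le \sup_i |\mu_i(f)| = \|\Phi(f)\|_\infty$, and the reverse inequality is contractivity.

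The substantial direction is $\Phi$ isometric $\Rightarrow Y \subseteq \overline{\{\mu_i\}}^{\weakstar}$, where Milman's theorem is the organising principle. Writing $K = \overline{\{\mu_i\}}^{\weakstar} \subseteq \cP(Y)$, which is weak*-compact, and noting that the extreme points of $\cP(Y)$ are precisely the point evaluations identified with $Y$, Milman's theorem reduces the claim to showing that the closed convex hull $\cconv K$ equals all of $\cP(Y)$: once this is known, $K$ must contain every extreme point of $\cP(Y)$, i.e. $Y \subseteq K$. I would establish $\cconv K = \cP(Y)$ by contradiction: if some $\nu_0 \in \cP(Y)$ were not in the weak*-closed convex set $\cconv K$, then Hahn-Banach separation, run in the real locally convex space $\cont(Y,\RR)^*$, yields a real $f \in \cont(Y)$ and a constant $c$ with $\mu_i(f) \le c$ for all $i$ while $\max_Y f \ge \nu_0(f) > c$.

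The key point, and the step I expect to carry the argument, is converting this separation into a violation of the isometry, which I would do by a shift trick to eliminate absolute values. Replacing $f$ by $f + t \cdot 1$ for $t \ge \|f\|_\infty$, so that $f + t$ and every $\mu_i(f) + t$ are positive, gives $\|\Phi(f+t)\|_\infty = \sup_i \mu_i(f) + t$ and $\|f + t\|_\infty = \max_Y f + t$; isometry then forces $\sup_i \mu_i(f) = \max_Y f$, contradicting $\sup_i \mu_i(f) \le c < \max_Y f$. Hence $\cconv K = \cP(Y)$ and Milman concludes. The only subtlety to verify carefully is that passing to the real picture $\cont(Y,\RR)^*$ affects neither the weak* topology nor the identification of the extreme points of $\cP(Y)$ with Dirac measures.
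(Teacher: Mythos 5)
Your proof is correct, and it splits the comparison with the paper neatly in two. On the substantial direction (isometric $\Rightarrow$ $Y \subseteq \overline{\{\mu_i\}}^{\weakstar}$) you follow the same route as the paper — Milman's partial converse plus Hahn--Banach separation — merely organised forwards rather than by contradiction: you show $\cconv \overline{\{\mu_i\}}^{\weakstar} = \cP(Y)$ and apply Milman once, whereas the paper assumes some $\delta_y \notin \overline{\{\mu_i\}}^{\weakstar}$, uses Milman (tacitly) to infer the closed convex hull is proper, and then separates $\delta_y$. A merit of your write-up is that the positivity shift $f \mapsto f + t\cdot 1$ is made explicit: the paper simply asserts that separation yields a \emph{positive} $f$ with $\sup_i \mu_i(f) \le \alpha < f(y)$, which is exactly your shift trick, and your remark that one must check the passage to real scalars does not disturb the weak* topology or the identification of the extreme points of $\cP(Y)$ with Dirac measures is the right (routine) point to verify. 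On the converse direction the routes genuinely differ: the paper induces a continuous affine map $\phi : \cP(\beta I) \to \cP(Y)$ with $\phi(\delta_i) = \mu_i$ and invokes the Krein--Milman theorem to conclude that $\phi$ is surjective, deducing isometry from surjectivity on states, while you argue directly with a net $\mu_{i_\alpha} \to \delta_y$ to get $|f(y)| \le \sup_i |\mu_i(f)| = \|\bigoplus_i \mu_i(f)\|_\infty$ and take the supremum over $y \in Y$. Your version is more elementary, avoiding $\beta I$ and Krein--Milman entirely; the paper's formulation buys the marginally stronger fact that the adjoint map on states is onto, but nothing downstream in the paper uses that extra information, so your simplification loses nothing.
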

\begin{proof}
  Suppose that the map $\bigoplus_{i \in I} \mu_i : \cont(Y) \to \linfty(I)$ is isometric but that there is $y \in Y$ such that $y \notin \overline{\{\mu_i\}_{i \in I}}^{\weakstar}$. Then letting $K \subseteq \cP(Y)$ denote the closed convex hull of the set $\{\mu_i\}_{i \in I}$, we infer that $K$ is a proper subset of $\cP(Y)$.  Hence by the Hahn-Banach separation theorem there is positive $f \in \cont(Y)$ and $\alpha \geq 0$ such that
  \begin{gather*}
    \sup_{i \in I} \mu_i(f) \leq \alpha < f(y)
    \eqcomma
  \end{gather*}
  contradicting the fact that the map $\bigoplus_{i \in I} \mu_i$ is isometric.
  
  Conversely, suppose that $Y \subseteq \overline{\{\mu_i\}_{i \in I}}^{\weakstar}$.  The map $\bigoplus_{i \in I} \mu_i$ induces a continuous map $\phi : \cP(\beta I) \to \cP(Y)$ satisfying $\phi(\delta_i) = \mu_i$ for $i \in I$. Hence $\overline{\phi(\beta I)}^{\weakstar} \supseteq Y$.  Since $\phi(\cP(\beta I))$ is compact and weak*-closed, the Krein-Milman theorem implies that $\phi(\cP(\beta I)) = \cP(Y)$, that is $\phi$ is surjective.  It follows that the map $\bigoplus_{i \in I} \mu_i$ is isometric.
\end{proof}

Recall the bijective correspondence between commutative unital $\cG$-\Cstar-algebras and compact $\cG$-spaces from Section \ref{sec:groupoid-spaces}.  The next result implies that this correspondence restricts to a bijective correspondence between essential commutative unital $\cG$-\Cstar-algebra extensions of $\cont(\Gnaught)$ and $\cG$-boundaries.
\begin{proposition}
  \label{prop:essential_iff_boundary}  
  Let $\cG$ be an {\'e}tale groupoid with compact Hausdorff unit space.  Let $\cont(Y)$ be a commutative unital $\cG$-\Cstar-algebra.  Then $\cont(Y)$ is an essential extension of $\cont(\Gnaught)$ if and only if $Y$ is a $\cG$-boundary.
\end{proposition}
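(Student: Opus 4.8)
The plan is to reduce the essentiality condition to an isometry statement and then read off the boundary property from the Poisson correspondence of Proposition~\ref{prop:poisson_maps} together with the Krein--Milman criterion of Lemma~\ref{lem:krein_milman_lemma}. First I would observe that for any $\cG$-ucp map $\phi : \cont(Y) \to C$, the compatibility relation forces $\phi|_{\cont(\Gnaught)} = \iota_C$, which is an embedding since the structure map of any unital $\cG$-\Cstar-algebra is injective by Definition~\ref{def:groupoid-cstar-algebra}. As $\cont(Y)$ is commutative, an embedding is the same as an isometry. Hence $\cont(Y)$ is an essential extension of $\cont(\Gnaught)$ if and only if \emph{every} $\cG$-ucp map $\phi : \cont(Y) \to C$ into a unital $\cG$-\Cstar-algebra $C$ is isometric.

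The computational heart is the following. Given a family of states $(\mu_x)_{x \in \Gnaught}$ on $\cont(Y)$ with $p_* \mu_x = \delta_x$, Proposition~\ref{prop:poisson_maps} produces the $\cG$-ucp map $\phi_\mu : \cont(Y) \to \linfty(\cG)$ with $\phi_\mu(f)(g) = (g \mu_{\rms(g)})(f)$. As $g$ ranges over $\cG$, the measures $g\mu_{\rms(g)}$ range exactly over $\cG \cdot \{\mu_x \mid x \in \Gnaught\}$, so that $\|\phi_\mu(f)\|_\infty = \sup_{\nu \in \cG \cdot \{\mu_x \mid x \in \Gnaught\}} |\nu(f)|$. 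By Lemma~\ref{lem:krein_milman_lemma}, the right-hand side equals $\|f\|$ for all $f$ precisely when $Y \subseteq \ol{\cG \cdot \{\mu_x \mid x \in \Gnaught\}}^{\weakstar}$; that is, $\phi_\mu$ is isometric if and only if this containment holds.

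For the implication that a $\cG$-boundary yields an essential extension, let $\phi : \cont(Y) \to C$ be an arbitrary $\cG$-ucp map. For each $x \in \Gnaught$ I would extend the state $\delta_x$ on $\cont(\Gnaught) \subseteq C$ to a state $\omega_x$ on $C$ by Hahn--Banach, and feed the family $(\omega_x)_x$ into Proposition~\ref{prop:poisson_maps} to obtain a $\cG$-ucp map $\Omega : C \to \linfty(\cG)$. A direct check shows that the composite $\Omega \circ \phi$ is the Poisson map $\phi_\mu$ associated with the family $\mu_x = \omega_x \circ \phi$, which satisfies $p_* \mu_x = \delta_x$. Since $Y$ is a $\cG$-boundary, Remark~\ref{rem:G-boundary-irreducible} gives $Y \subseteq \ol{\cG \cdot \{\mu_x \mid x \in \Gnaught\}}^{\weakstar}$, so $\Omega \circ \phi$ is isometric by the previous paragraph. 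As $\Omega$ is contractive, $\|f\| = \|(\Omega \circ \phi)(f)\| \leq \|\phi(f)\| \leq \|f\|$, whence $\phi$ is isometric.

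Conversely, to show essentiality forces the boundary property, I would argue by contraposition using Remark~\ref{rem:G-boundary-irreducible}: if $Y$ is not a $\cG$-boundary, there is a family $(\mu_x)_x$ with $p_* \mu_x = \delta_x$ but $Y \not\subseteq \ol{\cG \cdot \{\mu_x \mid x \in \Gnaught\}}^{\weakstar}$. Then the Poisson map $\phi_\mu : \cont(Y) \to \linfty(\cG)$ into the unital $\cG$-\Cstar-algebra $\linfty(\cG)$ fails to be isometric by the second paragraph, while $\phi_\mu|_{\cont(\Gnaught)} = \id$ is an embedding, contradicting essentiality. I expect the only real bookkeeping — and hence the main place to be careful — to be the verification that $\Omega \circ \phi$ is the Poisson map $\phi_\mu$ for $\mu_x = \omega_x \circ \phi$; this follows since $\delta_x \circ \Omega = \omega_x$ by Proposition~\ref{prop:poisson_maps}, together with the identification of the orbit $\cG \cdot \{\mu_x \mid x \in \Gnaught\}$ with the indexed family $(g\mu_{\rms(g)})_{g \in \cG}$.
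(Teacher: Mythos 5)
Your proof is correct and takes essentially the same route as the paper's: both directions reduce, via the Poisson correspondence of Proposition~\ref{prop:poisson_maps} and the Krein--Milman criterion of Lemma~\ref{lem:krein_milman_lemma}, to the characterisation of $\cG$-boundaries in Remark~\ref{rem:G-boundary-irreducible}, with the target of an arbitrary $\cG$-ucp map replaced by $\linfty(\cG)$. The only cosmetic difference is that where the paper invokes the $\cG$-injectivity of $\linfty(\cG)$ (Proposition~\ref{prop:ell_infty_injective}) to produce the map $C \to \linfty(\cG)$, you construct it directly by Hahn--Banach extension of the states $\delta_x$ together with Proposition~\ref{prop:poisson_maps} --- the same mechanism underlying the injectivity proof, so your verification that $\Omega \circ \phi$ is the Poisson map of the family $\mu_x = \omega_x \circ \phi$ is in fact immediate from the bijectivity of that correspondence.
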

\begin{proof}
  First recall from Remark~\ref{rem:G-boundary-irreducible} that $Y$ is a $\cG$-boundary if and only if for any family of probability measures $(\mu_x)_{x \in \Gnaught}$ on $Y$ with the property that $\mu_x|_{\cont(\Gnaught)} = \delta_x$ for each $x \in \Gnaught$, we have
  \begin{gather*}
    Y
    \subseteq
    \overline{\{g \mu_{\rms(g)} \mid g \in \cG\}}^{\weakstar}
    \eqstop
  \end{gather*}
  Suppose that $Y$ is a $\cG$-boundary.  Let $B$ be a $\cG$-\Cstar-algebra and let $\phi : \cont(Y) \to B$ be a $\cG$-ucp map.  We must show that $\phi$ is an embedding.  Equivalently, since $\cont(Y)$ is commutative, we must show that $\phi$ is isometric.  By Proposition~\ref{prop:ell_infty_injective}, we know that $\linfty(\cG)$ is an injective $\cG$-\Cstar-algebra.  So there is a $\cG$-ucp map $B \to \linfty(\cG)$.  It suffices to show that the composition $\psi \circ \phi$ is isometric.  Hence without loss of generality, we can assume that $B = \linfty(\cG)$.  For $x \in \Gnaught$, let $\mu_x = \delta_x \circ \phi$.  Then by Proposition~\ref{prop:ell_infty_injective}, we have $\phi = \bigoplus_{g \in \cG} g \mu_{\rms(g)}$. It now follows from Lemma~\ref{lem:krein_milman_lemma} and the characterisation of $\cG$-boundaries from the beginning of the proof that $\phi$ is isometric.
  
  Conversely, suppose that $\cont(Y)$ is an essential extension of $\cont(\Gnaught)$. Let $(\mu_x)_{x \in \Gnaught}$ be a family of probability measures on $Y$ satisfying $p_*\mu_x = \delta_x$ for all $x \in \Gnaught$.  By Proposition~\ref{prop:ell_infty_injective}, we obtain a $\cG$-ucp map $\phi = \bigoplus_{g \in \cG} g \mu_{s(g)}$.  By essentiality, $\phi$ is isometric.  Hence by Lemma~\ref{lem:krein_milman_lemma}, we have $Y \subseteq \overline{\{g\mu_{\rms(g)} \mid g \in \cG \}}^{\weakstar}$, so by the characterisation of $\cG$-boundaries from Remark~\ref{rem:G-boundary-irreducible}, $Y$ is a $\cG$-boundary.
\end{proof}

\begin{theorem}
  \label{thm:hamana-equals-furstenberg}
  Let $\cG$ be an {\'e}tale groupoid with compact Hausdorff unit space. The Hamana boundary $\hb \cG$ and the Furstenberg boundary $\fb \cG$ are isomorphic as $\cG$-spaces.
\end{theorem}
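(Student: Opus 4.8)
The plan is to run the standard Hamana-style comparison of the two universal objects, playing the injectivity and rigidity of the $\cG$-injective envelope against the universality of the Furstenberg boundary, and using Proposition~\ref{prop:essential_iff_boundary} to pass between $\cG$-boundaries and essential extensions of $\cont(\Gnaught)$.

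First I would record that $\cont(\hb\cG)$, being the $\cG$-injective envelope of $\cont(\Gnaught)$ by Theorem~\ref{thm:injective_envelope}, is a commutative unital $\cG$-\Cstar-algebra and an essential extension of $\cont(\Gnaught)$ in the sense of Definition~\ref{def:essential-rigid}. Proposition~\ref{prop:essential_iff_boundary} then shows that $\hb\cG$ is a $\cG$-boundary. Invoking the universality of the Furstenberg boundary (Theorem~\ref{thm:furstenberg-boundary-exists}), there is a surjective $\cG$-map $\fb\cG \to \hb\cG$; dualising, this is a $\cG$-equivariant, injective, unital *-homomorphism $\iota : \cont(\hb\cG) \hra \cont(\fb\cG)$ which, being induced by a morphism of $\cG$-spaces over $\Gnaught$, restricts to the identity on $\cont(\Gnaught)$.

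For the reverse comparison, I would use that $\fb\cG$ is itself a $\cG$-boundary, so by Proposition~\ref{prop:essential_iff_boundary} the inclusion $\cont(\Gnaught) \subseteq \cont(\fb\cG)$ is essential. Since $\cont(\hb\cG)$ is injective, the embedding $\cont(\Gnaught) \hra \cont(\hb\cG)$ extends along $\cont(\Gnaught) \hra \cont(\fb\cG)$ to a $\cG$-ucp map $\sigma : \cont(\fb\cG) \to \cont(\hb\cG)$ with $\sigma|_{\cont(\Gnaught)} = \id$; essentiality of $\cont(\fb\cG)$ then forces $\sigma$ to be an embedding, in particular injective. Now $\sigma \circ \iota : \cont(\hb\cG) \to \cont(\hb\cG)$ is a $\cG$-ucp map fixing $\cont(\Gnaught)$, so the rigidity of the injective envelope (Theorem~\ref{thm:injective_envelope}) yields $\sigma \circ \iota = \id$. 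Together with the injectivity of $\sigma$ this forces $\iota$ to be surjective: for $a \in \cont(\fb\cG)$ one has $\sigma(\iota(\sigma(a))) = \sigma(a)$, whence $\iota(\sigma(a)) = a$. Thus $\iota$ is a bijective unital *-homomorphism, hence a $\cG$-equivariant *-isomorphism, and passing to spectra via the correspondence between commutative unital $\cG$-\Cstar-algebras and compact $\cG$-spaces gives the desired $\cG$-homeomorphism $\fb\cG \cong \hb\cG$.

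The only genuinely delicate points are bookkeeping rather than hard analysis: one must ensure that each extension and dualisation is taken within the $\cG$-equivariant category, and that both $\iota$ and $\sigma$ fix $\cont(\Gnaught)$, so that rigidity and essentiality are applicable exactly where invoked. No step requires the explicit internal structure of either boundary beyond the universal properties already established in the preceding sections.
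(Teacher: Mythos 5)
Your proof is correct and follows essentially the same route as the paper: identify $\hb \cG$ as a $\cG$-boundary via Theorem~\ref{thm:injective_envelope} and Proposition~\ref{prop:essential_iff_boundary}, obtain the surjective $\cG$-map $\fb \cG \to \hb \cG$ from universality, dualise, and play injectivity against the essentiality of $\cont(\fb \cG)$ to force the resulting maps to be mutually inverse *-isomorphisms. The only cosmetic difference is that you extend the inclusion $\cont(\Gnaught) \hra \cont(\hb \cG)$ and then invoke rigidity to get $\sigma \circ \iota = \id$, whereas the paper extends $\id_{\cont(\hb \cG)}$ along the embedding $\phi$ directly, obtaining the same identity without a separate appeal to rigidity.
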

\begin{proof}
  We will prove that $\cont(\hb \cG)$ and $\cont(\fb \cG)$ are isomorphic as $\cG$-\Cstar-algebras. The result will then follow from the discussion in Section \ref{sec:groupoid-spaces}.
  
  Theorem~\ref{thm:injective_envelope} and Proposition~\ref{prop:essential_iff_boundary} imply that $\hb \cG$ is a $\cG$-boundary. Hence by the universal property of $\fb \cG$ there is a (necessarily surjective) $\cG$-map $\pi : \fb \cG \to \hb \cG$.  This map corresponds to a $\cG$-ucp embedding $\phi : \cont(\hb \cG) \to \cont(\fb \cG)$.  By the injectivity of $\cont(\hb \cG)$, there is a $\cG$-ucp map $\psi : \cont(\fb \cG) \to \cont(\hb \cG)$ such that the following diagram commutes:
  \begin{center}
    \begin{tikzcd}
      \cont(\fb \cG) \arrow[rd, "\psi", dashed]                 &                    \\
      \cont(\hb \cG) \arrow[u, "\phi", hook] \arrow[r, "\id"] & \cont(\hb \cG)
    \end{tikzcd}
  \end{center}
  Applying Proposition~\ref{prop:essential_iff_boundary} again, $\cont(\fb \cG)$ is an essential extension of $\cont(\Gnaught)$, so $\psi$ must be an embedding, forcing all of the maps in the above diagram to be *-isomorphisms. In particular, $\phi$ is a *-isomorphism.
\end{proof}

Finally, in the Hausdorff setting, we deduce the equality of the Furstenberg boundary as constructed in this section with the Furstenberg boundary constructed by Borys, which we denote by $\bb \cG$.  Recall that $\cont(\bb \cG)$ is the injective envelope of $\cont(\Gnaught)$ in the category of concrete $\cG$-operator systems in the terminology of \cite{borys2019-boundary,borys2020-thesis}.  In particular, it is the injective envelope of $\cont(\Gnaught)$ in the category of $\cG$-\Cstar-bundles. 
\begin{theorem}
  Suppose that $\cG$ is an {\'e}tale Hausdorff groupoid with compact space of units.  Then the Furstenberg boundary $\fb \cG$ and the boundary $\bb \cG$ constructed by Borys are isomorphic as $\cG$-spaces.
\end{theorem}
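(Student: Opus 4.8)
The plan is to show that $\cont(\fb\cG)$ is itself an injective envelope of $\cont(\Gnaught)$ in the category of unital $\cG$-\Cstar-bundles, and then to identify it with $\cont(\bb\cG)$ by uniqueness of injective envelopes. By Theorem~\ref{thm:hamana-equals-furstenberg} the Furstenberg boundary agrees with the Hamana boundary, so by Theorem~\ref{thm:injective_envelope} the algebra $\cont(\fb\cG) = \cont(\hb\cG)$ is a commutative injective envelope of $\cont(\Gnaught)$ in the category of unital $\cG$-\Cstar-algebras: it is injective, it is an essential extension of $\cont(\Gnaught)$, and it is a rigid extension. On the other side, Borys's boundary is defined so that $\cont(\bb\cG)$ is an injective envelope of $\cont(\Gnaught)$ in the category of unital $\cG$-\Cstar-bundles. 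So it suffices to compare these two injective envelopes inside the bundle category.

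The key structural input will be Proposition~\ref{prop:bundle-G-cstar-algebra-integration-desintegration}. Its three items together identify the category of unital $\cG$-\Cstar-bundles with the \emph{full} subcategory of unital $\cG$-\Cstar-algebras consisting of those objects in which $\cont(\Gnaught)$ is central: the first two items give the mutually inverse assignments on objects, and the third matches the two notions of $\cG$-equivariance for ucp maps. Fullness holds because every morphism in the algebra category fixes $\cont(\Gnaught)$ by Definition~\ref{def:G-ucp-map}, so $\cont(\Gnaught)$ lies in the multiplicative domain of any such map and the map is automatically $\cont(\Gnaught)$-modular, matching the bundle morphisms. Since $\cont(\fb\cG)$ is commutative, $\cont(\Gnaught)$ is central in it, so $\cont(\fb\cG)$—with its unique compatible bundle structure—is an object of this subcategory, and the inclusion $\cont(\Gnaught)\hookrightarrow\cont(\fb\cG)$ is a bundle morphism.

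I would then transfer injectivity and essentiality from the larger algebra category down to the bundle subcategory. Because the subcategory is full and the notion of embedding (complete order embedding) is the same in both, every lifting problem as in Definition~\ref{def:injectivity} posed in the bundle category is in particular a lifting problem in the algebra category; hence injectivity of $\cont(\fb\cG)$ in the algebra category yields injectivity in the bundle category. The same reasoning applied to Definition~\ref{def:essential-rigid} shows that essentiality of $\cont(\Gnaught)\hookrightarrow\cont(\fb\cG)$ descends, since restricting the range of the test object only weakens the condition. Therefore $\cont(\fb\cG)$ is an injective envelope of $\cont(\Gnaught)$ in the category of $\cG$-\Cstar-bundles.

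Finally, $\cont(\bb\cG)$ is by construction another injective envelope of $\cont(\Gnaught)$ in that same category, so the two are isomorphic by uniqueness of injective envelopes: using injectivity of one together with essentiality of the other produces a pair of $\cont(\Gnaught)$-fixing embeddings, and rigidity forces their composites to be the identity, exactly as in the final paragraph of the proof of Theorem~\ref{thm:injective_envelope}. The resulting map is a $\cG$-equivariant $*$-isomorphism of bundles, hence of $\cG$-\Cstar-algebras, and dualising gives the asserted isomorphism of $\cG$-spaces $\fb\cG \cong \bb\cG$. The step needing the most care is the full-subcategory identification coming from Proposition~\ref{prop:bundle-G-cstar-algebra-integration-desintegration}, and in particular checking that the two notions of equivariant ucp morphism truly coincide on commutative objects; once that is in place, everything else is a formal transfer of injectivity, essentiality, and rigidity along this identification.
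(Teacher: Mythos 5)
Your proof is correct, and it rests on the same two pillars as the paper's own argument: Proposition~\ref{prop:bundle-G-cstar-algebra-integration-desintegration} as the bridge between the bundle category and the algebra category, and rigidity to force the two composites to be the identity. The difference is in the packaging. The paper never transfers injectivity across categories: it directly produces $\phi\colon \cont(\fb\cG)\to\cont(\bb\cG)$ from injectivity of $\cont(\bb\cG)$ in the bundle category (after equipping $\cont(\fb\cG)$ with its unique compatible bundle structure), produces $\psi\colon\cont(\bb\cG)\to\cont(\fb\cG)$ from injectivity of $\cont(\fb\cG)$ in the larger algebra category (after turning $\cont(\bb\cG)$ into a unital $\cG$-\Cstar-algebra), and then applies the rigidity of $\cont(\fb\cG)$ from Theorem~\ref{thm:injective_envelope} together with the rigidity of $\cont(\bb\cG)$ from Borys's construction to get $\psi\circ\phi=\id$ and $\phi\circ\psi=\id$. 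You instead first prove the stronger intermediate statement that $\cont(\fb\cG)$ is itself an injective envelope of $\cont(\Gnaught)$ in the bundle category, using the observation that Proposition~\ref{prop:bundle-G-cstar-algebra-integration-desintegration} identifies the bundle category with the full subcategory of unital $\cG$-\Cstar-algebras in which $\cont(\Gnaught)$ is central; your fullness check is right (morphisms fix $\cont(\Gnaught)$ by Definition~\ref{def:G-ucp-map}, hence are $\cont(\Gnaught)$-modular by the multiplicative-domain argument, and the third part of the proposition matches the two equivariance notions), and your transfer of injectivity and essentiality is sound, since bundle lifting problems form a subset of the algebra lifting problems and any algebra-category extension into the commutative target is again a bundle morphism. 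This buys you a mildly stronger conclusion — the Furstenberg algebra is an injective envelope in Borys's category, so his construction is recovered outright — at the cost of an extra layer of full-subcategory bookkeeping that the paper's more economical cross-category ping-pong avoids.
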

\begin{proof}
  We will prove that $\cont(\fb \cG)$ and $\cont(\bb \cG)$ are isomorphic as $\cG$-\Cstar-algebras.  The result will then follow from the discussion in Section~\ref{sec:groupoid-spaces}.

  Since $\cont(\fb \cG)$ is a commutative unital $\cG$-\Cstar-algebra, Proposition~\ref{prop:bundle-G-cstar-algebra-integration-desintegration} says that it has a unique compatible structure of a unital $\cG$-\Cstar-bundle.  By Proposition~\ref{prop:bundle-G-cstar-algebra-integration-desintegration} and injectivity of $\cont(\bb \cG)$ in this category, there is a $\cG$-ucp map $\phi: \cont(\fb \cG) \to \cont(\bb \cG)$.  Since $\cont(\bb \cG)$ is a unital $\cG$-\Cstar-algebra, there is also a $\cG$-ucp map $\psi: \cont(\bb \cG) \to \cont(\fb \cG)$.

  By the rigidity of $\cont(\fb \cG)$, we infer that $\psi \circ \phi$ is the identity map.  Similarly, by the rigidity of $\cont(\bb \cG)$, we have that $\phi \circ \psi$ is the identity map.  Hence $\phi$ and $\psi$ are both *-isomorphisms.
\end{proof}

\section{Essential groupoid C$^*$-algebras}
\label{sec:non-hausdorff-groupoids}

In this section, we develop some understanding of essential groupoid \Cstar-algebras, which will be necessary to adapt methods from the theory of \Cstar-simplicity to non-Hausdorff groupoids.  While there is always an inclusion $\Cstarred(\cG) \subseteq \Cstarred(\cG \ltimes \fb \cG)$, it is {\`a} priori not clear that there is a similar inclusion on the level of essential groupoid \Cstar-algebras.  In Section~\ref{sec:inclusion-essential-groupoids} we will show that this is the case for minimal groupoids and $\sigma$-compact groupoids.  For the proof of this fact we reformulate the local conditional expectation of Kwa{\'s}niewski-Meyer in terms of continuous extensions of functions on extremally disconnected spaces, such as the Furstenberg boundary $\fb \cG$.  This will be done in Section~\ref{sec:local-conditional-expectation}.  Finally, the point of view developed here naturally leads to the question whether the local conditional expectation of the Furstenberg groupoid is actually related to a reduced groupoid \Cstar-algebra, which appears in the background. This is indeed the case, as we will show in Section~\ref{sec:hausdorffification}, when introducing the Hausdorffification of a groupoid with extremally disconnected unit space.  This concept will also be useful in our further discussion on the intersection property in Sections~\ref{sec:characterisations} and \ref{sec:confined-subgroupoids}.

\subsection{The local conditional expectation for groupoid C$^*$-algebras via continuous extensions on extremally disconnected spaces}
\label{sec:local-conditional-expectation}

The next lemma is an adaption of \cite[Lemma 3.2]{breuillardkalantarkennedyozawa14} (see also \cite[Lemma 3.3]{ursu2021-relative-simplicity}) to the setting of groupoid dynamical systems that are not necessarily minimal.
\begin{lemma}
  \label{lem:G-space-projections-dense-subsets}
  Let $\cG$ be an {\'e}tale groupoid with compact Hausdorff space of units and assume that $\cG$ is minimal or $\sigma$-compact.  Let $p: X \to \Gnaught$ be a totally disconnected, irreducible, compact $\cG$-space.  Then
  \begin{itemize}
  \item For every open subset $U \subseteq X$ its image $p(U) \subseteq \Gnaught$ has non-empty interior.
  \item For every dense subset $D \subseteq \Gnaught$, the inverse image $p^{-1}(D) \subseteq X$ is dense.
  \end{itemize}
\end{lemma}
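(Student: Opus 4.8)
The plan is to derive the second bullet from the first, so I would prove the two statements in the order they are listed. For the reduction, suppose the first bullet holds and let $D \subseteq \Gnaught$ be dense. To see that $p^{-1}(D)$ is dense in $X$, it suffices to meet an arbitrary nonempty open $U \subseteq X$. By the first bullet $p(U)$ has nonempty interior $O$, and since $D$ is dense the open set $O$ meets $D$; any $x \in O \cap D$ lifts to some $y \in U$ with $p(y) = x \in D$, so $U \cap p^{-1}(D) \neq \emptyset$. Thus the whole content lies in the first bullet.

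For the first bullet, I would first exploit total disconnectedness to shrink $U$ to a nonempty \emph{clopen} (hence compact open) set $W \subseteq U$, since it is enough to show that $p(W)$ has nonempty interior. The next step is to bring in irreducibility through a saturation argument: the set $\cG \cdot W = \bigcup_{\gamma \in \Gamma(\cG)} \gamma \cdot (W \cap p^{-1}(\supp \gamma))$ is open and $\cG$-invariant (each translate is a homeomorphic image of an open set inside the open set $p^{-1}(\im \gamma)$), so its complement $Z = X \setminus \cG \cdot W$ is closed and $\cG$-invariant. Since $W \neq \emptyset$ we have $Z \neq X$, and irreducibility then forces $p(Z) \neq \Gnaught$. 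Hence there is a point $x_0 \in \Gnaught$ whose entire fibre satisfies $p^{-1}(x_0) \subseteq \cG \cdot W$.

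It remains to upgrade this single fibre to a genuine neighbourhood of $x_0$ in $\Gnaught$, and this is the step I expect to be the main obstacle. The open translates $\{\gamma W\}_{\gamma \in \Gamma(\cG)}$ cover $p^{-1}(x_0)$; using that $\cG$ is $\sigma$-compact (so $\cG$, and hence the saturation, is exhausted by countably many bisections) or that the fibre is compact (in the minimal case, where one works with compact $X$ and the closed map $p$), I would extract a finite or countable subfamily $\gamma_1, \gamma_2, \dotsc$ and, via a tube-lemma/closed-map argument, produce an open $V \ni x_0$ with $V \subseteq \bigcap_i \im \gamma_i$ and $p^{-1}(V) \subseteq \bigcup_i \gamma_i W$. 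Chasing a point of $p^{-1}(V)$ through the responsible translate shows $V \subseteq \bigcup_i \psi_{\gamma_i}(p(W) \cap \supp \gamma_i)$. Because $W$ is compact, $p(W)$ is closed, so each set $V \cap \psi_{\gamma_i}(p(W) \cap \supp \gamma_i)$ is relatively closed in $V$; as $V$ is a nonempty open subset of the compact Hausdorff, hence Baire, space $\Gnaught$, the Baire category theorem yields one such piece with nonempty interior. Transporting this interior back through the homeomorphism $\psi_{\gamma_i}$ gives that $p(W) \cap \supp \gamma_i$, and therefore $p(W) \subseteq p(U)$, has nonempty interior. The delicate point throughout is precisely the passage from the fibre to the tube $V$: this is where the properness/closedness afforded by compactness (minimal case) or the countability afforded by $\sigma$-compactness is indispensable, the irreducibility hypothesis having already been used to locate the good base point $x_0$.
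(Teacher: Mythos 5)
Your proposal is correct in substance, but it takes a genuinely different route from the paper's proof. The second bullet is derived from the first exactly as in the paper. For the first bullet, however, the paper argues \emph{globally} and splits into two cases: in the minimal case it covers all of $X$ by finitely many translate-images of compact open bisections of $\cG \ltimes X$ whose supports lie in $U$ (minimality of $\cG \ltimes X$ being the combination of minimality of $\cG$ with irreducibility), then applies a finite closed-cover argument in $\Gnaught$ and transports back; in the $\sigma$-compact case it forms the countable saturation $O = \bigcup_n \vphi_{\gamma_n}(U)$, uses irreducibility and closedness of $p$ to see that $p(O)$ has non-empty interior, and then runs a Baire category argument over the countably many pieces. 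You instead \emph{localise at a fibre}: irreducibility produces a single point $x_0$ whose entire fibre lies in the saturation of $W$, and compactness of that fibre (a closed subset of the compact space $X$) together with closedness of $p$ (as $X$ is compact and $\Gnaught$ Hausdorff) yield a finite subfamily of translates and a tube $V$, after which only a finite relative closed-cover argument is needed and Baire disappears entirely. Two small repairs are needed: before shrinking $V$ into $\bigcap_i \im \gamma_i$ you must discard the translates in the finite subcover that miss $p^{-1}(x_0)$, since only for the retained ones is $x_0 \in \im \gamma_i$ guaranteed; and the countable variant you sketch for the $\sigma$-compact case does not work as stated, because an intersection of countably many open sets $\im \gamma_i$ need not be open --- but, as explained next, that variant is never needed.

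Indeed, your own diagnosis of where the hypotheses enter is off, in an interesting way. The fibre $p^{-1}(x_0)$ is compact and $p$ is closed in \emph{both} cases --- this uses only the standing compactness of $X$ (which the paper's proof also uses throughout, although the statement of the lemma omits it) and Hausdorffness of $\Gnaught$, not minimality or $\sigma$-compactness. So the finite, fibre-localised argument applies uniformly, and as completed it appears to prove the lemma from irreducibility, compactness of $X$ and Hausdorffness of $\Gnaught$ alone, with the ``minimal or $\sigma$-compact'' dichotomy playing no role. Since Lemma~\ref{lem:G-space-projections-dense-subsets} is where this hypothesis enters Proposition~\ref{prop:conditional-expectation-extremally-disconnected} and hence Theorem~\ref{thm:inclusion-essential-algebras}, such a strengthening would bear directly on the question recorded in Remark~\ref{rem:possible-generalisation-inclusion-essential-algebras}, so it deserves careful independent verification --- but after checking each step (openness and $\cG$-invariance of the saturation, the finite subcover, the tube, relative closedness of the finitely many pieces $V \cap \psi_{\gamma_i}(p(W) \cap \supp \gamma_i)$, and the transport through $\psi_{\gamma_i^*}$), I do not see a gap.
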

\begin{proof}
  Denote by $\pi: \cG \ltimes X \to \cG$ the natural extension of $p$.  Let $U \subseteq X$ be an open subset.  Denote by $\vphi_\gamma$ the partial homeomorphism of $X$ associated to an open bisection $\gamma$ of $\cG \ltimes X$ and $\psi_\gamma$ the partial homeomorphism of $\Gnaught$ associated with an open bisection of $\cG$.

  If $\cG$ is minimal, then $\cG \grpaction{} X$ is minimal too.  By compactness of $X$ we find finitely many compact open bisections $(\gamma_n)_n$ of $\cG \ltimes X$ such that $(\im \vphi_{\gamma_n})_n$ covers $X$ and for every $n$, we have $\supp \vphi_{\gamma_n} \subseteq U$ and there is an open bisection $\beta_n$ of $\cG$ such that $\gamma_n \subseteq \pi^{-1}(\beta_n)$.  Since each of the sets $p(\im \vphi_{\gamma_n})$ is closed and $X$ is a Baire space, it follows that there is some $n$ such that $p(\im \vphi_{\gamma_n})$ has non-empty interior.  Then also $\psi_{\beta_n^*} \circ p(\im \vphi_{\gamma_n}) = p(\supp \vphi_{\gamma_n})$ has non-empty interior.

  If $\cG$ is $\sigma$-compact, observe that $\pi: \cG \ltimes X \to \cG$ is proper, so that $\cG \ltimes X$ is $\sigma$-compact and as such covered by countably many compact open bisections $(\gamma_n)_{n \in \NN}$, each contained in $\pi^{-1}(\beta_n)$ for a some open bisection $\beta_n$ of $\cG$.  Consider the open subset $O = \bigcup_{n \in \NN} \vphi_{\gamma_n}(U) \subseteq X$.  Then $X \setminus O \subseteq X$ is a proper closed $\cG$-invariant subset.  Thus $p(X \setminus O) \subseteq \Gnaught$ is a proper subset by irreducibility of $X$.  Further, since $X$ is compact and $\Gnaught$ is Hausdorff, $p(X \setminus O)$ is also closed.  It follows that $p(O)$ has non-empty interior.  Since it is the countable union of the closed subsets $(p(\vphi_{\gamma_n}(U)))_{n \in \NN}$ and $X$ is a Baire space, there is $n \in \NN$ such that $p(\vphi_{\gamma_n}(U))$ has non-empty interior.  As before, one concludes that $\psi_{\beta_n^*} \circ p (\vphi_{\gamma_n}(U)) = p(U \cap \supp \vphi_{\gamma_n})$ has non-empty interior.

  Let now $D \subseteq \Gnaught$ be a dense subset.  Given an open subset $U \subseteq X$, we know by the first part that $p(U)$ has non-empty interior and thus intersects $D$ non-trivially.  Thus $U \cap p^{-1}(D)$ is non-empty either, which proves density of $p^{-1}(D) \subseteq X$.
\end{proof}

\begin{remark}
  \label{rec:extension-theorem}
  A topological space $X$ is extremally disconnected if and only if for every open subset $U \subseteq X$ and every continuous function $f \in \contb(U)$ there is a continuous function $g \in \contb(X)$ satisfying $g|_U = f$.  We refer to \cite[Exercise 1.H.6, p.23]{gillmanjerison1960}, which uses a suitable version of Urysohn's extension theorem (see \cite[1.17, p.18]{gillmanjerison1960}).  One may choose $g$ to be supported in the clopen subset $\ol{U} \subseteq X$.
\end{remark}

Let us now describe our perspective on the local conditional expectation.  We adopt the original perspective of the local multiplier algebra as explained in Section~\ref{sec:groupoids}.  Recall the terminology of \cite[Section 3.1]{kwasniewskimeyer2019-essential} that for an inclusion $A \subseteq B$ of \Cstar-algebras, a generalised conditional expectation is given by another inclusion $A \subseteq C$ and a completely positive, contractive map $\rF: B \to C$ that restricts to the identity on $A$.  In the next proposition, the role of $A$ is played by $\cont(\Gnaught)$.  
\begin{proposition}
  \label{prop:conditional-expectation-extremally-disconnected}
  Let $\cG$ be an {\'e}tale groupoid with compact Hausdorff space of units and assume that $\cG$ is minimal or $\sigma$-compact.  Let $p: X \to \Gnaught$ be an extremally disconnected, irreducible, compact $\cG$-space and denote by $\pi: \cG \ltimes X \to \cG$ its natural extension.  Then there is a unique generalised conditional expectation $\rF: \Cstarred(\cG) \to \cont(X)$ such that $\supp(\rF(f)) \subseteq \ol{p^{-1}(U \cap \Gnaught)}$ and
  \begin{gather*}
    \rF(f)|_{p^{-1}(U \cap \Gnaught)} = (f \circ \pi)|_{p^{-1}(U \cap \Gnaught)}
  \end{gather*}
  for all open bisections $U \subseteq \cG$ and all $f \in \contc(U)$.  If $\Ered: \Cstarred(\cG) \to \Mloc(\cont(\Gnaught))$ denotes the local conditional expectation, then there is a unique $\cG$-equivariant embedding ${\Mloc(\cont(\Gnaught)) \to \cont(X)}$ such that the following diagram commutes
  \begin{equation*}
    \begin{tikzcd}
      \Cstarred(\cG) \arrow{rd}{\Ered} \arrow{r}{\rF} & \cont(X) \\
      & \Mloc(\cont(\Gnaught)) \arrow{u}
    \end{tikzcd}
  \end{equation*}
\end{proposition}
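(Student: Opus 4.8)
The plan is to construct the asserted embedding $\theta \colon \Mloc(\cont(\Gnaught)) \to \cont(X)$ explicitly, as a ``pull back along $p$ and extend'' map, and then to set $\rF = \theta \circ \Ered$. Recall that $\Mloc(\cont(\Gnaught)) = \varinjlim_V \contb(V)$, where $V$ ranges over the dense open subsets of $\Gnaught$. Given such a $V$ and $f \in \contb(V)$, the set $p^{-1}(V) \subseteq X$ is open and, by the second bullet of Lemma~\ref{lem:G-space-projections-dense-subsets} (this is exactly where the hypothesis that $\cG$ is minimal or $\sigma$-compact enters), dense; hence $f \circ p \in \contb(p^{-1}(V))$, and since $X$ is extremally disconnected, Remark~\ref{rec:extension-theorem} provides a unique $g \in \cont(X)$ with $g|_{p^{-1}(V)} = f \circ p$. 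I would set $\theta([f]) = g$. This is well defined on the direct limit (two representatives agree on a dense open set, so their extensions agree on a dense set, hence everywhere), and it is a $*$-homomorphism. It is isometric: as $p$ is surjective, $p(p^{-1}(V)) = V$, so $\|\theta([f])\| = \sup_{p^{-1}(V)} |f \circ p| = \sup_V |f| = \|[f]\|$; thus $\theta$ is an embedding, and it is unital since $\theta(1) = 1 \circ p = 1$.

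For $\cG$-equivariance I would use that $p$ intertwines the induced partial homeomorphisms, $p \circ \vphi_\gamma = \psi_\gamma \circ p$ on the relevant domains, where $\vphi_\gamma$ and $\psi_\gamma$ are the partial homeomorphisms of $X$ and $\Gnaught$ attached to $\gamma \in \Gamma(\cG)$. Since the $\cG$-actions on $\Mloc(\cont(\Gnaught))$ and $\cont(X)$ are given by precomposition with $\psi_{\gamma^*}$ and $\vphi_{\gamma^*}$, the identity $\theta \circ \alpha_\gamma = \beta_\gamma \circ \theta$ holds on the dense open set $p^{-1}(\supp\gamma)$ and hence everywhere by continuity; one only has to keep track of the supports defining the hereditary subalgebras. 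Then $\rF = \theta \circ \Ered$ is unital, completely positive and contractive as a composite of a generalised conditional expectation with a ucp embedding, and it restricts to the identity on $\cont(\Gnaught)$ because $\Ered|_{\cont(\Gnaught)} = \id$ and $\theta|_{\cont(\Gnaught)}$ is the inclusion $f \mapsto f \circ p$.

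To verify the two characterising properties, I would fix an open bisection $U$ and $f \in \contc(U)$. Here $\Ered(f) = [f|_{\Gnaught}]$, where $f|_{\Gnaught}$ is continuous on the dense open set $W = \Gnaught \setminus \partial(U \cap \Gnaught)$ and vanishes off $\ol{U \cap \Gnaught}$; hence $\rF(f)$ is the continuous extension to $X$ of $(f|_{\Gnaught}) \circ p$ from $p^{-1}(W)$. On $p^{-1}(U\cap\Gnaught) \subseteq p^{-1}(W)$ this equals $x \mapsto f(p(x))$, which, under the identification $X \cong (\cG \ltimes X)^{(0)}$ with $\pi(1_{p(x)},x) = p(x)$, is precisely $(f \circ \pi)|_{p^{-1}(U\cap\Gnaught)}$; this is the restriction formula. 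For the support statement, $(f|_{\Gnaught}) \circ p$ vanishes on $p^{-1}(W) \setminus \ol{p^{-1}(U\cap\Gnaught)}$, a dense subset of the clopen set $X \setminus \ol{p^{-1}(U\cap\Gnaught)}$ (clopen by extremal disconnectedness), so $\rF(f)$ vanishes there, giving $\supp \rF(f) \subseteq \ol{p^{-1}(U\cap\Gnaught)}$.

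Uniqueness of $\rF$ follows since these two properties already pin down $\rF(f)$ for $f \in \contc(U)$: it is prescribed on the open set $p^{-1}(U\cap\Gnaught)$ and is zero on the open set $X \setminus \ol{p^{-1}(U\cap\Gnaught)}$, whose union is dense (the complement is the boundary of the clopen set $\ol{p^{-1}(U\cap\Gnaught)}$, hence nowhere dense), so continuity forces $\rF(f)$; linearity and contractivity then propagate uniqueness from $\cC(\cG)$ to $\Cstarred(\cG)$. For uniqueness of $\theta$, I would take another $\cG$-equivariant embedding $\theta'$ with $\theta' \circ \Ered = \rF$; then for $m = [f]$ with $f \in \contb(V)$ and any $\chi \in \contc(V)$ the product $\chi m = \chi f$ lies in $\contc(V) \subseteq \cont(\Gnaught) \subseteq \image(\Ered)$, so $\theta(\chi m) = \theta'(\chi m)$ and $\theta(\chi) = \theta'(\chi)$, whence $(\chi \circ p)\,\theta(m) = (\chi \circ p)\,\theta'(m)$; letting $\chi$ increase to $1$ on a compact exhaustion of $V$ and invoking density of $p^{-1}(V)$ gives $\theta(m) = \theta'(m)$. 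The main obstacle is the bookkeeping of the third paragraph: correctly matching $\Ered(f)$ with the generically discontinuous restriction $f|_{\Gnaught}$ and then showing that pulling back along $p$ and extending reproduces $f \circ \pi$ exactly on $p^{-1}(U\cap\Gnaught)$ while vanishing beyond its closure, which is where extremal disconnectedness and Lemma~\ref{lem:G-space-projections-dense-subsets} do the genuine work.
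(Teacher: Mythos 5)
Your proposal is correct, and it reorganises the paper's argument rather than repeating it. The paper works in the opposite order: it first defines $\rF$ directly, taking for each open bisection $U$ and $f \in \contc(U)$ the continuous extension of $(f \circ \pi)|_{p^{-1}(U \cap \Gnaught)}$ supported in $\ol{p^{-1}(U \cap \Gnaught)}$, and then proves boundedness by hand, via the pointwise estimate $|\sum_i f_i \circ \pi(x)| \leq \|\sum_i f_i\|_{\Cstarred}$ valid for $x$ off the union of the boundaries $\partial V_i$, so that $\rF$ extends from $\cC(\cG)$ to $\Cstarred(\cG)$ with no reference to $\Ered$. Only afterwards does it construct the embedding $\iota \colon \Mloc(\cont(\Gnaught)) \to \cont(X)$ (exactly your $\theta$, built by the same mechanism: density of $p^{-1}$ of dense sets from Lemma~\ref{lem:G-space-projections-dense-subsets} plus the extension property of Remark~\ref{rec:extension-theorem}) and verify $\iota \circ \Ered = \rF$ by agreement on the dense set $X \setminus p^{-1}(\Gnaught \cap \partial U)$. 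You instead set $\rF := \theta \circ \Ered$, which makes the diagram commute by fiat and outsources boundedness and complete positivity to the Kwa{\'s}niewski--Meyer properties of $\Ered$ quoted in the preliminaries; the price is that you must then verify the two characterising properties, which you do correctly via the identification of $\Ered(f)$ with the class of $f|_{\Gnaught}$ on a dense open set of continuity. Your route buys a shorter analytic core (no direct norm estimate against $\|\cdot\|_{\Cstarred}$), and, unlike the paper's written proof, it spells out both uniqueness claims explicitly, which the paper leaves implicit; your density argument — values prescribed on $p^{-1}(U \cap \Gnaught)$, vanishing off its closure, nowhere-density of what remains — is exactly the right mechanism.

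Two small repairs, neither a genuine gap. First, the exceptional set in your uniqueness argument for $\rF$ is $\ol{O} \setminus O$ for the open set $O = p^{-1}(U \cap \Gnaught)$, i.e.\ the boundary of the \emph{open} set, not of the clopen set $\ol{O}$ (a clopen set has empty boundary); nowhere-density holds for $\ol{O} \setminus O$ as you need. Second, in the uniqueness of $\theta$, the step $\theta'(\chi m) = \theta'(\chi)\,\theta'(m)$ deserves a word, since an embedding is a priori only a complete order embedding and need not be multiplicative: it is justified because the diagram forces $\theta'|_{\cont(\Gnaught)}$ to be the $*$-homomorphism $\chi \mapsto \chi \circ p$, which places $\cont(\Gnaught)$ in the multiplicative domain of $\theta'$.
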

\begin{proof}
  Let $U \subseteq \cG$ be an open bisection and $f \in \contc(U)$.  Write $V = p^{-1}(U \cap \Gnaught) \subseteq X$ and denote by $g \in \cont(X)$ the unique continuous extension of $(f \circ \pi)|_V$ having support in $\ol{V}$.  This defines a map $\rF: \bigoplus_U \contc(U) \to \cont(X)$.  Let $f = \sum_{i = 1}^n f_i \in \cC(\cG)$ for open bisections $U_i \subseteq \cG$ and $f_i \in \contc(U_i)$.  Write $V_i = p^{-1}(U_i \cap \Gnaught)$ and observe that $| \sum_i \rF(f_i) (x)| = | \sum_i f_i \circ \pi (x)| \leq \|\sum_i f_i\|_{\Cstarred}$ for all $x \in X \setminus \bigcup_i \partial V_i$, where $f_i$ is considered as a Borel function defined on all of $\cG$ and having support in $U_i$. So continuity of each $\rF(f_i)$ implies that
  \begin{align*}
    \| \sum_i F(f_i) \|
    & =
      \sup \{ |\sum_i F(f_i)(x)| \mid x \in X\} \\
    & =
      \sup \{ |\sum_i F(f_i)(x)| \mid x \in X \setminus \bigcup_i \partial V_i\} \\
    & \leq
      \|f\|_{\Cstarred}
  \end{align*}
  So $\rF$ factors through $\cC(\cG)$ and extends to a well-defined map $\Cstarred(\cG) \to \cont(X)$.

  Given a dense open subset $O \subseteq \Gnaught$ and $f \in \contb(O)$,  Lemma \ref{lem:G-space-projections-dense-subsets} says that $p^{-1}(O) \subseteq X$ is dense, so that there is a unique continuous extension of $f \circ \pi$ to $X$, which defines an element in $\cont(X)$.  We thus obtain a well-defined map $\contb(O) \to \cont(X)$.  Varying $O$, these embeddings define an injective *-homomorphism $\iota: \Mloc(\cont(\Gnaught)) \to \cont(X)$.

  Given an open bisection $U \subseteq \cG$ and $f \in \contc(U)$, fix $O = \Gnaught \setminus \partial U$.  Then $\Ered(f) \in \contb(O) \subseteq \Mloc(\cont(\Gnaught))$ by definition.  Further, $\iota(\Ered(f))(x) = \rF(f)(x)$ for all $x \in X \setminus p^{-1}(\Gnaught \cap \partial U)$.  By Lemma~\ref{lem:G-space-projections-dense-subsets} the latter set is dense in $X$ and we can infer $\iota(\Ered(f)) = \rF(f)$ from continuity.
\end{proof}

\subsection{Inclusions of essential groupoid C$^*$-algebras}
\label{sec:inclusion-essential-groupoids}

Using the skyscraper groupoid as in \cite[Example 2.5]{khoshkamskandalis2002}, it was observed in \cite[Remark 4.8]{kwasniewskimeyer2019-essential} that the essential groupoid construction is not functorial. The present section demonstrates that in the situations arising from the study of \Cstar-simplicity, we do see a natural inclusion of essential groupoid \Cstar-algebras arising from suitable surjections of groupoids.  We make use of the Dixmier algebra picture of the local conditional expectation, as explained in Section~\ref{sec:groupoids}.
\begin{theorem}
  \label{thm:inclusion-essential-algebras}
  Let $\cG$ and $\cH$ be {\'e}tale groupoids with compact Hausdorff space of units and that $\cH$ is minimal or $\sigma$-compact.  Assume that $\pi: \cG \to \cH$ is a fibrewise bijective, proper surjection of groupoids.  Then precomposition with $\pi$ defines a *-homomorphism $\cC(\cH) \to \cC(\cG)$ giving rise to the following commuting diagram.
  \begin{equation*}
    \begin{tikzcd}
      \cC(\cH) \arrow[hookrightarrow]{d}  \arrow[hookrightarrow]{r} & \Cstarred(\cH) \arrow[hookrightarrow]{d} \arrow{r} & \Cstaress(\cH) \arrow[hookrightarrow]{d} \\
      \cC(\cG) \arrow[hookrightarrow]{r} & \Cstarred(\cG) \arrow{r} & \Cstaress(\cG)
    \end{tikzcd}
  \end{equation*}
\end{theorem}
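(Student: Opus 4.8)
The plan is to construct the single map "precompose with $\pi$" and then propagate it across the three columns of the diagram, so that commutativity is automatic. First I would check that $f \mapsto f \circ \pi$ carries $\cC(\cH)$ into $\cC(\cG)$ and is a $*$-homomorphism. The key point is that $\pi^{-1}(V)$ is an open bisection of $\cG$ whenever $V \subseteq \cH$ is one: it is open by continuity of $\pi$, and if $g_1,g_2 \in \pi^{-1}(V)$ have a common source then $\pi(g_1),\pi(g_2)\in V$ have a common source, hence agree, so fibrewise injectivity forces $g_1=g_2$ (and symmetrically for the range). Since $\pi$ is proper, $f \circ \pi$ has compact support whenever $f$ does, so $f \mapsto f\circ \pi$ sends $\contc(V)$ into $\contc(\pi^{-1}(V))$ and extends linearly to $\cC(\cH)\to\cC(\cG)$. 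Compatibility with the involution is immediate from $\pi(g^{-1})=\pi(g)^{-1}$, and compatibility with convolution follows because, for fixed $g$, fibrewise bijectivity sets up a bijection between the factorisations $g=g_1g_2$ and the factorisations $\pi(g)=h_1h_2$, matching the two convolution sums term by term. Injectivity of $\cC(\cH)\to\cC(\cG)$ is clear from surjectivity of $\pi$.

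Next I would pass to the reduced completions. Writing $p=\pi|_{\Gnaught}\colon \Gnaught \to \Hnaught$, fibrewise bijectivity gives for each $x\in\Gnaught$ a bijection $\pi\colon \cG_x \to \cH_{p(x)}$ and hence a unitary $U_x\colon \ltwo(\cG_x)\to\ltwo(\cH_{p(x)})$, $\delta_g\mapsto\delta_{\pi(g)}$. A direct computation with the convolution formula gives $U_x\,\lambda^{\cG}_x(f\circ\pi)\,U_x^*=\lambda^{\cH}_{p(x)}(f)$, so that $\|\lambda^{\cG}_x(f\circ\pi)\|=\|\lambda^{\cH}_{p(x)}(f)\|$. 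Because $\pi$ is a surjection of groupoids, $p$ maps onto $\Hnaught$, and taking the supremum over $x$ yields $\|f\circ\pi\|_{\Cstarred(\cG)}=\|f\|_{\Cstarred(\cH)}$. Thus the map is isometric and extends to an inclusion $\iota\colon\Cstarred(\cH)\hookrightarrow\Cstarred(\cG)$. The same intertwining shows $\widehat{f\circ\pi}=\hat f\circ\pi$ on $\cC(\cH)$, and by continuity $\widehat{\iota(a)}=\hat a\circ\pi$ for every $a\in\Cstarred(\cH)$.

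To descend to the essential algebras it suffices to prove the ideal identity $J_{\mathrm{sing}}(\cH)=\iota^{-1}(J_{\mathrm{sing}}(\cG))$: this gives at once that $\iota$ carries $J_{\mathrm{sing}}(\cH)$ into $J_{\mathrm{sing}}(\cG)$ (so the quotient map factors) and that the induced map $\Cstaress(\cH)\to\Cstaress(\cG)$ is injective, whence the rightmost inclusion. Using $J_{\mathrm{sing}}=\{a:\Ered(a^*a)=0\}$ and the fact that $\Ered(b)=0$ if and only if $\hat b$ vanishes on a dense subset of the unit space (Proposition~\ref{prop:singular-elements-vanishing-generically}, together with continuity of $\hat b$ on a dense open set), the identity reduces via $\widehat{\iota(b)}=\hat b\circ\pi$ to the following statement: a bounded function $\beta$ on $\Hnaught$, continuous on a dense open set, vanishes on a dense subset of $\Hnaught$ if and only if $\beta\circ p$ vanishes on a dense subset of $\Gnaught$. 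The reverse implication (which yields injectivity) is elementary: if $\beta\neq 0$ on some nonempty open $W\subseteq\Hnaught$, then $\beta\circ p\neq 0$ on the nonempty open set $p^{-1}(W)$, so its zero set cannot be dense.

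The forward implication is the crux and the main obstacle. It amounts to the density-pullback property that $p^{-1}(O)$ is dense in $\Gnaught$ whenever $O\subseteq\Hnaught$ is dense open. To obtain it I would use that a fibrewise bijective proper surjection identifies $\cG$ with the transformation groupoid $\cH\ltimes\Gnaught$, where $\Gnaught$ carries the $\cH$-action $h\cdot x=\rmr\bigl((\pi|_{\cG_x})^{-1}(h)\bigr)$ with anchor map $p$; the density-pullback is then precisely the second bullet of Lemma~\ref{lem:G-space-projections-dense-subsets} applied to the $\cH$-space $\Gnaught$, which is exactly where the hypothesis that $\cH$ is minimal or $\sigma$-compact is consumed. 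The remaining delicate point is to verify that $\Gnaught$ meets the regularity hypotheses of that lemma (irreducibility, and total disconnectedness in the ample setting) as an $\cH$-space — automatic in the principal application $\cG=\cH\ltimes\fb\cH$, where $\fb\cH$ is an extremally disconnected $\cH$-boundary. With the ideal identity established, the common map $\iota$ together with the quotient maps $\Cstarred\to\Cstaress$ assembles into the commuting diagram of inclusions.
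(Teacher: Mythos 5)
Your first two steps coincide with the paper's: the construction of $\pi_*$ on $\cC(\cH)$ via preimages of open bisections, and its isometric extension to $\Cstarred(\cH) \hra \Cstarred(\cG)$ by intertwining the regular representations (the paper phrases this as $\lambda^\cG_x \circ \pi_* = \lambda^\cH_{\pi(x)}$; your unitaries $U_x$ are the same computation). Your reduction of the essential-level statement to the density-pullback property, via $\widehat{\iota(a)} = \hat a \circ \pi$ and the characterisation of $\ker \Ered$ through dense vanishing sets, is also sound and is close in spirit to the alternative route the paper itself sketches in Remark~\ref{rem:inclusion-essential-algebras-alternative-proof}; in fact, combining Proposition~\ref{prop:singular-elements-vanishing-generically} with a Baire-category argument (for general $a \in \Cstarred(\cG)$ the restriction $\hat a|_{\Gnaught}$ is continuous on a comeager set, not on a dense open set, but that suffices) even avoids the countability hypothesis needed for the meager-support criterion cited there.

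The gap is in the final step, and you have correctly located it yourself: Lemma~\ref{lem:G-space-projections-dense-subsets} requires the $\cH$-space to be totally disconnected and irreducible, and under the hypotheses of Theorem~\ref{thm:inclusion-essential-algebras} the $\cH$-space $\Gnaught$ need be neither. Both hypotheses fail already for $\cG = \cH \ltimes \bigl(\Hnaught \times [0,1]\bigr)$ with $\cH$ acting trivially on the second coordinate and $\pi$ the first-coordinate projection, which is a fibrewise bijective proper surjection: the unit space is not totally disconnected, and $\Hnaught \times \{0\}$ is a closed invariant subset with full projection, so irreducibility fails. Moreover the lemma genuinely consumes both hypotheses (covers by compact open bisections of the transformation groupoid in both cases; irreducibility to make $p(X \setminus O)$ proper in the $\sigma$-compact case), so they cannot simply be dropped, and retreating to the principal application $\Gnaught = \fb\cH$ proves only a special case of the stated theorem. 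The missing idea — and the paper's actual device — is to interpose the extremally disconnected cover $X = \mathrm{spec}(\Dix(\Gnaught))$, which is extremally disconnected by construction, is irreducible over $\Gnaught$ because $\cont(\Gnaught) \subseteq \Dix(\Gnaught)$ is an essential extension, and becomes an $\cH$-space by fibrewise bijectivity. The paper then applies the uniqueness statement of Proposition~\ref{prop:conditional-expectation-extremally-disconnected} to $\Ered^{\cG} \circ \pi_* \colon \Cstarred(\cH) \to \cont(X)$, obtaining $\Ered^{\cG} \circ \pi_* = \iota \circ \Ered^{\cH}$ with $\iota \colon \Dix(\Hnaught) \to \cont(X)$ injective, and reads off $\ker(\Cstarred(\cH) \to \Cstaress(\cH)) = \pi_*^{-1}\bigl(J_{\mathrm{sing}}(\cG)\bigr)$ by comparing kernels. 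Your function-level argument can be repaired by the same device: pull $\hat b$ back along $X \to \Gnaught \to \Hnaught$, apply the density lemma to the $\cH$-space $X$ instead of $\Gnaught$, and push density of the vanishing set back down to $\Gnaught$ along the closed surjection $X \to \Gnaught$.
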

\begin{proof}
  Whenever $U \subseteq \cH$ is an open bisection, then $\pi^{-1}(U) \subseteq \cG$ is an open bisection, since $\pi$ is fibrewise bijective.  Since $\pi$ is proper, precomposition with $\pi$ defines a *-isomorphism $\contc(U) \cong \contc(\pi^{-1}(U))$.  This proves that $\pi_*: \cC(\cH) \to \cC(\cG)$ is well-defined.  It is injective, since $\pi$ is surjective.

  For $x \in \Gnaught$, we have $\lambda^\cG_x \circ \pi_* = \lambda^\cH_{\pi(x)}$ as a straightforward calculation on the subspaces $\contc(U) \subseteq \cC(\cH)$ for open bisections $U \subseteq \cH$ shows.  It follows that $\pi_*$ extends continuously to an embedding $\Cstarred(\cH) \hra \Cstarred(\cG)$.

  Write now $X = \mathrm{spec}(\Dix(\Gnaught))$ and observe that $X$ is an extremally disconnected $\cG$-space.  Denote its unit space projection by $p: X \to \Gnaught$.  Since $\pi$ is fibrewise bijective, $X$ also becomes an $\cH$-space.  For an element $h \in \cH$ and an element $x \in X$ satisfying $\pi \circ p(x) = \rms(h)$, there is a unique $g \in \pi^{-1}(h)$ satisfying $\rms(g) = p(x)$.  Then $hx = gx$ holds by definition.  Denote by $\Ered^\cG: \Cstarred(\cG) \to \cont(X)$ the local conditional expectation.  Then
  \begin{gather*}
    \Ered^{\cG} \circ \pi_*: \Cstarred(\cH) \to \cont(X)
  \end{gather*}
  falls in the scope of Proposition~\ref{prop:conditional-expectation-extremally-disconnected}.  Hence we find that $\Ered^{\cG} \circ \pi_* = \iota \circ \Ered^\cH$, where we denote by $\iota: \Dix(\Hnaught) \to \cont(X)$ the natural inclusion and by $\Ered^\cH: \Cstarred(\cH) \to \Dix(\Hnaught)$ the local conditional expectation of $\cH$. It follows that  
  \begin{align*}
    \ker(\Cstarred(\cH) \to \Cstaress(\cH))
    & =
      \{a \in \Cstarred(\cH) \mid  \Ered^\cH(a^*a) = 0\} \\
    & =
      \{a \in \Cstarred(\cH) \mid \Ered^\cG \circ \pi_* (a^*a) = 0\} \\
    & =
      \ker(\Cstarred(\cH) \to \Cstaress(\cG))
      \eqstop
  \end{align*}
\end{proof}

\begin{remark}
  \label{rem:inclusion-essential-algebras-alternative-proof}
  Our proof of Theorem~\ref{thm:inclusion-essential-algebras} makes direct use of the definition of the essential groupoid.  It should be remarked that Lemma~\ref{lem:G-space-projections-dense-subsets} applied to the map $\cG \ltimes X \to \cG$ also implies that preimages of meager sets remain meager.  So functions with meager support are pulled back to functions with meager support.   Recall from Section~\ref{sec:groupoids} that for $a \in \Cstarred(\cG)$, one defines a function on $\cG$ by $\hat a (g) = \langle \lambda_{\rms(g)}(a) \delta_{\rms(g)}, \delta_g \rangle$.  It follows from \cite[Proposition 7.18]{kwasniewskimeyer2019-essential} and its proof that for {\'e}tale groupoids $\cG$, the kernel of $\Cstarred(\cG) \to \Cstaress(\cG)$ consists exactly of elements $a \in \Cstarred(\cG)$ such that $\widehat{a^*a}$ has meager support.  This provides an alternative approach to Theorem~\ref{thm:inclusion-essential-algebras} for $\sigma$-compact groupoids and minimal groupoids.
\end{remark}

\begin{remark}
  \label{rem:possible-generalisation-inclusion-essential-algebras}
  We currently do not know whether the assumption of minimality or $\sigma$-compactness is necessary in Theorem~\ref{thm:inclusion-essential-algebras}.
\end{remark}

\subsection{Hausdorffification of groupoids with extremally disconnected space of units}
\label{sec:hausdorffification}

Recall from Proposition~\ref{prop:isotropy-clopen} that the isotropy groupoid of an {\'e}tale groupoid with an extremally disconnected space of units is clopen.  The analogue statement holds true for the closure of its space of units, which will be the starting point of defining a Hausdorffification.
\begin{lemma}
  \label{lem:extended-unit-space-clopen}
  Let $\cG$ be an {\'e}tale groupoid whose space of units is an extremally disconnected, locally compact Hausdorff space.  Then $\cG$ is extremally disconnected.  In particular, $\ol{\Gnaught} \subseteq \cG$ is a clopen normal subgroupoid.
\end{lemma}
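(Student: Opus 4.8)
The plan is to prove the topological statement first — that $\cG$ itself is extremally disconnected — and then read off the clopenness and the subgroupoid properties of $\ol{\Gnaught}$ as consequences. The crucial observation for extremal disconnectedness is that in an {\'e}tale groupoid closures can be detected inside open bisections. Concretely, I would fix a cover of $\cG$ by open bisections $U \in \Gamma(\cG)$; each $\rms|_U : U \to \rms(U)$ is a homeomorphism onto an open subset of $\Gnaught$, and since open subspaces of extremally disconnected spaces are again extremally disconnected, every such $U$ is extremally disconnected. For an arbitrary open $W \subseteq \cG$ I would then verify the identity $\ol{W} \cap U = \ol{W \cap U}$, where the latter closure is taken in the subspace $U$: the inclusion $\supseteq$ is trivial, and for $\subseteq$ one uses that the open neighbourhoods of a point $g \in U$ that are contained in $U$ form a neighbourhood basis, so any such neighbourhood meeting $W$ already meets $W \cap U$. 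As $W \cap U$ is open in the extremally disconnected space $U$, its closure in $U$ is open in $U$, hence open in $\cG$; writing $\ol{W} = \bigcup_U (\ol{W} \cap U)$ over the cover then exhibits $\ol{W}$ as a union of open sets.

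Granting the standard fact that the unit space of an {\'e}tale groupoid is open in $\cG$ (the unit inclusion is a continuous section of the local homeomorphism $\rms$), the clopenness of $\ol{\Gnaught}$ is immediate: $\Gnaught$ is open, extremal disconnectedness makes $\ol{\Gnaught}$ open, and it is of course closed. Since $\Iso(\cG)$ is closed (by continuity of $\rms$ and $\rmr$) and contains $\Gnaught$, I get $\ol{\Gnaught} \subseteq \Iso(\cG)$, so every element of $\ol{\Gnaught}$ is an isotropy element; closure under inversion then follows because inversion is a homeomorphism of $\cG$ fixing $\Gnaught$ setwise.

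The genuine obstacle is closure under multiplication, and I would handle it as follows. Given composable $g,h \in \ol{\Gnaught}$, I would use that $\ol{\Gnaught}$ is open to pick open bisections $U \ni g$ and $V \ni h$ with $U,V \subseteq \ol{\Gnaught}$, and reduce to proving $UV \subseteq \ol{\Gnaught}$. Because $U,V \subseteq \Iso(\cG)$, their associated partial homeomorphisms $\psi_U,\psi_V$ from Example~\ref{ex:pseudogroup-action-unit-space} are the identity, so $\rms|_U = \rmr|_U$, and analogously for $V$; consequently $\rms(UV) = \rms(U)\cap\rms(V)$ and the element of $UV$ over a base point $z$ is the product of the $U$-element and $V$-element over $z$. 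The key input is that $\Gnaught$ is \emph{open} in $\cG$: this makes $U \cap \Gnaught$ open in $U$, and since $U \subseteq \ol{\Gnaught}$ it is also dense in $U$, so $D_U := \rms(U \cap \Gnaught)$ is open \emph{and} dense in $\rms(U)$, and likewise $D_V$ in $\rms(V)$. For $z \in D_U \cap D_V$ the two elements over $z$ are both the unit $z$, so the element of $UV$ over $z$ is again the unit $z$. The elementary fact that an intersection of two open dense sets is dense shows $D_U \cap D_V$ is dense in $\rms(UV)$, whence $UV \cap \Gnaught$ is dense in $UV$ and $UV \subseteq \ol{\Gnaught}$. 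The subtle point this resolves is that a priori the units of $U$ and of $V$ are only dense (not matched) over the base, and without openness of $\Gnaught$ their bases could meet in a non-dense set.

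For normality I would fix $h \in \ol{\Gnaught}$ with $\rms(h) = \rmr(h) = x$ and $g \in \cG$ with $\rms(g) = x$, choose an open bisection $U \ni g$, and consider the conjugation map $\Phi(k) = g_k\, k\, g_k^{-1}$ with $g_k = (\rms|_U)^{-1}(\rms(k))$, which is continuous on $\Omega := \ol{\Gnaught} \cap \rms^{-1}(\rms(U))$ (an open subset of $\cG$, using clopenness of $\ol{\Gnaught}$; here $\ol{\Gnaught} \subseteq \Iso(\cG)$ guarantees $\Phi$ is defined). Conjugating a unit $u = \rms(g_u)$ by $g_u$ produces the unit $\rmr(g_u)$, so $\Phi$ maps $\Gnaught \cap \Omega$ into $\Gnaught$; since $\Gnaught \cap \Omega$ is dense in $\Omega$ and $\Phi$ is continuous, I obtain $\Phi(\Omega) \subseteq \ol{\Gnaught}$, and in particular $g h g^{-1} = \Phi(h) \in \ol{\Gnaught}$. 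Together with the previous steps this shows $\ol{\Gnaught}$ is a clopen normal subgroupoid. (Proposition~\ref{prop:isotropy-clopen} may be invoked as an alternative route to the openness of the relevant domains, since $\cG$ is now known to be extremally disconnected.)
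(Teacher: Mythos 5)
Your proof is correct, and its first half --- extremal disconnectedness via a cover by open bisections, the identity $\ol{W}\cap U=\ol{W\cap U}^{\,U}$, and extremal disconnectedness of open subspaces of $\Gnaught$ --- is essentially the paper's own argument, merely phrased with neighbourhood bases instead of nets. Where you genuinely go beyond the paper is the ``in particular'' clause: the paper's proof stops after extremal disconnectedness and asserts the clopen normal subgroupoid statement without argument, whereas you supply complete proofs of closure under multiplication and of normality, and both are sound. Your multiplicative-closure argument (matching the two isotropy bisections $U,V\subseteq\ol{\Gnaught}$ over the base, noting $\rms|_U=\rmr|_U$, and observing that the open dense unit loci $D_U, D_V$ intersect densely in $\rms(U)\cap\rms(V)$, so $UV\cap\Gnaught$ is dense in $UV$) correctly isolates the real subtlety, namely that nets of units witnessing $g,h\in\ol{\Gnaught}$ are a priori not composable; the openness of $\Gnaught$ is indeed exactly what makes the two unit loci intersect densely. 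For comparison, the paper's implicit route --- visible in the proof of Lemma~\ref{lem:extended-units-are-abelian} --- is the ``common net'' trick: since disjoint open subsets of an extremally disconnected space have disjoint closures, any neighbourhoods $U\ni g$, $V\ni h$ satisfy $U\cap V\cap\Gnaught\neq\emptyset$, producing a single net $(x_i)$ in $\Gnaught$ converging simultaneously to $g$ and $h$, whence $x_i=x_ix_i\to gh\in\ol{\Gnaught}$; that is shorter but leans harder on extremal disconnectedness, while your version only uses it through the clopenness of $\ol{\Gnaught}$. Your normality argument via the continuous conjugation map $\Phi(k)=g_k\,k\,g_k^{-1}$ on $\Omega=\ol{\Gnaught}\cap\rms^{-1}(\rms(U))$, pushing the dense set $\Gnaught\cap\Omega$ into $\Gnaught$, is the standard one and is carried out correctly, including the identification $g_h=g$ by injectivity of $\rms|_U$.
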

\begin{proof}
  Let $O \subseteq \cG$ be an open subset and $g \in \ol{O}$.  Let $U \subseteq \cG$ be an open bisection containing $g$.  Then $U \cap O$ contains a net converging to $g$, so that $g$ lies in the relative closure $\ol{U \cap O}^U \subseteq U$.  Since $U$ is an open bisection and as such homeomorphic to an open subset of $\Gnaught$, it is extremally disconnected.  So $\ol{U \cap O}^U \subseteq \ol{O}$ is an open neighbourhood of $g$.
\end{proof}

Recall that quotients of {\'e}tale groupoids by open normal subgroupoids remain {\'e}tale.  In the setting of Lemma~\ref{lem:extended-unit-space-clopen}, the extended unit space is even clopen, so that we obtain a Hausdorff quotient.
\begin{definition}
  \label{def:hausdorffification}
  Let $\cG$ be an {\'e}tale groupoid whose space of units is an extremally disconnected, locally compact Hausdorff space.  The \emph{extended unit space} of $\cG$ is $\ol{\Gnaught} \subseteq \cG$.  The \emph{Hausdorffification} of $\cG$ is $\cG_\Haus = \cG / \ol{\Gnaught}$.
\end{definition}

Before we proceed to the next theorem, let us remark that if $\cG$ is an {\'e}tale groupoid with extremally disconnected compact Hausdorff space of units, then Proposition~\ref{prop:conditional-expectation-extremally-disconnected}, applied to the trivial $\cG$-space $\Gnaught$, produces a conditional expectation $\Ered: \Cstarred(\cG) \to \cont(\Gnaught)$ which induces a faithful conditional expectation $\Eess: \Cstaress(\cG) \to \cont(\Gnaught)$.
\begin{theorem}
  \label{thm:essential-is-reduced-cstar-algebra}
  Let $\cG$ be an {\'e}tale groupoid whose space of units is an extremally disconnected, compact Hausdorff space with the property that there exists a dense set of points in $\Gnaught$ where the isotropy group of the extended unit space $\ol{\Gnaught}$ is amenable.  Denote by $p: \cG \to \cG_\Haus$ the quotient map to its Hausdorffification.
  \begin{itemize}
  \item There is a unique *-isomorphism $\pi: \Cstaress(\cG) \to \Cstarred(\cG_\Haus)$ which restricts to the natural isometry $p_*: \contc(U) \to \contc(p(U))$ for every open bisection $U \subseteq \cG$.
  \item Denote by $\rE: \Cstarred(\cG_\Haus) \to \cont(\Gnaught)$ the natural conditional expectation.  Then $\rE \circ \pi = \Eess$.
  \end{itemize}
\end{theorem}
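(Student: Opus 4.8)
The plan is to realise $\pi$ as the descent of the quotient \Cstar-homomorphism from Proposition~\ref{prop:quotient-reduced-groupoid-cstar-algebras} and to identify its kernel with the ideal of singular elements by comparing conditional expectations. By Lemma~\ref{lem:extended-unit-space-clopen} the extended unit space $\ol{\Gnaught}$ is a clopen normal subgroupoid of $\cG$ contained in $\Iso(\cG)$, and by Lemma~\ref{lem:extended-units-are-abelian} its isotropy groups are abelian, hence amenable. First I would therefore apply Proposition~\ref{prop:quotient-reduced-groupoid-cstar-algebras} to $\cN = \ol{\Gnaught}$ to obtain a \Cstar-homomorphism $\pi_{\mathrm{red}}: \Cstarred(\cG) \to \Cstarred(\cG_\Haus)$ that restricts to $p_*: \contc(U) \to \contc(p(U))$ on every open bisection. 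Since its image is a \Cstar-subalgebra containing the dense subalgebra $\cC(\cG_\Haus) = \pi_{\mathrm{red}}(\cC(\cG))$, the map $\pi_{\mathrm{red}}$ is surjective.

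The heart of the argument is to show that $\rE \circ \pi_{\mathrm{red}} = \Ered$ as maps $\Cstarred(\cG) \to \cont(\Gnaught)$, where $\Ered$ is the local conditional expectation and where I use that $\Mloc(\cont(\Gnaught)) = \Dix(\Gnaught) = \cont(\Gnaught)$ because $\Gnaught$ is extremally disconnected and compact (so that Proposition~\ref{prop:conditional-expectation-extremally-disconnected} applies to the trivial $\cG$-space). By continuity it suffices to check this on a function $f \in \contc(U)$ for an open bisection $U$. On the one hand $\Ered(f) \in \cont(\Gnaught)$ is the continuous extension of $f|_{U \cap \Gnaught}$ supported in $\ol{U \cap \Gnaught}$. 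On the other hand, identifying $\Hnaught$ with $\Gnaught$ via $p$, the function $\rE(\pi_{\mathrm{red}}(f)) = p_*(f)|_{\Hnaught}$ takes the value $f(g)$ at a unit $x$ whenever $U \cap \ol{\Gnaught}_x^x = \{g\}$, and the value $0$ when this intersection is empty. Both functions are continuous and agree on the open set $U \cap \Gnaught$, where one may take $g = x$, hence they agree on its closure. The remaining point, which I expect to be the main obstacle, is to rule out a discrepancy coming from non-Hausdorff points: if $g \in U \cap (\ol{\Gnaught} \setminus \Gnaught)$ with $\rms(g) = x$, then $g$ is approximated by a net $(y_i)$ in $U \cap \Gnaught$, and continuity of $\rms$ forces $y_i = \rms(y_i) \to \rms(g) = x$, so that $x \in \ol{U \cap \Gnaught}$. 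Consequently $\rE(\pi_{\mathrm{red}}(f))$ can be nonzero only on $\ol{U \cap \Gnaught}$, and the two continuous functions coincide everywhere.

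With this identity in hand, I would conclude by a faithfulness argument. The canonical conditional expectation $\rE$ of the {\'e}tale Hausdorff groupoid $\cG_\Haus$ is faithful, so for $a \in \Cstarred(\cG)$ we have $\pi_{\mathrm{red}}(a) = 0$ if and only if $\rE(\pi_{\mathrm{red}}(a^*a)) = 0$, and by the previous paragraph this is equivalent to $\Ered(a^*a) = 0$, that is, to $a \in J_{\mathrm{sing}} = \ker(\Cstarred(\cG) \to \Cstaress(\cG))$. Thus $\ker \pi_{\mathrm{red}} = J_{\mathrm{sing}}$, and since $\pi_{\mathrm{red}}$ is surjective it descends to a \Cstar-isomorphism $\pi: \Cstaress(\cG) \to \Cstarred(\cG_\Haus)$. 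This $\pi$ restricts to $p_*$ on each $\contc(U)$, and uniqueness follows because $\cC(\cG)$ is dense in $\Cstaress(\cG)$. Finally, writing $q: \Cstarred(\cG) \to \Cstaress(\cG)$ for the quotient and using $\Eess \circ q = \Ered$ from the construction of $\Eess$ in Definition~\ref{def:essential-groupoid-cstar-algebra}, I would obtain $\rE \circ \pi \circ q = \rE \circ \pi_{\mathrm{red}} = \Ered = \Eess \circ q$; since $q$ is surjective this yields $\rE \circ \pi = \Eess$, which completes the proof.
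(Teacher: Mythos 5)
Your proof is correct and takes essentially the same route as the paper's: obtain $\pi_{\mathrm{red}}$ from Proposition~\ref{prop:quotient-reduced-groupoid-cstar-algebras} via Lemmas~\ref{lem:extended-unit-space-clopen} and~\ref{lem:extended-units-are-abelian}, show $\rE \circ \pi_{\mathrm{red}}$ coincides with the local conditional expectation by checking on $\contc(U)$ against the characterisation of Proposition~\ref{prop:conditional-expectation-extremally-disconnected} applied to the trivial $\cG$-space $\Gnaught$, and use faithfulness of $\rE$ to identify $\ker \pi_{\mathrm{red}}$ with $J_{\mathrm{sing}}$. Your write-up merely makes explicit steps the paper leaves implicit (surjectivity, the support estimate via the net of units, and the descent through the quotient), all of which are sound.
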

\begin{proof}
  By assumption we have a dense set of points where the isotropy group of $\ol{\Gnaught}$ is amenable. Hence Proposition~\ref{prop:quotient-reduced-groupoid-cstar-algebras} implies that there is a *-homomorphism $\pi_{\mathrm{red}}: \Cstarred(\cG) \to \Cstarred(\cG_\Haus)$ restricting to the natural isometry $p_*: \contc(U) \to \contc(p(U))$ for every open bisection $U \subseteq \cG$.  We will show that $\pi_{\mathrm{red}}$ factors through the essential groupoid \Cstar-algebra of $\cG$ to a *-homomorphism $\pi: \Cstaress(\cG) \to \Cstarred(\cG_\Haus)$ satisfying $\rE \circ \pi = \Eess$.  Since $\rE$ is faithful, it suffices to show that $\rE \circ \pi_{\mathrm{red}}$ is the local conditional expectation of $\Cstarred(\cG)$.  To this end, we verify the conditions of Proposition~\ref{prop:conditional-expectation-extremally-disconnected} for $X = \Gnaught$.  For every open bisection $U \subseteq \cG$ and every $f \in \contc(U)$ the function satisfying
  \begin{gather*}
    \rE \circ \pi_{\mathrm{red}}(f)(x)
    =
    \sum_{g \in \ol{\Gnaught}_x} f(g)
  \end{gather*}
  is continuous, supported in $\ol{U \cap \Gnaught}$ and satisfies $\rE \circ \pi_{\mathrm{red}}(f)|_{U \cap \Gnaught} = f|_{U \cap \Gnaught}$.  So $\rE \circ \pi_{\mathrm{red}}(f) = \Eess(f)$ follows.
\end{proof}

\section{Fundamental characterisations of the ideal intersection property}
\label{sec:characterisations}

In this section we establish characterisations of the ideal intersection property that will be of fundamental importance for further reasoning both about the existence of essentially confined amenable sections of the isotropy group in Section~\ref{sec:confined-subgroupoids}, and about Powers averaging property in Section~\ref{sec:powers-averaging}.

The characterisations we will consider here come in three different flavours.  First, the ideal intersection property of $\cont(\Gnaught) \subseteq \Cstaress(\cG)$ is equivalent to the ideal intersection property of $\cont(\fb \cG) \subseteq \Cstaress(\cG \ltimes \fb \cG)$, making it possible to exploit the extremal disconnectedness of the Furstenberg boundary.  Second, on the level of $\cG \ltimes \fb \cG$, in the presence of a dense set of points in $\fb \cG$ with amenable stabilizers, it is possible to express the ideal intersection property as a simple statement about principality, which must, however, take into account the possible non-Hausdorffness of the groupoid.  This is expressed in the definition of essentially principal groupoids in Definition~\ref{def:essentially-effective}.  Third, again in the presence of a dense set of points in $\fb \cG$ with amenable stabilizers, the ideal intersection property can be expressed in terms of a uniqueness statement for $\cG$-pseudo expectations.  It is this last characterisation that will be most frequently applied in further results.  Recall that, following Pitts \cite{pitts2017}, a $\cG$-pseudo expectation on $\Cstaress(\cG)$ is a $\cG$-equivariant ucp map $\Cstaress(\cG) \to \cont(\fb \cG)$ which restricts on $\cont(\Gnaught)$ the the canonical inclusion $\cont(\Gnaught) \hra \cont(\fb \cG)$.

\begin{theorem}
  \label{thm:intersection-property-essential-algebras}

\begin{enumerate}
  \item[(I)] Let $\cG$ be an {\'e}tale groupoid with compact Hausdorff space of units.  Assume that $\cG$ is Hausdorff, that $\cG$ is minimal or that $\cG$ is $\sigma$-compact.  Then the following statements are equivalent.
  \begin{enumerate}
  \item[(i)] $\cont(\fb \cG) \subseteq \Cstaress(\cG \ltimes \fb \cG)$ has the ideal intersection property.
  \item[(ii)] Every $\cG$-pseudo expectation $\Cstaress(\cG) \to \cont(\fb \cG)$ is faithful.
  \item[(iii)] $\cont(\Gnaught) \subseteq \Cstaress(\cG)$ has the ideal intersection property.
  \end{enumerate}

  \item[(II)] Assume, in addition to the assumptions in (I), that $\mathfrak{Y} \mathrel{:=} \{ y \in \fb \cG \mid (\cG \ltimes \fb \cG)_y^y \text{ is amenable} \}$ is dense in $\fb \cG$. Then (i) to (iii) are equivalent to
  \begin{enumerate}
  \item[(iv)] $\Iso(\cG \ltimes \fb \cG) = \ol{\fb \cG}^{\cG \ltimes \fb \cG}$.
  \item[(v)] There is a unique $\cG$-pseudo expectation $\Cstaress(\cG) \to \cont(\fb \cG)$.
  \end{enumerate}

  \item[(III)] Let $\mathfrak{X}$ be the set of those points $x \in \Gnaught$ whose $\cG$-orbit $\cG.x$ is contained in the interior of its closure in $\Gnaught$. Suppose that $\mathfrak{X}$ is dense in $\Gnaught$. Let $\pi : \fb \cG \to \Gnaught$ be the canonical projection. Then $\pi^{-1}(\mathfrak{X}) \subseteq \mathfrak{Y}$ and $\mathfrak{Y}$ is dense in $\fb \cG$. In that case, (i) to (v) are equivalent.  
\end{enumerate}
\end{theorem}
Note that the condition in (III) is satisfied if $\cG$ is topologically transitive, in the sense that there exists a point in $\Gnaught$ with dense $\cG$-orbit.

\begin{remark}
  \label{rem:countability-assumption}
  It is worth commenting on the necessity of the assumption of minimality or $\sigma$-compactness for non-Hausdorff groupoids in Theorem~\ref{thm:intersection-property-essential-algebras}~(I).  Most \Cstar-simplicity results for groups hold without any assumption on countability of the group, and this is also reflected in the Hausdorff case for {\'e}tale groupoids.  However, the essential groupoid \Cstar-algebra, which replaces the reduced groupoid \Cstar-algebra in the non-Hausdorff case, is by its very definition governed by the interaction between open dense subsets of the groupoid.  As a consequence, it is not even clear that there is an inclusion of $\Cstaress(\cG)$ into $\Cstaress(\cG \ltimes \fb \cG)$.  Only under the additional assumption of Theorem~\ref{thm:intersection-property-essential-algebras} have we been able to prove the existence of such an inclusion in Section~\ref{sec:inclusion-essential-groupoids}.  This inclusion is therefore vital for arguments pertaining to the theory of \Cstar-simplicity.
\end{remark}

\begin{remark}
  \label{rem:NotAllStabAmen}
  Note that, in general, contrary to what we had in an earlier version of our paper and what is also claimed in \cite[Proposition~4.2.16]{borys2020-thesis} and \cite[Proposition~5.1]{borys2019-boundary}, the stabilizer groups $(\cG \ltimes \fb \cG)_y^y$, for $y \in \fb \cG$, need not be amenable. Indeed, consider the following concrete example: Let $\mathbb{F}$ be a non-abelian free group and $N_i$ a sequence of finite index normal subgroups of $\mathbb{F}$ such that $\bigcap_i N_i$ is the trivial subgroup. Consider the bundle of groups $\cG \mathrel{:=} (\coprod_{i \in \mathbb{N}} \mathbb{F}/N_i) \amalg \mathbb{F}$, where each $G_i \mathrel{:=} \mathbb{F} / N_i$ is equipped with the discrete topology and a basic open set around $g \in \mathbb{F}$ is given by $\{gN_i \mid i \in I\}$, for some co-finite subset $I$ of $\mathbb{N}$. Then $\cG$ becomes a Hausdorff groupoid with unit space given by the one-point compactification of $\mathbb{N}$, where we identify the point $\{ \infty \}$ at infinity with the identity of $\mathbb{F}$. Let $\pi : \fb \cG \to \Gnaught$ be the canonical projection. For each $i \in \mathbb{N}$, $\pi^{-1}(i)$ consists of just a single point with isotropy group $G_i$ in $\cG \ltimes \fb \cG$. It follows that $\mathbb{F}$ acts trivially on $\pi^{-1}(\infty)$. Hence, for each $y \in \pi^{-1}(\infty)$, $(\cG \ltimes \fb \cG)_y^y$ is not amenable.
\end{remark}

\begin{remark}
  \label{rem:AllStabAmenSit}
  Related to Remark~\ref{rem:NotAllStabAmen}, there are special situations where all stabilizer groups in $\cG \ltimes \fb \cG$ are amenable. For example, this is trivially true if all stabilizer groups of $\cG$ are amenable. This is also true for transformation groupoids with compact unit spaces, as shown in \cite{kawabe17}. Moreover, we will show below (see Corollary~\ref{cor:MinAllStabAmen}) that this is also the case for minimal groupoids.
\end{remark}

We will adopt the following terminology in the remainder of this article.
\begin{definition}
  \label{def:essentially-effective}
  Let $\cG$ be an {\'e}tale groupoid with locally compact Hausdorff space of units.  We will say that $\cG$ is \emph{essentially principal} if $\Iso(\cG) = \ol{\Gnaught}$.  It is \emph{essentially effective} if $\Iso(\cG)^\circ \subseteq \ol{\Gnaught}$.
\end{definition}
We remark that, by Proposition~\ref{prop:isotropy-clopen}, an essentially effective groupoid with extremally disconnected unit space is automatically essentially principal.

For the proof of Theorem~\ref{thm:intersection-property-essential-algebras}, it is convenient to use the notation $\cH \mathrel{:=} \cG \ltimes \fb \cG$. Moreover, the conditions in Theorem~\ref{thm:intersection-property-essential-algebras}~(I) imply that there is a canonical inclusion $\Cstaress(\cG) \subseteq \Cstaress(\cH)$. Indeed, if $\cG$ is Hausdorff, then $\Cstaress(\cG) = \Cstarred(\cG) \subseteq \Cstarred(\cH) = \Cstaress(\cH)$, and if $\cG$ is minimal or $\sigma$-compact, then the desired inclusion is provided by Theorem~\ref{thm:inclusion-essential-algebras}.

Let us begin with the proof of (I).
\begin{proof}[Proof of Theorem~\ref{thm:intersection-property-essential-algebras}~(I)]
(i) $\Rightarrow$ (ii): Let $\vphi : \Cstaress(\cG) \to \cont(\fb G)$ be a $\cG$-pseudo expectation. Since $\Cstaress(\cG) \subseteq \Cstaress(\cH)$, applying $\cG$-injectivity of $\cont(\fb \cG)$ we can extend $\vphi$ to a $\cG$-equivariant ucp map $\psi : \Cstaress(\cH) \to \cont(\fb \cG)$. By $\cG$-rigidity we find that $\psi$ restricted to $\cont(\Hnaught) = \cont(\fb \cG)$ is the identity. Now consider the ideal $I \mathrel{:=} \{ a \in \Cstaress(\cH) \mid \psi(bac) = 0 \text{ for all } b, c \in \Cstaress(\cH) \}$. Clearly $\cont(\fb \cG) \cap I = \{0\}$, so that $I = \{0\}$ because $\cont(\fb \cG) \subseteq \Cstaress(\cH)$ has the ideal intersection property. We claim that if $a \in \Cstaress(\cG)$ satisfies $\vphi(aa^*) = 0$, then $a \in I$. Indeed, $\psi(bac) \psi(bac)^* \leq \psi(bac c^* a^* b^*) \leq \Vert c c^* \Vert \psi(b a a^* b^*)$, so that it suffices to show that $\psi(b a a^* b^*) = 0$ for all $b \in \Cstaress(\cH)$. Since $\Cstaress(\cH) = \cspan \cont(\fb \cG) \Cstaress(\cG)$ and $\Cstaress(\cG)$ is generated by normalizers of $\cont(\Gnaught)$, it actually suffices to show that $\psi(b a a^* b^*) = 0$ for normalizers $b \in \Cstaress(\cG)$ of $\cont(\Gnaught)$. But $\cG$-equivariance of $\psi$ implies that $\psi(b a a^* b^*) = b \psi(a a^*) b^*$. Hence we conclude that $\psi(b a a^* b^*) = b \psi(a a^*) b^* = b \vphi(a a^*) b^* = 0$, as desired. Since $I = \{0\}$, it follows that $a = 0$. In other words, we have shown that $\vphi$ is faithful.

(ii) $\Rightarrow$ (iii): Let $\rho: \Cstaress(\cG) \thra A$ be a surjective *-homomorphism that is faithful on $\cont(\Gnaught)$.  Proposition~\ref{prop:groupoid-cstar-algebras-carries-action} shows that $A$ carries a unital $\cG$-\Cstar-algebra structure such that $\rho$ is $\cG$-equivariant.  Then $(\rho|_{\cont(\Gnaught )})^{-1}: \rho(\cont(\Gnaught)) \to \cont(\Gnaught)$ extends to a $\cG$-ucp map $\vphi:A \to \cont(\fb \cG)$.   The composition $\vphi \circ \rho: \Cstaress(\cG) \to \cont(\fb \cG)$ is a $\cG$-pseudo expectation of $\cG$. By (ii), it is faithful. So also $\rho$ must be faithful.

(iii) $\Rightarrow$ (i): Let $\rho: \Cstaress(\cH) \thra A$ be a surjective *-homomorphism that is faithful on $\cont(\fb \cG)$. Since $\cont(\Gnaught) \subseteq \Cstaress(\cG)$ has the ideal intersection property, it follows that $\rho$ is faithful on $\Cstaress(\cG) \subseteq \Cstaress(\cH)$. Denote by $\Eess: \Cstaress(\cG) \to \cont(\fb \cG)$ the natural $\cG$-pseudo expectation.  We observe that $A$ becomes a unital $\cG$-\Cstar-algebra by Proposition~\ref{prop:groupoid-cstar-algebras-carries-action}.  So $\Eess \circ (\rho|_{\Cstaress(\cG)})^{-1}: \rho(\Cstaress(\cG)) \to \cont(\fb \cG)$ extends to a $\cG$-ucp map $\vphi: A \to \cont(\fb \cG)$.  Write $\psi = \vphi \circ \rho: \Cstaress(\cH) \to \cont(\fb \cG)$.  Then $\cG$-rigidity implies that $\psi|_{\cont(\fb \cG)}$ is the identity map.  In particular, $\cont(\fb \cG)$ lies in the multiplicative domain of $\psi$.  Since $\Cstaress(\cH) = \cspan  \Cstaress(\cG) \cont(\fb \cG)$ and $\psi|_{\Cstaress(\cG)} = \Eess$, we find that $\psi$ is the natural conditional expectation of $\Cstaress(\cH)$.  In particular $\psi$, and thus also $\rho$, is faithful.
\end{proof}

Let us continue with the proof of (II).
\begin{proof}[Proof of Theorem~\ref{thm:intersection-property-essential-algebras}~(II)]
(i) $\Rightarrow$ (iv): First assume that $\cG$ is Hausdorff, in which case $\cH$ is Hausdorff, too. Since the unit space of $\cH$ is extremally disconnected, Proposition~\ref{prop:isotropy-clopen} says that the isotropy groupoid of $\cH$ is clopen.  So there is a well-defined conditional expectation $\Phi : \Cstarred(\cH) \to \Cstarred(\Iso(\cH))$. Let $F$ be the composition 
\[
  \Cstarred(\cH) \to \Cstarred(\Iso(\cH)) \to \prod_{y \in \mathfrak{Y}} \Cstarred(\cH_y^y) \to \prod_{y \in \mathfrak{Y}} \mathbb{C},
\]
where the first map is given by $\Phi$, the second map is the canonical projection map and the third map is given by the trivial representations, which exist because of amenability of $\cH_y^y$ for $y \in \mathfrak{Y}$. Observe that $F$ is tracial and that $\Cstarred(\Iso(\cH))$ lies in the multiplicative domain of $F$.  Denote by $H$ the Hilbert-$\cont(\fb \cG)$-module obtained from separation-completion coming with a map $\Lambda: \Cstarred(\cH) \to H$ satisfying
  \begin{gather*}
    \langle \Lambda(a), \Lambda(b) \rangle = F(ab^*)
  \end{gather*}
  for all $a,b \in \Cstarred(\cH)$.  Denote by $\rho$ the associated KSGNS-representation of $\Cstarred(\cH)$ \cite{lance1995-toolkit}, which is faithful on $\cont(\fb \cG)$ because $\mathfrak{Y}$ is dense in $\fb \cG$.  Since $\cont(\fb \cG) \subseteq \Cstarred(\cH)$ has the ideal intersection property, this implies that $\rho$ is faithful on $\Cstarred(\cH)$.

  Let $U \subseteq \Iso(\cH)$ be an open bisection and $f \in \contc(U)$.  Write $g = f \circ (\rms|_U)^{-1} \in \contc(\rms(U))$.  Then $F(f - g) = 0$.  Combined with the fact that $\Cstarred(\Iso(\cH))$ lies in the multiplicative domain of $F$, this leads to the calculation
  \begin{align*}
    \langle \Lambda(a) , \rho(f - g) \Lambda(b) \rangle
    & =
      F(a^* (f - g) b) \\
    & =
      F(b a^* (f - g)) \\
    & =
      F(ba^*) F(f - g) \\
    & = 0
      \eqcomma
  \end{align*}
  for all $a,b \in \Cstarred(\cH)$.  This shows that $f - g$ is zero.  Since $f \in \contc(U)$ was arbitrary, this shows that $U = \rms(U) \subseteq \fb \cG$.  Since $\Iso(\cH)$ is open by Proposition~\ref{prop:isotropy-clopen}, this proves the proposition for Hausdorff groupoids.

  If $\cG$ (and hence also $\cH$) is not Hausdorff, we observe that $\cont(\fb \cG) \subseteq \Cstaress(\cH) \cong \Cstarred(\cH_\Haus)$ has the ideal intersection property.  So the Hausdorffification $\cH_\Haus$ introduced in Section~\ref{sec:hausdorffification} is principal.  This implies that $\Iso(\cH) = \ol{\fb \cG}$.

(iv) $\Rightarrow$ (v): Existence of a $\cG$-pseudo expectation $\Cstaress(\cG) \to \cont(\fb G)$ follows from Proposition~\ref{prop:conditional-expectation-extremally-disconnected} and the fact that the local conditional expectation $\Ered: \Cstarred(\cG) \to \Dix(\Gnaught)$ factors through $\Cstaress(\cG)$.  We will prove uniqueness.  Let $\vphi: \Cstaress(\cG) \to \cont(\fb \cG)$ be a $\cG$-pseudo expectation. Applying $\cG$-injectivity of $\cont(\fb \cG)$ we can extend $\vphi$ to $\Cstaress(\cH)$.  Theorem~\ref{thm:essential-is-reduced-cstar-algebra} provides the natural identification $\Cstaress(\cH) \cong \Cstarred(\cH_\Haus)$, and we denote the resulting $\cG$-pseudo expectation by $\psi: \Cstarred(\cH_\Haus) \to \cont(\fb \cG)$.  By $\cG$-rigidity we find that $\psi$ restricted to $\cont(\Hnaught_\Haus) = \cont(\fb \cG)$ is the identity.  Consequently, $\cont(\fb \cG)$ lies in the multiplicative domain of $\psi$.  Let now $\gamma \subseteq \cH_\Haus \setminus \fb \cG$ be some open bisection and $f \in \contc(\gamma)$.  Since $\cH$ is essentially principal, we have $\Iso(\cH_\Haus) = \fb \cG$.  So if $f \neq 0$, then for every non-empty open subset $V_0 \subseteq \supp \psi(f)$ there is some nonempty open subset $V \subseteq V_0$ such that $\gamma V \cap V = \emptyset$.  This leads to
  \begin{gather*}
    \psi(f) \mathbb{1}_V
    =
    \psi(f \mathbb{1}_V)
    =
    \psi(\mathbb{1}_{\gamma V} f)
    =
    \mathbb{1}_{\gamma V} \psi(f)
    \eqstop
  \end{gather*}
  This is a contradiction, showing that $\psi(f) = 0$.  We showed that $\psi$ is the natural conditional expectation of $\Cstarred(\cH_\Haus)$.  So the composition 
  \begin{gather*}
    \vphi: \Cstaress(\cG) \hra \Cstaress(\cH) \stackrel{\cong}{\to} \Cstarred(\cH_\Haus) \to \cont(\fb \cG)    
  \end{gather*}
  equals the natural $\cG$-pseudo expectation by Theorem~\ref{thm:essential-is-reduced-cstar-algebra}.

(v) $\Rightarrow$ (ii): This is clear because the canonical $\cG$-pseudo expectation $\Cstaress(\cG) \to \cont(\fb G)$ is faithful.
\end{proof}

To prove Theorem~\ref{thm:intersection-property-essential-algebras}~(III), we prove the following result.
\begin{proposition}
\label{prop:SomeStabAmen}
Let $\cG$ be an {\'e}tale groupoid with compact Hausdorff space of units and $x \in \Gnaught$ be a point such that its $\cG$-orbit $\cG.x$ is contained in the interior of its closure $\ol{\cG.x}$ in $\Gnaught$. Take an arbitrary $y \in \pi^{-1}(x)$. Then $\cH_y^y$ is amenable.
\end{proposition}
\begin{proof}
Let $K$ be the closure of the complement $\Gnaught \setminus \ol{\cG.x}$ of the orbit closure of $x$. By construction, we have $\Gnaught = \ol{\cG.x} \cup K$. Moreover, by assumption, we have $(\cG.x) \cap K = \emptyset$. Both $\ell^{\infty} (\cG_x)$ and $\cont(K)$ are $\cG$-\Cstar-algebras: 

We have a homomorphism $C(\Gnaught) \to \ell^{\infty} (\cG_x), \, f \mapsto (f \circ \rmr) \vert_{\cG_x}$, and given an open bisection $\gamma$ with $\rms(\gamma) = U$ and $\rmr(\gamma) = V$, its action is given by
\begin{equation*}
  \ol{C_0(U) \ell^{\infty} (\cG_x)} \to \overline{C_0(V) \ell^{\infty} (\cG_x)}, \, f \mapsto f(\gamma^{-1} \cdot \sqcup).
\end{equation*}

Moreover, since $K$ is a closed $\cG$-invariant subspace of $\Gnaught$, the canonical $\cG$-action on $\cont(\Gnaught)$ restricts to a $\cG$-action on $\cont(K)$.

By combining these two $\cG$-actions, we obtain the diagonal $\cG$-action on $\ell^{\infty} (\cG_x) \oplus \cont(K)$.

We now claim that the corresponding $\cG$-equivariant homomorphism $C(\Gnaught) \to \ell^{\infty} (\cG_x) \oplus \cont(K)$ is injective. Indeed, given $f \in \cont(\Gnaught)$, $(f \circ \rmr) \vert_{\cG_x} = 0$ implies that $f \vert_{\cG.x} = 0$, and if in addition $f \vert_K = 0$, then $f = 0$ because $\Gnaught = \ol{\cG.x} \cup K$.

Now $\cG$-injectivity of $\cont(\fb \cG)$ implies that we obtain a $\cG$-ucp map $\psi : \ell^{\infty} (\cG_x) \oplus \cont(K) \to \cont(\fb \cG)$. Let $y \in \pi^{-1}(x)$ be arbitrary and let $\ev_y$ be the character on $\cont(\fb \cG)$ given by evaluation at $y$. Then we claim that $\ev_y \circ \psi$ vanishes on the direct summand $\cont(K)$ of $\ell^{\infty} (\cG_x) \oplus \cont(K)$. Indeed, $(\cG.x) \cap K = \emptyset$ means that $x \in \Gnaught \setminus K$. Hence we can find $f \in \cont(\Gnaught)$ with $f(x) = 1$ and $f \vert_K = 0$. Then
\begin{equation*}
  \ev_y(\psi(\cont(K)) = f(x) \ev_y(\psi(\cont(K))) = \ev_y(f \cdot \psi(\cont(K))) = \ev_y(\psi( (f \vert_K) \cdot \cont(K) )) = 0.
\end{equation*}

Observe that $\cG_x \cong \cH_y$. Therefore the restriction of $\ev_y \circ \psi$ gives rise to a ucp map $\ell^{\infty} (\cH_y) \to \mathbb{C}$ which is $\cH_y^y$-equivariant. It follows that $\cH_y^y$ is amenable.
\end{proof}
Clearly, Proposition~\ref{prop:SomeStabAmen} and Theorem~\ref{thm:intersection-property-essential-algebras}~(II) imply Theorem~\ref{thm:intersection-property-essential-algebras}~(III).

We record the following direct consequences.

\begin{corollary}
\label{cor:MinAllStabAmen}
Let $\cG$ be an {\'e}tale groupoid with compact Hausdorff space of units. If $\cG$ is minimal, then all stabilizer subgroups in $\cG \ltimes \fb \cG$ are amenable.
\end{corollary}

\begin{corollary}
\label{cor:IntProp_TopTrans}
Let $\cG$ be an {\'e}tale groupoid with compact Hausdorff space of units.  Assume that $\cG$ satisfies one of the following properties:
\begin{itemize}
    \item $\cG$ is minimal, or
    \item $\cG$ is topologically transitive and Hausdorff, or
    \item $\cG$ is topologically transitive and $\sigma$-compact.
\end{itemize}
Then (i) to (v) from Theorem~\ref{thm:intersection-property-essential-algebras} are all equivalent.
\end{corollary}

\begin{remark}
\label{rem:AllEquivAllStabAmen}
Because of Remark~\ref{rem:AllStabAmenSit}, if all stabilizer groups of our groupoid $\cG$ are amenable or if $\cG$ is a transformation groupoid, then we again conclude that (i) to (v) are equivalent if $\cG$ satisfies the assumptions of Theorem~\ref{thm:intersection-property-essential-algebras}~(I).
\end{remark}

\begin{remark}
  \label{rem:reduced-groupoid-cstar-algebra-intersection-property}
  We made it clear that it is necessary to consider the essential groupoid \Cstar-algebra in order to obtain a relation between the ideal structure and the algebraic-dynamical structure of a groupoid.  This can be underpinned by the fact the ideal intersection property for $\Cstarred(\cG \ltimes \fb \cG)$ implies that $\cG$ is Hausdorff.  Indeed, assume that $\Cstarred(\cG \ltimes \fb \cG)$ has the ideal intersection property. Since the map $\Cstarred(\cG \ltimes \fb \cG) \to \Cstaress(\cG \ltimes \fb \cG)$ is injective on $\cont(\fb \cG)$, it follows that the ideal of singular functions in $\Cstarred(\cG \ltimes \fb \cG)$ is trivial.  Since $\ol{\fb \cG} \subseteq \cG \ltimes \fb \cG$ is clopen by Lemma \ref{lem:extended-unit-space-clopen}, this implies that $\cG \ltimes \fb \cG$ is Hausdorff.  Then also $\cG$ is Hausdorff, since the quotient map $\cG \ltimes \fb \cG \to \cG$ maps $\ol{\fb \cG}$ onto $\ol{\Gnaught}$.
\end{remark}

\section{Essentially confined subgroupoids}
\label{sec:confined-subgroupoids}

An important point in our understanding of \Cstar-simplicity of discrete groups has been the characterisation in terms of confined subgroups from \cite{kennedy15-cstarsimplicity}, since this provides criteria that can be checked in terms of the group itself.  Note that the terminology of ``confined'' subgroups does not appear in \cite{kennedy15-cstarsimplicity}, although it does appear in prior work on group algebras of locally finite groups \cite{hartleyyalesskij1997}. 

In this section we obtain results about the ideal intersection property for groupoids.  The key achievement is Theorem~\ref{thm:seciso}, which establishes a characterisation in terms of confined amenable sections of isotropy groups for groupoids with a compact Hausdorff space of units under the same hypotheses as in Theorem~\ref{thm:intersection-property-essential-algebras}~(II). Subsequently, in \S~\ref{sec:alexandrov-groupoid}, we will be able to remove the compactness assumption on the unit space by considering a notion of Alexandrov groupoid, whose unit space is the one-point compactification of the unit space of the original groupoid.

Given an {\'e}tale groupoid $\cG$ with locally compact Hausdorff space of units, as in \cite[\S~4.2.4, p. 78]{borys2020-thesis}, we define $\Sub(\cG)$ as the space of all subgroups of the isotropy groups of $\cG$, viewed as a subspace of the space $\mathfrak{C}(\cG)$ of all closed subsets of $\cG$ equipped with the Chabauty topology (see for instance \cite{fell1962}). We will denote elements of $\Sub(\cG)$ by $(x,H)$, where $x \in \Gnaught$ and $H$ is a subgroup of $\cG_x^x$.  Observe that the first factor projection $\Sub(\cG) \to \Gnaught$ together with the conjugation action of $\cG$ turns $\Sub(\cG)$ into a $\cG$-space.

The next definition is a modification of \cite[Definition~4.2.24]{borys2020-thesis} and at the same time a generalization to the non-Hausdorff case.
\begin{definition}
  \label{def:section-isotropy}
  Let $\cG$ be an {\'e}tale groupoid with locally compact Hausdorff space of units.  A section of isotropy subgroups is a collection $\{ (x,H_x) \mid x \in \mathfrak{X} \}$ of isotropy subgroups $H_x \subseteq \cG_x^x$ for all $x$ in a dense subset $\mathfrak{X}$ of $\Gnaught$.  Such a section is called amenable if all $H_x$ are amenable.  A section of isotropy subgroups  $\Lambda = \{ (x,H_x) \mid x \in \mathfrak{X} \}$ is called essentially confined if there exists $x \in \Gnaught$ such that $H \not\subseteq \ol{\Gnaught}$ for all $(x,H) \in \ol{\cG \cdot \Lambda}$.
\end{definition}
We recall that the terminology of confinedness first arose in the study of groups, where it qualifies subgroups that are isolated from the trivial group in a strong sense.

\subsection{The ideal intersection property for groupoids with compact space of units}
\label{sec:compact-unit-space}

Combining arguments similar to those used by Kawabe \cite{kawabe17} and Borys \cite{borys2020-thesis} with the techniques developed in the present article yields the following characterisation of the intersection property for {\'e}tale groupoids with compact space of units, which is new in the non-Hausdorff case and in the Hausdorff case completes Borys's sufficient criterion and at the same time takes into account that isotropy groups of the Furstenberg groupoid are not always amenable (see Remark~\ref{rem:NotAllStabAmen}).
\begin{theorem}
  \label{thm:seciso}
  Let $\cG$ be an {\'e}tale groupoid with compact Hausdorff space of units.  Assume that $\cG$ is Hausdorff, that $\cG$ is minimal or that $\cG$ is $\sigma$-compact. Further assume that $\mathfrak{Y} = \{ y \in \fb \cG \mid (\cG \ltimes \fb \cG)_y^y \text{ is amenable} \}$ is dense in $\fb \cG$. Then $\cont(\Gnaught) \subseteq \Cstaress(\cG)$ has the intersection property if and only if $\cG$ has no essentially confined amenable sections of isotropy subgroups.
\end{theorem}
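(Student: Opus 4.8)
The plan is to use Theorem~\ref{thm:intersection-property-essential-algebras} to translate the intersection property into a statement about the transformation groupoid $\cH := \cG \ltimes \fb\cG$, and then to match that statement against the existence of essentially confined amenable sections by means of the stabiliser map of the boundary. By Theorem~\ref{thm:intersection-property-essential-algebras}, under the standing hypotheses the inclusion $\cont(\Gnaught) \subseteq \Cstaress(\cG)$ has the intersection property if and only if $\cH$ is essentially principal, that is $\Iso(\cH) = \ol{\fb\cG}^{\cH}$. I would set up the stabiliser map $\Stab : \fb\cG \to \Sub(\cG)$, $\omega \mapsto (p(\omega), \{g \in \cG_{p(\omega)}^{p(\omega)} : g\omega = \omega\})$, which is $\cG$-equivariant for the conjugation actions and whose values are amenable groups by Proposition~\ref{prop:furstenberg-groupoid-has-amenable-isotropy}. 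Projecting $\cH \to \cG$ and invoking Lemma~\ref{lem:extended-unit-space-clopen}, so that $\ol{\fb\cG}^\cH$ is a clopen normal subgroupoid, I would record the dictionary that essential principality of $\cH$ is equivalent to $\Stab(\omega) \subseteq \ol{\Gnaught}$ for every $\omega \in \fb\cG$; here the easy inclusion $q(\ol{\fb\cG}^\cH) \subseteq \ol{\Gnaught}$ is immediate, while the reverse uses openness of $p$ and the extremal disconnectedness of $\fb\cG$. I would also note that $\{(x,H) \in \Sub(\cG) : H \subseteq \ol{\Gnaught}\}$ is closed in the Chabauty topology, since a Chabauty limit of elements of the closed set $\ol{\Gnaught}$ again lies in $\ol{\Gnaught}$.

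For the direction that the intersection property forbids essentially confined amenable sections, I would show that essential principality of $\cH$ implies that no amenable section is essentially confined. Given an amenable section $\Lambda = \{(x,H_x)\}_{x \in \Gnaught}$, amenability of each $H_x$ produces an $H_x$-invariant probability measure $\mu_x$ on the fibre $p^{-1}(x)$, so that $p_*\mu_x = \delta_x$. Feeding the family $(\mu_x)_x$ into the strong proximality of $\fb\cG$ from Definition~\ref{def:boundary-actions}, for each prescribed $x \in \Gnaught$ I obtain a boundary point $y$ with $p(y) = x$ and a net $g_i$ driving some $g_i \mu_{x_i}$ to $\delta_y$. A subnet of the conjugates $g_i H_{x_i} g_i^{-1}$ converges in $\Sub(\cG)$ to a subgroup $K$ fixing $y$, whence $K \subseteq \Stab(y) \subseteq \ol{\Gnaught}$ by essential principality, while by construction $(x,K) \in \ol{\cG \cdot \Lambda}$. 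As $x$ was arbitrary, every fibre of $\ol{\cG \cdot \Lambda}$ meets $\{H \subseteq \ol{\Gnaught}\}$, which is exactly the negation of essential confinedness in the sense of Definition~\ref{def:section-isotropy}.

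For the converse I would recover an essentially confined amenable section from the failure of essential principality, adapting the stabiliser-system construction of Kennedy and Borys. If $\cH$ is not essentially principal then, by the dictionary of the first paragraph, there is $\omega_0$ with $\Stab(\omega_0) \not\subseteq \ol{\Gnaught}$. I would pass to the closed $\cG$-invariant subset $Y = \ol{\Stab(\fb\cG)} \subseteq \Sub(\cG)$, all of whose members are amenable, and, exactly as in the construction of the Furstenberg boundary (Theorem~\ref{thm:furstenberg-boundary-exists}), extract a minimal $\cG$-invariant subset $M \subseteq Y$ with $p(M) = \Gnaught$; any choice of one subgroup over each unit is then a section $\Lambda$ with $\ol{\cG \cdot \Lambda} = M$. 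The role of minimality is that the closed $\cG$-invariant set $T = \{(x,H) \in M : H \subseteq \ol{\Gnaught}\}$ is either equal to $M$ or fails to project onto $\Gnaught$; in the latter case there is a fibre $x_0$ over which every element of $M = \ol{\cG \cdot \Lambda}$ avoids $\ol{\Gnaught}$, so that $\Lambda$ is essentially confined, and amenable because its subgroups are Chabauty limits of amenable stabilisers.

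The main obstacle is precisely this converse: one must ensure that the extracted minimal system escapes $\ol{\Gnaught}$, i.e.\ that $T \neq M$, rather than collapsing onto the trivial section. This is delicate because the stabiliser map is only lower semicontinuous, so the escaping datum $\Stab(\omega_0)$ need not persist into an arbitrary minimal component; here I would use strong proximality --- in the form of Proposition~\ref{prop:strongly-proximal-minimal} in the minimal case, and the general family version of Definition~\ref{def:boundary-actions} in the $\sigma$-compact case --- to spread a nontrivial stabiliser across a full-projection minimal system, together with the normal clopen structure of $\ol{\Gnaught}$ from Lemma~\ref{lem:extended-unit-space-clopen} to keep track of the non-Hausdorff correction. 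This replacement of $\Gnaught$ by its closure $\ol{\Gnaught}$ throughout is exactly what distinguishes essential confinedness and essential principality from their Hausdorff counterparts; in the Hausdorff case $\ol{\Gnaught} = \Gnaught$, and the argument specialises to Borys's criterion, while the inclusion $\Cstaress(\cG) \subseteq \Cstaress(\cH)$ supplied by Theorem~\ref{thm:inclusion-essential-algebras} is what allows the minimal and $\sigma$-compact cases to be treated on the same footing.
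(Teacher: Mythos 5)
Your reduction via Theorem~\ref{thm:intersection-property-essential-algebras} to essential principality of $\cH = \cG \ltimes \fb \cG$ matches the paper's strategy, and your first direction is correct but genuinely different from the paper's. The paper proves ``intersection property $\Rightarrow$ no essentially confined amenable sections'' (Proposition~\ref{prop:unique-pseudo-expection-implies-no-essentially-confined-amenable-sections}) through uniqueness of the $\cG$-pseudo-expectation: it builds a ucp map $\theta\colon \Cstarred(\cG) \to \cont(Y)$ from trivial characters of the $H_x$, identifies $\vphi \circ \theta$ with the canonical expectation, and derives a contradiction via inner regularity of the measures $\mu_y$. You instead use amenability to produce $H_x$-invariant measures on the fibres of $\fb \cG$, contract them by strong proximality, and take Chabauty limits of the conjugates $g_i H_{x_i} g_i^{-1}$, landing inside a boundary stabiliser, which is contained in $\ol{\Gnaught}$ by the easy half of your dictionary (continuity of $\pi\colon \cH \to \cG$). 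Modulo a routine continuity check that the limit group fixes $y$ (via a bisection through each $k \in K$ and uniqueness of limits in $\cP(\fb \cG)$), this works, needs only condition (iii) of Theorem~\ref{thm:intersection-property-essential-algebras}, and avoids the pseudo-expectation and singular-ideal machinery entirely; it is the dynamical Kawabe--Kennedy argument transported to groupoids.

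The genuine gap is your converse, and the obstacle you flag ($T \neq M$) is real, but your proposed fix is the wrong tool: strong proximality contracts measures on $\fb \cG$ and provides no mechanism for forcing a minimal subsystem of $\ol{\Stab(\fb \cG)}$ to retain a subgroup escaping $\ol{\Gnaught}$ --- an abstractly extracted minimal subsystem can perfectly well collapse onto small subgroups. What actually rescues the argument, and what the paper uses in Proposition~\ref{prop:norec->intprop}, is that no closure and no extraction are needed: since $\fb \cG$ is extremally disconnected, $\Iso(\cH)$ is \emph{clopen} by Proposition~\ref{prop:isotropy-clopen}, so the stabiliser map $\Stab\colon \fb \cG \to \Sub(\cG)$ is continuous, not merely lower semicontinuous. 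Its image is therefore already closed, and, being the continuous equivariant image of the irreducible space $\fb \cG$, is itself irreducible (a closed invariant $B$ in the image with full projection pulls back to a closed invariant full-projection subset of $\fb \cG$, hence to all of it). Thus your $M$ is the entire stabiliser family; the set $Z = \{y \mid \Stab(y) \subseteq \ol{\Gnaught}\}$ is closed, invariant, and proper, irreducibility yields an $x_0 \notin p(Z)$, and any selection of stabilisers is essentially confined, amenable directly by Proposition~\ref{prop:furstenberg-groupoid-has-amenable-isotropy}. Your parenthetical justification of amenability --- ``Chabauty limits of amenable stabilisers are amenable'' --- is false in this setting and must not be relied on: the approximating elements live in different fibres, so one is in the marked-groups situation, where limits of finite (hence amenable) groups can be free; the fixed-ambient-group argument, where finitely generated subgroups of the limit are eventually literally contained in the approximants, does not apply across fibres.

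Separately, the nontrivial half of your dictionary --- that $\Stab(y) \subseteq \ol{\Gnaught}$ for all $y$ implies $\Iso(\cH) \subseteq \ol{\fb \cG}$ --- does not follow from ``openness of $p$ and extremal disconnectedness'' as you assert ($p$ is not obviously open). The needed input is the Baire-type Lemma~\ref{lem:G-space-projections-dense-subsets}: one takes the clopen set $A = \Iso(\cH) \setminus \ol{\fb \cG}$, pushes a nonempty open piece $V = \pi^{-1}(U) \cap A$ down to $\cG$, concludes that $\pi(V)$ has nonempty interior inside $\ol{\Gnaught} \setminus \Gnaught$, and contradicts density of $\Gnaught$ in its closure. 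This is exactly where the hypotheses ``minimal or $\sigma$-compact'' enter, and your converse cannot start without it, since converting the failure of essential principality into a single point $y_0$ with $\Stab(y_0) \not\subseteq \ol{\Gnaught}$ is precisely the contrapositive of this implication.
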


\begin{remark}
  \label{rem:recovering-previous-work-confined}
  Previous work on crossed product \Cstar-algebras and groupoid \Cstar-algebras obtained partial results that can be recovered from Theorem~\ref{thm:seciso}.

  Kawabe considered in \cite{kawabe17} crossed product \Cstar-algebras $\cont(X) \redtimes \Gamma$ for a compact Hausdorff space $X$ and a discrete group $\Gamma$.  Note that $\cont(X) \redtimes \Gamma \cong \Cstarred(\Gamma \ltimes X)$ is the groupoid \mbox{\Cstar-algebra} associated with the transformation groupoid $\Gamma \ltimes X$.  Kawabe's main result \cite[Theorem 1.6]{kawabe17} characterised the ideal intersection property for $\cont(Y) \subseteq \cont(Y) \redtimes \Gamma$ where $Y \subseteq X$ runs through all closed $\Gamma$-invariant subsets by the following property: For every point $x \in X$ and every amenable subgroup $\Lambda \leq \Gamma_x$, there is a net $(g_i)$ in $\Gamma$ such that $(g_i x)$ converges to $x$ and $(g_i \Lambda g_i^{-1})$ converges to $\{e\}$ in the Chabauty topology.  We observe that $\Sub(X \rtimes \Gamma) = \{(x, \Lambda) \in X \times \Sub(\Gamma) \mid \Lambda \leq \Gamma_x\}$ as topological spaces.  This shows how Kawabe's result is generalised by our Theorem~\ref{thm:seciso} (see also Remark~\ref{rem:AllEquivAllStabAmen}).

  Borys considered in \cite{borys2020-thesis,borys2019-boundary} {\'e}tale Hausdorff groupoids with compact space of units and proved that the absence of confined amenable sections in the isotropy is a sufficient criterion for simplicity of $\Cstarred(\cG)$. However, his argument used amenability of all isotropy subgroups of the Furstenberg groupoid, which is not true in general (see Remark~\ref{rem:NotAllStabAmen}). Hence we had to adjust the notion of confined sections of isotropy groups, and with this new notion, our Theorem~\ref{thm:seciso} provides the correct analogue of Borys' sufficient criterion and at the same time shows its necessity.

We will comment on Kwa{\'s}niewski-Meyer's result \cite[Theorem 7.29]{kwasniewskimeyer2019-essential} in Remark~\ref{rem:previous-work-locally-compact-case}, since groupoids with locally compact space of units are considered in this work.
\end{remark}

We begin with the following observation, which is probably well known.
\begin{lemma}
  \label{lem:space-of-subgroupoids-closed}
  Let $\cG$ be an {\'e}tale groupoid with locally compact Hausdorff space of units.  $\Sub(\cG)$ is closed in $\mathfrak{C}(\cG)$.
\end{lemma}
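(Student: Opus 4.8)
The plan is to prove closedness directly, by showing that $\Sub(\cG)$ is stable under Chabauty limits. I would take a net $(C_i)_i$ in $\Sub(\cG)$, where each $C_i$ is a subgroup of $\cG_{x_i}^{x_i}$ for some $x_i \in \Gnaught$, converging in $\mathfrak{C}(\cG)$ to a closed set $C$, and argue that $C$ is again a subgroup of some $\cG_x^x$ with $x \in \Gnaught$. I would use only the two semicontinuity properties coming from the Fell subbasis $\{D : D \cap U \neq \emptyset\}$ ($U$ open) and $\{D : D \cap K = \emptyset\}$ ($K$ compact): \emph{(LSC)} if $g \in C$ and $U$ is an open neighbourhood of $g$, then eventually $C_i \cap U \neq \emptyset$; and \emph{(USC)} if $g \notin C$, then by local compactness of $\cG$ there is a compact neighbourhood $K$ of $g$ with $K \cap C = \emptyset$, so eventually $C_i \cap K = \emptyset$. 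Since $\cG$ is in general non-Hausdorff, the Fell space $\mathfrak{C}(\cG)$ need not be Hausdorff, so I would deliberately avoid any uniqueness-of-limits argument and instead work pointwise through (LSC) and (USC).

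First I would locate the basepoint, assuming $C \neq \emptyset$ (the empty case is treated at the end). Each $C_i \subseteq \cG_{x_i}^{x_i}$ gives $\rms(C_i) = \rmr(C_i) = \{x_i\}$. Fixing $g_0 \in C$ and setting $x := \rms(g_0) \in \Gnaught$, for every open $V \ni x$ in $\Gnaught$ the set $\rms^{-1}(V)$ is an open neighbourhood of $g_0$, so by (LSC) eventually $C_i$ meets it, i.e.\ eventually $x_i \in V$; hence $x_i \to x$ in $\Gnaught$. Running the same argument for an arbitrary $h \in C$, and for $\rmr$ in place of $\rms$, produces $x_i \to \rms(h)$ and $x_i \to \rmr(h)$; since $\Gnaught$ is Hausdorff and limits of the single net $(x_i)$ are unique there, this forces $\rms(h) = \rmr(h) = x$ for all $h \in C$. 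Thus $C \subseteq \cG_x^x$ with $x \in \Gnaught$.

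Next I would verify the subgroup axioms. For the unit: $x_i \in C_i$ (the identity of $\cG_{x_i}^{x_i}$) and $x_i \to x$, so (USC) gives $x \in C$. For inverses: given $g \in C$, (LSC) yields a subnet $g_j \in C_{i_j}$ with $g_j \to g$; since each $C_i$ is inverse-closed and inversion is a homeomorphism, $g_j^{-1} \in C_{i_j}$ and $g_j^{-1} \to g^{-1}$, whence (USC) gives $g^{-1} \in C$. The genuinely delicate point is closure under multiplication, where for $g, h \in C$ I must approximate $g$ and $h$ by elements of the \emph{same} $C_i$ to exploit that $C_i$ is a subgroup. I would achieve this by indexing over triples $(W_g, W_h, i)$ with $W_g \ni g$, $W_h \ni h$ open and $C_i$ meeting both — a cofinal directed set by (LSC) applied to $g$ and to $h$ — and choosing $g_k \in C_{i_k} \cap W_g$, $h_k \in C_{i_k} \cap W_h$. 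These pairs are composable (both source and range equal $x_{i_k}$), so $g_k h_k \in C_{i_k}$, and continuity of the multiplication gives $g_k h_k \to gh$; then (USC) yields $gh \in C$. As $C$ is closed, contains $x$, and is stable under inversion and multiplication, it is a closed subgroup of $\cG_x^x$, so $C \in \Sub(\cG)$.

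The main obstacle is exactly this simultaneous-approximation step for the product, made unavoidable by the failure of Hausdorffness of $\mathfrak{C}(\cG)$: one cannot simply invoke continuity of the induced inversion and multiplication maps on $\mathfrak{C}(\cG)$ together with uniqueness of limits, and the product-neighbourhood directed set is the device that replaces that shortcut. A separate minor point concerns nonemptiness. By the basepoint analysis, a Chabauty limit of subgroups fails to be a subgroup only if it is empty, which occurs precisely when $(x_i)$ eventually leaves every compact subset of $\Gnaught$. When $\Gnaught$ is compact this cannot happen — the units $x_i \in C_i$ then subconverge into $\Gnaught$, forcing $C \neq \emptyset$ — so $\Sub(\cG)$ is genuinely closed; in the general locally compact case the sole additional Chabauty-limit is $\emptyset$, which one adjoins to $\Sub(\cG)$.
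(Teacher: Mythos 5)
Your proof is correct and follows essentially the same route as the paper's: both verify that a Chabauty limit of subgroups is again a subgroup of a single isotropy group using the Fell-topology neighbourhood conditions, Hausdorffness of $\Gnaught$ to pin down the common basepoint $x = \lim_i x_i$, and the key fact that any two elements of the same $C_i$ are composable --- the paper merely packages your simultaneous-approximation step for the product more compactly, choosing by continuity of multiplication open sets $U \ni g$, $V \ni h$ with $UV \subseteq W$ and noting that $H_i \cap U \neq \emptyset$ and $H_i \cap V \neq \emptyset$ together force $H_i \cap W \neq \emptyset$. Your closing discussion of the unit and of the empty set is extra care the paper omits: its proof tacitly assumes the limit $C$ is nonempty, and you are right that for non-compact $\Gnaught$ the trivial subgroups $\{x_i\}$ with $x_i$ escaping every compact set converge to $\emptyset$ in $\mathfrak{C}(\cG)$, which is precisely why, in the application of this lemma to non-compact base spaces, the authors take care to keep the relevant basepoints inside a fixed bounded subset.
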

\begin{proof}
Suppose that $(x_i,H_i) \in \Sub(\cG)$ converges to $C$ in $\mathfrak{C}(\cG)$. As explained in \cite{fell1962}, the subset $C$ consists precisely of the elements of $g \in \cG$ with the following property: For every open subset $U \subseteq \cG$ with $g \in U$, we have $H_i \cap U \neq \emptyset$ eventually. First of all, we claim that $C \subseteq \cG_x^x$ for some $x \in \Gnaught$. Indeed, take $g, h \in C$ with $\rms(g) = x$ and $\rms(h) = y$. If $x \neq y$, then we can find disjoint open subsets $U$ and $V$ of $\Gnaught$ containing $x$ and $y$, respectively. Then we must have $H_i \cap \rms^{-1}(U) \neq \emptyset$ eventually, which implies $x_i \in U$ eventually. Similarly, $x_i \in V$ eventually. This shows that $x = y$. The same argument, applied to the range map, shows that $C \subseteq \cG_x^x$. This also shows that we must have $\lim_i x_i = x$. It remains to show that $C$ is a subgroup of $\cG_x^x$. Take $g \in C$ and an open subset $V \subseteq \cG$ with $g^{-1} \in V$. Then $g \in V^{-1}$, so that $H_i \cap V^{-1} \neq \emptyset$ eventually. Applying the inverse map, we deduce that $H_i \cap V \neq \emptyset$ eventually. Hence $g^{-1} \in C$. Now take $g, h \in C$ and an open subset $W \in \cG$ with $gh \in W$. By continuity of multiplication, we find open subsets $U, V \subseteq \cG$ with $g \in U$, $h \in V$ such that $UV \subseteq W$. Then $g, h \in C$ implies $H_i \cap U \neq \emptyset$ and $H_i \cap V \neq \emptyset$ eventually. Thus $H_i \cap UV \neq \emptyset$ eventually. We conclude that $gh \in C$. So $C \in \Sub(\cG)$, as desired. 
\end{proof}

The next proposition establishes the reverse implication of Theorem~\ref{thm:seciso}.  Portions of the proof will closely follow the presentation in \cite{borys2020-thesis} for Hausdorff groupoids.
\begin{proposition}
  \label{prop:norec->intprop}
  Let $\cG$ be an {\'e}tale groupoid with compact Hausdorff space of units.  Suppose that $\cG$ is Hausdorff, $\cG$ is minimal or $\cG$ is covered by countably many bisections. Further assume that $\mathfrak{Y} = \{ y \in \fb \cG \mid (\cG \ltimes \fb \cG)_y^y \text{ is amenable} \}$ is dense in $\fb \cG$. If $\cG$ has no essentially confined amenable sections of isotropy subgroups, then $\cG \ltimes \fb \cG$ is essentially principal.
\end{proposition}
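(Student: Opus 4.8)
The plan is to prove the contrapositive: assuming that $\cG \ltimes \fb \cG$ fails to be essentially principal, I will manufacture an amenable section of isotropy subgroups of $\cG$ that is essentially confined in the sense of Definition~\ref{def:section-isotropy}. Write $\cH = \cG \ltimes \fb \cG$. Since $\cont(\fb \cG)$ is injective, its spectrum $\fb \cG$ is extremally disconnected, so Proposition~\ref{prop:isotropy-clopen} shows that $\Iso(\cH)$ is clopen and Lemma~\ref{lem:extended-unit-space-clopen} shows that the extended unit space $\ol{\fb \cG}$ is a clopen normal subgroupoid of $\cH$. Hence $\Iso(\cH) \setminus \ol{\fb \cG}$ is clopen, and by the failure of essential principality (Definition~\ref{def:essentially-effective}) it is nonempty. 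First I would choose a nonempty clopen open bisection $U \subseteq \Iso(\cH) \setminus \ol{\fb \cG}$; via the source homeomorphism it yields a continuous assignment $w \mapsto h_w \in \cH_w^w$ of isotropy elements over the nonempty clopen set $D = \rms(U) \subseteq \fb \cG$, each of which is \emph{nontrivial} in the sense that $h_w \notin \ol{\fb \cG}$.

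Next I would organise the stabilisers into a subset of $\Sub(\cG)$. For $w \in \fb \cG$, projecting $\cH_w^w$ to $\cG$ gives a subgroup $S_w \le \cG_{p(w)}^{p(w)}$, amenable by Proposition~\ref{prop:furstenberg-groupoid-has-amenable-isotropy}. The map $w \mapsto (p(w), S_w)$ is equivariant for the conjugation action ($S_{gw} = g S_w g^{-1}$), so its image has $\cG$-invariant closure $\cS \subseteq \Sub(\cG)$; here I would use Lemma~\ref{lem:space-of-subgroupoids-closed} and the compactness of $\Sub(\cG)$, together with the fact that amenability is a closed condition in the Chabauty topology, to see that $\cS$ consists of amenable subgroups. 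I would then define an amenable section $\Lambda = \{(x, H_x)\}$ by selecting, for each $x \in \Gnaught$, a point $w(x) \in p^{-1}(x)$, taken inside $D$ whenever $x \in p(D)$, and putting $H_x = S_{w(x)}$; by construction $H_x$ contains the nontrivial germ of $h_{w(x)}$ for every $x \in p(D)$. It remains to exhibit a confinement witness: a point $x_0$ such that every $(x_0, H) \in \ol{\cG \cdot \Lambda}$ satisfies $H \not\subseteq \ol{\Gnaught}$. I would take $x_0$ in the interior of $p(D)$, which is nonempty by Lemma~\ref{lem:G-space-projections-dense-subsets} applied to the irreducible, totally disconnected $\cG$-space $\fb \cG$ in the minimal and $\sigma$-compact cases; the Hausdorff case reduces to Borys's principality argument, since there $\ol{\Gnaught} = \Gnaught$.

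Any $(x_0, H) \in \ol{\cG \cdot \Lambda}$ arises as a limit of conjugates $g_i H_{y_i} g_i^{-1} \to H$ with $\rmr(g_i) \to x_0$; lifting, the points $v_i = g_i w(y_i) \in \fb \cG$ satisfy $p(v_i) \to x_0$ and $S_{v_i} \to H$, while the conjugated germs $g_i h_{w(y_i)} g_i^{-1}$ remain outside $\ol{\fb \cG}$ because $\ol{\fb \cG}$ is normal (Lemma~\ref{lem:extended-unit-space-clopen}). The crux is to show that these nontrivial germs cannot degenerate into $\ol{\Gnaught}$ in the limit. This is the main obstacle, and here I would exploit strong proximality of the boundary $\fb \cG$: using Proposition~\ref{prop:strongly-proximal-minimal} (or the defining contraction property directly in the non-minimal cases), one contracts measures supported on the nontrivial region $D$ onto point masses concentrated near the fibre over $x_0$, forcing the relevant limit point to inherit a nontrivial isotropy germ from the continuous family $w \mapsto h_w$. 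Since $U$ is clopen and disjoint from $\ol{\fb \cG}$, this germ stays uniformly away from $\ol{\fb \cG}$, so that—combined with the upper semicontinuity of the stabiliser map and the persistence of Chabauty limits—the limiting subgroup $H$ retains an element outside $\ol{\Gnaught}$. This yields the witness $x_0$ and shows that $\Lambda$ is essentially confined, contradicting the hypothesis.

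I expect the delicate heart of the argument to be precisely this contraction-and-lift step, namely arranging that the strongly proximal dynamics returns the conjugated stabilisers to a fixed clopen family of nontrivial isotropy over the witness point, so that the nontrivial germ provably survives in the Chabauty limit rather than being absorbed into the extended unit space $\ol{\Gnaught}$. The remaining ingredients—clopenness of $\Iso(\cH)$ and $\ol{\fb \cG}$, amenability and closedness of the stabiliser section, and the nonemptiness of the interior of $p(D)$—are, by contrast, direct applications of the results assembled earlier in the paper.
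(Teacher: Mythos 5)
There is a genuine gap, and it sits exactly where you flag it, but it is worse than a missing computation: the strategy conflates two different notions of ``nontrivial''. Your clopen bisection $U \subseteq \Iso(\cG \ltimes \fb \cG) \setminus \ol{\fb \cG}$ gives germs $h_w$ that are nontrivial \emph{upstairs}, i.e.\ $h_w \notin \ol{\fb \cG}$; but essential confinement of the section $\Lambda$ is a statement \emph{downstairs}, requiring elements of $H_x$ outside $\ol{\Gnaught} \subseteq \cG$. The projection $\pi(h_w)$ of a germ outside $\ol{\fb \cG}$ can perfectly well lie in $\ol{\Gnaught}$ pointwise, so even your base section need not witness confinement at any point of $p(D)$. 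Ruling this out is the actual crux of the proposition, and it cannot be done germ by germ: the paper's proof excludes it only on the level of open bisections, via a Baire-category argument using Lemma~\ref{lem:G-space-projections-dense-subsets} (projections of open sets have nonempty interior, forcing $\emptyset \neq \pi(V)^\circ \subseteq \ol{\Gnaught} \setminus \Gnaught$, impossible since $\ol{\Gnaught}\setminus\Gnaught$ has empty interior) in the minimal case, and Proposition~\ref{prop:dangerous-points-meager} (a dense set of $x$ with $\ol{\Gnaught}_x = \{x\}$) in the $\sigma$-compact case. Your sketch never engages with this distinction.

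The contraction-and-lift step has, in addition, a structural quantifier mismatch that strong proximality cannot repair. Essential confinedness at the witness $x_0$ is a \emph{universal} statement: every $(x_0,H)$ in the Chabauty orbit closure must satisfy $H \not\subseteq \ol{\Gnaught}$. Strong proximality (Proposition~\ref{prop:strongly-proximal-minimal}) is \emph{existential}: it produces some net of translates contracting a measure to a point mass at some $y$ in the fibre. Choosing $x_0$ in the interior of $p(D)$ only guarantees that \emph{some} boundary point over $x_0$ carries the nontrivial germ; the fibre $p^{-1}(x_0)$ may contain points outside $D$ whose stabiliser projects into $\ol{\Gnaught}$, and Chabauty limits of your conjugated section can land on precisely those, since nothing forces the approximating boundary points $g_i w(y_i)$ to converge into $D$. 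The paper avoids both problems by proving the implication in the forward direction: it applies the non-confinement hypothesis to the canonical closed $\cG$-invariant family $\Lambda = \{\pi((\cG \ltimes \fb \cG)_y^y) \mid y \in \fb \cG\} \subseteq \Sub(\cG)$, defines the set $Z$ of boundary points with small stabiliser, shows $Z$ is closed, invariant and projects onto (a dense subset of, hence all of) $\Gnaught$, and concludes $Z = \fb \cG$ by irreducibility, with the Baire argument above finishing the minimal case. Your peripheral ingredients (clopenness via Proposition~\ref{prop:isotropy-clopen} and Lemma~\ref{lem:extended-unit-space-clopen}, amenability via Proposition~\ref{prop:furstenberg-groupoid-has-amenable-isotropy}, closedness of $\Sub(\cG)$) are correct, but the decisive step is not merely ``delicate'' — as proposed, it fails.
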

\begin{proof}
Let $\pi: \: \cG \ltimes \fb \cG \to \cG$ be the canonical projection. Choose a subset $\mathfrak{Y}' \subseteq \mathfrak{Y}$ which maps bijectively onto $\pi(\mathfrak{Y})$ under $\pi$ and consider the section of isotropy subgroups $\Lambda = \{ \pi((\cG \ltimes \fb \cG)_y^y) \mid y \in \mathfrak{Y}' \}$, which is amenable by assumption.  The same argument as in \cite[Proof of Theorem~4.2.25]{borys2020-thesis}, which does not use the assumption that $\cG$ is Hausdorff, shows that $\{ \pi((\cG \ltimes \fb \cG)_y^y) \mid y \in \fb \cG \}$ is a closed invariant subspace of $\Sub(\cG)$.  So our assumption that $\cG$ has no essentially confined amenable section of isotropy subgroups implies that for all $x \in \Gnaught$, there exists $y \in \pi^{-1}(x)$ such that $\pi((\cG \ltimes \fb \cG)_y^y) \subseteq \ol{\Gnaught}_x$.

  Let us consider the case when $\cG$ is minimal.  Then we define
  \begin{gather*}
    Z = \{y \in \fb \cG \mid \pi((\cG \ltimes \fb \cG)_y^y) \subseteq \ol{\Gnaught}_{\pi(y)} \}
  \end{gather*}
  and observe that $\pi(Z) = \Gnaught$ and $Z$ is $\cG$-invariant.  If $y \in \ol{Z}$ and $g \in (\cG \ltimes \fb \cG)_y^y$, we take an open bisection $U \subseteq \Iso(\cG \ltimes \fb \cG)$ that contains $g$.  Let $(y_i)_i$ be a net in $\rms(U) \cap Z$ converging to $y$.  Then $(s|_U)^{-1}(y_i) \to g$ and thus $\pi(g) = \lim_i \pi((s|_U)^{-1}(y_i)) \in \ol{\Gnaught}$.  This proves that $y \in Z$.  So $Z$ is closed, which implies $Z = \fb \cG$ by irreducibility.  Equivalently, $\pi(\Iso(\cG \ltimes \fb \cG)) \subseteq \ol{\Gnaught}$.  Put $A = \Iso(\cG \ltimes \fb \cG) \setminus \ol{\fb \cG}$.  We will conclude the proof in the minimal case by assuming that $A \neq \emptyset$ and deducing a contradiction.   If $A \neq \emptyset$, we can find an open bisection $U \subseteq \cG$ such that $U \cap \pi(A) \neq \emptyset$.  Then $V = \pi^{-1}(U) \cap A \subseteq \cG \ltimes \fb \cG$ is a non-empty open bisection.  Since $\cG$ is minimal, Lemma~\ref{lem:G-space-projections-dense-subsets} says that $\rms(\pi(V)) = \pi(\rms(V))$ contains a non-empty open subset.  Since $\pi(V) \subseteq U$ is contained in an open bisection, this implies that $\emptyset \neq \pi(V)^\circ \subseteq \pi(A) \subseteq \ol{\Gnaught} \setminus \Gnaught$.  This is the desired contradiction.  Hence $\cG \ltimes \fb \cG$ is essentially principal.

  Now assume that $\cG$ is Hausdorff or covered by countably many open bisections, and define
  \begin{gather*}
    Z = \{ y \in \fb \cG \mid (\cG \ltimes \fb \cG)_y^y \subseteq \ol{\fb \cG}_y \}
    \eqstop
  \end{gather*}
  As the extended unit space $\ol{\fb \cG}$ is normal in $\cG \ltimes \fb \cG$, it follows that $Z$ is $\cG$-invariant.  Moreover, the identity $\fb \cG \setminus Z = \rms\bigl( {\rm Iso}(\cG \ltimes \fb \cG) \setminus \ol{\fb \cG} \bigr )$ implies that $Z$ is closed.  Hence $\pi(Z)$ is a closed subset of $\Gnaught$.  We want to show that $\pi(Z) = \Gnaught$. If $\cG$ is Hausdorff, then our statement that for all $x \in \Gnaught$, there exists $y \in \pi^{-1}(x)$ such that $\pi((\cG \ltimes \fb \cG)_y^y) \subseteq \ol{\Gnaught}_x = \{ x \}$ implies that for all $x \in \Gnaught$, there exists $y \in \pi^{-1}(x)$ such that $(\cG \ltimes \fb \cG)_y^y = \{ y \}$. This shows $\pi(Z) = \Gnaught$, as desired. If $\cG$ is covered by countably many open bisections, then \cite[Lemma 7.15]{kwasniewskimeyer2019-essential} implies that the set $\{ x \in \Gnaught \mid \ol{\Gnaught}_x = \{x\} \}$ is dense in $\Gnaught$.  Now for all $x$ in this set, there exists $y \in \pi^{-1}(x)$ with $\pi((\cG \ltimes \fb \cG)_y^y) \subseteq \ol{\Gnaught}_x = \{x\}$, that is $\pi((\cG \ltimes \fb \cG)_y^y) = \{x\}$ and thus $(\cG \ltimes \fb \cG)_y^y = \{y\}$.  This shows that $\pi(Z)$ is dense in $\Gnaught$.  Since $\pi(Z)$ is also closed, we conclude that $\pi(Z) = \Gnaught$, as desired.  Since $\fb \cG$ is $\cG$-irreducible,  it follows that $Z = \fb \cG$, that is  ${\rm Iso}(\cG \ltimes \fb \cG) \subseteq \ol{\fb \cG}$.
\end{proof}

Let us now explain the proof of the forward implication of Theorem~\ref{thm:seciso}.  The following proof is an adaptation of Kawabe's argument for dynamical systems of groups \cite{kawabe17}, as presented in \cite[Proof of Theorem~3.3.6]{borys2020-thesis}.  Note that this proposition is new even in the Hausdorff case.
\begin{proposition}
  \label{prop:unique-pseudo-expectation-implies-no-essentially-confined-amenable-sections}
  Let $\cG$ be an {\'e}tale groupoid with compact Hausdorff space of units.  Assume that $\cG$ is Hausdorff, that $\cG$ is minimal or that $\cG$ is $\sigma$-compact. If there is a unique $\cG$-pseudo-expectation $\Cstaress(\cG) \to C(\fb \cG)$, then $\cG$ has no essentially confined amenable sections of isotropy subgroups.
\end{proposition}
\begin{proof}
  Let $\Lambda = \{ (x,H_x) \mid x \in \mathfrak{X} \}$ be an amenable section of isotropy subgroups and $Y$ its orbit closure, that is $Y = \ol{\cG \cdot \Lambda} \subseteq \Sub(\cG)$. Define for all $a \in \cC(\cG)$ a function $\theta(a) : Y \to \mathbb{C}$ by $\theta(a)(x,H) \mathrel{:=} \sum_{\rms(g) = x} a(g) \mathbb{1}_H(g)$. 
  
  We claim that $\theta(a)$ is continuous on $Y$. It suffices to prove this for $a \in \contc(U)$, where $U \subseteq \cG$ is an open bisection. For such $a$, our construction yields
  \begin{gather*}
    \theta(a)(x,H) =
    \begin{cases}
      a(Ux) \mathbb{1}_H(Ux) & \text{if } x \in \rms(U) \\
      0 & \text{otherwise.}
    \end{cases}
  \end{gather*}
  Now let $O = \{ g \in U \mid a(g) \neq 0 \}$, which is open, and let $K \subseteq U$ be compact such that $O \subseteq K$. The sets $\cO_O = \{(x,H) \in \Sub(\cG) \mid H \cap O \neq \emptyset\}$ and $\cO'_K = \{(x,H) \in \Sub(\cG) \mid H \cap K = \emptyset\}$ are open in $\Sub(\cG)$. We have $\theta(a) \vert_{(Y \cap \cO_O)^{\rm c}} \equiv 0$, and $\theta(a) \vert_{(Y \cap \cO'_K)^{\rm c}}$ is given as the composition
  \begin{gather*}
    (Y \cap \cO'_K)^{\rmc} \to \rms(K) \to \CC:  (x,H) \mapsto x \mapsto a(Ux)
    \eqstop
  \end{gather*}
  It follows that both restrictions $\theta(a) \vert_{(Y \cap \cO_O)^{\rm c}}$ and $\theta(a) \vert_{(Y \cap \cO'_K)^{\rm c}}$ are continuous. Moreover, $O \subseteq K$ implies that $(Y \cap \cO_O) \cap (Y \cap \cO'_K) = \emptyset$, so that we obtain a decomposition into two closed sets $Y = (Y \cap \cO_O)^{\rm c} \cup (Y \cap \cO'_K)^{\rm c}$. Since $\theta(a)$ is continuous on both of these closed subsets, it follows that $\theta(a)$ is continuous on $Y$.

  For $a \in \cC(\cG)$ and $(x,H) \in \cG \cdot \Lambda$, $\theta(a)(x,H)$ is given by the image of $a$ under the composition
  \begin{equation*}
    \begin{tikzcd}
      \Cstarred(\cG) \arrow{r}{\rE_x} &
      \Cstarred(\cG_x^x) \arrow{r}{\rE_H} &
      \Cstarred(H) \arrow{r}{\chi} &
      \CC
    \end{tikzcd}
  \end{equation*}
  where $\rE_x$ and $\rE_H$ are the natural conditional expectations and $\chi$ is the character corresponding to the trivial representation. Here we are using that $H$ is amenable. It follows that $\vert \theta(a)(x,H) \vert \leq \Vert a \Vert_{\Cstarred(\cG)}$. By continuity of $\theta(a)$ and density of $\cG \cdot \Lambda$ in $Y$, we conclude that $\Vert \theta(a) \Vert_{C(Y)} \leq \Vert a \Vert_{\Cstarred(\cG)}$. Hence $\theta$ extends to a contractive map $\theta : \Cstarred(\cG) \to C(Y)$, which is clearly unital and completely positive.

  Now we show that $\theta$ is $\cG$-equivariant.  Take an open bisection $\gamma$, an open bisection $U$ with $\rmr(U), \rms(U) \subseteq \rms(\gamma)$ and $a \in \contc(U)$.  Then $\alpha_\gamma(a) \in \contc(\gamma U \gamma^*)$.  For all $(x,H) \in Y$, we have $\theta(\alpha_\gamma(a))(x,H) = 0$ if $x \notin \rms(\gamma U \gamma^*)$ and $\theta(a)(\psi_{\gamma^*}(x), \gamma^* H \gamma) = 0$ if $\psi_{\gamma^*}(x) \notin \rms(U)$.  The conditions $x \notin \rms(\gamma U \gamma^*) = \psi_\gamma \circ \rms(U)$ and $\psi_{\gamma^*}(x) \notin \rms(U)$ are equivalent. Using the fact that $\psi_{\gamma^*}(x) = \gamma^* x \gamma$, for $x \in \rms(\gamma U \gamma^*)$, we have
  \begin{align*}
    \theta(\alpha_\gamma(a))(x,H)
    & = \alpha_\gamma(a)(\gamma U \gamma^* x) \mathbb{1}_H(\gamma U \gamma^* x) \\
    & = a(U \gamma^* x \gamma) \mathbb{1}_H(\gamma U \gamma^* x) \\
    & = a(U \gamma^* x \gamma) \mathbb{1}_{\gamma^* H \gamma}(U \gamma^* x \gamma) \\
    & = \theta(a)(\psi_{\gamma^*} (x), \gamma^* H \gamma)
      \eqstop
  \end{align*}

  The projection onto the first coordinate $Y \to \Gnaught$ induces a $\cG$-equivariant embedding $\cont(\Gnaught) \to \cont(Y)$.  Hence $\cG$-injectivity of $\cont(\fb \cG)$ provides us with a $\cG$-equivariant ucp map $\vphi: \cont(Y) \to \cont(\fb \cG)$.  It follows that $\vphi \circ \theta:  \Cstarred(\cG) \to C(\fb \cG)$ is a $\cG$-ucp map with $(\varphi \circ \theta) \vert_{\cont(\Gnaught)} = \id$. We claim that $\varphi \circ \theta$ factors as
  \begin{equation*}
    \begin{tikzcd}
      \Cstarred(\cG) \arrow{r}{\mathfrak{q}} & \Cstaress(\cG) \arrow{r}{\Psi} & C(\fb \cG),
    \end{tikzcd}
  \end{equation*}
  or equivalently, that $(\varphi \circ \theta)(J_{\rm sing}) = 0$.  If $\cG$ is Hausdorff, this is tautological.  Otherwise, given $a \in J_{\mathrm{sing}}$ we have $\Ered(a^*a) = 0$.  So by Proposition~\ref{prop:singular-elements-vanishing-generically}, there is a dense subset $U \subseteq \Gnaught$ such that $\widehat{a^*a}|_U = 0$.  Since
  \begin{gather*}
    \widehat{a^*a}(x) = \sum_{g \in \cG_x} |\hat a(g)|^2
  \end{gather*}
  for all $x \in \Gnaught$, it follows that $\hat a|_{\cG_x} = 0$ for all $x \in U$.  Thus $\theta(a)|_{\{x\} \times \Sub(\cG_x^x)} = 0$.  By $\cont(\Gnaught)$-modularity, it follows that $(\vphi \circ \theta)(a) \vert_{\pi^{-1}(x)} = 0$ for all $x \in U$, where $\pi:  \cG \ltimes \fb \cG \to \cG$ is the natural projection.  By Lemma~\ref{lem:G-space-projections-dense-subsets} the preimage of $U$ in $\fb \cG$ is dense.  So $(\varphi \circ \theta)(a) = 0$ follows.   Hence, indeed, $(\vphi \circ \theta)(J_{\rm sing}) = 0$.

  Now it follows that $\Psi$ is a $\cG$-pseudo-expectation $\Cstaress(\cG) \to \cont(\fb \cG)$.  By assumption, there is only one such $\cG$-pseudo-expectation.  It follows that $\Psi \circ \mathfrak{q} = \rE \circ \pi_*$, where $\rE: \Cstarred(\cG \ltimes \fb \cG) \to \cont(\fb \cG)$ is the (unique) conditional expectation described in Proposition~\ref{prop:conditional-expectation-extremally-disconnected}.  Thus $\vphi \circ \theta = \rE \circ \pi_*$.

  Now take $y \in \fb \cG$ and set $x = \pi(y)$.  The composition $\ev_y \circ \vphi$ defines a state on $\cont(Y)$, hence it corresponds to a probability measure $\mu_y = \vphi_*(\delta_y)$ on $Y$.  The commutative diagram
  \begin{equation*}
    \begin{tikzcd}
      \fb \cG \arrow{rr}{\vphi_*} \arrow{dr} && \cP(Y) \arrow{dl} \\
      & \cP(\Gnaught)
    \end{tikzcd}
  \end{equation*}
  shows that $\supp(\mu_y) \subseteq \{x\} \times \Sub(\cG_x^x)$.  Now assume, for the sake of contradiction, that $\Lambda$ is essentially confined. Then there exists $x \in \Gnaught$ such that $H \not\subseteq \ol{\Gnaught}_x$ for all $(x,H) \in Y = \ol{\cG \cdot \Lambda}$.  Choose $y \in \fb \cG$ with $\pi(y) = x$. We claim that $\mu_y(\{(x,H) \in Y \mid g \in H\}) = 0$ for all $g \in \cG_x^x$ with $g \notin \ol{\Gnaught}$.  Indeed, take an open subset $U \subseteq \cG \setminus \ol{\Gnaught}$ with $g \in U$.  Choose $a \in \contc(U)$ satisfying $a(g) = 1$.  Then $U \cap \ol{\Gnaught} = \emptyset$ implies that $\pi^{-1}(U) \cap \ol{\fb \cG} = \emptyset$.  Hence it follows that $\rE(a \circ \pi) = 0$.  Now, using ${\rm supp}(\mu_y) \subseteq \{x\} \times \Sub(\cG_x^x)$, we obtain
  \begin{align*}
    \mu_y(\{(x,H) \in Y \mid g \in H\})
    & = \int_Y \mathbb{1}_H(g) \rmd \mu_y(x,H) \\
    & = \int_Y \theta(a)(x,H) \rmd \mu_y(x,H) \\
    & = (\ev_y \circ \vphi \circ \theta)(a) \\
    & = \ev_y ( (\rE \circ \pi^*) (a)) \\
    & = 0.
  \end{align*}
  To finish the proof, let us observe that $H \not \subseteq \ol{\Gnaught}_x$ for all $(x, H) \in Y$ implies that
  \begin{gather*}
    (\{x\} \times \Sub(\cG_x^x)) \cap Y = \bigcup_{g \in \cG_x^x \setminus \ol{\Gnaught}_x} \{(x,H) \mid g \in H\}
    \eqstop
  \end{gather*}
  The latter set is exhausted by compact subsets of the form $\{x\} \times K_F$, where $K_F = \{H \in \Sub(\cG_x^x) \mid F \cap H \neq \emptyset\}$ and $F \subseteq \cG_x^x \setminus \ol{\Gnaught}$ is finite.  Since $\mu_y$ is supported on $\{x\} \times \Sub(\cG_x^x)$, inner regularity implies that there is a finite subset $F \subseteq \cG_x^x \setminus \ol{\Gnaught}_x$ such that $0 < \mu_y(\{x\} \times K_F)$.  We find that
  \begin{gather*}
    0
    <
    \mu_y(\{x\} \times K_F) 
    \leq
    \sum_{g \in F} \mu_y( \{ (x,H) \mid g \in H \} )
    =
    0
    \eqstop
  \end{gather*}
  This is a contradiction.
\end{proof}

Let us finish by proving the main theorem in this section.
\begin{proof}[Proof of Theorem \ref{thm:seciso}]
  By Theorem~\ref{thm:intersection-property-essential-algebras}~(II), we know that the ideal intersection property for $\Cstaress(\cG)$ is equivalent to essential principality of $\cG \ltimes \fb \cG$ and to the uniqueness of a $\cG$-pseudo expectation $\Cstaress(\cG) \to \cont(\fb \cG)$.   Also observe that every $\sigma$-compact {\'e}tale groupoid is covered by countably many open bisections.  So the present theorem follows from Propositions~\ref{prop:norec->intprop} and~\ref{prop:unique-pseudo-expectation-implies-no-essentially-confined-amenable-sections}.
\end{proof}

We record the following immediate consequence of Theorem~\ref{thm:intersection-property-essential-algebras}~(III) (see also Corollary~\ref{cor:IntProp_TopTrans}) and Theorem~\ref{thm:seciso}.
\begin{corollary}
\label{cor:seciso}
Let $\cG$ be an {\'e}tale groupoid with compact Hausdorff space of units.  Assume that $\cG$ is Hausdorff, that $\cG$ is minimal or that $\cG$ is $\sigma$-compact. In addition, suppose that the set of points $x \in \Gnaught$ whose $\cG$-orbit $\cG.x$ is contained in the interior of its closure $\ol{\cG.x}$ in $\Gnaught$ is dense in $\Gnaught$. In particular, these assumptions are satisfied if $\cG$ is an {\'e}tale groupoid with compact Hausdorff space of units satisfying one of the following
\begin{itemize}
    \item $\cG$ is minimal, or
    \item $\cG$ is topologically transitive and Hausdorff, or
    \item $\cG$ is topologically transitive and $\sigma$-compact.
\end{itemize}
Then $\cont(\Gnaught) \subseteq \Cstaress(\cG)$ has the intersection property if and only if $\cG$ has no essentially confined amenable sections of isotropy subgroups.
\end{corollary}

\subsection{The Alexandrov groupoid and the case of non-compact unit spaces}
\label{sec:alexandrov-groupoid}

In this section we provide a full characterisation of the ideal intersection property for {\'e}tale groupoids satisfying similar assumptions as in \S~\ref{sec:compact-unit-space}.  To this end, we will combine Theorem~\ref{thm:seciso} with the study of a suitable notion of Alexandrov groupoid.

Recall that if $X$ is a locally compact Hausdorff space, its Alexandrov or one-point compactification is the compact Hausdorff space $X^+ = X \sqcup \{\infty\}$ whose topology is determined by specifying that the inclusion $X \hra X^+$ is a homeomorphism onto its image and a neighbourhood basis of $\infty$ is provided by the set $\{\infty\} \cup (X \setminus K)$, where $K \subseteq X$ runs through compact subsets of $X$.  In particular, $X \subseteq X^+$ is dense if $X$ is non-compact and $\infty$ is an isolated point in $X^+$ if $X$ is compact. 

The next definition provides a suitable notion of Alexandrov compactification for groupoids whose unit space is not necessarily compact.
\begin{definition}
  \label{def:alexandrov-compactification}
  Let $\cG$ be an {\'e}tale groupoid with locally compact Hausdorff space of units.  Then the Alexandrov groupoid $\cG^+$ is the set $\cG \cup (\Gnaught)^+$ with the topology determined by specifying that the inclusions $\cG, (\Gnaught)^+ \subseteq \cG^+$ are open, and the groupoid structure extending the groupoid structure of $\cG$ and making $\infty$ a unit.
\end{definition}
We directly observe that $\cG^+$ is an {\'e}tale groupoid whose unit space is, by construction, the compact Hausdorff space $(\Gnaught)^+$.  Note also that Anantharaman-Delaroche in the appendix of \cite{anantharamandelaroche2016v2-exact} considered a fibrewise Alexandrov compactification of groupoids, which is different from the present construction.

We identify the essential groupoid \Cstar-algebra of the Alexandrov groupoid.
\begin{proposition}
  \label{prop:essential-groupoid-cstar-algebra-alexandrov-groupoid-unitisation}
  Let $\cG$ be an {\'e}tale groupoid with locally compact Hausdorff space of units.  Then the inclusion $\cC(\cG) \subseteq \cC(\cG^+)$ induces an inclusion $\Cstaress(\cG) \unlhd \Cstaress(\cG^+)$ isomorphic with the unitisation $\Cstaress(\cG) \unlhd \Cstaress(\cG)^+$.
\end{proposition}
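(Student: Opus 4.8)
The plan is to realise $\cG$ as the restriction of $\cG^+$ to the open invariant subset $\Gnaught \subseteq (\Gnaught)^+$, whose complement is the single unit $\{\infty\}$ carrying the trivial group, and to follow this decomposition through first the reduced and then the essential construction. Throughout I write $\mathbb{1} = \mathbb{1}_{(\Gnaught)^+}$ for the unit of $\cC(\cG^+)$, available because $\cG^+$ is {\'e}tale with \emph{compact} unit space $(\Gnaught)^+$. First I would settle the reduced picture. Since the only arrow of $\cG^+$ meeting $\infty$ is $\infty$ itself, a short convolution computation shows that $\cC(\cG) \subseteq \cC(\cG^+)$ is a two-sided ideal. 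Moreover, choosing $\chi \in \cont((\Gnaught)^+)$ equal to $1$ on a neighbourhood of $\infty$ and supported where $\cG^+$ consists of units, every $f \in \cC(\cG^+)$ splits as $f = f(\mathbb{1}-\chi) + f\chi$ with $f(\mathbb{1}-\chi) \in \cC(\cG)$ and $f\chi \in \cont((\Gnaught)^+)$; hence $\cC(\cG^+) = \cC(\cG) + \cont((\Gnaught)^+) = \cC(\cG) + \conto(\Gnaught) + \CC\mathbb{1}$, and since $\conto(\Gnaught) \subseteq \Cstarred(\cG)$ this gives $\cC(\cG^+) \subseteq \Cstarred(\cG) + \CC\mathbb{1}$. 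Comparing the fibred representations $\lambda_x$ for $x \in \Gnaught$ (which are unchanged on $\cC(\cG)$ because $\cG^+_x = \cG_x$) together with $\lambda_\infty = \ev_\infty$ (which annihilates $\cC(\cG)$) shows that the inclusion $\Cstarred(\cG) \hra \Cstarred(\cG^+)$ is isometric; taking closures yields $\Cstarred(\cG^+) = \Cstarred(\cG) + \CC\mathbb{1}$ with $\Cstarred(\cG) \unlhd \Cstarred(\cG^+)$ a closed ideal.

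Next I would pass to the essential algebras via the quotient map $q\colon \Cstarred(\cG^+) \to \Cstaress(\cG^+)$ by $J_{\mathrm{sing}}^{\cG^+}$. Set $A := \overline{q(\Cstarred(\cG))}$. As the closure of the image of an ideal under a surjective $*$-homomorphism, $A$ is an ideal of $\Cstaress(\cG^+)$, and from $\Cstarred(\cG^+) = \Cstarred(\cG) + \CC\mathbb{1}$ I obtain $\Cstaress(\cG^+) = A + \CC\,q(\mathbb{1}) = A + \CC 1$. The key identification is $A \cong \Cstaress(\cG)$, which I would get by computing the kernel of $q|_{\Cstarred(\cG)}$, namely $\Cstarred(\cG) \cap J_{\mathrm{sing}}^{\cG^+} = J_{\mathrm{sing}}^{\cG}$. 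Indeed, for $b \in \Cstarred(\cG)$ the function $\widehat{b^*b}$ is supported on $\cG$ and vanishes at $\infty$, so by the description of the local conditional expectation and Proposition~\ref{prop:singular-elements-vanishing-generically} one has $\Ered^{\cG^+}(b^*b) = 0$ precisely when $\widehat{b^*b}$ vanishes on a dense open subset of $\Gnaught$, which (since a dense open subset of $\Gnaught$ is dense open in $(\Gnaught)^+$) happens precisely when $\Ered^{\cG}(b^*b) = 0$. Hence $A = \Cstarred(\cG)/J_{\mathrm{sing}}^{\cG} = \Cstaress(\cG)$ as an ideal of $\Cstaress(\cG^+)$.

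To conclude I would invoke the elementary fact that a unital \Cstar-algebra $B$ containing an ideal $A$ with $B = A + \CC 1_B$ and $1_B \notin A$ is canonically isomorphic to the minimal unitisation: the map $A \oplus \CC \to B$, $(a,\lambda) \mapsto a + \lambda 1_B$, is a surjective $*$-homomorphism, and it is injective because $A$ is an essential ideal of $A^+$ when $A$ is non-unital. It therefore remains to verify $1 \notin A$, i.e. $A \neq \Cstaress(\cG^+)$. When $\Gnaught$ is non-compact, $\Cstaress(\cG)$ is non-unital: $\conto(\Gnaught)$ acts non-degenerately, so an approximate unit of $\conto(\Gnaught)$ is an approximate unit for $\Cstaress(\cG)$, and were $\Cstaress(\cG)$ unital this net would converge in norm to the unit, forcing $\conto(\Gnaught)$ to be unital, a contradiction. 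Thus the unital algebra $\Cstaress(\cG^+)$ cannot equal the ideal $A \cong \Cstaress(\cG)$, and the lemma gives $\Cstaress(\cG^+) \cong \Cstaress(\cG)^+$. When $\Gnaught$ is compact the point $\infty$ is isolated, $\cG^+ = \cG \sqcup \{\infty\}$ splits as topological groupoids, and $\Cstaress(\cG^+) = \Cstaress(\cG) \oplus \CC = \Cstaress(\cG)^+$ directly.

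I expect the main obstacle to be exactly the singular-ideal comparison of the second paragraph, and more conceptually the worry underlying it: when $\Gnaught$ is non-compact the adjoined unit $\infty$ is a \emph{meagre} point, hence invisible to the local conditional expectation $\Ered^{\cG^+}$, so it is not obvious that adjoining it enlarges the essential algebra at all. The resolution I would emphasise is that compactness of $(\Gnaught)^+$ nevertheless forces $\Cstaress(\cG^+)$ to be unital, with unit descending from $\cont((\Gnaught)^+)$; thus the unit survives the passage to the essential quotient even though the point $\infty$ itself does not, and it is precisely this surviving unit that supplies the extra summand realising the unitisation.
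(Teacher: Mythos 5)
Your argument is correct in substance, but it takes a genuinely different route from the paper. The paper never passes through the reduced algebra of $\cG^+$ or the singular ideals at all: after noting $\cC(\cG)^+ \cong \cC(\cG^+)$, it proves a local-unit replacement lemma (for every $f \in \cC(\cG^+)$ and finitely many $h_1,\dotsc,h_n \in \cC(\cG)$ there is $\tilde f \in \cC(\cG)$ with $f * h_i = \tilde f * h_i$) and then computes the essential norm of $f \in \cC(\cG)$ directly from the formula $\|f\|^2_{\Cstaress(\cG)} = \sup\{\|\rE(g^* * f^* * f * g)\| \mid g \in \cC(\cG)\}$ of \cite[Proposition~4.10]{kwasniewskimeyer2019-essential}, observing that the infimum over dense open subsets of $\Gnaught$ agrees with the infimum over dense open subsets of $(\Gnaught)^+$ because $\Gnaught \subseteq (\Gnaught)^+$ is dense in the non-compact case and $\infty$ is isolated otherwise. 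This gives the isometric inclusion $\Cstaress(\cG) \subseteq \Cstaress(\cG^+)$ in one step, after which the universal property of the unitisation finishes exactly as in your last paragraph. Your route --- first $\Cstarred(\cG^+) = \Cstarred(\cG) + \CC\mathbb{1}$ via the fibre representations $(\lambda_x)_{x \in \Gnaught}$ and $\lambda_\infty = \ev_\infty$, then the singular-ideal comparison $\Cstarred(\cG) \cap J_{\mathrm{sing}}^{\cG^+} = J_{\mathrm{sing}}^{\cG}$, then the abstract unitisation lemma --- is more structural and makes the mechanism explicit (the unit of $\cont((\Gnaught)^+)$ survives the essential quotient even though the point $\infty$ is meagre, exactly as your closing remark says), at the cost of the kernel computation; the paper's route is shorter and isolates a reusable convolution trick instead.

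One step of yours needs repair as written. Proposition~\ref{prop:singular-elements-vanishing-generically} gives only one implication ($\Ered(a) = 0$ implies $\hat a$ vanishes on a dense --- not necessarily open --- subset), so your ``$\Ered^{\cG^+}(b^*b) = 0$ precisely when $\widehat{b^*b}$ vanishes on a dense open subset of $\Gnaught$'' is not literally justified; and the two-sided characterisation \cite[Proposition~7.18]{kwasniewskimeyer2019-essential} is unavailable here, since the proposition carries no countable-cover or $\sigma$-compactness hypothesis. The fix is cheap and stays inside the paper's toolkit: since $a \mapsto \hat a$ is contractive from $\Cstarred(\cG)$ into $\linfty(\cG)$ and $\Ered$ is, by \cite[Proposition~4.3]{kwasniewskimeyer2019-essential}, the continuous extension of the restriction map $\cC(\cG) \to \Dix(\Gnaught)$, one gets $\Ered(a) = [\hat a|_{\Gnaught}]$ in $\Dix(\Gnaught)$ for \emph{every} $a \in \Cstarred(\cG)$. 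Hence $\Ered(b^*b) = 0$ if and only if $\widehat{b^*b}|_{\Gnaught}$ has meager support, and meagerness of a subset of $\Gnaught$ is the same computed in $\Gnaught$ or in $(\Gnaught)^+$ because $\Gnaught$ is open there (and $\widehat{b^*b}(\infty) = 0$ automatically for $b \in \Cstarred(\cG)$, which disposes of the isolated point in the compact case). With ``dense open'' replaced by this meager-support criterion, your identification of the kernels goes through and the remainder of your proof is sound.
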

\begin{proof}
  Let us first observe that there is indeed an inclusion $\cC(\cG) \subseteq \cC(\cG^+)$, since $\cG \subseteq \cG^+$ is open.  It moreover defines a *-isomorphism $\cC(\cG)^+ \cong \cC(\cG^+)$ mapping $1 \in \cC(\cG)^+$ to $\mathbb{1}_{(\Gnaught)^+}$.  Let us identify the essential \Cstar-algebra norm on $\cC(\cG)$ and $\cC(\cG^+)$.  To this end, we show that for all $f \in \cC(\cG^+)$ and $h_1, \dotsc, h_n \in \cC(\cG)$ there is $\tilde f \in \cC(\cG)$ such that $f * h_i = \tilde f * h_i$ for all $i \in \{1, \dotsc, n\}$.  For each $i$, we know that $\supp h_i \subseteq \cG$ is compact.  So also $K = \bigcup_{i = 1}^n \rmr(\supp h_i) \subseteq \Gnaught$ is compact.  Let $g \in \contc(\Gnaught)$ be a function satisfying $0 \leq g \leq 1$ and $g|_K \equiv 1$.  Put $\tilde f = f \cdot (g \circ \rms)$ and observe that $\tilde f \in \cC(\cG)$.  Then for $x \in \cG$ and $i \in \{1, \dotsc, n\}$, we have
  \begin{gather*}
    \tilde f * h_i(x)
    = \sum_{x = y z} \tilde f(y) h_i(z)
    = \sum_{x = y z} f(y) g(\rmr(z)) h_i(z)
    = \sum_{x = y z} f(y) h_i(z)
    = f*h_i (x)
    \eqstop
  \end{gather*}

  Denote by $\Eess: \Cstaress(\cG) \to \Dix(\Gnaught)$ the local condition expectation and let $f \in \cC(\cG)$.  Recall that $\Gnaught \subseteq (\Gnaught)^+$ is dense if $\Gnaught$ is non-compact and $\infty$ is an isolated point in $(\Gnaught)^+$ otherwise.  Combining this with \cite[Proposition 4.10]{kwasniewskimeyer2019-essential} and the previous paragraph, we find that for $f \in \contc(\cG)$,
  \begin{align*}
    \|f\|^2_{\Cstaress(\cG)}
    & = \sup \{ \|\rE(g^* * f^* * f * g) \| \mid g \in \cC(\cG) \} \\
    & = \sup_{g \in \cC(\cG)} \inf_{U \subseteq \Gnaught \text{ dense open}} \sup_{x \in U} |g^* * f^* * f * g|(x) \\
    & = \sup_{g \in \cC(\cH)} \inf_{U \subseteq (\Gnaught)^+ \text{ dense open}} \sup_{x \in U} |g^* * f^* * f * g|(x) \\
    & = \|f\|^2_{\Cstaress(\cG^+)}
      \eqstop
  \end{align*}
  This shows that $\cC(\cG) \subseteq \cC(\cG^+)$ extends to an inclusion $\Cstaress(\cG) \subseteq \Cstaress(\cG^+)$.  The universal property of the unitisation provides an injective extension to a *-homomorphism $\Cstaress(\cG)^+ \hra \Cstaress(\cG^+)$, which must also be surjective since it restricts to the *-isomorphism $\cC(\cG)^+ \cong \cC(\cG^+)$.
\end{proof}

Let us now translate the condition about the absence of essentially confined amenable sections of isotropy.
\begin{lemma}
  \label{lem:transfer-essentially-confined-sections}
  Let $\cG$ be an {\'e}tale groupoid with locally compact Hausdorff space of units.  Then $\cG$ has no essentially confined amenable sections of isotropy groups if and only if $\cG^+$ has no essentially confined amenable sections of isotropy groups.
\end{lemma}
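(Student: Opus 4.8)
The plan is to reduce the statement to a comparison of orbit closures in $\Sub(\cG)$ and $\Sub(\cG^+)$ over base points lying in $\Gnaught$. First I would record the elementary structural facts about the Alexandrov groupoid: since $\cG^+$ adjoins only the unit $\infty$ and no new morphisms, one has $(\cG^+)_x^x = \cG_x^x$ for every $x \in \Gnaught$, while $(\cG^+)_\infty^\infty$ is trivial. Consequently, appending the trivial group at $\infty$ sets up a bijection $\Lambda \mapsto \Lambda^+$ between amenable sections of $\cG$ and of $\cG^+$. I would also verify the closure-of-units comparison: for $x \in \Gnaught$ and $H \subseteq \cG_x^x$, one has $H \subseteq \ol{\Gnaught}$ in $\cG$ if and only if $H \subseteq \ol{(\Gnaught)^+}$ in $\cG^+$. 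This holds because $\cG$ is open in $\cG^+$, so for $g \in \cG$ the neighbourhood filters in $\cG$ and in $\cG^+$ agree, and a basic neighbourhood (sitting inside $\cG$, hence not containing $\infty$) meets $\Gnaught$ exactly when it meets $(\Gnaught)^+$.

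The technical heart is to show that the Chabauty topology of $\Sub(\cG)$ coincides with the subspace topology induced from $\Sub(\cG^+)$ on $\rho^{-1}(\Gnaught)$, where $\rho \colon \Sub(\cG^+) \to (\Gnaught)^+$ is the base-point projection, continuous as in the proof of Lemma~\ref{lem:space-of-subgroupoids-closed}; note that $\rho^{-1}(\Gnaught)$ is open, since $\Gnaught$ is open in $(\Gnaught)^+$, and equals $\Sub(\cG)$ as a set. One inclusion is immediate: open and compact subsets of $\cG$ remain open and compact in $\cG^+$, so the Fell subbasic sets coming from $\cG$ restrict from those coming from $\cG^+$. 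For the reverse inclusion I would argue with nets: if $(x_i, H_i) \to (x_0, H)$ in the Chabauty topology of $\cG$ with $x_0 \in \Gnaught$, I must verify the Fell lim-sup condition against every compact $K' \subseteq \cG^+$ with $K' \cap H = \emptyset$. The main obstacle, given that $\cG^+$ is non-Hausdorff so that compact sets need not be closed, is controlled by the observation that every neighbourhood of $\infty$ contains only units: choosing a compact neighbourhood $L_0$ of $x_0$ in $\Gnaught$, the open set $W = \{\infty\} \cup (\Gnaught \setminus L_0)$ is a neighbourhood of $\infty$ meeting $\cG$ only in units. Then $K_1 = K' \setminus W$ is compact inside $\cG$ and disjoint from $H$, so $H_i \cap K_1 = \emptyset$ eventually by Chabauty-$\cG$ convergence; whereas $K' \cap W$ consists of units outside $L_0$, which $H_i$ avoids eventually because its only unit is $x_i$ and $x_i \to x_0 \in \mathrm{int}(L_0)$. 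Hence $H_i \cap K' = \emptyset$ eventually, giving convergence in $\cG^+$ and thus equality of the two topologies.

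With the topologies identified, I would compare the orbit closures. Since $\infty$ acts trivially, $\cG^+ \cdot \Lambda^+ = (\cG \cdot \Lambda) \sqcup \{(\infty, \{\infty\})\}$, and because $\{\infty\}$ is closed in the Hausdorff space $(\Gnaught)^+$, the closure of the extra point remains over $\infty$. Intersecting with the open set $\rho^{-1}(\Gnaught)$ therefore yields $\ol{\cG^+ \cdot \Lambda^+} \cap \rho^{-1}(\Gnaught) = \ol{\cG \cdot \Lambda}^{\Sub(\cG)}$; that is, the two orbit closures carry exactly the same isotropy subgroups over each $x_0 \in \Gnaught$.

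Finally I would combine these facts. If $\cG$ has an essentially confined amenable section $\Lambda$ with witness $x_0 \in \Gnaught$, then by the orbit-closure and closure-of-units comparisons the same $x_0$ witnesses confinement of $\Lambda^+$ in $\cG^+$. Conversely, an essentially confined amenable section $\Lambda'$ of $\cG^+$ cannot be witnessed by $\infty$, since the isotropy there is trivial and $\{\infty\} \subseteq \ol{(\Gnaught)^+}$; hence its witness lies in $\Gnaught$, and the restriction $\Lambda'|_{\Gnaught}$ is essentially confined in $\cG$ with the same witness. This shows that $\cG$ and $\cG^+$ simultaneously do, or do not, admit essentially confined amenable sections of isotropy groups, which is the assertion of the lemma.
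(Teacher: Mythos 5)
Your proof is correct and follows essentially the same route as the paper's: both directions transfer a section between $\cG$ and $\cG^+$ by adjoining or discarding the trivial isotropy group $\{\infty\}$ at the added unit and then comparing orbit closures, using that $\infty$ is $\cG^+$-invariant. The only difference is that you make explicit the identification of the Chabauty topology on $\Sub(\cG)$ with the subspace topology inherited from $\Sub(\cG^+)$ over $\Gnaught$, together with the comparison $H \subseteq \ol{\Gnaught}$ iff $H \subseteq \ol{(\Gnaught)^+}$ — facts the paper's terse proof uses implicitly.
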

\begin{proof}
  Assume that $\cG^+$ has no essentially confined amenable sections of isotropy groups and let $(\Lambda_x)_{x \in \mathfrak{X}}$ be an amenable section of isotropy groups of $\cG$. Then this section is also an amenable section of isotropy groups of $\cG^+$.  Put $Y = \ol{\cG^+ \cdot \{(x, \Lambda_x) \mid x \in \mathfrak{X} \}} \subseteq \Sub(\cG^+)$.  By assumption, for every $x \in (\Gnaught)^+$ there is $(x,H) \in Y$ such that $H \subseteq \ol{\Gnaught^+}_x$.  Since $\infty$ is $\cG^+$ invariant, this shows that $(\Lambda_x)_{x \in \mathfrak{X}}$ is not essentially confined.

  Assume that $\cG$ has no essentially confined amenable sections of isotropy groups and let $(\Lambda_x)_{x \in \mathfrak{X}^+}$ be an amenable section of isotropy groups of $\cG^+$.  Put $Y = \ol{\cG \cdot \{(x,\Lambda_x) \mid x \in \mathfrak{X}^+ \cap \Gnaught \}} \subseteq \Sub(\cG)$. By assumption, for every $x \in \Gnaught$, there is $(x,H) \in Y$ such that $H \subseteq \ol{\Gnaught}_x$.  Since $\cG^+_\infty = \{\infty\}$, this implies that $(\Lambda_x)_{x \in \mathfrak{X}^+}$ is not confined.
\end{proof}

We are now able to combine the discussion in this section with our results from Section~\ref{sec:compact-unit-space}.  This completes the proof of Theorem~\ref{thmintro:characterisation-confined-subgroups}.
\begin{theorem}
  \label{thm:characterisation-locally-compact-case}
      Let $\cG$ be an {\'e}tale groupoid with locally compact Hausdorff space of units.  Assume that $\cG$ is Hausdorff or $\cG^+$ is $\sigma$-compact. 
      In addition, let $\mathfrak{X}$ be the set of those points $x \in \Gnaught$ whose $\cG$-orbit $\cG.x$ is contained in the interior of its closure in $\Gnaught$, and suppose that $\mathfrak{X}$ is dense in $\Gnaught$. The last condition is in particular satisfied if $\cG$ is topologically transitive.
      
      Then $\Cstaress(\cG)$ has the ideal intersection property if and only if $\cG$ has no essentially confined amenable sections of isotropy groups.  Further, $\cG^+$ is $\sigma$-compact if $\cG$ is $\sigma$-compact.
\end{theorem}
\begin{proof}
 If $\cG$ is Hausdorff, then $\Gnaught \subseteq \cG$ is closed, so that also $(\cG^+)^{(0)} = \Gnaught \cup \{\infty\} \subseteq \cG^+$ is closed.  So also $\cG^+$ is Hausdorff.  If $\cG$ is $\sigma$-compact and $(K_n)_n$ is a sequence of compact subsets exhausting $\cG$, then $(K_n \cup \{\infty\})_n$ exhausts $\cG^+$.  So the Alexandrov groupoid is also $\sigma$-compact.

  Since $\Cstaress(\cG) \subseteq \Cstaress(\cG^+)$ is isomorphic with the inclusion $\Cstaress(\cG) \subseteq \Cstaress(\cG)^+$ by Proposition~\ref{prop:essential-groupoid-cstar-algebra-alexandrov-groupoid-unitisation}, it follows that $\conto(\Gnaught) \subseteq \Cstarred(\cG)$ has the ideal intersection property if and only if $\cont((\Gnaught)^+) \subseteq \Cstarred(\cG^+)$ has the ideal intersection property.  We can now combine Corollary~\ref{cor:seciso} and Lemma~\ref{lem:transfer-essentially-confined-sections} to conclude the proof.
\end{proof}

\begin{remark}
  \label{rem:translating-sigma-compactness}
  As stated in Theorem~\ref{thm:characterisation-locally-compact-case}, the Alexandrov groupoid $\cG^+$ is always $\sigma$-compact if $\cG$ is so.  The converse holds if $\{x \in \Gnaught \mid \cG_x = \{x\}\}$ is $\sigma$-compact, in particular if $\cG_x \neq \{x\}$ holds for every $x \in \Gnaught$.  Indeed, if $(K_n)_n$ is a sequence of compact subsets exhausting $\cG^+$, then the subsets $C_n = K_n \setminus (\Gnaught)^+ \subset \cG$ are compact and exhaust $\cG \setminus \Gnaught$.  Then $\Gnaught = \{x \in \Gnaught \mid \cG_x = \{x\}\} \cup \bigcup_n \rms(C_n)$, showing that $\cG$ is $\sigma$-compact.
\end{remark}

\begin{remark}
Theorem~\ref{thm:characterisation-locally-compact-case} leads to a characterisation of the ideal separation property (see for instance \cite{boenickeli2020}) for {\'e}tale Hausdorff groupoids $\cG$ with locally compact Hausdorff space of units in terms of sections of isotropy subgroups of restricted groupoids if the restriction of $\cG$ to every closed invariant subspace of $\Gnaught$ satisfies the conditions in Theorem~\ref{thm:seciso}, Corollary~\ref{cor:seciso} or Theorem~\ref{thm:characterisation-locally-compact-case}.
\end{remark}

Let us now give a proof of Theorem~\ref{thmintro:characterisation-confined-subgroups-minimal-case}, combining the results of this section.
\begin{theorem}
  \label{thm:characterisation-confined-subgroups-minimal-case}
  Let $\cG$ be an {\'e}tale groupoid with locally compact Hausdorff space of units.   Assume that $\cG$ is Hausdorff, $\cG$ is $\sigma$-compact or $\cG$ has a compact space of units.  Then the essential groupoid \Cstar-algebra $\Cstaress(\cG)$ is simple if and only if $\cG$ is minimal and has no essentially confined amenable sections of isotropy groups.
\end{theorem}
\begin{proof}
  In view of Theorems~\ref{thm:seciso} (see also Corollary~\ref{cor:seciso}) and \ref{thm:characterisation-locally-compact-case}, we only have to show that $\cG$ is minimal if $\Cstaress(\cG)$ is simple.  Assume that $\cG$ is not minimal and let $U \subseteq \Gnaught$ be a non-empty, proper, open $\cG$-invariant subset of its unit space.  Consider the ideal $I = \ol{\Cstaress(\cG) \conto(U) \Cstaress(\cG)}$ generated by $\conto(U)$ inside the essential groupoid \Cstar-algebra.  It is non-zero since $U$ is non-empty and it is proper since its preimage in $\Cstarred(G)$ is the proper ideal $\ol{\Cstarred(\cG) \conto(U) \Cstarred(\cG)}$. So $\Cstaress(\cG)$ is not simple.
\end{proof}

Let us next compare our Theorem~\ref{thm:characterisation-locally-compact-case} with the work of Kwa{\'s}niewski-Meyer \cite{kwasniewskimeyer2019-essential}.  In order to do so, we need to introduce their notion of topologically free groupoids, which is a strengthening of essentially effective groupoids as introduced in Definition~\ref{def:essentially-effective}, as we will observe below.
\begin{definition}[See {\cite[Definition 2.20]{kwasniewskimeyer2019-essential}}]
  \label{def:topologically-free}
  Let $\cG$ be an {\'e}tale groupoid with locally compact Hausdorff space of units.  Then $\cG$ is topologically free, if for every bisection $U \subset \cG \setminus \Gnaught$ the set $\{x \in \Gnaught \mid \cG_x^x \cap U \neq \emptyset\}$ has empty interior.
\end{definition}

\begin{remark}
  \label{rem:previous-work-locally-compact-case}

  Kwa{\'s}niewski-Meyer \cite[Theorem 7.29]{kwasniewskimeyer2019-essential} characterised {\'e}tale groupoids with locally compact Hausdorff space of units for which the kernel of $\Cstar(\cG) \to \Cstaress(\cG)$ is the unique maximal ideal of $\Cstar(\cG)$ that intersects $\conto(\Gnaught)$ trivially.  These are precisely the topologically free groupoids in the sense of Definition~\ref{def:topologically-free}. Let us explain why topologically free groupoids have no essentially confined sections of isotropy subgroups. Assume that $\cG$ is topologically free and observe that the bisection $U$ in Definition~\ref{def:topologically-free} may be assumed to be open and a subset of $\cG \setminus \ol{\Gnaught}$.  Fix a compact subset $C \subset \Gnaught$ with non-empty interior.  Given a section of isotropy groups $(\Lambda_x)_{x \in \Gnaught}$ (amenable or not), we will exhibit a net $(x_K)_K$ in $C$ indexed by compact subsets $K$ of $\cG \setminus \ol{\Gnaught}$ such that $\Lambda_{x_K} \cap K = \emptyset$.  Compactness of the Chabauty space $\Sub(\cG)$, which is a non-trivial fact for non-Hausdorff groupoids \cite{fell1962}, then implies that $(\Lambda_{x_K})_K$ has a cluster point, which is a subgroup of $\ol{\Gnaught}_x$ for some $x \in C$.  Let $K \subseteq \cG \setminus \ol{\Gnaught}$ be a compact subset and let $U_1, \dotsc, U_n$ be open bisections of $\cG$ covering $K$.  Then $\{x \in \Gnaught \mid \bigcup_{i = 1}^n U_i \cap \cG_x^x \neq \emptyset\}$ has empty interior.  In particular, there is $x \in C$ such that $\Lambda_x \cap K = \emptyset$.  We can put $x_K = x$ and finish the argument, showing that $(\Lambda_x)_x$ is not essentially confined.
\end{remark}

As explained in the introduction, characterisations of simplicity and a suitable generalised ideal intersection property of groupoid \Cstar-algebras were limited to the maximal groupoid \Cstar-algebra, as in \cite[Theorem 5.1]{brownclarkfarthingsims2014} and \cite[Theorem 7.29]{kwasniewskimeyer2019-essential}.  For amenable groupoids, this leads to characterisations of simplicity of the essential groupoid \Cstar-algebra.  The next corollary clarifies the relation of our Theorem~\ref{thm:characterisation-locally-compact-case} to such characterisations for amenable groupoids.
\begin{corollary}
  \label{cor:characterisation-locally-compact-case-amenable-isotropy}
  Let $\cG$ be a $\sigma$-compact {\'e}tale groupoid with locally compact Hausdorff space of units and amenable isotropy groups.  Then $\Cstaress(\cG)$ has the ideal intersection property if and only if $\cG$ is topologically free.
\end{corollary}
\begin{proof}
  If $\cG$ is topologically free, it follows from \cite[Theorem 7.29]{kwasniewskimeyer2019-essential} (see also Remark~\ref{rem:previous-work-locally-compact-case}) that $\Cstaress(\cG)$ has the ideal intersection property. Assume that $\cG$ is not topologically free.  Then there is a bisection $U \subseteq \cG \setminus \Gnaught$ such that $\{x \in \Gnaught \mid \cG_x^x \cap U \neq \emptyset\}$ has non-empty interior.  We may assume that $U$ is open and that $U \subset \Iso(\cG)$.  Putting $\Lambda_x = \cG_x^x$, we obtain an amenable section of isotropy groups of $\cG$.  We will show that it is confined.  For every $g \in \cG_x$, the equality $g \Lambda_x g^{-1} = \Lambda_{\rmr(g)}$ holds, so that $\ol{\cG \cdot \{\Lambda_x\}_x} = \ol{\{\Lambda_x\}_x}$. Let us denote this set by $Y$. Since for all $x \in \rms(U)$, we have $\Lambda_x \cap U \neq \emptyset$, also every element $(H,x) \in Y$ with $x \in \rms(U)$ satisfies $H \cap U \neq \emptyset$.  So $H \not \subseteq \ol{\Gnaught}$.  We can now apply Theorem~\ref{thm:characterisation-locally-compact-case} (noting that the condition on density of $\mathfrak{X}$ is not needed if $\cG$ has amenable isotropy groups, see also Remark~\ref{rem:AllStabAmenSit}) and conclude that $\Cstaress(\cG)$ does not have the ideal intersection property.  
\end{proof}

\section{Powers averaging for minimal groupoids}
\label{sec:powers-averaging}

In this section, we generalise the work done in \cite{amrutamursu2021}, and derive the Powers averaging property for simple essential groupoid \Cstar-algebras based on the notion of generalised probability measures.  We also prove the relative Powers averaging property for certain semigroups of generalised probability measures, leading to natural \Cstar-irreducible inclusions into essential groupoid \Cstar-algebras and applications to unitary representations in Section~\ref{sec:examples}.

\subsection{Generalised probability measures}
\label{sec:generalised-probability-measures}

In \cite{amrutamursu2021}, a notion of generalised probability measure on a compact space $X$ was introduced.  It combines the action of a group $G \grpaction{} X$ with the action of positive elements in $\cont(X)$ by elementary operators.  This idea naturally fits with our approach to the ideal intersection property for groupoid \Cstar-algebras by means of the pseudogroup of open bisections.

\begin{definition}
  \label{def:groupoid-convolver}
  Let $\cG$ be an {\'e}tale groupoid with compact Hausdorff space of units.  A (finite) generalised $\cG$-probability measure on $\Gnaught$ is a finite formal sum of pairs $\sum_{i \in I} (\gamma_i, f_i)$ such that $\gamma_i \in \Gamma(\cG)$ and $f_i \in \contc(\supp (\gamma_i), \RR_{\geq 0})$ is a positive, continuous, compactly supported function on the support of $\gamma_i$ satisfying $\sum_i f_i \circ \psi_{\gamma_i^*} = \mathbb{1}_{\Gnaught}$.  We denote the set of (finite) generalised $\cG$-probability measures on $\Gnaught$ by $\CP_\cG(\Gnaught)$.
\end{definition}
We point out that the concept of generalised probability measures allows for the repetition of bisections. Therefore, it is not necessarily true that the formal sum in the definition can be indexed by a subset of $\Gamma(\cG)$.

\begin{remark}
  \label{rem:rearranging-convolvers}
  In some situations it is preferable to think of a pair $(\gamma, f)$ as in Definition~\ref{def:groupoid-convolver} and being represented as $(f \circ \psi_{\gamma^*}, \gamma)$.  This is for example the case when interpreting the unitality condition $\sum_i f_i \circ \psi_{\gamma_i^*} = \mathbb{1}_{\Gnaught}$ and later in Definition~\ref{def:contractive-semigroup}, where covering semigroups of generalised probability measures are introduced.
\end{remark}

Generalised probability measures naturally act both on groupoid \Cstar-algebras and on their state spaces.  To prove this, we will first exhibit their natural semigroup structure.
\begin{proposition}
  Let $\cG$ be an {\'e}tale groupoid with compact Hausdorff space of units.  Given two generalised probability measures $\mu_1 = \sum_i (\gamma_{1,i}, f_{1,i})$ and $\mu_2 = \sum_j (\gamma_{2,j}, f_{2,j})$ from $\CP_\cG(\Gnaught)$, we define their product by the following formula.
  \begin{gather*}
    \mu_1 \mu_2
    =
    \sum_{i,j} (\gamma_{1,i} \gamma_{2,j}, (f_{1,i} \circ \psi_{\gamma_{2,j}}) f_{2,j})
    \eqstop
  \end{gather*}
  Then $\CP_\cG(\Gnaught)$ becomes a semigroup with this product.
\end{proposition}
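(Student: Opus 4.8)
The plan is to verify three things in turn: that $\mu_1 \mu_2$ is again a well-defined generalised $\cG$-probability measure (closure), that the operation is associative, and, along the way, to keep careful track of the partial-homeomorphism domains, which is where the genuine work lies. Throughout I would use the identity $\alpha_\gamma(f) = f \circ \psi_{\gamma^*}$ from Example~\ref{ex:pseudogroup-action-unit-space} together with its two structural consequences: that precomposition is multiplicative, $(uv)\circ\psi = (u\circ\psi)(v\circ\psi)$, and that $\psi_{\gamma_1\gamma_2} = \psi_{\gamma_1}\circ\psi_{\gamma_2}$ on the domain $\psi_{\gamma_2^*}(\supp\gamma_1 \cap \im\gamma_2) = \supp(\gamma_1\gamma_2)$.

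For closure I would first treat each summand $g_{ij} = (f_{1,i}\circ\psi_{\gamma_{2,j}})f_{2,j}$. Positivity is immediate, so the point is that $g_{ij}$ lies in $\contc(\supp(\gamma_{1,i}\gamma_{2,j}), \RR_{\geq 0})$. Here I would use that a compactly supported continuous function on an open subset of a locally compact Hausdorff space extends by zero to a continuous function: applying this to $f_{1,i}$ and composing with the homeomorphism $\psi_{\gamma_{2,j}} : \supp\gamma_{2,j} \to \im\gamma_{2,j}$ shows $f_{1,i}\circ\psi_{\gamma_{2,j}}$ is continuous on $\supp\gamma_{2,j}$, and multiplying by $f_{2,j} \in \contc(\supp\gamma_{2,j})$ yields a compactly supported continuous function whose support lies in the compact set $\psi_{\gamma_{2,j}}^{-1}(\supp f_{1,i} \cap \im\gamma_{2,j}) \cap \supp f_{2,j} \subseteq \supp(\gamma_{1,i}\gamma_{2,j})$.

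Next, the normalisation $\sum_{i,j}\alpha_{\gamma_{1,i}\gamma_{2,j}}(g_{ij}) = \mathbb{1}_{\Gnaught}$ I would check pointwise. Evaluating at $x \in \Gnaught$ and unwinding the definitions gives, for each $(i,j)$ with $x \in \im\gamma_{1,i}$, the value $\alpha_{\gamma_{1,i}}(f_{1,i})(x)\cdot\alpha_{\gamma_{2,j}}(f_{2,j})(\psi_{\gamma_{1,i}^*}(x))$, and $0$ otherwise; the cancellation $\psi_{\gamma_{2,j}}\circ\psi_{\gamma_{2,j}^*} = \mathrm{id}$ on $\im\gamma_{2,j}$ is what makes the $f_{1,i}$-factor collapse to $\alpha_{\gamma_{1,i}}(f_{1,i})$. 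I would check that the zero-extension conventions make this formula valid in every boundary case. Summing over $j$ first and invoking $\sum_j\alpha_{\gamma_{2,j}}(f_{2,j}) = \mathbb{1}_{\Gnaught}$ (evaluated at $\psi_{\gamma_{1,i}^*}(x)$) collapses the inner sum to $\alpha_{\gamma_{1,i}}(f_{1,i})(x)$, and then summing over $i$ and using $\sum_i\alpha_{\gamma_{1,i}}(f_{1,i}) = \mathbb{1}_{\Gnaught}$ gives the value $1$.

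Finally, associativity I would prove term by term. Comparing the function parts of $(\mu_1\mu_2)\mu_3$ and $\mu_1(\mu_2\mu_3)$ in the $(i,j,k)$-summand, both reduce, by multiplicativity of precomposition and the rule $\psi_{\gamma_{2,j}}\circ\psi_{\gamma_{3,k}} = \psi_{\gamma_{2,j}\gamma_{3,k}}$, to the common expression $(f_{1,i}\circ\psi_{\gamma_{2,j}\gamma_{3,k}})(f_{2,j}\circ\psi_{\gamma_{3,k}})f_{3,k}$, while the bisection parts agree because $\Gamma(\cG)$ is associative by Example~\ref{ex:pseudogroup-bisections}. I expect the main obstacle to be purely bookkeeping: making the domain and extension-by-zero conventions precise enough that the multiplicativity and composition identities for $\psi$ can be applied term by term without ambiguity. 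Once these conventions are fixed, each of the three steps is a routine computation.
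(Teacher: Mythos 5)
Your proposal is correct and follows essentially the same route as the paper's proof: you verify that each summand $(f_{1,i}\circ\psi_{\gamma_{2,j}})f_{2,j}$ lies in $\contc(\supp (\gamma_{1,i}\gamma_{2,j}), \RR_{\geq 0})$, collapse the normalisation sum in two stages using the partition identities for the two measures, and reduce associativity to the associativity of $\Gamma(\cG)$ together with the composition rule $\psi_{\gamma_1\gamma_2} = \psi_{\gamma_1}\circ\psi_{\gamma_2}$. The only difference is one of detail — you write out the term-by-term associativity computation and the pointwise normalisation check that the paper dispatches in a line and a short Borel-function calculation, respectively — which does not change the substance.
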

\begin{proof}
  Once the product is well-defined, its associativity follows from associativity of the product in $\Gamma(\cG)$ and the fact that $\Gamma(\cG)$ acts on $\cont(\Gnaught)$.  Thus, we only have to prove that $\mu_1 \mu_2$ as defined above is again an element of $\CP_\cG(\Gnaught)$.

  First observe that for all $i,j$ the product $(f_{1,i} \circ \psi_{\gamma_{2,j}}) f_{2,j}$ is a continuous function whose support lies in $\psi_{\gamma_{2,j}^*}(\supp \gamma_{1,i} \cap \im \gamma_{2,j}) = \supp \gamma_{1,i}\gamma_{2,j}$.  Further,
  \begin{gather*}
    ( f_{1,i} \circ \psi_{\gamma_{2,j}}) f_{2,j}
    =
    ( f_{1,i} (f_{2,j} \circ \psi_{\gamma_{2,j}^*})) \circ \psi_{\gamma_{2,j}}
    \in
    \contc(\supp \gamma_{1,i} \gamma_{2,j})
    \eqstop
  \end{gather*}
  Evaluating the following sum in the space of Borel functions on $\Gnaught$, we find that
  \begin{align*}
    \sum_{i,j} ((f_{1,i} \circ \psi_{\gamma_{2,j}}) f_{2,j} ) \circ \psi_{(\gamma_{1,i}\gamma_{2,j})^*}
    & =
      \sum_i   f_{1, i} \circ \psi_{\gamma_{1,i}^*} \cdot \left ( \sum_j f_{2,j} \circ \psi_{\gamma_{2,j}^*} \right) \circ \psi_{\gamma_{1,i}^*} \\
    & =
      \sum_i (f_{1,i} \circ \psi_{\gamma_{1,i}^*}) \cdot \mathbb{1}_{\im \gamma_{1, i}} \\
    & =
      \mathbb{1}_{\Gnaught}
    \eqstop
  \end{align*}
  This finishes the proof of the proposition.
\end{proof}

Generalised probability measures were introduced in \cite{amrutamursu2021} to provide a notion of contractibility of measures on spaces admitting a minimal group action.  In order to generalise this to groupoids in the context of \Cstar-simplicity, we require suitable actions of $\CP_{\cG}(\Gnaught)$ on $\cont(\Gnaught)$ and on $\Cstaress(\cG)$.
\begin{proposition}
  \label{prop:action-generalised-probability-measures}
  Let $\cG$ be an {\'e}tale groupoid with compact Hausdorff space of units and let $(A, \alpha)$ be a unital $\cG$-\Cstar-algebra.  There is a semigroup action of $\CP_\cG(\Gnaught)$ on $A$ by completely positive, completely contractive and non-degenerate maps defined by
  \begin{gather*}
    \mu a = \sum_i \alpha_{\gamma_i}(f_i^{1/2} a f_i^{1/2})
  \end{gather*}
  for $a \in A$ and $\mu = \sum_i (\gamma_i, f_i)$. Consequently, there is a right semigroup action on the state space $\cS(A)$ of $A$ defined by
  \begin{gather*}
    (\vphi \mu)(a) = \vphi(\mu a)
  \end{gather*}
  for $\vphi \in \cS(A)$, $\mu \in \CP_\cG(\Gnaught)$ and $a \in A$.
\end{proposition}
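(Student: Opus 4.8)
The plan is to verify in turn that each map $a \mapsto \mu a$ is well defined and unital completely positive, that $\mu \mapsto (a \mapsto \mu a)$ respects the semigroup product, and finally to dualise to the state space.

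First I would settle well-definedness and complete positivity. For $\mu = \sum_i (\gamma_i, f_i)$ and $a \in A$ we have $f_i^{1/2} \in \contc(\supp\gamma_i) \subseteq \conto(\supp\gamma_i)$, so $f_i^{1/2} a f_i^{1/2} \in A_{\supp\gamma_i}$ and $\alpha_{\gamma_i}$ carries it into $A_{\im\gamma_i} \subseteq A$; hence $\mu a \in A$. Each summand $a \mapsto \alpha_{\gamma_i}(f_i^{1/2} a f_i^{1/2})$ is the composite of the compression $a \mapsto f_i^{1/2} a f_i^{1/2}$ with the \Cstar-isomorphism $\alpha_{\gamma_i}$, both completely positive, so $\mu$ acts by a completely positive map. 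Using the compatibility $\alpha_{\gamma_i}(g) = g \circ \psi_{\gamma_i^*}$ for $g \in \conto(\supp\gamma_i)$ from Definition~\ref{def:groupoid-cstar-algebra}, I compute $\mu \mathbb{1} = \sum_i \alpha_{\gamma_i}(f_i) = \sum_i f_i \circ \psi_{\gamma_i^*} = \mathbb{1}_{\Gnaught}$, which is exactly the normalisation condition in the definition of a generalised probability measure. Thus $\mu$ is unital and completely positive, hence completely contractive; unitality also gives non-degeneracy immediately.

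Next comes the semigroup law $(\mu_1\mu_2)a = \mu_1(\mu_2 a)$. Writing $\mu_1 = \sum_i (\gamma_{1,i}, f_{1,i})$ and $\mu_2 = \sum_j (\gamma_{2,j}, f_{2,j})$, I expand both sides into double sums over $(i,j)$ and match them termwise. The $(i,j)$-summand of $\mu_1(\mu_2 a)$ is $\alpha_{\gamma_{1,i}}\bigl(f_{1,i}^{1/2}\,\alpha_{\gamma_{2,j}}(f_{2,j}^{1/2} a f_{2,j}^{1/2})\,f_{1,i}^{1/2}\bigr)$. The crux is to absorb the conjugating functions through $\alpha_{\gamma_{2,j}}$: since $\alpha_{\gamma_{2,j}}$ restricts to $g \mapsto g \circ \psi_{\gamma_{2,j}^*}$ on $\conto(\supp\gamma_{2,j})$, one has $f_{1,i}^{1/2} = \alpha_{\gamma_{2,j}}(f_{1,i}^{1/2} \circ \psi_{\gamma_{2,j}})$ on the relevant part of $\im\gamma_{2,j}$, and the module property of the homomorphism $\alpha_{\gamma_{2,j}}$ rewrites the conjugation as $\alpha_{\gamma_{2,j}}\bigl((f_{1,i}^{1/2}\circ\psi_{\gamma_{2,j}})\,f_{2,j}^{1/2} a f_{2,j}^{1/2}\,(f_{1,i}^{1/2}\circ\psi_{\gamma_{2,j}})\bigr)$. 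Applying $\alpha_{\gamma_{1,i}}$ and invoking the cocycle identity $\alpha_{\gamma_{1,i}}\circ\alpha_{\gamma_{2,j}} = \alpha_{\gamma_{1,i}\gamma_{2,j}}$ from Example~\ref{ex:pseudogroup-action-unit-space}, then using commutativity of $\cont(\Gnaught)$ to write $(f_{1,i}^{1/2}\circ\psi_{\gamma_{2,j}})\,f_{2,j}^{1/2} = \bigl((f_{1,i}\circ\psi_{\gamma_{2,j}})\,f_{2,j}\bigr)^{1/2}$, I recover precisely the $(i,j)$-summand of $(\mu_1\mu_2)a$ as read off from the product formula for $\mu_1\mu_2$.

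I expect the main obstacle to lie entirely in this second step, and it is one of bookkeeping rather than of ideas: the identities $\alpha_{\gamma_1}\circ\alpha_{\gamma_2} = \alpha_{\gamma_1\gamma_2}$ and $g = \alpha_{\gamma_2}(g\circ\psi_{\gamma_2})$ are valid only on the intersections of the relevant supports, namely $\psi_{\gamma_{2,j}^*}(\supp\gamma_{1,i}\cap\im\gamma_{2,j}) = \supp\gamma_{1,i}\gamma_{2,j}$, so one must check that every element produced actually lies in the domain where the commuting diagram of Definition~\ref{def:groupoid-cstar-algebra} applies. This is exactly where the compact supports of the $f_{\bullet,\bullet}$ and the hereditary structure $A_U = \overline{\conto(U) A \conto(U)}$ are needed. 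Finally, the right action on states is formal: for $\vphi\in\cS(A)$ and $\mu\in\CP_\cG(\Gnaught)$ the functional $\vphi\mu = \vphi\circ(a\mapsto\mu a)$ is a state, being a state composed with a ucp map, and dualising the left action gives $(\vphi(\mu_1\mu_2))(a) = \vphi((\mu_1\mu_2)a) = \vphi(\mu_1(\mu_2 a)) = (\vphi\mu_1)(\mu_2 a) = ((\vphi\mu_1)\mu_2)(a)$ for all $a$, so $\vphi(\mu_1\mu_2) = (\vphi\mu_1)\mu_2$ as required.
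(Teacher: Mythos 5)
Your proposal is correct and follows essentially the same route as the paper's proof: unitality of each map $a \mapsto \mu a$ via the normalisation $\sum_i \alpha_{\gamma_i}(f_i) = \sum_i f_i \circ \psi_{\gamma_i^*} = \mathbb{1}_{\Gnaught}$, the semigroup law by pulling $f_{1,i}^{1/2}$ through $\alpha_{\gamma_{2,j}}$ via the module property and then invoking the cocycle identity $\alpha_{\gamma_{1,i}} \circ \alpha_{\gamma_{2,j}} = \alpha_{\gamma_{1,i}\gamma_{2,j}}$ together with commutativity of $\cont(\Gnaught)$ to merge the square roots, and a purely formal dualisation to $\cS(A)$. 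If anything, you are more explicit than the paper about the support bookkeeping (that everything lands in $\contc(\supp \gamma_{1,i}\gamma_{2,j})$ so the commuting diagram of Definition~\ref{def:groupoid-cstar-algebra} applies), which the paper delegates to its preceding proposition on the well-definedness of the product in $\CP_\cG(\Gnaught)$.
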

\begin{proof}
  Let us first check that the formula in the statement defines an action.  Given two generalised $\cG$-probability measures $\mu_1 = \sum_i (\gamma_{1,i}, f_{1,i})$ and $\mu_2 = \sum_j (\gamma_{2,j}, f_{2,j})$ and an element $a \in A$, we find
  \begin{align*}
    \mu_1 (\mu_2 a) & =
                      \mu_1 (\sum_j \alpha_{\gamma_{2,j}} (f_{2,j}^{\frac{1}{2}} a f_{2,j}^{\frac{1}{2}})) \\
                    & =
                      \sum_i \alpha_{\gamma_{1,i}} ( f_{1,i}^{\frac{1}{2}} \sum_j \alpha_{\gamma_{2,j}} (f_{2,j}^{\frac{1}{2}} a f_{2,j}^{\frac{1}{2}}) f_{1,i}^{\frac{1}{2}}) \\
                    & =
                      \sum_i \alpha_{\gamma_{1,i}} (  \sum_j \alpha_{\gamma_{2,j}} ( (f_{1,i} \circ \psi_{\gamma_{2,j}})^{\frac{1}{2}} f_{2,j}^{\frac{1}{2}} a f_{2,j}^{\frac{1}{2}} (f_{1,i} \circ \psi_{\gamma_{2,j}})^{\frac{1}{2}})) \\
                    & =
                      (\mu_1\mu_2)(a)
                      \eqstop
  \end{align*}
For every $\mu \in \CP_\cG(\Gnaught)$ the map $a \mapsto \mu a$  is completely positive as a sum of a composition of completely positive maps.  Further, it is unital on $A$, since
  \begin{gather*}
    \mu(1_A)
    =
    \sum_i \alpha_{\gamma_i} (f_i^{\frac{1}{2}} 1_A f_i^{\frac{1}{2}})
    =
    \sum_i \alpha_{\gamma_i} (f_i^{\frac{1}{2}} f_i^{\frac{1}{2}})
    =
    \sum_i \alpha_{\gamma_i}(f_i)
    =
    \mathbb{1}_{\Gnaught}
    =
    1_A
    \eqstop
  \end{gather*}
  This shows that $a \mapsto \mu a$ is completely contractive and non-degenerate on $A$.  It follows directly that $\vphi \mu(a) = \vphi(\mu a)$ defines an action on the state space of $\cS(A)$.
\end{proof}

\subsection{Contractive and covering semigroups}
\label{sec:contractive-covering-semigroups}

When we consider Powers averaging property, we will require the following definition, which provides a suitable generalisation of boundary actions of groups.

\begin{definition}
  \label{def:contractive-semigroup}
  Let $\cG$ be an {\'e}tale groupoid with compact Hausdorff space of units.  We will say that a subsemigroup $S \subseteq \CP_\cG(\Gnaught)$ is covering if for every $g \in \cG$ there is $\mu = \sum_i (\gamma_i, f_i) \in S$ such that $f_i \circ \psi_{\gamma_i^*}(\rmr(g)) \neq 0$ implies $g \in \gamma_i$.  If $\cG$ is minimal, we will say that $S$ is contractive if for any $\nu \in \cP(\Gnaught)$ and any $x \in \Gnaught$ there is a net $(\mu_i)_i$ in $S$ such that $\nu \mu_i \xrightarrow{\weakstar} \delta_x$.
\end{definition}

The following result was established for the special case of crossed products by minimal group actions in \cite[Lemma 3.6]{amrutamursu2021}.
\begin{proposition}
  \label{prop:generalised-probabilities-covering-and-contractive}
  Let $\cG$ be an {\'e}tale groupoid with compact Hausdorff space of units.  Then $\CP_\cG(\Gnaught)$ is a covering semigroup.  If $\cG$ is minimal, then $\CP_\cG(\Gnaught)$ is contractive.
\end{proposition}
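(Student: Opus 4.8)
The plan is to treat the two assertions separately, since the covering property requires no dynamical hypothesis whereas minimality enters only for contractibility. Throughout I would work with the explicit description of the action on $\cont(\Gnaught)$ from Proposition~\ref{prop:action-generalised-probability-measures}: for $\mu = \sum_i(\gamma_i, f_i)$ and $h \in \cont(\Gnaught)$ one has $\mu h = \sum_i (f_i h) \circ \psi_{\gamma_i^*}$, so that $(\mu h)(y) = \sum_{i \,:\, y \in \im \gamma_i} f_i(\psi_{\gamma_i^*}(y))\, h(\psi_{\gamma_i^*}(y))$, together with $\mu \mathbb{1}_{\Gnaught} = \mathbb{1}_{\Gnaught}$.

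For the covering property, fix $g \in \cG$ and put $y = \rmr(g)$. The point is that the covering condition only constrains terms whose bisection passes over $y$, so it suffices to build a generalised probability measure in which a single distinguished bisection containing $g$ is the only one whose image meets $y$. Concretely, I would cover $\Gnaught$ by finitely many open bisections $\gamma_1, \dots, \gamma_n$ with $\gamma_1 \ni g$ and $y \notin \im \gamma_i$ for $i \geq 2$, then choose a partition of unity $\{\rho_i\}$ subordinate to $\{\supp \gamma_i\}$ and set $f_i = \rho_i \circ \psi_{\gamma_i^*}$, so that $\mu = \sum_i (\gamma_i^*, f_i) \in \CP_\cG(\Gnaught)$ because $\sum_i f_i \circ \psi_{\gamma_i} = \sum_i \rho_i = \mathbb{1}_{\Gnaught}$. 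Since $f_i$ is supported in $\im \gamma_i$, we get $f_i(y) = 0$ for $i \geq 2$, so the implication $f_i(\rmr(g)) \neq 0 \Rightarrow g \in \gamma_i$ holds trivially for those terms and holds for $\gamma_1$ because $g \in \gamma_1$; arranging $\rho_1(\rms(g)) > 0$ moreover makes $\mu$ genuinely follow $g$. The only delicate point is producing the cover: away from $y$ one covers units by small identity bisections contained in $\Gnaught \setminus \{y\}$, while the unit $y$ itself is covered either by $\gamma_1$ (when $\rms(g) = y$, so $g$ lies in the isotropy at $y$) or, when $\rms(g) \neq y$, by a small bisection around $g^{-1}$, whose image is a neighbourhood of $\rms(g) \neq y$ and hence can be shrunk to avoid $y$. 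I expect this local bookkeeping over the single point $y$ to be the only obstacle in this half, but it is purely combinatorial and needs no minimality.

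For contractibility I would exploit minimality to push every measure into a shrinking neighbourhood of $x$. Fix $x \in \Gnaught$ and consider the net of open neighbourhoods $U$ of $x$ directed by reverse inclusion. For each such $U$, minimality forces the saturation $\rmr(\rms^{-1}(U))$, which is open and invariant, to be all of $\Gnaught$, so every $y \in \Gnaught$ admits some $g$ with $\rms(g) \in U$ and $\rmr(g) = y$. Choosing open bisections through these elements with support inside $U$ and invoking compactness of $\Gnaught$, I obtain finitely many bisections $\gamma_1, \dots, \gamma_n$ with $\supp \gamma_j \subseteq U$ and $\bigcup_j \im \gamma_j = \Gnaught$. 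Taking a partition of unity $\{\phi_j\}$ subordinate to $\{\im \gamma_j\}$ and setting $f_j = \phi_j \circ \psi_{\gamma_j}$ produces $\mu_U = \sum_j (\gamma_j, f_j) \in \CP_\cG(\Gnaught)$. The reason for forcing $\supp \gamma_j \subseteq U$ is that then $\psi_{\gamma_j^*}(y) \in U$ whenever $f_j(\psi_{\gamma_j^*}(y)) \neq 0$, so that, using $\mu_U \mathbb{1}_{\Gnaught} = \mathbb{1}_{\Gnaught}$, one obtains the uniform estimate $\|\mu_U h - h(x)\mathbb{1}_{\Gnaught}\|_\infty \leq \sup_{w \in U} |h(w) - h(x)|$ for every $h \in \cont(\Gnaught)$.

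Finally, for any $\nu \in \cP(\Gnaught)$ I would conclude $(\nu \mu_U)(h) = \nu(\mu_U h) \to h(x) = \delta_x(h)$ as $U \downarrow \{x\}$, since the right-hand side of the displayed estimate tends to $0$ by continuity of $h$ at $x$; this gives $\nu \mu_U \xrightarrow{\weakstar} \delta_x$, in fact uniformly in $\nu$. The decisive step, and the one genuinely using the hypothesis, is the passage from minimality to a cover of $\Gnaught$ by the images $\im \gamma_j$ of bisections supported in an arbitrarily small neighbourhood of $x$; everything after that is a partition-of-unity computation combined with uniform continuity of $h$ on the compact space $\Gnaught$.
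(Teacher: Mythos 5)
Your proof is correct. The contractivity half coincides with the paper's argument: both use minimality (you via the open invariant saturation $\rmr(\rms^{-1}(U)) = \Gnaught$, the paper via the sets $\psi_{\gamma^*}(V)$ covering $\Gnaught$ --- the same fact) plus compactness to produce finitely many bisections whose weight functions are supported in the prescribed neighbourhood of $x$, followed by the identical partition-of-unity construction; the paper concludes by noting that $\nu\mu_V$ is supported in $\ol{V}$ for every $\nu$, which is the same content as your uniform estimate $\|\mu_U h - h(x)\mathbb{1}_{\Gnaught}\|_\infty \le \sup_{w \in U}|h(w)-h(x)|$. Where you genuinely diverge is the covering half. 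The paper's witness is a two-term measure: for an open bisection $\gamma \ni g^{-1}$ and $f \in \contc(\im \gamma)$ with $f(\rms(g)) = 1$ it takes $\mu_g = (\gamma, f\circ\psi_\gamma) + (e, (1-f^2)^{1/2})$, with the unit bisection $e$ absorbing the complementary mass (there is a cosmetic slip in the exponents there --- the normalisation comes out to $\mathbb{1}_{\Gnaught}$ with $f^2\circ\psi_\gamma$ and $1-f^2$ --- but the idea stands); since $(f\circ\psi_\gamma)(\rmr(g)) = f(\rms(g)) = 1$, the full weight at $\rmr(g)$ rides on the bisection through $g$. You instead build a finite family of bisections whose supports cover $\Gnaught$, with one distinguished bisection through $g$ and all other images avoiding $y = \rmr(g)$, together with a subordinate partition of unity; your case analysis at $y$ (the isotropy case $\rms(g) = y$ versus a shrunk bisection through $g^{-1}$ when $\rms(g) \neq y$) is sound, and the resulting measure satisfies Definition~\ref{def:contractive-semigroup} exactly as stated. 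The trade-off: the paper's construction is a one-liner and concentrates unit mass along $g$ at the relevant point, which is the feature the subsequent averaging argument (Lemma~\ref{lem:semigroup_translation_is_generalized_measure}) exploits, whereas your measure verifies the covering implication largely vacuously ($f_i(\rmr(g)) = 0$ for $i \geq 2$), and arranging $\rho_1(\rms(g)) > 0$ only partially recovers concentration, since other members of your partition of unity may still carry weight at $\rms(g)$. A compensating feature of your version: $\supp f_i = \psi_{\gamma_i}(\supp \rho_i)$ is compact inside $\im \gamma_i$, so the extraneous weights vanish on an entire neighbourhood of $\rmr(g)$, not merely at the point itself.
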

\begin{proof}
  Let us first prove that $\CP_\cG(\Gnaught)$ is covering.  To this end take $g \in \cG$, let $\gamma$ be an open bisection of $\cG$ containing $g$ and let $f \in \contc(\im \gamma)$ with $0 \leq f \leq 1$ and $f(\rmr(g)) = 1$.  Denoting by $e \in G$ the neutral element, define
  \begin{gather*}
    \mu_g
    =
    (\gamma, f \circ \psi_\gamma) + (e, 1- f)
    \eqstop
  \end{gather*}
  Then $\mu_g$ satisfies the conditions of Definition~\ref{def:contractive-semigroup} and hence witnesses that $\CP_\cG(\Gnaught)$ is covering.

  Assume that $\cG$ is minimal.  Fix any $x \in \Gnaught$ and an open neighbourhood $V$ of $x$.  We will find $\mu_V \in \CP_\cG(\Gnaught)$ such that $\nu \mu_V$ is supported in $\ol{V}$ for all $\nu \in \cP(\Gnaught)$.  Given any $y \in \Gnaught$, by minimality there exists some $\gamma_y \in \Gamma(\cG)$ with $y \in \supp \gamma_y$ and $\psi_{\gamma_y}(y) \in V$.  In other words, $y \in \psi_{\gamma_y^*}(V) $.  It follows that the family $(\psi_{\gamma^*}(V))_{\gamma \in \Gamma(\cG)}$ is an open cover of $\Gnaught$.  So by compactness, there is a finite number of open bisections $\gamma_1, \dotsc, \gamma_n$ such that $(\psi_{\gamma_i^*}(V))_{i = 1}^n$ covers $\Gnaught$. Let $g_i$ be a partition of unity subordinate to this open cover and put $f_i = g_i \circ \psi_{\gamma_i}$ as well as $\mu_V = \sum_{i=1}^n (\gamma_i, f_i)$.  Then $\mu_V$ is a generalised probability measure.
  
  Let $\nu \in \cP(\Gnaught)$ and let $h \in \cont(\Gnaught)$ be such that $h \geq 0$ and $h|_{\ol{V}} = 0$. We have
  \begin{gather*}
    (\nu \mu_V)(h)
    =
    \nu(\mu_V h)
    =
    \sum_i \nu(\alpha_{\gamma_i}( f_i^{\frac{1}{2}} h f_i^{\frac{1}{2}}))
    =
    0
    \eqcomma
  \end{gather*}
  since all functions $f_i$ are supported in $V$.
\end{proof}

Recall from Section \ref{sec:groupoid-spaces} the action of $\cG$ on states in $\cS_{\Gnaught}(A)$ for a $\cG$-\Cstar-algebra $A$.  That is, if $\vphi \in \cS(A)$ satisfies $\vphi|_{\cont(\Gnaught)} = \ev_x$, then $g \vphi = \vphi(\alpha_{\gamma^*}(f \cdot f))$ for any $g \in \cG_x$, any open bisection $\gamma$ containing $g$ and any $f \in \contc(\im \gamma)$ satisfying $f(\rmr(g)) = 1$.  The next lemmas shows how covering semigroups can be used to implement this action.
\begin{lemma}
  \label{lem:semigroup_translation_is_generalized_measure}
  Let $\cG$ be an {\'e}tale groupoid with compact Hausdorff space of units and let $(A, \alpha)$ be a unital $\cG$-\Cstar-algebra.  Assume that $S \subseteq \CP_\cG(\Gnaught)$ is a semigroup covering $\cG$ and let $\vphi \in \cS(A)$ be a state satisfying $\vphi|_{\cont(\Gnaught)} = \ev_x$ for some $x \in \Gnaught$.  Then for every $g \in \cG_x$ there is $\mu_g \in S$ such that $\vphi \mu_g = g \vphi$.
\end{lemma}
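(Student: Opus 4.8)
The plan is to read off $\mu_g$ directly from the covering hypothesis and then show that, after unwinding both the action of $\CP_\cG(\Gnaught)$ and the formula for $g\vphi$, the functional $\vphi\mu_g$ concentrates all of its mass on the single translate $g\vphi$. Since $S$ covers $\cG$, Definition~\ref{def:contractive-semigroup} furnishes $\mu_g = \sum_i(\gamma_i^*, f_i) \in S$ detecting $g$, in the sense that the $i$-th piece only matters when $g \in \gamma_i$. Writing $x = \rms(g)$ and $y = \rmr(g)$, I would first record from Proposition~\ref{prop:action-generalised-probability-measures} and Definition~\ref{def:groupoid-convolver} that
\[
  (\vphi\mu_g)(a) = \sum_i \vphi(\alpha_{\gamma_i^*}(f_i^{1/2} a f_i^{1/2})),
\]
and introduce the positive functionals $\tau_i(a) = \vphi(\alpha_{\gamma_i^*}(f_i^{1/2} a f_i^{1/2}))$ on $A$, so that $\vphi\mu_g = \sum_i \tau_i$.

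The first genuine step is to compute the total mass of each $\tau_i$ and discard the irrelevant indices. As $f_i \in \contc(\im\gamma_i)$ is a function, the first axiom of a $\cG$-\Cstar-algebra gives $\alpha_{\gamma_i^*}(f_i) = f_i \circ \psi_{\gamma_i} \in \conto(\supp\gamma_i)$, whence $\tau_i(1) = \vphi(f_i \circ \psi_{\gamma_i}) = f_i(\psi_{\gamma_i}(x))$, with the convention that this is $0$ when $x \notin \supp\gamma_i$. When $x \in \supp\gamma_i$, letting $g_i'$ be the unique element of $\gamma_i$ over $x$, one has $\psi_{\gamma_i}(x) = \rmr(g_i')$ and $\tau_i(1) = f_i(\rmr(g_i'))$. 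The covering property is exactly what forces $g_i' = g$, equivalently $g \in \gamma_i$, whenever this mass is nonzero; for every remaining index $\tau_i(1) = 0$, and a positive functional of vanishing mass is itself $0$ by Cauchy--Schwarz. Hence only indices with $g \in \gamma_i$ contribute to the sum.

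For such an index we have $\psi_{\gamma_i}(x) = y$ and $f_i(y) = \tau_i(1) > 0$, so $\hat f_i := f_i^{1/2}/f_i(y)^{1/2}$ is a well-defined element of $\contc(\im\gamma_i)$ with $\hat f_i(y) = 1$. Since $f_i^{1/2} a f_i^{1/2} = f_i(y)\,\hat f_i a \hat f_i$ and $\gamma_i$ is an open bisection containing $g$, the independence of $g\vphi$ from the choice of bisection and cutoff function in Lemma~\ref{lem:translated-state-existence} yields
\[
  \tau_i(a) = f_i(y)\,\vphi(\alpha_{\gamma_i^*}(\hat f_i a \hat f_i)) = f_i(y)\,(g\vphi)(a).
\]
Summing over the contributing indices gives $\vphi\mu_g = \big(\textstyle\sum_{g \in \gamma_i} f_i(y)\big)\,g\vphi$. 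Finally, evaluating the normalisation $\sum_i f_i \circ \psi_{\gamma_i} = \mathbb{1}_{\Gnaught}$ from Definition~\ref{def:groupoid-convolver} at $x$ gives $\sum_i f_i(\psi_{\gamma_i}(x)) = 1$; the discarded terms contribute nothing, so $\sum_{g \in \gamma_i} f_i(y) = 1$ and therefore $\vphi\mu_g = g\vphi$, as claimed.

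The hard part is the second step, namely ruling out the indices with $g \notin \gamma_i$: such a $\gamma_i$ may still carry an element $g_i' \neq g$ over $x$ whose range $\psi_{\gamma_i}(x) = \rmr(g_i')$ differs from $y$, producing a stray contribution based at the wrong point. Controlling these competing base points is precisely the purpose of the covering hypothesis, and I would make explicit in the write-up that the chosen $\mu_g$ sees $g$ and no rival element over $\rms(g)$. Once this selection is secured, the rest---identifying $\tau_i(1)$, rescaling by $f_i(y)$, and invoking the independence clause of Lemma~\ref{lem:translated-state-existence}---is routine.
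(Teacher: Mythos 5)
Your proposal follows the same skeleton as the paper's proof: take a covering witness $\mu_g = \sum_i (\gamma_i^*, f_i)$, split $\vphi\mu_g = \sum_i \tau_i$, discard the indices with $g \notin \gamma_i$, identify each surviving $\tau_i$ with $f_i(\rmr(g))\, g\vphi$ via the independence clause of Lemma~\ref{lem:translated-state-existence}, and renormalise using $\sum_i f_i \circ \psi_{\gamma_i} = \mathbb{1}_{\Gnaught}$. The only mechanical difference is the elimination step: you kill the bad indices by computing their total mass and invoking Cauchy--Schwarz, whereas the paper inserts a cutoff $h$ with $h(x) = 1$ through the multiplicative domain of $\vphi$ and requires $\supp h \cap \supp \gamma_i = \emptyset$ for every bad $i$. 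Your route is the cleaner of the two, but both hinge on exactly the same point.

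And at that point there is a genuine gap, which you noticed in your final paragraph and then waved away. Definition~\ref{def:contractive-semigroup} controls $f_i$ at the point $\rmr(g)$, whereas the mass of $\tau_i$ is $f_i(\psi_{\gamma_i}(x)) = f_i(\rmr(g_i'))$ for $g_i'$ the element of $\gamma_i$ over $x$; for a bad index these are different points, and the covering condition says nothing about $f_i(\rmr(g_i'))$. So your assertion that "the covering property is exactly what forces $g_i' = g$ whenever this mass is nonzero" is false under the definition as printed. Concretely, let $\cG = \ZZ \ltimes X$ for an irrational rotation $\sigma$ of the circle, $g = (1,x)$, $y = \sigma(x)$, and take the global bisections $\gamma_1 = \{1\} \times X$, $\gamma_2 = \{2\} \times X$. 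Choose continuous $0 \leq f_1 \leq 1$ with $f_1(x) = 1$, $f_1(y) = 1-c$ for some $0 < c < 1$, and set $f_2 = 1 - f_1 \circ \sigma^{-1}$. Then $\mu = (\gamma_1^*, f_1) + (\gamma_2^*, f_2)$ is a generalised probability measure in the sense of Definition~\ref{def:groupoid-convolver} (indeed $f_1 \circ \sigma + f_2 \circ \sigma^2 = \mathbb{1}$), and it satisfies the covering implication at $g$ because $f_2(y) = 1 - f_1(x) = 0$; yet acting on $A = \cont(X)$ with $\vphi = \ev_x$ one computes $\vphi\mu = (1-c)\,\ev_y + c\,\ev_{\sigma^2(x)} \neq g\vphi$. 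The bad piece carries mass $c$ over $x$, based at the wrong point. You should be aware that the paper's own proof conceals the identical assumption in the step "choose $h$ such that $\supp h \cap \supp \gamma_i = \emptyset$ whenever $g \notin \gamma_i$", which is impossible in this example since $\supp \gamma_2 = X \ni x$; so you have in effect reproduced the published argument, gap included. What saves the lemma in all of the paper's applications is that the witnesses actually used --- those built in Proposition~\ref{prop:generalised-probabilities-covering-and-contractive}, and single global bisections through $g$ coming from a subgroup of the topological full group in Corollary~\ref{cor:contracting-groups-implies-irreducibility} --- satisfy the stronger property that every piece carrying nonzero weight over $\rms(g)$, not merely at $\rmr(g)$, contains $g$. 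To close your proof you must either build that stronger selection property into the covering hypothesis or exhibit such a witness in $S$; the printed definition does not hand it to you.
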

\begin{proof}
  Since $S$ is covering, we can choose $\mu_g = \sum_i (\gamma_i^*, f_i) \in S$ such that $f_i \circ \psi_{\gamma_i}(\rms(g)) \neq 0$ implies $g \in \gamma_i$.  The intersection of all $\gamma_i$ containing $g$ is an open bisection $\gamma$ containing $g$.  Since all functions $f_i$ have compact support and $\Gnaught$ is Hausdorff, we can choose $h \in \contc(\im \gamma)$ with $0 \leq h \leq 1$ and $h(x) = 1$ such that $\supp h \cap \supp f_i \circ \psi_{\gamma_i} = \emptyset$ whenever $g \notin \gamma_i$.  Then we can calculate
  \begin{align*}
    \vphi \mu_g
    & =
    \sum_i \vphi(\alpha_{\gamma_i^*}(f_i^{\frac{1}{2}} \cdot f_i^{\frac{1}{2}})) \\
    & =
      \sum_i h(x) \vphi( \alpha_{\gamma_i^*}(f_i^{\frac{1}{2}} \cdot f_i^{\frac{1}{2}})) \\
    & =
      \sum_i \vphi( h \alpha_{\gamma_i^*}(f_i^{\frac{1}{2}} \cdot f_i^{\frac{1}{2}})) \\
    & =
      \sum_i \vphi(\alpha_{\gamma^*}((\alpha_{\gamma}(h) f_i)^{\frac{1}{2}} \cdot (\alpha_{\gamma^*}(h) f_i)^{\frac{1}{2}})) \\
    & =
      g \vphi \cdot \left ( \sum_i (\alpha_{\gamma}(h) f_i)(\rmr(g)) \right )  \\
    & =
      g \vphi \cdot \left ( \sum_i (h(x) f_i \circ \psi_{\gamma_i}(\rms(g)) \right )  \\
    & =
      g \vphi
      \eqstop
  \end{align*}
  This finishes the proof.
\end{proof}

We will need to know that the contractivity of certain semigroups of generalised probability measures is preserved by passing to the Furstenberg groupoid.
\begin{lemma}
  \label{lem:contractivity-furstenberg-boundary}
  Let $\cG$ be a minimal {\'e}tale groupoid with compact Hausdorff space of units and let $S \subseteq \CP_\cG(\Gnaught)$ be a contractive and covering semigroup.  Denote by $\pi: \fb \cG \to \Gnaught$ the projection map.  For $\mu = \sum_i (\gamma_i, f_i) \in \CP_\cG(\Gnaught)$ write $\pi^*(\mu) = \sum_i (\pi^{-1}(\gamma_i), f_i \circ \pi) \in \CP_{\cG \ltimes \fb \cG}(\fb \cG)$.  Then $\pi^*(S) \subseteq \CP_{\cG \ltimes \fb \cG}(\fb \cG)$ is contractive and covering.
\end{lemma}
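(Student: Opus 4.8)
The plan is to first check that $\pi^*$ is a well-defined affine semigroup homomorphism $\CP_\cG(\Gnaught) \to \CP_{\cG \ltimes \fb \cG}(\fb \cG)$, and then to establish the covering and contractive properties in turn. Throughout I write $\cH = \cG \ltimes \fb \cG$ and regard $\pi$ both as the projection $\fb \cG \to \Gnaught$ and as its natural extension $\cH \to \cG$, which is fibrewise bijective and proper, exactly as in Theorem~\ref{thm:inclusion-essential-algebras}. For well-definedness, fibrewise bijectivity gives that $\pi^{-1}(\gamma)$ is an open bisection with $\supp \pi^{-1}(\gamma) = \pi^{-1}(\supp \gamma)$, properness gives that $f_i \circ \pi$ is compactly supported there, and the equivariance identity $\pi \circ \psi^\cH_{\pi^{-1}(\gamma)} = \psi^\cG_\gamma \circ \pi$ yields $\sum_i (f_i \circ \pi) \circ \psi^\cH_{\pi^{-1}(\gamma_i)^*} = (\sum_i f_i \circ \psi^\cG_{\gamma_i^*}) \circ \pi = \mathbb{1}_{\fb \cG}$, so $\pi^*(\mu) \in \CP_\cH(\fb \cG)$. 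The same identity shows $\pi^{-1}(\gamma_1 \gamma_2) = \pi^{-1}(\gamma_1)\pi^{-1}(\gamma_2)$ and that the twisting functions in the product match, so $\pi^*$ respects the product of generalised probability measures; it is clearly affine, whence $\pi^*(S)$ is a convex subsemigroup.

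For the covering property, fix $h \in \cH$ and put $g = \pi(h)$. Choosing via the covering property of $S$ an element $\mu = \sum_i (\gamma_i^*, f_i) \in S$ with $f_i(\rmr(g)) \neq 0 \Rightarrow g \in \gamma_i$, I claim $\pi^*(\mu)$ witnesses covering of $h$: since $\pi(\rmr(h)) = \rmr(g)$, the condition $(f_i \circ \pi)(\rmr(h)) \neq 0$ forces $f_i(\rmr(g)) \neq 0$, hence $g \in \gamma_i$, hence $h \in \pi^{-1}(\gamma_i)$. Thus $\pi^*(S)$ is covering. Moreover, since $\cG$ is minimal and $\fb \cG$ is irreducible, any closed nonempty invariant $Z \subseteq \fb \cG$ has $\pi(Z)$ compact, invariant, hence all of $\Gnaught$, so $Z = \fb \cG$; that is, $\fb \cG$ is a minimal $\cG$-space and $\cH$ is minimal, so the contractivity notion of Definition~\ref{def:contractive-semigroup} applies to $\pi^*(S)$.

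Now set $\Sigma = \ol{\nu \cdot \pi^*(S)}^{\weakstar} \subseteq \cP(\fb \cG)$; right translation by a fixed generalised measure is weak*-continuous and $\pi^*(S)$ is a semigroup, so $\Sigma$ is closed and right $\pi^*(S)$-invariant. Given a target $y \in \fb \cG$ with $x = \pi(y)$, I proceed in three steps. First, a direct computation using equivariance of $\pi$ gives $\pi_*(\nu \cdot \pi^*(\mu)) = (\pi_* \nu) \cdot \mu$ for all $\mu \in \CP_\cG(\Gnaught)$; contractivity of $S$ supplies $\mu_i \in S$ with $(\pi_* \nu)\mu_i \to \delta_x$, and any weak* cluster point $\nu'$ of $(\nu \cdot \pi^*(\mu_i))$ then lies in $\Sigma$ and satisfies $\pi_* \nu' = \delta_x$. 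Second, I regard $\cont(\fb \cG)$ as a $\cG$-\Cstar-algebra via $\cont(\Gnaught) \hookrightarrow \cont(\fb \cG)$, so that $\nu'$ is a state with $\nu'|_{\cont(\Gnaught)} = \ev_x$; strong proximality of the $\cG$-boundary $\fb \cG$ (Proposition~\ref{prop:strongly-proximal-minimal}) yields $y_0 \in \pi^{-1}(x)$ and a net $(g_j)$ in $\cG_x$ with $g_j \nu' \to \delta_{y_0}$. The key observation is that the $\cG$-action of a measure $\mu$ on $\cont(\fb \cG)$ coincides with the $\cH$-action of $\pi^*(\mu)$, because $\alpha^\cG_\gamma = \alpha^\cH_{\pi^{-1}(\gamma)}$ on $\cont(\fb \cG)$ and $f_i$ acts as $f_i \circ \pi$; hence Lemma~\ref{lem:semigroup_translation_is_generalized_measure}, applied to this $\cG$-\Cstar-algebra and the covering semigroup $S$, gives $\mu_{g_j} \in S$ with $\nu' \cdot \pi^*(\mu_{g_j}) = g_j \nu'$, so $\delta_{y_0} \in \Sigma$. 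Third, $\delta_{y_0}$ is the point evaluation $\ev_{y_0}$ on $\cont(\fb \cG) = \cont(\Hnaught)$; applying Lemma~\ref{lem:semigroup_translation_is_generalized_measure} to $\cont(\fb \cG)$ as an $\cH$-\Cstar-algebra with the covering semigroup $\pi^*(S)$ produces, for each $h \in \cH_{y_0}$, an element of $\pi^*(S)$ sending $\delta_{y_0}$ to $\delta_{h y_0}$. Since $\cG$ is minimal the orbit of $y_0$ is dense and $z \mapsto \delta_z$ is continuous, so $\delta_y \in \Sigma$. As $\nu$ and $y$ were arbitrary, $\pi^*(S)$ is contractive.

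The step I expect to require the most care is the second one: strong proximality naturally produces contractions by genuine groupoid elements acting on the whole fibre measure $\nu'$, and the difficulty is to realise these contractions inside $\pi^*(S)$, since Lemma~\ref{lem:semigroup_translation_is_generalized_measure} is stated for states whose restriction to the unit space is a character. The resolution is precisely to avoid treating $\nu'$ as a state on the unit space $\fb \cG$ of $\cH$ — where it is not a character — and instead to exploit that its projection to $\Gnaught$ \emph{is} a character, so that the lemma applies with respect to the $\cG$-structure; the identification of the $\cG$- and $\cH$-actions on $\cont(\fb \cG)$ then transfers the conclusion to $\pi^*(S)$.
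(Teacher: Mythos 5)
Your proof is correct, and its core follows the same route as the paper's: identify the $\cG$-action of $\mu$ on $\cont(\fb \cG)$ with the $\cH$-action of $\pi^*(\mu)$ (so that $\nu \, \pi^*(\mu) = \nu \mu$ and $\pi_*(\nu \, \pi^*(\mu)) = (\pi_*\nu)\mu$), use contractivity of $S$ to extract a cluster point $\nu'$ with $\pi_* \nu' = \delta_x$, apply Proposition~\ref{prop:strongly-proximal-minimal} to contract $\nu'$ to a point mass along a net in $\cG_x$, and realise those translates inside $S$ via Lemma~\ref{lem:semigroup_translation_is_generalized_measure}. Where you go beyond the paper is your third step, and it is a genuine improvement in completeness: Proposition~\ref{prop:strongly-proximal-minimal} only yields $g_j \nu' \to \delta_{y_0}$ for \emph{some} $y_0 \in \pi^{-1}(x)$, whereas contractivity of $\pi^*(S)$ demands that the arbitrarily prescribed target $y$ be reached; the paper's proof elides this by reusing the letter $y$ for the point produced by strong proximality. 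Your repair --- observing that $\Sigma = \ol{\nu \cdot \pi^*(S)}^{\weakstar}$ is closed and right $\pi^*(S)$-invariant, applying Lemma~\ref{lem:semigroup_translation_is_generalized_measure} a second time in the $\cH$-structure (legitimate since you checked $\pi^*(S)$ is covering) to move $\delta_{y_0}$ along the $\cH$-orbit, and using minimality of $\fb \cG$ as a $\cG$-space together with weak*-continuity of $z \mapsto \delta_z$ --- is exactly what closes this gap. Your explicit verifications of the covering property of $\pi^*(S)$ (which the paper declares clear) and of minimality of $\cG \ltimes \fb \cG$ (needed for the contractivity notion of Definition~\ref{def:contractive-semigroup} to apply at all) are likewise correct, as is your identification of the delicate point: Lemma~\ref{lem:semigroup_translation_is_generalized_measure} applies to $\nu'$ only through the $\cG$-structure, where its restriction to the unit algebra is a character, and the matching of $\cG$- and $\cH$-actions on $\cont(\fb \cG)$ is what transfers the conclusion to $\pi^*(S)$.
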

\begin{proof}
  It is clear that $\pi^*(S)$ is covering, so we have to show that it is contractive.  Note that considering $\cont(\fb \cG)$ as a $\cG$-\Cstar-algebra, we have $\nu \pi^*(\mu) = \nu \cdot \mu$, for all $\nu \in \cP(\fb \cG)$ and all $\mu \in \CP_{\cG}(\Gnaught)$.  We will use the simple notation $\nu \mu$.  Let $\nu \in \cP(\fb \cG)$ and $y \in \fb \cG$.  Put $x = \pi(y) \in \Gnaught$.  Since $S$ is contractive, we can find a net $(\mu_i)_i$ in $S$ such that $\pi_*(\nu)\mu_i \to \delta_x$.  Passing to a subnet, we may assume that $\nu \mu_i \to \tilde \nu$ for some $\tilde \nu \in \cP(\fb \cG)$.  Then $\pi_*(\tilde \nu) = \delta_x$ holds.  Since $\cG$ is minimal, Proposition~\ref{prop:strongly-proximal-minimal} and strong proximality of $\fb \cG$ imply that there is a net $(g_i)_i$ in $\cG_x$ such that $g_i \tilde \nu \to \delta_y$ for some $y \in \pi^{-1}(x)$.  Lemma~\ref{lem:semigroup_translation_is_generalized_measure} shows that there is a net $(\tilde \mu_i)_i$ in $S$ such that $\tilde \nu \tilde \mu_i = g_i \tilde \nu  \to \delta_y$.  Summarising, we found that $\delta_y \in \ol{ \nu \cdot S} = \ol{\nu \cdot \pi^*(S) }$, which proves contractivity of $\pi^*(S)$.
\end{proof}

\subsection{Powers averaging}
\label{sec:powers-averaging-subsection}

We are ready to establish Powers averaging property for simple essential groupoid \Cstar-algebras.  We begin by considering the state space of $\Cstaress(\cG)$.  To put the statement of the next proposition into context, let us recall that the natural conditional expectation $\Eess: \Cstaress(\cG) \to \Dix(\Gnaught)$ does not take values in $\cont(\Gnaught)$ if $\cG$ is not Hausdorff.  Also recall from Theorem~\ref{thm:characterisation-confined-subgroups-minimal-case} that if $\Cstaress(\cG)$ is simple, then $\cG$ is necessarily minimal, so that Theorem~\ref{thm:inclusion-essential-algebras} implies there is a natural inclusion $\Cstaress(\cG) \subseteq \Cstaress(\cG \ltimes \fb \cG)$.  In the remainder of this section, we let $\rE: \Cstaress(\cG \ltimes \fb \cG) \to \cont(\fb \cG)$ denote the conditional expectation described in Proposition~\ref{prop:conditional-expectation-extremally-disconnected}.

In order to formulate averaging results conveniently, it is useful to introduce a notion of convex combinations of generalised probability measures.

\begin{definition}
  \label{def:convex-generalised-probabilities}
  Let $\cG$ be an {\'e}tale groupoid with compact Hausdorff space of units.  For generalised probability measures indexed over disjoint sets $I$ and $J$, a formal convex combination is an expression of the form
  \begin{gather*}
    c \sum_{i \in I} (\gamma_i, f_i) + (1 - c)\sum_{j \in J} (\gamma_j, f_j)
    =
    \sum_{i \in I \sqcup J} (\gamma_i, \mathbb{1}_I(i) c f_i + \mathbb{1}_J(i) (1 - c) f_i).
    \eqcomma
  \end{gather*}
  We call a subset of $\CP_\cG(\Gnaught)$ convex if it is closed under convex combinations.
\end{definition}

\begin{proposition}
  \label{prop:average_state_to_trivial_boundary}
  Let $\cG$ be an {\'e}tale groupoid with compact Hausdorff space of units.  Let $S \subseteq \CP_\cG(\Gnaught)$ be a contractive and covering convex semigroup.  Assume that $\Cstaress(\cG)$ is simple.  Then given any $\vphi \in \cS(\Cstaress(\cG \ltimes \fb \cG))$, we have
  \begin{gather*}
    \{ \nu \circ \rE \mid \nu \in \cP(\fb \cG) \}
    \subseteq
    \ol{\{\vphi \mu \mid \mu \in S\}}^{\weakstar}
  \end{gather*}
\end{proposition}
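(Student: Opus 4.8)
The plan is to run a Powers-type averaging argument inside $\cS(\Cstaress(\cH))$, where $\cH = \cG \ltimes \fb \cG$, and to reduce the whole statement to producing the single states $\delta_y \circ \rE$ for $y \in \fb \cG$. First I would record the structural facts that make this possible. Since $\Cstaress(\cG)$ is simple it is in particular minimal and has the ideal intersection property, so by Theorem~\ref{thm:intersection-property-essential-algebras} the groupoid $\cH$ is essentially principal, that is $\Iso(\cH) = \ol{\fb \cG}^{\cH}$. By minimality and Lemma~\ref{lem:contractivity-furstenberg-boundary} the image $\pi^*(S)$ is a contractive and covering convex subsemigroup of $\CP_{\cH}(\fb \cG)$, and we use it to implement the action of $S$ on $\Cstaress(\cH)$. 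A direct check, using the definition of the convex combination of generalised probability measures, shows that $\mu \mapsto \vphi \mu$ is affine, so $K := \ol{\{\vphi \mu \mid \mu \in S\}}^{\weakstar}$ is a nonempty weak* compact convex set. The map $\nu \mapsto \nu \circ \rE$ is affine and weak*-continuous and $\cconv \{\delta_y \mid y \in \fb \cG\}^{\weakstar} = \cP(\fb \cG)$, so it suffices to prove $\delta_y \circ \rE \in K$ for every $y \in \fb \cG$.

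Fix $y \in \fb \cG$ and write $\rho = \vphi|_{\cont(\fb \cG)} \in \cP(\fb \cG)$. For $h \in \cont(\fb \cG)$ one has $(\vphi \mu)(h) = (\rho \mu)(h)$, so the restriction to $\cont(\fb \cG)$ of $\vphi \mu$ is exactly $\rho \mu$. By the contractivity of $\pi^*(S)$ there is a net $(\mu_i)$ in $S$ with $\rho \mu_i \xrightarrow{\weakstar} \delta_y$. Passing to a subnet using weak* compactness of $K$, I may assume $\vphi \mu_i \to \psi$ for some $\psi \in K$, and then $\psi|_{\cont(\fb \cG)} = \delta_y$.

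The heart of the matter, and the step I expect to be the main obstacle, is to upgrade the equality $\psi|_{\cont(\fb \cG)} = \delta_y$ to $\psi = \delta_y \circ \rE$. Since $\psi(h^* h) = |h(y)|^2 = |\psi(h)|^2$ for $h \in \cont(\fb \cG)$, the subalgebra $\cont(\fb \cG)$ lies in the multiplicative domain of $\psi$, so $\psi(h_1 a h_2) = h_1(y)\,\psi(a)\,h_2(y)$ for all $a$. To pin down $\psi$ on the rest of $\Cstaress(\cH)$ I would invoke Theorem~\ref{thm:essential-is-reduced-cstar-algebra}: because $\fb \cG$ is extremally disconnected there is a $*$-isomorphism $\Cstaress(\cH) \cong \Cstarred(\cH_\Haus)$ intertwining $\rE$ with the canonical conditional expectation of the Hausdorffification, and essential principality of $\cH$ forces $\cH_\Haus$ to be a \emph{principal} Hausdorff {\'e}tale groupoid with unit space $\fb \cG$. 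For such a groupoid $\delta_y \circ \rE$ is the unique state restricting to $\delta_y$ on $\cont(\fb \cG)$: given an open bisection $U$ and $f \in \contc(U)$, the multiplicative-domain identity together with triviality of the isotropy, so that the unique element of $U$ with source $y$ either equals $y$ or has range different from $y$, lets one separate the two cases by functions in $\cont(\fb \cG)$ and conclude $\psi(f) = \rE(f)(y)$; linearity and density of $\cC(\cH_\Haus)$ then give $\psi = \delta_y \circ \rE$.

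Combining the three steps, $\delta_y \circ \rE \in K$ for every $y$, and by the affinity and weak*-continuity of $\nu \mapsto \nu \circ \rE$ together with convexity and closedness of $K$ I obtain $\{\nu \circ \rE \mid \nu \in \cP(\fb \cG)\} \subseteq K$, as required. The only genuinely delicate point is the uniqueness of the state extension in the third step; everything else is bookkeeping with the affine structure of $\CP_\cG(\Gnaught)$ and the contraction supplied by Lemma~\ref{lem:contractivity-furstenberg-boundary}.
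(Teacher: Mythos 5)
Your proposal is correct and follows essentially the same route as the paper's own proof: reduce via convexity and weak*-closedness of $K$ to showing $\delta_y \circ \rE \in K$, contract using Lemma~\ref{lem:contractivity-furstenberg-boundary}, and then identify the subnet limit with $\delta_y \circ \rE$ via the multiplicative domain of $\cont(\fb \cG)$, the identification $\Cstaress(\cG \ltimes \fb \cG) \cong \Cstarred((\cG \ltimes \fb \cG)_\Haus)$ from Theorem~\ref{thm:essential-is-reduced-cstar-algebra}, and principality of the Hausdorffification supplied by Theorem~\ref{thm:intersection-property-essential-algebras}. The only differences are cosmetic: you apply Lemma~\ref{lem:contractivity-furstenberg-boundary} directly to $\vphi|_{\cont(\fb \cG)}$ rather than first contracting over $\Gnaught$ as the paper does, and you package the paper's explicit two-case computation on bisections disjoint from the unit space as a uniqueness-of-state-extension statement for principal Hausdorff groupoids, which amounts to the same argument.
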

\begin{proof}
  Write $K = \ol{\{\vphi \mu \mid \mu \in S\}}^{\weakstar}$ and observe that $K$ is weak-* closed and convex.  Hence, it suffices to prove that $\ev_y \circ \rE \in K$ for all $y \in \fb \cG$.  Fix $y \in \fb \cG$ and let $x = \pi(y) \in \Gnaught$.  By contractivity of $S$, there is $\sigma \in K$ satisfying $\sigma|_{\cont(\Gnaught)} = \ev_x$.  Since $S$ is both contractive and covering, Lemma~\ref{lem:contractivity-furstenberg-boundary} shows that there is a net $(\mu_i)_i$ in $S$ such that $\sigma|_{\cont(\fb \cG)} \mu_i \to \ev_y$.  Dropping to a subnet, we may assume that $\sigma \mu_i \to \omega \in \cS(\Cstaress(\cG \ltimes \fb \cG))$.   Observe that $\omega|_{\Cstaress(\cG)} \in K$.

 As observed in the proof of Theorem~\ref{thm:characterisation-confined-subgroups-minimal-case}, simplicity of $\Cstaress(\cG)$ implies that $\cG$ is minimal. Hence all stabilizer subgroups in $\cG \ltimes \fb \cG$ are amenable by Corollary~\ref{cor:MinAllStabAmen}. Thus Theorem~\ref{thm:essential-is-reduced-cstar-algebra} applies and allows us to identify $\Cstaress(\cG \ltimes \fb \cG)$ with $\Cstarred(\cH)$ for the Hausdorffification $\cH = (\cG \ltimes \fb \cG)_\Haus$.  Then $\rE$ is identified with the natural conditional expectation of $\Cstarred(\cH)$.  We use the fact that $\Cstarred(\cH)$ is simple to infer by Theorem~\ref{thm:intersection-property-essential-algebras} (see also Corollary~\ref{cor:IntProp_TopTrans}) that $\cH$ is principal.  So it suffices to show that $\omega(u_\gamma f) = 0$ for all open bisections $\gamma \subseteq \cH \setminus \fb \cG$ and all positive functions $f \in \contc(\supp \gamma)$.  If $y \notin \supp \gamma$, then we see that
  \begin{gather*}
    \omega(u_\gamma f)
    =
    \omega(u_\gamma f^{\frac{1}{2}}) \omega(f^{\frac{1}{2}})
    =
    \omega(u_\gamma f^{\frac{1}{2}}) f^{\frac{1}{2}}(y)
    =
    0
    \eqstop
  \end{gather*}
  If $y \in \supp \gamma$, the fact that $\psi_{\gamma}(y) \neq y$ allows us to choose a function $h \in \contc(\supp \gamma)$ such that $h(y) = 1$ and $h(\psi_\gamma(y)) = 0$. It follows that
  \begin{gather*}
    \omega(u_\gamma f)
    =
    h(y) \omega(u_\gamma f)
    =
    \omega(h u_\gamma f)
    =
    \omega(u_\gamma f (\alpha_{\gamma^*}(h)))
    =
    \omega(u_\gamma f) h(\psi_{\gamma}(y))
    =
    0
    \eqstop
  \end{gather*}
  This finishes the proof.
\end{proof}

We now extend the previous result to the entire dual space of $\Cstaress(\cG)$.
\begin{corollary}
  \label{cor:average_functional_to_trivial_boundary}
  Let $\cG$ be an {\'e}tale groupoid with compact Hausdorff space of units. Assume that $\Cstaress(\cG)$ is simple, let $S \subseteq \CP_\cG(\Gnaught)$ be a contractive and covering convex semigroup and let $\omega \in \Cstaress(\cG \ltimes \fb \cG)^*$. Then
  \begin{gather*}
    \{ \omega(1) \nu \circ E \mid \nu \in P(\fb \cG)\}
    \subseteq
    \ol{\{\omega \mu \mid \mu \in S\}}^{\weakstar}
    \eqstop
  \end{gather*}
\end{corollary}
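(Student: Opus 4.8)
The plan is to deduce the corollary from Proposition~\ref{prop:average_state_to_trivial_boundary} by a decomposition argument, after first recording two structural facts about the set $K_\omega := \ol{\{\omega\mu \mid \mu \in S\}}^{\weakstar}$. Since $\mu$ acts on $\Cstaress(\cG \ltimes \fb \cG)$ by the unital completely positive maps of Proposition~\ref{prop:action-generalised-probability-measures} and $(\omega\mu)(a) = \omega(\mu a)$, the assignment $\mu \mapsto \omega\mu$ is affine with respect to formal convex combinations; as $S$ is convex, the set $\{\omega\mu \mid \mu \in S\}$ is convex, and being norm-bounded by $\|\omega\|$ and weak*-closed by definition, $K_\omega$ is weak*-compact and convex. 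The target set $\{\omega(1)\, \nu \circ \rE \mid \nu \in \cP(\fb \cG)\}$ is the image of $\cP(\fb \cG)$ under the affine weak*-continuous map $\nu \mapsto \omega(1)\, \nu \circ \rE$, hence equals the weak*-closed convex hull of the points $\omega(1)\, \ev_y \circ \rE$, $y \in \fb \cG$. Therefore it suffices to prove $\omega(1)\, \ev_y \circ \rE \in K_\omega$ for each fixed $y \in \fb \cG$; the general case then follows because $K_\omega$ is weak*-closed and convex.

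Fix $y \in \fb \cG$. I would decompose $\omega$ as a linear combination of positive functionals $\omega = \omega_1 - \omega_2 + i(\omega_3 - \omega_4)$ and set $\rho = \sum_{k=1}^4 \omega_k \geq 0$. If $\rho = 0$ then $\omega = 0$ and there is nothing to prove, so assume $\rho(1) > 0$ and apply Proposition~\ref{prop:average_state_to_trivial_boundary} to the state $\rho/\rho(1)$ with $\nu = \delta_y$: this yields a net $(\mu_i)$ in $S$ with $\rho\mu_i \to \rho(1)\, \ev_y \circ \rE$ in the weak* topology. Passing to a subnet, I may assume $\omega_k \mu_i \to \eta_k$ for each $k$, where each $\eta_k$ is a positive functional, since positivity is preserved by the action and the positive functionals are weak*-closed. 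By construction $\sum_k \eta_k = \rho(1)\, \ev_y \circ \rE$, and unitality of the action gives $\eta_k(1) = \lim_i \omega_k(\mu_i 1) = \omega_k(1)$; in particular $0 \leq \eta_k \leq \rho(1)\, \ev_y \circ \rE$.

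The crux of the argument is the claim that this forces $\eta_k = \omega_k(1)\, \ev_y \circ \rE$, which I would establish through the purity of the state $\ev_y \circ \rE$. Since $\Cstaress(\cG)$ is simple, $\cG$ is minimal and, by Theorem~\ref{thm:intersection-property-essential-algebras}, the Hausdorffification $\cH = (\cG \ltimes \fb \cG)_\Haus$ is principal; under the identification $\Cstaress(\cG \ltimes \fb \cG) \cong \Cstarred(\cH)$ of Theorem~\ref{thm:essential-is-reduced-cstar-algebra}, the functional $\ev_y \circ \rE$ becomes the vector state $\langle \lambda_y(\cdot)\delta_y, \delta_y\rangle$ of the regular representation on $\ell^2(\cH_y)$. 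Because $\cH$ is principal, $\delta_y$ is cyclic and $\lambda_y$ is irreducible, so $\ev_y \circ \rE$ is a pure state; a positive functional dominated by a multiple of a pure state is a scalar multiple of it, and comparing values at the unit gives $\eta_k = \eta_k(1)\, \ev_y \circ \rE = \omega_k(1)\, \ev_y \circ \rE$. Consequently $\omega\mu_i \to \eta_1 - \eta_2 + i(\eta_3 - \eta_4) = \omega(1)\, \ev_y \circ \rE$, and since each $\omega\mu_i$ lies in $K_\omega$, so does the limit. I expect the main obstacle to be precisely this identification of the limiting pieces: verifying the irreducibility of $\lambda_y$ for the principal groupoid $\cH$ (equivalently, the purity of $\ev_y \circ \rE$) and confirming that purity, rather than some weaker domination estimate, is what pins down each $\eta_k$ as the correct scalar multiple.
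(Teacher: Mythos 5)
Your argument is correct, but it takes a genuinely different route through the crux step than the paper does. The paper also reduces to showing $\omega(1)\,\ev_y \circ \rE \in K_\omega$ and also uses a four-fold decomposition of $\omega$, but it then iterates Proposition~\ref{prop:average_state_to_trivial_boundary} \emph{component-wise}: it averages the first state to some $\nu_1 \circ \rE$, passes to a subnet, averages the second, and so on, using that the set $\{\nu \circ \rE \mid \nu \in \cP(\fb \cG)\}$ is invariant under the right action of $\CP_\cG(\Gnaught)$ so that already-averaged components remain of this form (though the measure may move); this produces an element of $K_\omega$ of the form $\nu \circ \rE$ with $\nu$ a finite complex measure of total mass $\omega(1)$, and only then contracts $\nu$ to $\delta_y$ via Lemma~\ref{lem:contractivity-furstenberg-boundary}. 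You instead average \emph{once}, applying Proposition~\ref{prop:average_state_to_trivial_boundary} to the dominating positive functional $\rho = \sum_k \omega_k$ of the Jordan decomposition, and then pin down all four limit functionals $\eta_k$ simultaneously from $0 \leq \eta_k \leq \rho(1)\,\ev_y \circ \rE$ via purity of $\ev_y \circ \rE$. That purity claim, which the paper never needs, is indeed the only extra input, and your justification of it is sound: simplicity gives minimality of $\cG$, Theorem~\ref{thm:intersection-property-essential-algebras} gives principality of the Hausdorffification $\cH = (\cG \ltimes \fb \cG)_\Haus$, Theorem~\ref{thm:essential-is-reduced-cstar-algebra} identifies $\ev_y \circ \rE$ with the vector state $\langle \lambda_y(\cdot)\delta_y, \delta_y \rangle$ on $\Cstarred(\cH)$, and $\lambda_y$ is irreducible because principality makes $\rmr|_{\cH_y}$ injective, so any operator in the commutant of $\lambda_y(\cont(\Hnaught))$ is diagonal, and commutation with $\lambda_y(\contc(U))$ for bisections $U$ forces the diagonal to be constant; the domination fact (a positive functional below a multiple of a pure state is a scalar multiple of it) is standard via GNS. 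What each approach buys: the paper's proof stays entirely within the dynamical averaging framework, at the price of a somewhat delicate final step contracting a complex measure to a point mass; yours trades the iteration and that contraction for a single clean representation-theoretic rigidity statement, and in particular avoids Lemma~\ref{lem:contractivity-furstenberg-boundary} entirely in this corollary. One cosmetic remark: cyclicity of $\delta_y$ holds for any {\'e}tale groupoid and is not where principality enters (irreducibility renders it moot anyway).
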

\begin{proof}
  We write $K = \ol{\{\omega \mu \mid \mu \in S\}}^{\weakstar}$.  Since $K$ is convex and weak-* closed, it suffices to shows that $\omega(1) \ev_y \circ \rE \in K$ for any $y \in \fb \cG$.  We decompose $\omega = \sum_{k=1}^4 c_k \vphi_k$ as a convex combination with four states $\vphi_i \in \cS(\Cstaress(\cG \ltimes \fb \cG))$.  By Proposition~\ref{prop:average_state_to_trivial_boundary}, we may find a net $(\mu_i)$ in $S$ with $\vphi_1 \mu_i \xrightarrow{\weakstar} \nu_1 \circ \rE$ for some $\nu_1 \in \cP(\fb \cG)$.  Dropping to a subset, we may assume that $\omega \mu_i \xrightarrow{\weakstar} \nu_1 \circ \rE + \sum_{k=2}^4 c_k \vphi_k'$ for some new states $\vphi_k' \in \cS(\Cstaress(\cG))$.  Repeating this process three more times, and noting that the set $\{ \nu \circ \rE  \mid  \nu \in \cP(\fb \cG)\}$ is invariant under the right action of $\CP_\cG(\Gnaught)$, we see that there is some element of $K$ of the form $\omega(1) \nu \circ \rE$ for some finite complex measure $\nu$ on $\fb \cG$.  Thanks to Lemma~\ref{lem:contractivity-furstenberg-boundary} we find that $\omega(1) \ev_y \circ \rE \in K$ for every $y \in \fb \cG$.
\end{proof}

We are now able to dualise Corollary~\ref{cor:average_functional_to_trivial_boundary} in order to obtain a version of Powers averaging property for essential groupoid \Cstar-algebras.  The next definition provides a notion of Powers averaging that subsumes the classical notion for groups and dynamical systems.
\begin{definition}
  \label{def:powers-averaging}
  Let $\cG$ be an {\'e}tale groupoid with compact Hausdorff space of units and let $S \subset \CP_\cG(\Gnaught)$ be a subsemigroup.  We say that $\Cstaress(\cG)$ satisfies relative Powers averaging property with respect to $S$ if $0 \in \cconv\{\mu a \mid \mu \in S\}$ for all $a \in \Cstaress(\cG)$ satisfying $\Eess(a) = 0$.
\end{definition}

Recall that in \cite{rordam2021} a unital inclusion of \Cstar-algebras $A \subseteq B$ is said to be \Cstar-irreducible if every intermediate \Cstar-algebra is simple.  This notion will be linked to the relative Powers averaging property by means the following definition, relating generalised probability measures to \Cstar-subalgebras of $\Cstaress(\cG)$.
\begin{definition}
  \label{def:algebra-supporting-generalised-probability-measures}
  Let $\cG$ be an {\'e}tale groupoid with compact Hausdorff space of units and let $\mu = \sum_{i = 1}^n (\gamma_i, f_i) \in \CP_\cG(\Gnaught)$ be a generalised probability measure.  Let $A \subseteq \Cstaress(\cG)$ be a \mbox{\Cstar-subalgebra}. Then $A$ is said to \emph{support} $\mu$, if $u_{\gamma_i} f_i \in A$ for all $i \in \{1, \dotsc, n\}$.  Further, if $S \subseteq \CP_\cG(\Gnaught)$ is a subsemigroup, then $A$ is said to support $S$ if it supports every element of $S$.
\end{definition}
We observe that $\Cstaress(\cG)$ supports $\CP_\cG(\Gnaught)$.

We are now ready to state our main result on Powers averaging for essential groupoid \Cstar-algebras.    
\begin{theorem}
  \label{thm:powers-averaging}
  Let $\cG$ be a minimal {\'e}tale groupoid with compact Hausdorff space of units.  Then the following statements are equivalent.
  \begin{enumerate}
  \item \label{item:simplicity}
    $\Cstaress(\cG)$ is simple.
  \item \label{item:zero-expectation}
    $\Cstaress(\cG \ltimes \fb \cG)$ satisfies relative Powers averaging property with respect to any covering and contractive semigroup of generalised probability measures on $\cG$.
  \item \label{item:integral}
    Given any $a \in \Cstaress(\cG \ltimes \fb \cG)$ and any $\nu \in \cP(\fb \cG)$, we have $\nu \circ \rE(a) \in \cconv\{ \mu a \mid \mu \in S\}$ for any covering and contractive semigroup $S$ of generalised probability measures on $\cG$.
  \item \label{itm:irreducibility-groupoid-algebra}
    $A \subseteq \Cstaress(\cG)$ is \Cstar-irreducible for every \Cstar-subalgebra $A$ supporting a covering and contractive semigroup of generalised probability measures on $\cG$.
  \item \label{itm:irreducibility}
    $A \subseteq \Cstaress(\cG \ltimes \fb \cG)$ is \Cstar-irreducible for every \Cstar-subalgebra $A$ supporting a covering and contractive semigroup of generalised probability measures on $\cG$.
  \end{enumerate}
  If $\cG$ is Hausdorff, then all these conditions are equivalent to the following statement.
  \begin{enumerate}
    \setcounter{enumi}{5}
  \item \label{item:expectation}
    Given any $a \in \Cstarred(\cG)$, we have $\rE(a) \in \cconv\{ \mu a \mid \mu \in S\}$ for every covering and contractive semigroup $S$ of generalised probability measures on $\cG$.
  \end{enumerate}
\end{theorem}

\begin{remark}
  The assumptions on $S$ being covering and contractive are satisfied for $S = \CP_\cG(\Gnaught)$ by Proposition~\ref{prop:generalised-probabilities-covering-and-contractive}, since $\cG$ is assumed to be minimal.
\end{remark}

\begin{proof}[Proof of \ref{thm:powers-averaging}]
  The implication from~\ref{itm:irreducibility} to~\ref{itm:irreducibility-groupoid-algebra} is clear.  It is also clear that \ref{itm:irreducibility-groupoid-algebra} implies~\ref{item:simplicity}.

For the rest of the proof we fix a covering and contractive semigroup $S$ of generalised probability probability measures on $\cG$.  Without loss of generality, we may assume that $S$ is convex.
  
  Let us show that~\ref{item:simplicity} implies~\ref{item:integral}.  For a contradiction, assume there were some $\nu \in \cP(\fb \cG)$ and some $a \in \Cstaress(\cG \ltimes \fb \cG)$ for which the conclusion of~\ref{item:integral} does not hold.  By the Hahn-Banach separation theorem, there is some functional $\omega \in \Cstaress(\cG \ltimes \fb \cG)^*$ and some $\alpha \in \RR$ with
  \begin{gather*}
    \Re \omega(1) \nu(\rE(a)) < \alpha \leq \Re \omega(\mu a)
  \end{gather*}
  for all $\mu \in S$. This contradicts Corollary~\ref{cor:average_functional_to_trivial_boundary}.

  Let us next assume that~\ref{item:integral} holds.  Given $a \in \Cstaress(\cG \ltimes \fb \cG)$ such that $\rE(a) = 0$, we have $0 = \nu(\rE(a)) \in \ol{\{\mu a \mid \mu \in S\}}$ for any auxiliary $\nu \in \cP(\fb \cG)$.  This proves~\ref{item:zero-expectation}.

  We will now show that~\ref{item:zero-expectation} implies~\ref{itm:irreducibility}.  Assume that $I \subseteq \Cstaress(\cG \ltimes \fb \cG)$ is a nonzero \Cstar-subalgebra which is invariant under multiplication with elements from $A$.  Let $a \in I$ be nonzero. Replacing $a$ by $a^*a$, we may assume without loss of generality that $a$ is positive. Consider the nonzero positive function $f = \rE(a) \in \cont(\fb \cG)$.  Fix $x \in \fb \cG$ such that $f(x) \neq 0$.  Denote by $\pi: \cG \ltimes \fb \cG \to \cG$ the natural projection.  Since $S$ is covering $\cG$, it also covers $\cG \ltimes \fb \cG$ by Lemma~\ref{lem:contractivity-furstenberg-boundary}.  So for every $g \in (\cG \ltimes \fb \cG)_x$ there is some $\mu = \sum_i (\gamma_i, f_i)$ such that $f_i \circ \psi_{\gamma_i^*}(\rmr(g)) \neq 0$ implies $g \in \gamma_i$.  Hence,
  \begin{gather*}
    \mu f (\rmr(g))
    =
    \sum_i \alpha_{\gamma_i}(f_i^{\frac{1}{2}} f f_i^{\frac{1}{2}})(\rmr(g))
    =
    \sum_i (f_i f )(\psi_{\gamma_i^*}(\rmr(g)))
    =
    \sum_i (f_i f )(x)
    =
    f(x)
    \neq
    0
    \eqstop
  \end{gather*}
  So $gx \in \supp \mu f$ follows.  By minimality of $\cG \ltimes \fb \cG$ and compactness of $\fb \cG$ there are finitely many elements $\mu_1, \dotsc, \mu_n \in S$ such that $\frac{1}{n} \sum_i \mu_i f$ is nowhere zero.  By compactness of $\fb \cG$, there is $\delta > 0$ such that $\frac{1}{n} \sum_i \mu_i f \geq \delta$.    Since $\rE$ is a $\cG$-expectation, it follows that $\rE(\frac{1}{n} \sum_i \mu_i a) =  \frac{1}{n} \sum_i \mu_i \rE(a) \geq \delta$.  So we found a positive element $b \in I$ such that $\rE(b) \geq \delta > 0$.  Then for arbitrary $\mu \in S$, we have
  \begin{gather*}
    \mu b = \mu(b - E(b)) + \mu(E(b)) \geq \mu(b - E(b)) + \delta
    \eqstop
  \end{gather*}
  Since $S$ is convex, our assumption allows to choose $\mu \in S$ such that $\|\mu(b - E(b))\| \leq \frac{\delta}{2}$.  Then we infer that $\mu b \geq \frac{\delta}{2}$, and hence $\mu b \in I$ is invertible.  This shows that $I = \Cstaress(\cG \ltimes \fb \cG)$, and thus \ref{itm:irreducibility}.

  Assuming additionally that $\cG$ is Hausdorff, we observe that the proof that \ref{item:expectation} implies~\ref{item:simplicity} follows from an obvious simplification of the argument that~\ref{item:zero-expectation} implies~\ref{itm:irreducibility}, making use of the fact that $\Ered(a) \in \cont(\Gnaught)$ holds for all $a \in \Cstarred(\cG)$.  The implication from~\ref{item:integral} to~\ref{item:expectation} follows from contractivity of $S$, since every function $f \in \cont(\Gnaught)$ lies in the $S$ closure the functions $f(x) \mathbb{1}_{\Gnaught}$.
\end{proof}

\section{From boundary actions to Powers averaging}
\label{sec:examples}

In this section, we will apply Theorem~\ref{thm:powers-averaging} in order to obtain concrete examples of unitary group representations into \Cstar-algebras satisfying relative Powers averaging property.  Let us recall that for a discrete group $G$, a $G$-boundary is a compact Hausdorff space with a minimal and strongly proximal action of $G$.  Also recall that the topological full group $\mathbf{F}(\cG)$ of a groupoid is the group of its global bisections.  We can now formulate the following corollary of Theorem~\ref{thm:powers-averaging}.
\begin{corollary}
  \label{cor:contracting-groups-implies-irreducibility}
  Let $\cG$ be an {\'e}tale groupoid with compact Hausdorff space of units.  Assume that there is a subgroup of the topological full group $G \leq \mathbf{F}(\cG)$ that covers $\cG$ and such that $G \grpaction{} \Gnaught$ is a $G$-boundary.  Denote by $\pi: G \to \Cstaress(\cG)$ the unitary representation of $G$ in the essential groupoid \Cstar-algebra of $\cG$.  If $\Cstaress(\cG)$ is simple, then $\Cstaress(\cG)$ satisfies Powers averaging relative to $\pi(G)$.
\end{corollary}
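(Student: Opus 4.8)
The plan is to deduce the statement directly from the equivalence of items~\ref{item:simplicity} and~\ref{itm:irreducibility-groupoid-algebra} in Theorem~\ref{thm:powers-averaging}, once the group data has been repackaged as a suitable semigroup of generalised probability measures. Concretely, I would produce a contractive and covering convex subsemigroup $S \subseteq \CP_\cG(\Gnaught)$ that is supported by $\Cstar_\pi(G)$ in the sense of Definition~\ref{def:algebra-supporting-generalised-probability-measures}, and then invoke the theorem. Before doing so I observe that $\cG$ is automatically minimal: since $G \leq \mathbf{F}(\cG)$ covers $\cG$, every $G$-orbit in $\Gnaught$ is contained in the corresponding $\cG$-orbit, so minimality of the boundary action $G \grpaction{} \Gnaught$ forces all $\cG$-orbits to be dense. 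Thus the standing hypotheses of Theorem~\ref{thm:powers-averaging} are met.

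Each $g \in G$ is a global bisection $\gamma_g \in \Gamma(\cG)$ with $\supp \gamma_g = \im \gamma_g = \Gnaught$, whose associated partial isometry from Lemma~\ref{lem:pseudogroup-partial-isometries} is the unitary $u_{\gamma_g} = \pi(g)$. I would take $S$ to be the set of convex combinations
\begin{gather*}
  \sum_{g \in F} (\gamma_g, c_g \mathbb{1}_{\Gnaught}), \qquad F \subseteq G \text{ finite}, \ c_g \geq 0, \ \textstyle\sum_{g} c_g = 1 \eqstop
\end{gather*}
Each such expression lies in $\CP_\cG(\Gnaught)$ because $\mathbb{1}_{\Gnaught} \circ \psi_{\gamma_g^*} = \mathbb{1}_{\Gnaught}$ for every global bisection, and the product formula for generalised probability measures together with $\gamma_g \gamma_h = \gamma_{gh}$ shows that $S$ is a convex subsemigroup. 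Since $u_{\gamma_g} \mathbb{1}_{\Gnaught} = \pi(g) \in \Cstar_\pi(G)$, the algebra $\Cstar_\pi(G)$ supports $S$. Covering is immediate from the assumption that $G$ covers $\cG$: given $g \in \cG$, pick $h \in G$ with $g \in \gamma_h$; then the single-term measure $(\gamma_{h^{-1}}, \mathbb{1}_{\Gnaught}) \in S$, read in the required form $(\gamma_1^*, f_1)$ with $\gamma_1 = \gamma_{h^{-1}}^* = \gamma_h$ and $f_1 = \mathbb{1}_{\Gnaught}$, satisfies the condition of Definition~\ref{def:contractive-semigroup}, its weight being nowhere zero and $g \in \gamma_h = \gamma_1$.

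The remaining and only delicate point is contractivity. Using Proposition~\ref{prop:action-generalised-probability-measures}, the right action of $(\gamma_g, \mathbb{1}_{\Gnaught})$ on $\cP(\Gnaught)$ is the pushforward $\nu \mapsto (\psi_{\gamma_g}^{-1})_* \nu$, that is, the classical action of $g^{-1}$ on measures; hence $S \cdot \nu$ contains the entire orbit $G \cdot \nu$ for every $\nu \in \cP(\Gnaught)$. Because $G \grpaction{} \Gnaught$ is a $G$-boundary it is minimal and strongly proximal, so $\ol{G \cdot \nu}^{\weakstar}$ contains a Dirac mass; as the set $\{x \in \Gnaught \mid \delta_x \in \ol{G \cdot \nu}^{\weakstar}\}$ is $G$-invariant, closed and nonempty, minimality makes it all of $\Gnaught$. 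Therefore for every $x \in \Gnaught$ there is a net in $S$ driving $\nu$ to $\delta_x$, which is precisely contractivity of $S$. With $S$ now known to be covering and contractive, supported by $\Cstar_\pi(G)$, and with $\Cstaress(\cG)$ simple, the implication \ref{item:simplicity}$\Rightarrow$\ref{itm:irreducibility-groupoid-algebra} of Theorem~\ref{thm:powers-averaging} will give that $\Cstar_\pi(G) \subseteq \Cstaress(\cG)$ is \Cstar-irreducible. The main obstacle to watch is this contractivity step, where one must faithfully translate classical strong proximality of the boundary into the groupoid-theoretic language of $S$ acting on $\cP(\Gnaught)$, keeping careful track of the direction in which $G$ acts through the partial homeomorphisms $\psi_{\gamma_g}$.
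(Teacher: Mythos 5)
Your proposal is correct and takes essentially the same route as the paper: the paper's proof also defines $S$ to be the convex subsemigroup generated by $G$ inside $\CP_\cG(\Gnaught)$, notes that $S$ is contractive and covering and that $\Cstar_\pi(G)$ supports it, and then applies Theorem~\ref{thm:powers-averaging}. You merely write out the routine verifications the paper leaves implicit (minimality of $\cG$ from minimality of the $G$-boundary, covering via global bisections and the adjoint convention in Definition~\ref{def:contractive-semigroup}, and contractivity from strong proximality plus minimality), all of which are accurate.
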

\begin{proof}
   We define $S$ to be the convex subsemigroup generated by $G$ inside $\CP_\cG(\Gnaught)$.  Then it suffices to note that $S$ is contractive and covering to apply Theorem~\ref{thm:powers-averaging}.
\end{proof}

Recall that the groupoid of germs associated with an action of a discrete group $G \grpaction{} X$ is the quotient $(G \ltimes X) / \Iso(G \ltimes X)^\circ$.  We will apply Corollary~\ref{cor:contracting-groups-implies-irreducibility} to groupoids of germs, thereby obtaining concrete examples of groupoid \Cstar-algebras satisfying relative Power averaging with respect to a natural group of unitaries.  \Cstar-irreducibility of the associated inclusions has been studied in \cite{kalantarscarparo2021}.
\begin{theorem}
  \label{thm:relative-powers-averaging-boundary-action}
  Let $G$ be a countable discrete group and $G \grpaction{} X$ a boundary action.  Denote by $\cG$ its groupoid of germs and by $\pi: G \to \Cstaress(\cG)$ the associated unitary representation.  Then $\pi(G) \subseteq \Cstaress(\cG)$ satisfies the relative Powers averaging property.
\end{theorem}
\begin{proof}
  Let us first show that $\Cstaress(\cG)$ is simple.  To this end, we observe that $\cG$ is minimal, since it has the same orbits as $G \grpaction{} X$.  Further, since $G$ is countable, the set $\bigcup_{g \in G} \partial \Fix(g)$ is meager in $X$. Hence, its complement is dense in $X$, and it follows that $\cG$ is topologically principal.  It follows from Theorem~\ref{thmintro:characterisation-confined-subgroups} (and already from \cite[Theorem 7.26]{kwasniewskimeyer2019-essential}) that $\Cstaress(\cG)$ is simple.   By Corollary~\ref{cor:contracting-groups-implies-irreducibility} applied to $G/ \ker(G \grpaction{} X)$, it now follows that $\pi(G) \subseteq \Cstaress(\cG)$ satisfies the relative Powers averaging property.
\end{proof}

\begin{remark}
  \label{rem:identifying-representation}
  The representation of $\pi$ appearing in Theorem~\ref{thm:relative-powers-averaging-boundary-action} can be identified with a quasi-regular representation as employed in \cite{kalantarscarparo2020,kalantarscarparo2021}.  To this end recall the following notation, given an action of a discrete group $G \grpaction{} X$.  The open stabiliser at $x \in X$ is defined as
  \begin{gather*}
    G_x^\circ = \{g \in G \mid \text{ there is a neighbourhood } x \in U \text{ such that } g|_U = \id\}
    \eqcomma
  \end{gather*}
  and, for $g \in G$, we denote by $\Fix(g) = \{x \in X \mid gx = x\}$ the fixed point set of $g$.

  Now if $x \in X$ is such that
  \begin{itemize}
  \item the subquotient $G_x/G_x^0$ is amenable, and
  \item $x \notin \partial (\Fix(g)^\circ)$ for every $g \in G$,
  \end{itemize}
  then the inclusion $\pi(G) \subseteq \Cstaress(\cG)$ is isomorphic with an inclusion $\lambda_{G/G_x}(G) \subseteq \Cstaress(\cG)$ arising from the regular representation of $\cG$ associated with $x$.

  Indeed, consider the regular representation $\lambda_x$ of $\Cstarred(\cG)$ on $\ltwo(\cG_x)$.  We recall from Section~\ref{sec:groupoids} that if $a \in \Cstarred(\cG)$ is singular, then $\rms(\supp \hat a) \subseteq \rms(\ol{\Gnaught} \setminus \Gnaught) = \bigcup_{g \in G} \partial (\Fix(g)^\circ)$. So by the assumption on $x$, it follows that $\lambda_x$ factors through a *-homomorphism $\Cstaress(\cG) \to \bo(\ltwo(\cG_x))$.  We observe that $\cG_x = G/G_x^0$ and the associated representation of $G$ on $\ltwo(\cG_x) \cong \ltwo(G / G_x^0)$ is the quasi-regular representation $\lambda_{G, G_x^0}$.  Further, since $\cG_x^x = G_x / G_x^0$ is amenable, the *-homomorphism above factors to a *-homomorphism $\Cstaress(\cG) \to \bo(\ltwo(\cG_x / \cG_x^x))$.  Observe that this map is injective, because $\Cstaress(\cG)$ is simple. The associated representation of $G$ on $\ltwo(\cG_x / \cG_x^x) \cong \ltwo(G / G_x)$ is the quasi-regular representation $\lambda_{G, G_x}$.
\end{remark}

\begin{remark}
  \label{rem:recovering-previous-work-irreducible}
  Previous results about relative Powers averaging arising from group actions were obtained by Amrutam-Kalantar \cite{amrutamkalantar2020}, who showed that if $G$ is a \Cstar-simple discrete group and $X$ is any minimal, compact $G$-space, then the inclusion $G \subseteq \cont(X) \redtimes G = \Cstarred(X \rtimes G)$ satisfies relative Powers averaging.  In these examples, \Cstar-simplicity of the group is the source of Powers averaging.  In contrast, our Theorem~\ref{thm:relative-powers-averaging-boundary-action} makes no assumption on the acting group $G$, but relies instead on the assumption that $X$ is a $G$-boundary.  Nevertheless, there are some situations where the two results overlap such as hyperbolic groups $G$ with trivial finite radical acting on their Gromov boundary $\partial G$.  Since $\partial G$ is a topologically free boundary action of a \Cstar-simple group, both \cite[Theorem 1.3]{amrutamkalantar2020} and our Theorem~\ref{thm:relative-powers-averaging-boundary-action} imply the relative Powers averaging property for the inclusion $G \subseteq \Cstarred(\partial G \rtimes G)$.
\end{remark}

\begin{remark}
  \label{rem:relation-kalantar-scarparo}
  In their work \cite{kalantarscarparo2020}, Kalantar-Scarparo considered boundary actions of a discrete group $G \grpaction{} X$, and obtained a characterisation of \Cstar-simplicity for quasi-regular representations arising from neighbourhood stabilisers $G_x^0$ with $x \in X \setminus \bigcup_{g \in G} \partial(\Fix(g)^\circ)$ \cite[Corollary 5.3]{kalantarscarparo2020}.  Our Theorem~\ref{thm:relative-powers-averaging-boundary-action} and Remark \ref{rem:identifying-representation} consider the associated groupoid of germs $\cG$ and say that there the inclusion $\lambda_{G, G_x^0}(G) \subseteq \Cstaress(\cG)$ satisfies relative Powers averaging.  This is a considerably stronger conclusion.  No prediction about \Cstar-simplicity of the associated quasi-regular representations for $G_x^0$ is made for $x \in \bigcup_{g \in G} \partial(\Fix(g)^\circ)$, and \cite[Example 6.5]{kalantarscarparo2020} even shows that in general $\lambda_{G, G_x^0}$ it will not necessarily be \Cstar-simple in this case.  Theorem~\ref{thm:relative-powers-averaging-boundary-action} provides a conceptual explanation of this phenomenon, by drawing attention to the difference between the reduced and the essential groupoid \Cstar-algebra.
  
  More recently in \cite[Theorem 5.5]{kalantarscarparo2021} simplicity results for the groupoid of germs associated with a minimal group action on a locally compact space $G \grpaction{} X$ were obtained.  This result is implied by Theorem~\ref{thm:characterisation-locally-compact-case}.  However, in this situation there is no natural map from $G$ to the essential groupoid \Cstar-algebra of the groupoid of germs, so that Powers averaging and thus an analogue of Theorem~\ref{thm:relative-powers-averaging-boundary-action} needs further care to be even formulated.
\end{remark}

We now consider the concrete case of Thompson's group $\rT$ acting on the circle as well as Thompson's group $\rV$ acting on a totally disconnected cover of the circle. The latter action was previously considered in \cite{kalantarscarparo2020}, in order to reprove the simplicity of the Cuntz algebra $\cO_2$ using techniques from the theory of \Cstar-simplicity.
\begin{example}
  \label{ex:irreducible-inclusion-T}
  Consider Thompson's group $\rT \subseteq \mathrm{Homeo}(\rS^1)$.  It is the group of piecewise linear transformations of $\rS^1 \cong \RR \bP^1$ with breakpoints in $\exp(2\pi i \ZZ[\frac{1}{2}])$ and derivatives in $2^\ZZ$.  It acts transitively on non-trivial intervals of the circle, whose end points lie in $\exp(2\pi i \ZZ[\frac{1}{2}])$.  In particular, $\rT \grpaction{} \rS^1$ is a boundary action.  Let $\cG$ be the groupoid of germs for $\rT \grpaction{} \rS^1$.  We observe that $\cG$ is non-Hausdorff, since $\rT$ does not act topologically freely while $\rS^1$ is connected.  Denote by $\pi: \rT \to \Cstaress(\cG)$ the associated unitary representation of $\rT$.  By Theorem~\ref{thm:relative-powers-averaging-boundary-action} the inclusion $\pi(\rT) \subseteq \Cstaress(\cG)$ satisfies relative Powers averaging.  If $x \in \rS^1 \setminus \exp(2 \pi i \ZZ[\frac{1}{2}])$ then Remark~\ref{rem:identifying-representation} further identifies this inclusion with $\lambda_{\rT, \rT_x^0} (T) \subseteq \Cstaress(\cG)$.  In contrast, it was shown in \cite[Example 6.5]{kalantarscarparo2020} that the quasi-regular representation associated with the standard inclusion $[F,F] \cong \rT_1^0 \subseteq \rT$ does not even generate a simple \Cstar-algebra.
\end{example}

\begin{remark}
  \label{rem:circle-dynamics-powers-averaging}
  The example of Thompson's group $\rT$ acting on the circle should be considered in the more general context of groups of homeomorphisms of the circle and the real line, which provides many examples of boundary actions that are not topologically free.  We mention several concrete examples.  First, Monod considered in \cite{monod2013-piecewise-projective} groups of piecewise projective homeomorphisms of the real line, arising as point stabiliser of $\infty \in \RR \mathbb{P}^1 \cong \rS^1$ of $\mathrm{PSL}_2(A)$ for arbitrary countable subrings $A \subseteq \RR$.  The action of $\mathrm{PSL}_2(\ZZ[\frac{1}{2}])$ on $\RR \mathbb{P}^1$ is conjugate to the action of $\rT$ on $\rS^1$.  Second, we like to point out recent work of Hyde-Lodha \cite{hydelodha2019} producing finitely generated, simple groups of homeomorphisms of the real line, which are constructed as variations of Thompson's group $\rT$.  Finally, Navas' survey \cite[p. 2056ff]{navas2018-icm} and the recent book of Kim and Koberda \cite{kimkoberda2021-structure-regularity} contain concrete questions about and provide examples of groups of homeomorphism of the circle, and include consideration of their contraction properties.  It should be pointed out that a group $G$ acting by homeomorphisms on a one-dimensional manifold $M \in \{\RR, \rS^1\}$ is strongly proximal if and only if it is extremally proximal, in the sense that for every pair of non-trivial open intervals $I, J \subseteq M$ there is $g \in G$ such that $g I \subseteq J$.  This is because an open interval can be described by its endpoints together with a point in its interior.  Such actions are also called CO-transitive in the dynamics community.
\end{remark}

The next example considers the action of $\rT$ on a suitable totally disconnected cover of the circle, and it yields a Hausdorff groupoid of germs.  A particularly interesting feature is that it produces a unitary representation of $\rT$ into the Cuntz algebra $\cO_2$ satisfying the relative Powers averaging property.
\begin{example}
  \label{ex:irreducible-inclusion-cuntz-algebra}
  Consider the following cover of the circle,
  \begin{gather*}
    K = \bigl ( \rS^1 \setminus \exp(2\pi i \ZZ[\frac{1}{2}]) \bigr ) \cup \bigl ( \{+, -\} \times \exp(2\pi i \ZZ[\frac{1}{2}]) \bigr )
  \end{gather*}
  equipped with the natural topology arising from the cyclic order.  We write $z_+$ and $z_-$ for the elements $(+,z)$ and $(-,z)$, respectively.  The action of $T$ lifts uniquely to an action on $K$ preserving the cyclic order.  By definition, $V$ is the topological full group of this action.  It follows directly from the definitions that $\rV \grpaction{} K$ is a boundary action, so that Theorem~\ref{thm:relative-powers-averaging-boundary-action} applies to the groupoid of germs $\cG(\rV \grpaction{} K) = \cG(\rT \grpaction{} K) = \cG$.  This groupoid is Hausdorff, since $\Fix(g)^\circ$ is clopen for every $g \in \rT$.  Considering the stabiliser $\rF \cong \rT_{1_+} \leq \rT$, and employing Remark~\ref{rem:identifying-representation} we obtain an inclusion $\lambda_{\rT, \rF}(\rT) \subseteq \Cstarred(\cG)$ satisfying the relative Powers averaging property.  By \cite{brixscarparo2019} we know that $\Cstarred(\cG)$ is generated by the image of Thompson's group $\rV$, so that \cite[Proposition 5.3]{haagerupolesen17} allows to make the identification with the Cuntz algebra $\Cstarred(\cG) \cong  \cO_2$.
\end{example}

\begin{remark}
  \label{rem:irreducible-inclusions-kirchberg-algebras}
  It is known that $\mathbf{F}(\cG) \grpaction{} \Gnaught$ is extremally proximal for every purely infinite Hausdorff groupoid $\cG$.  But the work in \cite{brixscarparo2019} shows that $\mathbf{F}(\cG)$ generates $\Cstarred(\cG)$ in this case.  In view of Theorem~\ref{thm:relative-powers-averaging-boundary-action} and Example~\ref{ex:irreducible-inclusion-cuntz-algebra}, it is natural to ask the following question.  Is there a systematic approach to constructing subgroups $G \leq \mathbf{F}(\cG)$ such that the \Cstar-algebra inclusion generated by $G$ inside $\Cstarred(\cG)$ is proper?
\end{remark}


{\small
  \printbibliography
}



  


\vspace{2em}

\begin{minipage}[t]{0.45\linewidth}
  \small
  Matthew Kennedy \\
  Department of Pure Mathematics \\
  University of Waterloo \\
  200 University Avenue West \\
  Waterloo, Ontario, N2L 3G1 \\
  Canada \\[0.2em]
  {\footnotesize matt.kennedy@uwaterloo.ca}
\end{minipage}
\begin{minipage}[t]{0.45\linewidth}
  \small 
  Se-Jin Kim \\
  KU Leuven Department of Mathematics \\
  KU Leuven \\
  Celestijnenlaan 200B \\
  Leuven, 3001 \\
  Belgium \\
  {\footnotesize sam.kim@kuleuven.be}
\end{minipage}

\vspace{2em}

\begin{minipage}[t]{0.45\linewidth}
  \small
  Xin Li \\
  School of Mathematics and Statistics \\
  University of Glasgow \\
  University Place \\
  Glasgow, G12 8QQ \\
  United Kingdom \\[0.2em]
  {\footnotesize xin.li@glasgow.ac.uk}
\end{minipage}
\begin{minipage}[t]{0.45\linewidth}
  \small
  Sven Raum \\
  Institute of Mathematics \\
  University of Potsdam \\
  Campus Golm, Haus 9 \\
  Karl-Liebknecht-Str. 24-25 \\
  14476 Potsdam\\
  Germany\\[0.2em]
  {\footnotesize sven.raum@uni-potsdam.de}
\end{minipage}

\vspace{2em}

\begin{minipage}[t]{0.45\linewidth}
  \small
  Dan Ursu \\
  Mathematisches Institut \\
  University of M{\"u}nster \\ 
  Einsteinstr. 62 \\
  48149 M{\"u}nster \\
  Germany \\[0.2em]
  {\footnotesize dursu@uni-muenster.de}
\end{minipage}


\end{document}